\title[The FRT construction over non-commutative algebras]
{Tannaka theory and the FRT construction over non-commutative algebras}
\author[K.~Shimizu]{Kenichi Shimizu}
\email{kshimizu@shibaura-it.ac.jp}
\address{Department of Mathematical Sciences \\
  Shibaura Institute of Technology \\
  307 Fukasaku, Minuma-ku, Saitama-shi, Saitama 337-8570, Japan.}
\date{}
\numberwithin{equation}{section}
\newtheorem{counter}{}[section]
\theoremstyle{definition}
\newtheorem{definition}         [counter]{Definition}
\theoremstyle{plain}
\newtheorem{lemma}              [counter]{Lemma}
\newtheorem{proposition}        [counter]{Proposition}
\newtheorem{theorem}            [counter]{Theorem}
\newtheorem*{theorem*}          {Theorem}
\theoremstyle{remark}
\newtheorem{remark}             [counter]{Remark}
\newtheorem{example}            [counter]{Example}
\newcommand{\PICDIR}{./}
\newcommand{\PICSCALE}{1}
\newcommand{\PIC}[2][\PICSCALE]{%
  % \typeout{PIC: \PICDIR/#2.tex}%
  \vcenter{\hbox{\includegraphics[scale=#1]{\PICDIR/#2.pdf}}}}
\newcommand{\id}{\mathrm{id}}
\newcommand{\eval}{\mathrm{eval}}
\newcommand{\coev}{\mathrm{coev}}
\newcommand{\op}{\mathrm{op}}
\newcommand{\env}{\mathrm{e}}
\newcommand{\rev}{\mathrm{rev}}
\newcommand{\unitobj}{\mathbbm{1}}
\newcommand{\Hom}{\mathrm{Hom}}
\newcommand{\Nat}{\mathrm{Nat}}
\mathchardef\mhyphen="2D
\newcommand{\lmod}[1]{#1\mhyphen\mathrm{mod}}
\newcommand{\lcomod}[1]{#1\mhyphen\mathrm{comod}}
\newcommand{\rcomod}[1]{\mathrm{comod}\mhyphen#1}
\newcommand{\lZlax}{\mathscr{W}_{\ell}}
\newcommand{\rZlax}{\mathscr{W}_r}
\newcommand{\src}{\mathfrak{s}}
\newcommand{\tgt}{\mathfrak{t}}
\newcommand{\Mod}{\mathfrak{M}}
\newcommand{\BiMon}{\mathrm{Bimon}}
\newcommand{\BiAlg}{\mathrm{Bialg}}
\newcommand{\Obj}{\mathrm{Obj}}
\newcommand{\Mor}{\mathrm{Mor}}
\newcommand{\Img}{\mathrm{Im}}
\newcommand{\Ker}{\mathrm{Ker}}
\newcommand{\Rel}{\mathrm{Rel}}
\newcommand{\rig}{\mathrm{rig}}
\newcommand{\xE}[3]{{#1\big[#2; #3\big]}}
\newcommand{\wE}[2]{\xE{\widetilde{\mathbf{e}}}{#1}{#2}}
\newcommand{\eE}[2]{\xE{\mathbf{e}}{#1}{#2}}
\newcommand{\bE}[2]{\xE{\overline{\mathbf{e}}}{#1}{#2}}
\begin{document}

\begin{abstract}
  Let $A$ be an algebra over a commutative ring $k$. We introduce the notion of a coquasitriangular left bialgebroid over $A$ and show that the category of left comodules over such a bialgebroid has a braiding. 
  We also investigate a Tannaka type construction of bimonads and bialgebroids.
  As an application, the Faddeev-Reshetikhin-Takhtajan (FRT) construction over the algebra $A$ is established.
  Our construction associates a coquasitriangular bialgebroid to a braided object $(M, c)$ in the category of $A$-bimodules such that $M$ is finitely generated and projective as a left $A$-module. A Hopf algebroid version of this construction is also provided.
\end{abstract}
 
\maketitle
\tableofcontents

\section{Introduction}

An invertible linear map $c: M \otimes M \to M \otimes M$ on the two-fold tensor product of a finite-dimensional vector space $M$ is called a {\em Yang-Baxter operator} if it satisfies
\begin{equation}
  \label{eq:Intro-YBE}
  (c \otimes \id_M) \circ (\id_M \otimes c) \circ (c \otimes \id_M) = (\id_M \otimes c) \circ (c \otimes \id_M) \circ (\id_M \otimes c)
\end{equation}
on $M \otimes M \otimes M$. This equation, called the Yang-Baxter equation (YBE), originally appeared in the context of mathematical physics. The study of the YBE gave birth to quantized enveloping algebras $U_q(\mathfrak{g})$. Roughly speaking, we can obtain a solution of the YBE from each finite-dimensional representation of $U_q(\mathfrak{g})$.

The Faddeev-Reshetikhin-Takhtajan (FRT) construction, introduced in \cite{MR1015339}, may be considered as an inverse procedure of the above method of obtaining a solution of the YBE. The FRT construction takes a solution $c: M \otimes M \to M \otimes M$ of the YBE as an input and yields a bialgebra or a Hopf algebra as an output. The resulting bialgebra, say $B$, has the remarkable property that the category of $B$-comodules is braided. Furthermore, the vector space $M$ has a canonical $B$-comodule structure and the original solution $c$ can be recovered as the braiding at $M$.

Several generalizations of the YBE have been introduced and studied; see \cite{MR1722951,MR1769723,MR2142557,MR2183847,MR2742743} and references therein. In the language of category theory, most of such generalizations can be defined as a braided object in a suitable monoidal category. Here, a braided object in a monoidal category $\mathcal{V}$ is a pair $(M, c)$ consisting of an object $M$ of $\mathcal{V}$ and an isomorphism $c: M \otimes M \to M \otimes M$ in $\mathcal{V}$ satisfying \eqref{eq:Intro-YBE}, but now the symbols $\otimes$ and $\circ$ mean the tensor product and the composition in $\mathcal{V}$, respectively.

With the progress of the study of such generalizations of the YBE, the FRT construction has also been generalized in various settings; see, {\it e.g.}, \cite{MR1313748,MR1645196,MR1623965,MR1722951,MR2096667,MR2588133,MR3448180}.
Let $A$ be an algebra over a commutative ring $k$, and let ${}_A \Mod_A$ be the monoidal category of $A$-bimodules. One of contributions of this paper is a generalization of the FRT construction for a braided object $(M, c)$ in ${}_A \Mod_A$ such that $M$ is finitely generated and projective as a left $A$-module.

Given such a pair $(M, c)$, one can construct a bialgebroid by existing frameworks as follows:
Let $\mathfrak{B}$ be the monoidal category of braids. There is a strong monoidal functor $\omega: \mathfrak{B} \to {}_A \Mod_A$ assigning $c$ to a crossing of two strands. Although $\mathfrak{B}$ is not even an additive category, a construction of bialgebroids studied in \cite{MR2448024,MR1984397,2009arXiv0907.1578S} can be applied to the functor $\omega$.
The bialgebroid obtained in this way may be called the `FRT bialgebroid' associated to $(M, c)$ since, in the case where $A = k$, the story so far reduces to the Tannaka theoretic interpretation of the FRT construction explained in \cite{MR1623637,MR1623636}.

However, the above method does not extend an important aspect of the original FRT construction.
The bialgebra constructed by the original FRT construction has a universal R-form and therefore the category of its comodules is braided, while it is not yet known whether the same holds for the FRT bialgebroid constructed in the above.

In this paper, mentioning this question, we introduce a general method for constructing bimonads ($=$ comonoidal monads) and study their properties.
Our main result is explained as follows: Let $\mathcal{V}$ be a monoidal category with cocontinuous tensor product. A {\em construction data} over $\mathcal{V}$ (Definition~\ref{def:construction-data}) is a pair $(\mathcal{D}, \omega)$ consisting of an essentially small monoidal category $\mathcal{D}$ and a strong monoidal functor $\omega: \mathcal{D} \to \mathcal{V}$ such that $\omega(x)$ admits a right dual object for all $x \in \mathcal{D}$ and the coend
\begin{equation}
  \label{eq:Intro-bimonad-T}
  T(M) = \int^{x \in \mathcal{D}} \omega(x) \otimes M \otimes \omega(x)^{\vee}
\end{equation}
exists for all $M \in \mathcal{V}$, where $(-)^{\vee}$ denotes the right dual (see Subsection~\ref{subsec:convention-duals} for our convention on duals). We extend the assignment $M \mapsto T(M)$ to an endofunctor on $\mathcal{V}$ and equip $T$ with a structure of a bimonad on $\mathcal{V}$ in a similar way as we do in the Tannaka theory (see Subsection~\ref{subsec:bimonad-intro}). Our main result (Theorem~\ref{thm:FRT-bimonad}) states that the bimonad $T$ enjoys the following properties:
\begin{enumerate}
\item The category $\lmod{T}$ of $T$-modules is isomorphic to the lax right centralizer of $\omega$ (see Subsection~\ref{subsec:lax-centralizers} for the definition of left/right lax centralizer).
\item The category $\lcomod{T}$ is isomorphic to the left lax centralizer of the forgetful functor from $\lmod{T}$ to $\mathcal{V}$ (here $\lcomod{T}$ is the category of $T$-comodules, which corresponds to the category of comodules over a bialgebroid; see Definition~\ref{def:comodule-over-bimonad}).
\item If $\mathcal{V}$ is complete, then $T$ admits a right adjoint.
\item If $\mathcal{V}$ is has a lax braiding ({\it i.e.}, a not necessarily invertible braiding), then the bimonad $T$ is implemented by a bialgebra object in $\mathcal{V}$.
\item If $\mathcal{D}$ is right rigid, then $T$ is a left Hopf monad in the sense of \cite{MR2793022}.
\item If $\mathcal{D}$ is left rigid, then $T$ is a right Hopf monad in the sense of \cite{MR2793022}.
\item If $\mathcal{D}$ has a lax braiding, then so does $\lcomod{T}$.
\end{enumerate}

For bialgebroids, this theorem works as follows:
We fix an algebra $A$ and consider the case where $\mathcal{V} = {}_A \Mod_A$.
Let $\mathcal{D}$ be a small monoidal category, and let $\omega : \mathcal{D} \to \mathcal{V}$ be a strong monoidal functor such that $\omega(x)$ is right rigid for all objects $x \in \mathcal{D}$ (note that an object of ${}_A \Mod_A$ is right rigid if and only if it is finitely generated and projective as a left $A$-module).
Since $\mathcal{V}$ is cocomplete, the coend \eqref{eq:Intro-bimonad-T} exists and thus we have the associated bimonad $T$ on $\mathcal{V}$.
According to Szlach\'anyi \cite{MR1984397}, a bialgebroid over $A$ can be identified with a bimonad on ${}_A \Mod_A$ admitting a right adjoint.
Thus, by Part (3), there is a left bialgebroid ${B}_{\omega}$ over $A$ associated to the bimonad $T$.
Part (5) implies that ${B}_{\omega}$ is a left Hopf algebroid (in the sense of Schauenburg \cite{MR1800718}) if $\mathcal{D}$ is right rigid.

In this paper, we also introduce the notion of a (lax) universal R-form on a left bialgebroid (Definition~\ref{def:univ-R}) and show that the category of comodules over a bialgebroid $B$ has a (lax) braiding if $B$ has a (lax) universal R-form (Theorem~\ref{thm:univ-R-lax-braiding}).
Now we suppose that the monoidal category $\mathcal{D}$ has a lax braiding $\sigma$.
Then, by Part (6), the category of left $B_{\omega}$-comodules has a lax braiding, say $\widetilde{\sigma}$.
It turns out that $\widetilde{\sigma}$ arises from a lax universal R-form on ${B}_{\omega}$.
Furthermore, if $\sigma$ is invertible, then $\widetilde{\sigma}$ is also invertible (Theorem~\ref{thm:B-omega-lax-universal-R}).

The FRT bialgebroid is a special case where $\mathcal{D} = \mathfrak{B}$ is the category of braids and, in particular, has a universal R-form. If we replace $\mathfrak{B}$ with the `rigid extension' of $\mathfrak{B}$, we also obtain a Hopf algebroid version of the FRT construction.

The bialgebra constructed by the original FRT construction has the generators $T_i^j$ subject to certain relations expressed in terms of a matrix presentation of the solution of the YBE and its coalgebra structure is given by $\Delta(T_{i}^{j}) = \sum_r T_{i}^r \otimes T_{r}^j$ on the generators.
In this paper, we also provide a general method to give a presentation of the left bialgebroid ${B}_{\omega}$ from a presentation of the monoidal category $\mathcal{D}$ (Theorem~\ref{thm:B-omega-gen-rel}). This yields a presentation of the FRT bialgebroid and its Hopf version (Theorems~\ref{thm:FRT-coqt-bialgebroid} and \ref{thm:FRT-coqt-Hopf}).

We remark that a similar construction has been given by Rosengren \cite{MR2096667} in the setting of $\mathfrak{h}$-bialgebroids. According to \cite{2020arXiv200809081K}, $\mathfrak{h}$-bialgebroids might be understood as monoidal comonads on the category of Harish-Chandra bimodules and, in particular, they are different objects than ordinary bialgebroids. It could be interesting to examine how our construction (or its dual version) relates to some constructions of dynamical quantum groups.

\subsection*{Organization of this paper}

This paper is organized as follows: In Section \ref{sec:preliminaries}, we recall basic results and terminology on monoidal categories, bimonads and Hopf monads from \cite{MR1712872,MR3242743,MR2355605,MR2793022}.

In Section~\ref{sec:monadic-FRT}, we introduce our main construction for a bimonad and investigate its properties. The main result of this section was introduced in the above: Starting from a construction data $(\mathcal{D}, \omega)$ over $\mathcal{V}$, we construct a bimonad $T$ on $\mathcal{V}$. Properties of the bimonad $T$ are summarized in Subsection~\ref{subsec:bimonad-intro}.

In Section~\ref{sec:bialgebroids}, we recall basic results on bialgebroids. Let $A$ be an algebra over a commutative ring $k$. Given a bialgebroid $B$ over $A$, we denote by ${}_B \Mod$ and ${}^B \Mod$ the category of left $B$-modules and the category of left $B$-comodules, respectively. It is known that both ${}_B \Mod$ and ${}^B\Mod$ are $k$-linear monoidal categories admitting the `fiber' functor to ${}_A \Mod_A$. The reason why ${}_B\Mod$ is monoidal is that $B$ defines a bimonad $T = B \otimes_{A^{\env}}(-)$ on ${}_A \Mod_A$. For this bimonad $T$, we explain that the category of $T$-comodules in the sense of Definition~\ref{def:comodule-over-bimonad} is isomorphic to ${}^B\Mod$ as a monoidal category (Theorem~\ref{thm:T-comod-is-B-comod}). In this section, we also introduce the notion of a lax universal R-form of a bialgebroid and show that ${}^B\Mod$ is lax braided if $B$ possesses a lax universal R-form (Theorem~\ref{thm:univ-R-lax-braiding}).

Now let $(\mathcal{D}, \omega)$ be a construction data over ${}_A \Mod_A$. In Section~\ref{sec:construction-bialgebroids}, we introduce a left bialgebroid ${B}_{\omega}$ over $A$ such that the bimonad on ${}_A \Mod_A$ constructed from the data $(\mathcal{D}, \omega)$ is isomorphic to the bimonad ${B}_{\omega} \otimes_{A^{\env}}(-)$. The bialgebroid $B_{\omega}$ is constructed as follows: We first consider the $A^{\env}$-bimodule
\begin{equation*}
  \widetilde{{B}}_{\omega} = \bigoplus_{x \in \Obj(\mathcal{D})} \omega(x) \otimes_k \omega(x)^{\vee}
\end{equation*}
and define a $k$-linear map $\Rel_f: \omega(x) \otimes \omega(y)^{\vee} \to \widetilde{{B}}_{\omega}$ for each morphism $f : x \to y$ in $\mathcal{D}$ in a certain way. By realizing the coend \eqref{eq:Intro-bimonad-T} as a certain form of a coequalizer, we see that the $A^{\env}$-bimodule ${B}_{\omega}$ is the quotient of $\widetilde{B}_{\omega}$ by $J_{\omega} = \sum_{f \in \Mor(\mathcal{D})} \Img(\Rel_f)$ (see Lemma~\ref{lem:construction-coend}). By using this realization, we describe the left bialgebroid structure of ${B}_{\omega}$, its lax universal R-form (provided that $\mathcal{D}$ is braided) and the inverse of the Galois map (provided that $\mathcal{D}$ is right rigid).

In Section~\ref{sec:FRT-over-non-comm-rings}, we first recall the precise meaning of the monoidal category presented by `generators and relations.' We then give a presentation of the bialgebroid ${B}_{\omega}$ provided that a presentation of $\mathcal{D}$ is given (Theorem~\ref{thm:B-omega-gen-rel}). As an application of the result, we establish the FRT construction for bialgebroids over $A$ (Theorem~\ref{thm:FRT-coqt-bialgebroid}) and its Hopf algebroid version (Theorem~\ref{thm:FRT-coqt-Hopf}).
Finally, we explain that our construction generalizes Hayashi's construction of face algebras \cite{MR1623965}.

\subsection*{Acknowledgment}

This work is supported by JSPS KAKENHI Grant Numbers JP16K17568 and JP20K03520.

\section{Preliminaries: Monoidal categories and bimonads}
\label{sec:preliminaries}

\subsection{Monoidal categories}
\label{subsec:mon-cat}
Given a category $\mathcal{C}$, we denote by $\Obj(\mathcal{C})$ and $\Mor(\mathcal{C})$ the class of objects of $\mathcal{C}$ and the class of morphisms of $\mathcal{C}$, respectively. We usually write $X \in \mathcal{C}$ to mean that $X$ belongs to $\Obj(\mathcal{C})$.

A {\em monoidal category} \cite[VII.1]{MR1712872} is a category $\mathcal{C}$ equipped with a functor $\otimes: \mathcal{C} \times \mathcal{C} \to \mathcal{C}$ (called the tensor product), an object $\unitobj \in \mathcal{C}$ (called the unit object) and natural isomorphisms $a_{X,Y,Z}: (X \otimes Y) \otimes Z \to X \otimes (Y \otimes Z)$, $l_X: \unitobj \otimes X \to X$ and $r_X: X \otimes \unitobj \to X$ for $X, Y, Z \in \mathcal{C}$ satisfying the pentagon and the triangle axioms. A monoidal category $\mathcal{C}$ is said to be {\em strict} if the natural isomorphisms $a$, $l$ and $r$ are the identities. In view of the Mac Lane coherence theorem, we may assume that every monoidal category is strict.

Now let $\mathcal{C}$ and $\mathcal{D}$ be monoidal categories.
A {\em monoidal functor} \cite[XI.2]{MR1712872} from $\mathcal{C}$ to $\mathcal{D}$ is a functor $F : \mathcal{C} \to \mathcal{D}$ equipped with a natural transformation
\begin{equation*}
  m_{X,Y} : F(X) \otimes F(Y) \to F(X \otimes Y)
  \quad (X, Y \in \mathcal{C})
\end{equation*}
and a morphism $u : \unitobj_{\mathcal{D}} \to F(\unitobj_{\mathcal{C}})$ in $\mathcal{D}$ such that the equations
\begin{gather*}
  m_{X, Y \otimes Z} \circ (\id_{F(X)} \otimes m_{Y, Z})
  = m_{X \otimes Y, Z} \circ (m_{X, Y} \otimes \id_{F(Z)}), \\
  m_{X, \unitobj} \circ (\id_{F(X)} \otimes u)
  = \id_{F(X)}
  = m_{\unitobj, X} \circ (u \otimes \id_{F(X)})
\end{gather*}
hold for all objects $X, Y, Z \in \mathcal{C}$. We say that a monoidal functor is {\em strong} ({\it resp.} {\em strict}) if its structure morphisms are invertible ({\it resp}. identities).

The dual notion is also used in this paper: A {\em comonoidal functor} from $\mathcal{C}$ to $\mathcal{D}$ is a functor $F : \mathcal{C} \to \mathcal{D}$ equipped with a natural transformation
\begin{equation*}
  \delta_{X,Y} : F(X \otimes Y) \to F(X) \otimes F(Y)
  \quad (X, Y \in \mathcal{C})
\end{equation*}
and a morphism $\varepsilon : F(\unitobj_{\mathcal{C}}) \to \unitobj_{\mathcal{D}}$ in $\mathcal{D}$ making  $F^{\op}: \mathcal{C}^{\op} \to \mathcal{D}^{\op}$ a monoidal functor.

\subsection{Convention for duals}
\label{subsec:convention-duals}

For basic terminology on duality in a monoidal category, we refer the reader to \cite[Section 2.10]{MR3242743}.
Let $\mathcal{C}$ be a monoidal category, and let $\mathcal{C}_{\rig}$ be the full subcategory of $\mathcal{C}$ consisting of right rigid objects. We assume that a right dual object has been chosen for each object of $\mathcal{C}_{\rig}$. Unlike \cite{MR3242743}, we denote by
$(X^{\vee}, \eval_X: X \otimes X^{\vee} \to \unitobj, \coev_{X}: \unitobj \to X^{\vee} \otimes X)$
the fixed right dual object of $X \in \mathcal{C}_{\rig}$. The assignment $X \mapsto X^{\vee}$ extends to a contravariant functor from $\mathcal{C}_{\rig}$ to $\mathcal{C}$. Furthermore, there are canonical isomorphisms
\begin{equation}
  \label{eq:right-dual-monoidal}
  \vartheta_{0}: \unitobj \to \unitobj^{\vee}
  \quad \text{and} \quad
  \vartheta^{(2)}_{X,Y}: Y^{\vee} \otimes X^{\vee} \to (X \otimes Y)^{\vee}
  \quad (X, Y \in \mathcal{C}_{\rig})
\end{equation}
making the functor $X \mapsto X^{\vee}$ a strong monoidal functor from $(\mathcal{C}_{\rig})^{\op}$ to $\mathcal{C}^{\rev}$, which we call the {\em right duality functor}. Here, $\mathcal{C}^{\rev}$ is the monoidal category obtained from $\mathcal{C}$ by reversing the order of the tensor product.

For reader's convenience, we note how the morphisms $\vartheta_{0}$ and $\vartheta^{(2)}_{X,Y}$ are defined:
Under our strictness assumption on $\mathcal{C}$, the isomorphism $\vartheta_{0}$ and its inverse are given by $\vartheta_{0} = \coev_{\unitobj}$ and $\vartheta_{0}^{-1} = \eval_{\unitobj}$, respectively. For $X, Y \in \mathcal{C}_{\rig}$, we define
\begin{gather}
  \label{eq:rigid-mon-cat-eval-2}
  \eval_{X,Y}^{(2)} = \eval_{X} \circ (\id_{X} \otimes \eval_{Y} \otimes \id_{X^{\vee}})
  : X \otimes Y \otimes Y^{\vee} \otimes X^{\vee} \to \unitobj, \\
  \label{eq:rigid-mon-cat-coev-2}
  \coev_{X,Y}^{(2)} = (\id_{Y^{\vee}} \otimes \coev_X \otimes \id_{Y}) \circ \coev_{Y}
  : \unitobj \to Y^{\vee} \otimes X^{\vee} \otimes X \otimes Y.
\end{gather}
Then the isomorphism $\vartheta^{(2)}_{X,Y}$ and its inverse are given as follows:
\begin{gather*}
  \vartheta^{(2)}_{X,Y}
  = (\id_{(X \otimes Y)^{\vee}} \otimes \eval^{(2)}_{X,Y})
  \circ (\coev_{X \otimes Y} \otimes \id_{Y^{\vee}} \otimes \id_{X^{\vee}}), \\
  (\vartheta^{(2)}_{X,Y})^{-1}
  = (\id_{Y^{\vee}} \otimes \id_{X^{\vee}} \otimes \eval_{X \otimes Y})
  \circ (\coev^{(2)}_{X,Y} \otimes \id_{(X \otimes Y)^{\vee}}).
\end{gather*}

\subsection{Lax centralizers and their variants}
\label{subsec:lax-centralizers}

Let $\mathcal{B}$ and $\mathcal{C}$ be monoidal categories. Given two comonoidal functors $F = (F, \delta, \varepsilon)$ and $G = (G, \delta', \varepsilon')$ from $\mathcal{B}$ to $\mathcal{C}$, we define the category $\lZlax(F, G)$ as follows:

\begin{definition}
  \label{def:left-Z-lax-F-G}
  An object of $\lZlax(F, G)$ is a pair $(M, \rho_M)$ consisting of an object $M \in \mathcal{C}$ and a natural transformation
  \begin{equation*}
    \rho_M(X): M \otimes F(X) \to G(X) \otimes M \quad (X \in \mathcal{B})
  \end{equation*}
  satisfying the following two conditions (Z1) and (Z2).
  \begin{enumerate}
  \item [(Z1)] For all objects $X, Y \in \mathcal{B}$, the following diagram commutes:
    \begin{equation*}
      \xymatrix@C=96pt@R=16pt{
        M \otimes F(X \otimes Y)
        \ar[dd]_{\rho_M(X \otimes Y)}
        \ar[r]^(.45){\id_M \otimes \delta_{X, Y}}
        & M \otimes F(X) \otimes F(Y)
        \ar[d]^{\ \rho_M(X) \otimes \id_{F(Y)}} \\
        & G(X) \otimes M \otimes F(Y)
        \ar[d]^{\ \id_{G(X)} \otimes \rho_M(Y)} \\
        G(X \otimes Y) \otimes M
        \ar[r]_(.45){\delta'_{X, Y} \otimes \id_M}
        & G(X) \otimes G(Y) \otimes M \\
      }
    \end{equation*}
  \item [(Z2)] The equation $\id_M \otimes \varepsilon = (\varepsilon' \otimes \id_M) \circ \rho_M(\unitobj)$ holds.
  \end{enumerate}
  Given two objects $\mathbf{M} = (M, \rho_M)$ and $\mathbf{N} = (N, \rho_N)$ of $\lZlax(F, G)$, a {\em morphism} from $\mathbf{M}$ to $\mathbf{N}$ is a morphism $f: M \to N$ in $\mathcal{C}$ satisfying the equation
  \begin{equation*}
    (\id_{G(X)} \otimes f) \circ \rho_M(X) = \rho_N(X) \circ (f \otimes \id_{F(X)})
  \end{equation*}
  for all objects $X \in \mathcal{C}$. The composition of morphisms in $\lZlax(F, G)$ is defined by the composition of morphisms in $\mathcal{C}$.
\end{definition}

For three comonoidal functors $F$, $G$ and $H$ from $\mathcal{B}$ to $\mathcal{C}$, the tensor product
\begin{equation*}
  % \label{eq:left-lax-center-tensor}
  \otimes: \lZlax(G, H) \times \lZlax(F, G) \to \lZlax(F, H)
\end{equation*}
is defined by $(M, \rho_M) \otimes (N, \rho_N) = (M \otimes N, \rho_{M \otimes N})$ for $(M, \rho_M) \in \lZlax(G, H)$ and $(N, \rho_N) \in \lZlax(F, G)$, where $\rho_{M \otimes N}$ is the natural transformation defined by
\begin{equation*}
  \rho_{M \otimes N}(X) = (\rho_M(X) \otimes \id_N) \circ (\id_M \otimes \rho_N(X))
\end{equation*}
for $X \in \mathcal{B}$. The class of comonoidal functors from $\mathcal{B}$ to $\mathcal{C}$ form a bicategory with this operation. In particular, $\lZlax(F, F)$ is a monoidal category.

There is a right-handed version of the above construction: 
For two comonoidal functors $F$ and $G$ from $\mathcal{B}$ to $\mathcal{C}$, there is a category $\rZlax(F, G)$ whose objects are pairs $(M, \rho_M)$ consisting of an object $M \in \mathcal{C}$ and a natural transformation $\rho_M : F \otimes M \to M \otimes G$ satisfying similar axioms as above. The tensor product for these categories is also defined in a similar way as above and, in particular, $\rZlax(F, F)$ is a monoidal category.

\begin{definition}
  We call $\lZlax(F) := \lZlax(F, F)$ and $\rZlax(F) := \rZlax(F, F)$ the {\em left lax centralizer} and the {\em right lax centralizer} of $F$, respectively.
\end{definition}

\begin{remark}
  We will introduce some `lax' notions in this paper. Some of them are often called `weak' notions in literature. For example, the lax center (Definition~\ref{def:lax-center}) is called the weak center in \cite{MR3572267}
\end{remark}

\begin{remark}
  As a strong monoidal functor is a comonoidal functor in an obvious way, the left/right lax centralizer of such a functor makes sense.
\end{remark}

A {\em comonoidal adjunction} is an adjunction $(L, U, \eta, \varepsilon): \mathcal{C} \to \mathcal{D}$ between monoidal categories $\mathcal{C}$ and $\mathcal{D}$ such that $L: \mathcal{C} \to \mathcal{D}$ and $U: \mathcal{D} \to \mathcal{C}$ are comonoidal functors and the unit $\eta: \id_{\mathcal{C}} \to U L$ and the counit $\varepsilon: L U \to \id_{\mathcal{D}}$ are comonoidal natural transformations.

\begin{lemma}
  \label{lem:Z-lax-and-comonoidal-adj}
  Let $(L, U, \eta, \varepsilon): \mathcal{C} \to \mathcal{D}$ be a comonoidal adjunction as above, and let $\mathcal{V}$ be another monoidal category. Then, for any comonoidal functors $F: \mathcal{C} \to \mathcal{V}$ and $G: \mathcal{D} \to \mathcal{V}$, there are category isomorphisms
  \begin{equation*}
    \lZlax(F U, G) \cong \lZlax(F, G L)
    \quad \text{and} \quad
    \rZlax(F U, G) \cong \rZlax(F, G L).
  \end{equation*}
\end{lemma}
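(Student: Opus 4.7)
The plan is to construct the isomorphism $\lZlax(FU, G) \cong \lZlax(F, GL)$ at the level of objects via the mate correspondence induced by the adjunction $(L, U, \eta, \varepsilon)$, and then observe that it is natural enough to respect morphisms and that the right-handed analogue follows by the same argument.

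First, given $(M, \rho_M) \in \lZlax(FU, G)$, I would define a family of morphisms
\begin{equation*}
  \sigma_M(X) = \rho_M(L(X)) \circ (\id_M \otimes F(\eta_X)) : M \otimes F(X) \to G(L(X)) \otimes M \quad (X \in \mathcal{C}),
\end{equation*}
which is natural in $X$ because $\eta$ is natural and $\rho_M$ is natural. Conversely, given $(M, \sigma_M) \in \lZlax(F, GL)$, I would define
\begin{equation*}
  \rho_M(Y) = (G(\varepsilon_Y) \otimes \id_M) \circ \sigma_M(U(Y)) : M \otimes F(U(Y)) \to G(Y) \otimes M \quad (Y \in \mathcal{D}).
\end{equation*}
The triangle identities $U\varepsilon \circ \eta U = \id_U$ and $\varepsilon L \circ L \eta = \id_L$ (applied inside $F$ and $G$ respectively) will show that these two assignments are mutually inverse, exactly as in the classical mate calculus.

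Next comes the verification that the axioms (Z1) and (Z2) translate correctly. Here the comonoidality of the adjunction is crucial: the fact that $\eta$ is a comonoidal natural transformation means $\delta^{UL}_{X,Y} \circ \eta_{X \otimes Y} = \eta_X \otimes \eta_Y$ and $\varepsilon^{UL} \circ \eta_{\unitobj} = \id$, where the comonoidal structure on $UL$ is the composite of those on $U$ and $L$; the analogous identities hold for $\varepsilon$. Starting from (Z1) for $\rho_M$ at the object $L(X) \otimes L(Y)$ and precomposing with $\id_M \otimes F(\eta_{X \otimes Y})$, I would insert the comonoidal coherence for $\eta$ together with the fact that $F$ and $G$ are comonoidal, and repeatedly use naturality of $\rho_M$ to rearrange the coend-like diagram into the shape of (Z1) for $\sigma_M$. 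Axiom (Z2) is handled analogously, using the unit-comonoidality of $\eta$. The reverse direction works symmetrically via $\varepsilon$, and the two checks are formally dual.

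For morphisms, given $f : (M, \rho_M) \to (N, \rho_N)$ in $\lZlax(FU, G)$, the compatibility of $f$ with $\sigma_M$ and $\sigma_N$ follows by postcomposing the compatibility with $\rho_M$ and $\rho_N$ with $\id \otimes F(\eta_X)$ and using naturality; since the underlying morphism in $\mathcal{V}$ is unchanged, the assignment defines a functor, and the inverse assignment works the same way. Thus the two categories are isomorphic. The right-handed case is handled by the mirror construction using $\rho_M(Y) : F(Y) \otimes M \to M \otimes G(Y)$ and its mate, which is formally identical after swapping the order of tensor factors.

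The main obstacle is purely bookkeeping: ensuring that the comonoidal coherence data of $L$, $U$, $\eta$, $\varepsilon$, $F$, $G$ all fit together to convert (Z1) for $\rho_M$ into (Z1) for $\sigma_M$ (and back). No new idea is required beyond the standard mate correspondence, but the diagram to be checked mixes the comonoidal structures on the two composite functors $FU$ and $GL$, so one must carefully expand both and match them using the comonoidality of $\eta$ and $\varepsilon$.
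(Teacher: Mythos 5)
Your proposal is correct and follows essentially the same route as the paper: the same mate correspondence $\sigma_M(X) = \rho_M(L(X)) \circ (\id_M \otimes F(\eta_X))$ and its inverse via $G(\varepsilon_Y)$, the zig-zag identities for mutual inversion, the comonoidality of $\eta$ and $\varepsilon$ to see that the axioms (Z1)--(Z2) are preserved, and the observation that the correspondence is the identity on underlying morphisms.
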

\begin{proof}
  We only give an isomorphism $\lZlax(F U, G) \cong \lZlax(F, G L)$ as the other one can be given in a similar way. We fix an object $M \in \mathcal{V}$ and consider the two sets:
  \begin{equation*}
    \mathcal{N}_1 = \Nat(M \otimes F U(-), G(-) \otimes M),
    \quad
    \mathcal{N}_2 = \Nat(M \otimes F(-), G L(-) \otimes M).
  \end{equation*}
  Given $\alpha \in \mathcal{N}_1$, we have an element of $\mathcal{N}_2$ defined by
  \begin{equation*}
    M \otimes F(X)
    \xrightarrow{\quad \id_M \otimes F(\eta_{X}) \quad}
    M \otimes F U L(X)
    \xrightarrow{\quad \alpha_{L(X)} \quad}
    G L(X) \otimes M
    \quad (X \in \mathcal{C}).
  \end{equation*}
  Given $\beta \in \mathcal{N}_2$, we have an element of $\mathcal{N}_1$ defined by
  \begin{equation*}
    M \otimes F U(X)
    \xrightarrow{\quad \beta_{U(X)} \quad}
    G L U(X) \otimes M
    \xrightarrow{\quad G(\varepsilon_X) \otimes \id_M \quad}
    G(X) \otimes M
    \quad (X \in \mathcal{D}).
  \end{equation*}
  By the zig-zag identities for $\eta$ and $\varepsilon$, we see that the above two constructions are mutually inverse to each other. Since $\eta$ and  $\varepsilon$ are comonoidal natural transformations, the bijection $\mathcal{N}_1 \cong \mathcal{N}_2$ so obtained restricts to the bijection
  \begin{equation*}
    \{ \alpha \in \mathcal{N}_1 \mid (M, \alpha) \in \lZlax(F U, G) \}
    \cong \{ \beta \in \mathcal{N}_2 \mid (M, \beta) \in \lZlax(F, G L) \}.
  \end{equation*}
  Hence we obtain a bijective correspondence between the objects of $\lZlax(F U, G)$ and the objects of $\lZlax(F, G L)$. One easily checks that this correspondence extends to a category isomorphism that is identity on morphisms.
\end{proof}

\subsection{Lax braided categories}

A {\em lax braiding} (also called a weak braiding) on a monoidal category $\mathcal{B}$ is a natural transformation $c_{X,Y}: X \otimes Y \to Y \otimes X$ ($X, Y \in \mathcal{B}$), which is not necessarily invertible, such that the equations
\begin{gather}
  \label{eq:def-lax-br-1}
  c_{X \otimes Y, Z} = (c_{X,Z} \otimes \id_Y) \circ (\id_X \otimes c_{Y,Z}), \\
  \label{eq:def-lax-br-2}
  c_{X, Y \otimes Z} = (\id_{X} \otimes c_{X, Z}) \circ (c_{X, Y} \otimes \id_Z), \\
  \label{eq:def-lax-br-3}
  c_{\unitobj, X} = \id_X = c_{X, \unitobj}
\end{gather}
hold for objects $X, Y, Z \in \mathcal{B}$. An invertible lax braiding is called a {\em braiding}. A (lax) {\em braided category} is a monoidal category equipped with a (lax) braiding.

The monoidal category $\lZlax(\mathcal{C}) := \lZlax(\id_{\mathcal{C}})$ is a lax braided category by the lax braiding given by $c_{\mathbf{M}, \mathbf{N}} = \rho_M(N)$ for two objects $\mathbf{M} = (M, \rho_M)$ and  $\mathbf{N} = (N, \rho_N)$ of $\lZlax(\mathcal{C})$. 
The monoidal category $\rZlax(\mathcal{C}) := \rZlax(\id_{\mathcal{C}})$ is also a lax braided category by the lax braiding given in an analogous way.

\begin{definition}
  \label{def:lax-center}
  We call the lax braided categories $\lZlax(\mathcal{C})$ and $\rZlax(\mathcal{C})$ the {\em left lax center} and the {\em right lax center} of $\mathcal{C}$, respectively.
\end{definition}

\subsection{Bimonads and Hopf monads}

We recall that a {\em monad} \cite[VI.1]{MR1712872} on a category $\mathcal{C}$ is a triple $(T, \mu, \eta)$ consisting of a functor $T: \mathcal{C} \to \mathcal{C}$ and natural transformation $\mu: T T \to T$ and $\eta: \id_{\mathcal{C}} \to T$ such that the equations
\begin{gather*}
  % \label{eq:def-monad-1}
  \mu_{M} \circ T(\mu_M) = \mu_{M} \circ \mu_{T(M)}
  \quad \text{and} \quad
  % \label{eq:def-monad-2}
  \mu_{M} \circ \eta_{T(M)} = \id_{T(M)} = \mu_{M} \circ T(\eta_{M})
\end{gather*}
hold for all objects $M \in \mathcal{C}$.

\begin{definition}[\cite{MR2355605,MR2793022}]
  Let $\mathcal{C}$ be a monoidal category. A {\em bimonad} on $\mathcal{C}$ is a data $(T, \mu, \eta, \delta, \varepsilon)$ such that
  \begin{enumerate}
  \item [(B1)] the triple $(T, \mu, \eta)$ is a monad on $\mathcal{C}$,
  \item [(B2)] the triple $(T, \delta, \varepsilon)$ is a comonoidal endofunctor on $\mathcal{C}$, and
  \item [(B3)] $\mu$ and $\eta$ are comonoidal natural transformations.
  \end{enumerate}
  Given two bimonads $T$ and $T'$ on $\mathcal{C}$, a {\em morphism} from $T$ to $T'$ is a comonoidal natural transformation $T \to T'$ that is also a morphism of monads. We denote by $\BiMon(\mathcal{C})$ the category of bimonads on $\mathcal{C}$.
\end{definition}

% Let $(T, \mu, \eta, \delta, \varepsilon)$ be a bimonad. The condition (B2) says that the equations
% \begin{gather}
%   \label{eq:def-bimonad-1}
%   (\id_{T(L)} \otimes \delta_{M, N})\circ \delta_{L, M \otimes N}
%   = (\delta_{L, M} \otimes \id_{T(N)}) \circ \delta_{L \otimes M, N}, \\
%   \label{eq:def-bimonad-2}
%   (\id_{T(M)} \otimes \varepsilon) \circ \delta_{M, \unitobj}
%   = \id_{T(M)}
%   = (\varepsilon \otimes \id_{T(M)}) \circ \delta_{\unitobj, M}
% \end{gather}
% hold for all $L, M, N \in \mathcal{C}$. The condition (B3) says that the equations
% \begin{gather}
%   \label{eq:def-bimonad-3}
%   \mu_{M \otimes N} = (\mu_M \otimes \mu_N) \circ \delta_{T(M), T(N)} \circ T(\delta_{M,N}), \\
%   \label{eq:def-bimonad-4}
%   \varepsilon \circ \mu_{\unitobj}
%   = \varepsilon \circ T(\varepsilon), \\
%   \label{eq:def-bimonad-5}
%   \delta_{M,N} \circ \eta_{M \otimes N} = \eta_{M} \otimes \eta_{N}, \\
%   \label{eq:def-bimonad-6}
%   \varepsilon \circ \eta_{\unitobj} = \id_{\unitobj}
% \end{gather}
% hold for all $M, N \in \mathcal{C}$.

\subsubsection{Modules over a bimonad}

Let $T$ be a monad on a category $\mathcal{C}$. We recall that a {\em module} over $T$ ($=$ $T$-algebra \cite[VI.2]{MR1712872}) is a pair $(M, a)$ consisting of an object $M$ of $\mathcal{C}$ and a morphism $a: T(M) \to M$ in $\mathcal{C}$ satisfying the equations
\begin{gather}
  \label{eq:def-monad-module-1}  
  a \circ T(a) = a \circ \mu_M, \qquad
  a \circ \eta_M = \id_M.
  % \label{eq:def-monad-module-2}
\end{gather}
If $M = (M, a)$ and $N = (N, b)$ are $T$-modules, then a {\em morphism} of $T$-modules from $M$ to $N$ is a morphism $f: M \to N$ in $\mathcal{C}$ such that $b \circ T(f) = f \circ a$.

\begin{definition}
  We denote by $\lmod{T}$ the category of $T$-modules.
\end{definition}

Now we consider the case where $\mathcal{C}$ is a monoidal category. If $T = (T, \mu, \eta, \delta, \varepsilon)$ is a bimonad on $\mathcal{C}$, then the tensor product of two $T$-modules $(M, a)$ and $(N, b)$ is defined by $(M, a) \otimes (N, b) = (M \otimes N, (a \otimes b) \circ \delta_{M,N})$. The category $\lmod{T}$ is in fact a monoidal category with respect to this tensor product and the unit object given by $\unitobj_{\lmod{T}} = (\unitobj, \varepsilon)$.

\subsubsection{Comodules over a bimonad}

It is not obvious what a comodule over a bimonad is since a bimonad is not a comonad. Here we propose the following definition of a comodule over a bimonad:

\begin{definition}
  \label{def:comodule-over-bimonad}
  Let $\mathcal{C}$ be a monoidal category, and let $T = (T, \mu, \eta, \delta, \varepsilon)$ be a bimonad on $\mathcal{C}$. We define the category $\lcomod{T}$ of $T$-comodules by
  \begin{equation*}
    \lcomod{T} = \lZlax(\id_{\mathcal{C}}, T).
  \end{equation*}
\end{definition}

Spelling out the definition, a $T$-comodule is a pair $(M, \rho_M)$ consisting of an object $M \in \mathcal{C}$ and a family $\rho_M = \{ \rho_M(X) : M \otimes X \to T(X) \otimes M \}_{X \in \mathcal{C}}$ of morphisms in $\mathcal{C}$ such that the equations
\begin{gather}
  \label{eq:T-comod-def-0}
  \rho_M(W) \circ (f \otimes \id_M)
  = (T(f) \otimes \id_M) \circ \rho_M(V), \\
  \label{eq:T-comod-def-1}
  (\delta_{X,Y} \otimes \id_M) \circ \rho_M(X \otimes Y)
  = (\id_{T(X)} \otimes \rho_M(Y)) \circ (\rho_M(X) \otimes \id_Y), \\
  \label{eq:T-comod-def-2}
  (\varepsilon \otimes \id_{M}) \circ \rho_M(\unitobj) = \id_{M}
\end{gather}
hold for all all morphisms $f : V \to W$ in $\mathcal{C}$ and objects $X, Y \in \mathcal{C}$.

\begin{remark}
  As we will see in Subsection~\ref{subsec:bialgebroids}, if $T$ is the bimonad associated to a left bialgebroid $B$, then the category $\lcomod{T}$ can be identified with the category of left $B$-comodules.
  This is why we adopt the above definition that is different to \cite[Subsection 4.1]{MR2355605}.
  On the other hand, our definition may not be appropriate for bialgebras in braided monoidal categories (see Remark~\ref{rem:comodules-over-bialgebras}).
\end{remark}

Now let $\mathcal{C}$ and $T$ be as in Definition~\ref{def:comodule-over-bimonad}. Given two $T$-comodules $\mathbf{M} = (M, \rho_M)$ and $\mathbf{N} = (N, \rho_N)$, we define their tensor product by $\mathbf{M} \otimes \mathbf{N} = (M \otimes N, \rho_{M \otimes N})$, where $\rho_{M \otimes N}$ is the natural transformation defined by
\begin{equation}
  \label{eq:T-comod-tensor-def}
  \rho_{M \otimes N}(X) = (\mu_X \otimes \id_M \otimes \id_N)
  \circ (\rho_M(T(X)) \otimes \id_{N})
  \circ (\id_{M} \otimes \rho_N(X))
\end{equation}
for $X \in \mathcal{C}$. It is straightforward to check that $\lcomod{T}$ is a monoidal category with respect to this tensor product and the unit object
\begin{equation}
  \label{eq:T-comod-unit-def}
  \unitobj_{\lcomod{T}} = (\unitobj, \eta).
\end{equation}

\begin{theorem}
  \label{thm:T-comod-and-lax-centralizer}
  There is an isomorphism $\lcomod{T} \cong \lZlax(U)$ of monoidal categories, where $U: \lmod{T} \to \mathcal{C}$ is the forgetful functor.
\end{theorem}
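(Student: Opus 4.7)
The plan is to apply Lemma~\ref{lem:Z-lax-and-comonoidal-adj} to the Eilenberg-Moore adjunction $L \dashv U : \mathcal{C} \to \lmod{T}$, where $L(X) = (T(X), \mu_X)$ is the free $T$-module functor and $U$ is the forgetful functor. This adjunction is comonoidal: $U$ is strict monoidal by the very construction of $\otimes$ on $\lmod{T}$, while the comonoidal structure on $L$ and the comonoidality of the unit $\eta$ and counit $\varepsilon$ follow from axiom (B3) for the bimonad $T$, together with the standard fact that the comonoidal endofunctor $U L$ recovers the comonoidal functor $T$.

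I then instantiate the lemma with ambient category $\mathcal{V} = \mathcal{C}$ and with the two comonoidal functors $F := \id_{\mathcal{C}} : \mathcal{C} \to \mathcal{V}$ and $G := U : \lmod{T} \to \mathcal{V}$. The first isomorphism in the lemma reads
\[
  \lZlax(U) = \lZlax(U, U) = \lZlax(\id_{\mathcal{C}} \cdot U,\, U) \;\cong\; \lZlax(\id_{\mathcal{C}},\, U \cdot L) = \lZlax(\id_{\mathcal{C}},\, T) = \lcomod{T},
\]
the last equality being Definition~\ref{def:comodule-over-bimonad}. Unwinding the proof of the lemma, the bijection on objects sends $(M, \tilde\rho_M) \in \lZlax(U)$ to $(M, \rho_M) \in \lcomod{T}$ with $\rho_M(X) = \tilde\rho_M(L(X)) \circ (\id_M \otimes \eta_X)$, and its inverse sends $(M, \rho_M)$ to $(M, \tilde\rho_M)$ with $\tilde\rho_M(\mathbf{X}) = (a \otimes \id_M) \circ \rho_M(X)$ for $\mathbf{X} = (X, a) \in \lmod{T}$.

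It remains to upgrade this to an isomorphism of monoidal categories. The unit matches immediately: the identity half-braiding on $\unitobj \in \lZlax(U)$ is transported to $\rho_{\unitobj}(X) = \eta_X$, which is precisely the unit of $\lcomod{T}$ specified by \eqref{eq:T-comod-unit-def}. For tensor products, I must check that the transport of
\[
  \tilde\rho_{M \otimes N}(\mathbf{X}) = (\tilde\rho_M(\mathbf{X}) \otimes \id_N) \circ (\id_M \otimes \tilde\rho_N(\mathbf{X}))
\]
across the bijection coincides with the formula \eqref{eq:T-comod-tensor-def}; this single computation is the main obstacle, since the left-hand side uses the module structures while the right-hand side uses the monad multiplication $\mu$.

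The key to that computation is the naturality of $\rho_M$ at the structure morphism $a : T(X) \to X$, which gives $(T(a) \otimes \id_M) \circ \rho_M(T(X)) = \rho_M(X) \circ (\id_M \otimes a)$, combined with the module axiom $a \circ T(a) = a \circ \mu_X$ which absorbs the multiplication $\mu_X$ appearing in \eqref{eq:T-comod-tensor-def}. Inserting the definition $\tilde\rho_N(\mathbf{X}) = (a \otimes \id_N) \circ \rho_N(X)$ on the right then produces the required composite, so once the adjunction framework is in place the theorem reduces essentially to Lemma~\ref{lem:Z-lax-and-comonoidal-adj}.
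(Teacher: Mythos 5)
Your proposal is correct and follows essentially the same route as the paper: the paper also applies Lemma~\ref{lem:Z-lax-and-comonoidal-adj} to the comonoidal Eilenberg--Moore adjunction $(L,U)$ with $F=\id_{\mathcal{C}}$ and $G=U$, obtaining $\lcomod{T}=\lZlax(\id_{\mathcal{C}},UL)\cong\lZlax(U,U)$. The only difference is that you spell out the compatibility of the transported half-braidings with the tensor product \eqref{eq:T-comod-tensor-def}, which the paper dismisses as ``easy to see''; your verification of that step is sound.
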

\begin{proof}
  Let $L: \mathcal{C} \to \lmod{T}$ be the free $T$-module functor, that is, the functor defined by $L(X) = (T(X), \mu_X)$ for $X \in \mathcal{C}$. This is a comonoidal functor with structure morphisms inherited from $T$ and the pair $(L, U)$ is a comonoidal adjunction \cite[Example 2.4]{MR2793022}. Hence we have a category isomorphism
  \begin{equation*}
    \lcomod{T}
    = \lZlax(\id_{\mathcal{C}}, T)
    = \lZlax(\id_{\mathcal{C}}, U L)
    \cong \lZlax(U, U)
  \end{equation*}
  by Theorem \ref{lem:Z-lax-and-comonoidal-adj}. It is easy to see that this category isomorphism is in fact a strict monoidal functor.
\end{proof}

\subsubsection{Hopf monads}

A fusion operator \cite[Subsection 2.6]{MR2793022} for a bimonad is a category-theoretical counterpart of the Galois map of a bialgebra. A Hopf monad is defined to be a bimonad with invertible fusion operator. For reader's convenience, we include the full definition:

\begin{definition}[{\cite{MR2793022}}]
  Let $\mathcal{C}$ be a monoidal category, and let $T$ be a bimonad on $\mathcal{C}$. The {\em left fusion operator} $H^{\ell}$ and the {\em right fusion operator} $H^r$ for $T$ are the natural transformations
  \begin{align*}
    H^{\ell}_{M,N} & : T(M \otimes T(N)) \to T(M) \otimes T(N), \\
    H^{r}_{M,N} & : T(T(M) \otimes N) \to T(M) \otimes T(N)
  \end{align*}
  defined, respectively, by
  \begin{equation*}
    H^{\ell}_{M,N} = (\id_{T(M)} \otimes \mu_N) \circ \delta_{M, T(N)}
    \quad \text{and} \quad
    H^{r}_{M,N} = (\mu_M \otimes \id_{T(N)}) \circ \delta_{T(M), N}
  \end{equation*}
  for all objects $M, N \in \mathcal{C}$.  A {\em left} ({\it respectively}, {\em right}) {\em Hopf monad} is a bimonad whose left ({\it respectively}, right) fusion operator is invertible.
\end{definition}

\subsection{Bialgebras and Hopf algebras}
\label{subsec:bialgebras}

Let $\mathcal{B}$ be a lax braided category with lax braiding $\sigma$. A {\em bialgebra} in $\mathcal{B}$ is a data $(H, m, u, \Delta, \varepsilon)$ such that the triple $(H, m, u)$ is an algebra ($=$ a monoid \cite[VII.3]{MR1712872}) in $\mathcal{B}$, the triple $(H, \Delta, \varepsilon)$ is a coalgebra in $\mathcal{B}$ and the following equations hold:
\begin{equation*}
  \Delta \circ m = (m \otimes m) \circ (\id_H \otimes \sigma_{H,H} \otimes \id_H) \circ (\Delta \otimes \Delta), \quad
  \varepsilon \circ u = \id_{\unitobj}.
\end{equation*}
Given two bialgebras $H$ and $H'$ in $\mathcal{B}$, a {\em bialgebra morphism} from $H$ to $H'$ is a morphism $H \to H'$ in $\mathcal{B}$ that is an algebra morphism as well as a coalgebra morphism. We denote by $\BiAlg(\mathcal{B})$ the category of bialgebras in $\mathcal{B}$.

The endofunctor $T_H := H \otimes (-)$ on $\mathcal{C}$ has a natural structure of a monad on $\mathcal{C}$ if $H$ is an algebra in $\mathcal{C}$. If, furthermore, $(H, \rho_H)$ is a bialgebra in $\lZlax(\mathcal{C})$, then the monad $T_H$ is a bimonad on $\mathcal{C}$ by the comonoidal structure given by
\begin{equation*}
  \delta_{X,Y} = (\id_H \otimes \rho_H(X) \otimes \id_Y) \circ (\Delta_H \otimes \id_X \otimes \id_Y)
  \quad (X, Y \in \mathcal{C})
\end{equation*}
and $\varepsilon_H: T_H(\unitobj) \to \unitobj$, where $\Delta_H$ and $\varepsilon_H$ are the comultiplication and the counit of $H$, respectively.

\begin{remark}
  \label{rem:comodules-over-bialgebras}
  Unfortunately, $\lcomod{T_H}$ is {\em not} equivalent to the category of left $H$-comodules in general. For example, we consider the trivial bialgebra $H = \unitobj$. Then the former is equivalent to $\lZlax(\mathcal{C})$, while the latter is equivalent to $\mathcal{C}$. 
\end{remark}

The identity functor $\id_{\mathcal{C}}$ has a trivial structure of a bimonad on $\mathcal{C}$. An {\em augmentation} of a bimonad $T$ on $\mathcal{C}$ is a morphism $T \to \id_{\mathcal{C}}$ in $\BiMon(\mathcal{C})$. We define the category $\BiMon^a(\mathcal{C})$ of {\em augmented bimonads} on $\mathcal{C}$ to be the category of objects of $\BiMon(\mathcal{C})$ over the object $\id_{\mathcal{C}}$. There is a functor
\begin{equation}
  \label{eq:bialg-to-bimon}
  \BiAlg(\lZlax(\mathcal{C})) \to \BiMon^{a}(\mathcal{C}),
  \quad H \mapsto (T_H, \varepsilon_H \otimes \id).
\end{equation}
This functor is not an equivalence in general. An augmented bimonad $(T, e)$ on $\mathcal{C}$ is said to be {\em left regular} \cite[Subsection 5.6]{MR2793022} if the morphism
\begin{equation*}
  u^e_X = (\id_{T(\unitobj)} \otimes e_X) \circ \delta_{\unitobj,X}: T(X) \to T(\unitobj) \otimes X
\end{equation*}
is invertible for all objects $X \in \mathcal{C}$. The functor \eqref{eq:bialg-to-bimon} actually induces an equivalence between the category $\BiAlg(\lZlax(\mathcal{C}))$ and the full subcategory of left regular augmented bimonads on $\mathcal{C}$ \cite[Theorem 5.17]{MR2793022}.

\section{Tannaka construction for bimonads}
\label{sec:monadic-FRT}

\subsection{Tannaka construction for bimonads}
\label{subsec:bimonad-intro}

Let $\mathcal{V}$ be a monoidal category with cocontinuous tensor product (by this we mean that for all objects $X \in \mathcal{V}$ the endofunctors $X \otimes (-)$ and $(-) \otimes X$ on $\mathcal{V}$ preserve arbitrary small colimits existing in $\mathcal{V}$).
As in Subsection \ref{subsec:convention-duals}, we denote by $\mathcal{V}_{\rig}$ the full subcategory of right rigid objects of $\mathcal{V}$.

\begin{definition}
  \label{def:construction-data}
  A {\em construction data} over $\mathcal{V}$ is a pair $(\mathcal{D}, \omega)$ consisting of an essentially small monoidal category $\mathcal{D}$ and a strong monoidal functor $\omega: \mathcal{D} \to \mathcal{V}_{\rig}$ such that the following coend $T(M)$ exists for all $M \in \mathcal{V}$.
  \begin{equation}
    \label{eq:FRT-bimonad-def}
    T(M) = \int^{x \in \mathcal{D}} \omega(x) \otimes M \otimes \omega(x)^{\vee}.
  \end{equation}
\end{definition}

For basics on (co)ends, the reader is referred to \cite{MR1712872}.
Now let $(\mathcal{D}, \omega)$ be a construction data over $\mathcal{V}$.
For objects $x \in \mathcal{D}$ and $M \in \mathcal{V}$, we denote by
\begin{equation*}
  i_x(M) :  \omega(x) \otimes M \otimes \omega(x)^{\vee} \to T(M)
\end{equation*}
the universal dinatural transformation for the coend $T(M)$.
By the parameter theorem for coends ({\it cf}. \cite[IX.7]{MR1712872}), the assignment $M \mapsto T(M)$ extends to an endofunctor on $\mathcal{V}$ in such a way that $i_x(M)$ is natural in $M$. By the Fubini theorem for coends and the assumption that the tensor product of $\mathcal{V}$ preserves small colimits, we see that the object $T T(M)$ is the coend
\begin{equation*}
  T T(M) = \int^{x, y \in \mathcal{D}} \omega(x) \otimes \omega(y) \otimes M \otimes \omega(y)^{\vee} \otimes \omega(x)^{\vee}
\end{equation*}
with the universal dinatural transformation
\begin{equation*}
  i^{(2)}_{x, y}(M) := i_{x}(T(M)) \circ (\omega(x) \otimes i_{y}(M) \otimes \omega(x)^{\vee})
  \quad (x, y \in \mathcal{D}).
\end{equation*}
Now, for $M, N \in \mathcal{V}$, we define morphisms
\begin{gather*}
  \mu_{M} : T T(M) \to T(M),
  \quad \eta_M : M \to T(M), \\
  \delta_{M,N} : T(M \otimes N) \to T(M) \otimes T(N),
  \quad \varepsilon : T(\unitobj) \to \unitobj
\end{gather*}
in $\mathcal{V}$ to be the unique morphisms such that the equations
\begin{gather}
  \label{eq:FRT-bimonad-mu}
  \mu_{M} \circ i^{(2)}_{x, y}(M) = i_{x \otimes y}(M) \circ (\omega^{(2)}_{x, y} \otimes \id_M \otimes \check{\omega}^{(2)}_{x, y}), \\
  \label{eq:FRT-bimonad-eta}
  \eta_M = i_{\unitobj}(M) \circ (\omega_0 \otimes \check{\omega}_{0}), \\
  \label{eq:FRT-bimonad-delta}
  \begin{aligned}
    \delta_{M, N} \circ i_x(M \otimes N)
    & = (i_x(M) \otimes i_x(N)) \\
    & \qquad \circ (\id_{\omega(x)} \otimes \id_M \otimes \coev_{\omega(x)} \otimes \id_N \otimes \id_{\omega(x)^{\vee}}),
  \end{aligned} \\
  \label{eq:FRT-bimonad-eps}
  \varepsilon \circ i_{\unitobj}(x) = \eval_{\omega(x)}
\end{gather}
hold for all objects $M, N \in \mathcal{V}$ and $x, y \in \mathcal{D}$. Here,
\begin{equation*}
  \omega_0 : \unitobj_{\mathcal{V}} \to \omega(\unitobj_{\mathcal{D}})
  \quad \text{and} \quad
  \omega^{(2)}_{x,y} : \omega(x) \otimes \omega(y) \to \omega(x \otimes y)
\end{equation*}
are the monoidal structure of $\omega$ and
\begin{gather}
  \label{eq:omega-check-def}
  \check{\omega}_{0} = (\omega_0^{-1})^{\vee} \circ \vartheta_{0}
  \quad \text{and} \quad
  \check{\omega}^{(2)}_{x, y} = (\omega^{(2)}_{x, y}{}^{-1})^{\vee} \circ \vartheta^{(2)}_{\omega(x), \omega(y)},
\end{gather}
where $\vartheta_{0}$ and $\vartheta^{(2)}$ are the monoidal structure \eqref{eq:right-dual-monoidal} of the right duality functor.

\begin{theorem}
  \label{thm:construction-bimonad}
  $T = (T, \mu, \eta, \delta, \varepsilon)$ is a bimonad on $\mathcal{V}$.
\end{theorem}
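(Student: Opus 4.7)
The overall plan is to verify each bimonad axiom by exploiting the universal property of the coend $T(M)$. Morphisms out of $T(M)$ are in bijection with dinatural transformations out of $\omega(x) \otimes M \otimes \omega(x)^{\vee}$; since the tensor product on $\mathcal{V}$ is cocontinuous, each iterated functor value $T^n(M)$ is itself a coend whose universal dinatural transformation factors through the $i_x(M)$'s, as already used implicitly in the statement for $TT$. Consequently each of the axioms (B1)--(B3) can be tested by precomposing with the relevant iterated universal dinatural transformation and reducing to an identity at the level of $\omega(x_1) \otimes \dots \otimes \omega(x_n) \otimes M \otimes \omega(x_n)^{\vee} \otimes \dots \otimes \omega(x_1)^{\vee}$.

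For the monad axioms (B1), I precompose the associativity equation $\mu_M \circ T(\mu_M) = \mu_M \circ \mu_{T(M)}$ with the triple dinatural transformation $i^{(3)}_{x,y,z}(M)$. Two applications of \eqref{eq:FRT-bimonad-mu} reduce both sides to $i_{x \otimes y \otimes z}(M)$ composed on the left with the two different bracketings of $\omega^{(2)}$ and on the right with the two corresponding bracketings of $\check{\omega}^{(2)}$; equality then follows from the associativity coherence of $\omega$ together with the fact, recalled in Subsection~\ref{subsec:convention-duals}, that $(-)^{\vee}$ is a strong monoidal functor $(\mathcal{V}_{\rig})^{\op} \to \mathcal{V}^{\rev}$, which forces $\check{\omega}^{(2)}$ to satisfy the matching coassociativity. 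The unit equations $\mu \circ \eta_{T(M)} = \id_{T(M)} = \mu \circ T(\eta_M)$ reduce in the same manner to the unital coherence of $\omega_0$ and of $\check{\omega}_0$.

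For the comonoidal axioms (B2), the coassociativity of $\delta$ is tested against $i_x(M \otimes N \otimes P)$, whereupon \eqref{eq:FRT-bimonad-delta} applied twice reduces both sides to the triple tensor $i_x(M) \otimes i_x(N) \otimes i_x(P)$ with two factors of $\coev_{\omega(x)}$ inserted in the two possible orders, which agree by naturality. The counit equations for $\delta$ become, after precomposing with $i_x(M)$ and using \eqref{eq:FRT-bimonad-eps}, the zig-zag identities for $\omega(x)$. Turning to the compatibility axioms (B3), the scalar equations $\varepsilon \circ \eta_{\unitobj} = \id$ and $\varepsilon \circ \mu_{\unitobj} = \varepsilon \circ T(\varepsilon)$ are direct from \eqref{eq:FRT-bimonad-eta}--\eqref{eq:FRT-bimonad-eps}, while $\delta_{M,N} \circ \eta_{M \otimes N} = \eta_M \otimes \eta_N$ reduces to the compatibility of $\coev_{\omega(\unitobj)}$ with $\vartheta_0$ encoded by \eqref{eq:omega-check-def}.

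The most delicate axiom is the comonoidality of $\mu$, namely $\delta_{M,N} \circ \mu_{M \otimes N} = (\mu_M \otimes \mu_N) \circ \delta^{TT}_{M,N}$, where $\delta^{TT}$ denotes the induced comonoidal structure on $TT$. Precomposing with $i^{(2)}_{x,y}(M \otimes N)$ and alternately applying \eqref{eq:FRT-bimonad-mu} and \eqref{eq:FRT-bimonad-delta} on both sides, each side reduces to a decorated version of $i_{x \otimes y}(M) \otimes i_{x \otimes y}(N)$: on the left-hand side a single $\coev_{\omega(x \otimes y)}$ appears, while on the right two separate coevaluations $\coev_{\omega(x)}$ and $\coev_{\omega(y)}$ appear together with extra copies of $\omega^{(2)}, \check{\omega}^{(2)}$. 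The main obstacle is matching these two decorations; this requires unfolding the definition \eqref{eq:omega-check-def} of $\check{\omega}^{(2)}$ and invoking the explicit description of $\vartheta^{(2)}_{\omega(x), \omega(y)}$ from Subsection~\ref{subsec:convention-duals}, which expresses $\coev_{\omega(x \otimes y)}$ in terms of $\coev_{\omega(x)}$, $\coev_{\omega(y)}$, $\omega^{(2)}_{x,y}$, and $\check{\omega}^{(2)}_{x,y}$. Executing this in graphical calculus, so that the duality cups and caps can be slid past each other cleanly, should be the most efficient route; once this single coherence identity is in hand, the remaining manipulations are routine.
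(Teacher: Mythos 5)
Your proposal is correct, and it rests on the same underlying principle as the paper's proof---every axiom is tested against the universal dinatural transformations of the iterated coends $T^n(M)$---but the computational route is genuinely different. You work throughout with $i^{(n)}_{x_1,\dotsc,x_n}(M)$ and the defining equations \eqref{eq:FRT-bimonad-mu}--\eqref{eq:FRT-bimonad-eps}, so the duals $\omega(x)^{\vee}$ and the morphisms $\check{\omega}^{(2)}$, $\check{\omega}_{0}$ stay in play, and each axiom reduces to a coherence identity: associativity and unitality of $\check{\omega}^{(2)}$, $\check{\omega}_0$ (which hold, as you say, because $x \mapsto \omega(x)^{\vee}$ is a composite of the strong monoidal functors $\omega^{\op}$ and the right duality functor), and, for the comonoidality of $\mu$, the identity $\coev_{\omega(x \otimes y)} = (\check{\omega}^{(2)}_{x,y} \otimes \omega^{(2)}_{x,y}) \circ \coev^{(2)}_{\omega(x), \omega(y)}$, which is exactly the coherence you isolate and which does follow from the explicit formula for $(\vartheta^{(2)})^{-1}$ in Subsection~\ref{subsec:convention-duals} together with naturality of coevaluation under isomorphisms. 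The paper instead first establishes Lemma~\ref{lem:partial-universality} and replaces \eqref{eq:FRT-bimonad-mu}--\eqref{eq:FRT-bimonad-eps} by the dual-free characterizations \eqref{eq:FRT-bimonad-alter-mu}--\eqref{eq:FRT-bimonad-alter-eps} in terms of $\partial_x(M)$; this eliminates all cups, caps and $\vartheta$'s from the subsequent string-diagram verifications, at the cost of the extra setup (which is then reused heavily in the proof of Theorem~\ref{thm:FRT-bimonad}, so it is not wasted). Notably, the one axiom where the paper reverts to your style---testing directly against $i^{(2)}_{x,y}(\unitobj)$---is $\varepsilon \circ \mu_{\unitobj} = \varepsilon \circ T(\varepsilon)$, where the duals cannot be avoided anyway. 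Your route is self-contained and valid for this theorem; the bookkeeping with $\vartheta^{(2)}$ will be heaviest in the monad associativity and in the comonoidality of $\mu$, exactly as you anticipate.
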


The quantum double of the identity Hopf monad \cite{MR2869176} can be thought of as the case where $\omega$ is the identity functor.
This theorem is proved in a similar way as \cite{MR2869176}, however, we give a detailed proof in Subsection~\ref{subsec:proof-bimonad-construction} since the techniques for the proof are also important for later discussions.

The main result of this section is the following properties of this bimonad:

\begin{theorem}
  \label{thm:FRT-bimonad}
  The bimonad $T$ enjoys the following properties:
  \begin{enumerate}
  \item \label{item:main-thm-T-mod}
    There is an isomorphism $\lmod{T} \cong \rZlax(\omega)$ of monoidal categories.
  \item \label{item:main-thm-T-comod}
    There is an isomorphism $\lcomod{T} \cong \lZlax(U_T)$ of monoidal categories, where $U_T$ is the forgetful functor from $\lmod{T}$ to $\mathcal{V}$.
  \item \label{item:main-thm-V-complete}
    If $\mathcal{V}$ is complete, then $T$ admits a right adjoint.
  \item \label{item:main-thm-V-braided}
    If $\mathcal{V}$ is lax braided, then $T \cong B \otimes (-)$ for some bialgebra $B$ in $\mathcal{V}$.
  \item \label{item:main-thm-D-right-rigid}
    If $\mathcal{D}$ is right rigid, then $T$ is a left Hopf monad.
  \item \label{item:main-thm-D-left-rigid}
    If $\mathcal{D}$ is left rigid, then $T$ is a right Hopf monad.
  \item \label{item:main-thm-D-braided}
    If $\mathcal{D}$ has a lax braiding, then so does $\lcomod{T}$.
  \end{enumerate}
\end{theorem}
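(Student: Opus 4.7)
The plan is to construct a lax braiding $\widetilde{\sigma}$ on $\lcomod{T}$ explicitly from the lax braiding $\sigma$ of $\mathcal{D}$, using the coend description of $T$. I first transport $\sigma$ to $\mathcal{V}$ by setting
$\bar{\sigma}_{x,y} := (\omega^{(2)}_{y,x})^{-1} \circ \omega(\sigma_{x,y}) \circ \omega^{(2)}_{x,y} : \omega(x) \otimes \omega(y) \to \omega(y) \otimes \omega(x)$
for $x, y \in \mathcal{D}$; these morphisms inherit hexagon-type identities from $\sigma$ modulo the coherences of $\omega^{(2)}$.

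For each $T$-comodule $\mathbf{N} = (N, \rho_N)$, I construct a natural morphism $\beta_N : T(N) \to N$ as follows. Using $T(\omega(x)^{\vee}) = \int^{y} \omega(y) \otimes \omega(x)^{\vee} \otimes \omega(y)^{\vee}$ together with the cocontinuity of $\otimes$, the family
\begin{equation*}
  \omega(x) \otimes \omega(y) \otimes \omega(x)^{\vee} \otimes \omega(y)^{\vee}
  \xrightarrow{\bar{\sigma}_{x,y} \otimes \id \otimes \id}
  \omega(y) \otimes \omega(x) \otimes \omega(x)^{\vee} \otimes \omega(y)^{\vee}
  \xrightarrow{\eval^{(2)}_{\omega(y), \omega(x)}}
  \unitobj,
\end{equation*}
which is dinatural in $y$ thanks to the naturality of $\sigma$ and the definition of $\omega(-)^{\vee}$, induces a morphism $r_x : \omega(x) \otimes T(\omega(x)^{\vee}) \to \unitobj$. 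Then $\beta_N \circ i_x(N)$ is defined to be the composite
\begin{equation*}
  \omega(x) \otimes N \otimes \omega(x)^{\vee}
  \xrightarrow{\id \otimes \rho_N(\omega(x)^{\vee})}
  \omega(x) \otimes T(\omega(x)^{\vee}) \otimes N
  \xrightarrow{r_x \otimes \id_N}
  N,
\end{equation*}
and dinaturality in $x$ (so that this family descends to $\beta_N : T(N) \to N$) will follow from the naturality of $\sigma$ in $x$ and the naturality of $\rho_N$. The candidate lax braiding is then
\begin{equation*}
  \widetilde{\sigma}_{\mathbf{M}, \mathbf{N}} := (\beta_N \otimes \id_M) \circ \rho_M(N) : M \otimes N \to N \otimes M
\end{equation*}
for $T$-comodules $\mathbf{M} = (M, \rho_M)$ and $\mathbf{N}$.

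What remains is to verify that $\widetilde{\sigma}$ is natural in both arguments as morphisms of $T$-comodules and satisfies the lax braiding axioms \eqref{eq:def-lax-br-1}--\eqref{eq:def-lax-br-3}. The unit axiom \eqref{eq:def-lax-br-3} is a direct consequence of \eqref{eq:FRT-bimonad-eta} together with the comodule axiom \eqref{eq:T-comod-def-2}. Naturality and the hexagon identities \eqref{eq:def-lax-br-1}, \eqref{eq:def-lax-br-2} reduce, after unraveling both sides on the components of the relevant coends and invoking the comonoidal structure \eqref{eq:FRT-bimonad-delta} of $T$ together with the comodule axiom \eqref{eq:T-comod-def-1} and the tensor-product formula \eqref{eq:T-comod-tensor-def}, to identities in $\mathcal{V}$ that follow from the hexagon axioms for $\sigma$ in $\mathcal{D}$.

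The main obstacle will be the verification of these hexagon identities. The difficulty is that $\beta_N$ is defined as a morphism out of a coend while $\rho_M(N)$ is a morphism into one, so the composite $\widetilde{\sigma}_{\mathbf{M}, \mathbf{N}}$ must be analyzed through the dinatural generating families, and the two sides of each hexagon involve compositions of several such coend-based morphisms. Once the components are carefully tracked, however, each hexagon reduces to the corresponding hexagon axiom for $\sigma$ together with standard manipulations of the evaluation and coevaluation morphisms of the right duality functor; in particular, no invertibility is used, so that a lax braiding $\sigma$ on $\mathcal{D}$ indeed suffices to produce a lax braiding $\widetilde{\sigma}$ on $\lcomod{T}$.
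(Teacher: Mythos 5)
Your proposal addresses only Part (\ref{item:main-thm-D-braided}) of the theorem; Parts (\ref{item:main-thm-T-mod})--(\ref{item:main-thm-D-left-rigid}) are not touched at all, and each of them requires its own argument (the module/centralizer correspondence via Lemma~\ref{lem:partial-universality}, the end formula for the right adjoint, the left-regular augmentation producing the bialgebra $B$, and the explicit inverse of the left fusion operator). As it stands the proposal cannot be accepted as a proof of the stated theorem.

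For Part (\ref{item:main-thm-D-braided}) itself, your construction coincides exactly with the paper's: your $\bar{\sigma}_{x,y}$ is $\Sigma_{x,y}$ of \eqref{eq:T-coqt-Sigma-def}, your $r_x$ is defined by the same formula as \eqref{eq:T-coqt-r-def-1}, your $\beta_N$ is the morphism $a_N$ of \eqref{eq:T-coqt-action-def}, and your $\widetilde{\sigma}_{\mathbf{M},\mathbf{N}}$ is \eqref{eq:T-coqt-lax-br-def}. The gap is in the verification, which is where essentially all of the work lies. First, the claim that the hexagons ``reduce to the hexagon axioms for $\sigma$'' misidentifies the mechanism: the workable route is to establish the R-form-like identities \eqref{eq:T-coqt-r-1}--\eqref{eq:T-coqt-r-4} for $r_x$ and deduce that $\mathbf{N}\mapsto (N,a_N)$ is a strict monoidal functor $\Phi:\lcomod{T}\to\lmod{T}$ (Lemmas~\ref{lem:bimonad-lax-br-1} and~\ref{lem:bimonad-lax-br-2}); the hexagon \eqref{eq:def-lax-br-2} is then formal from the monoidality of $\Phi$, while the hexagon \eqref{eq:def-lax-br-1} follows from the module axiom $a_N\circ\mu_N=a_N\circ T(a_N)$ and does not invoke the hexagon for $\sigma$ directly at all. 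Second, and more seriously, you do not address the $T$-colinearity of $\widetilde{\sigma}_{\mathbf{M},\mathbf{N}}$, i.e., that it is a morphism in $\lcomod{T}$ rather than merely in $\mathcal{V}$; this is the most delicate step, requiring the auxiliary identity \eqref{eq:T-coqt-lemma-4-3} and the computation of Lemma~\ref{lem:T-coqt-sigma-colinearity}, and it does not follow from ``unraveling on coend components'' without a genuinely new intermediate identity relating $\mu$, $\delta$, $r_x$ and the coaction. Until those two points are supplied, the argument for Part (\ref{item:main-thm-D-braided}) is a construction plus a plausible but unexecuted plan, not a proof.
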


Let $U^T: \lcomod{T} \to \mathcal{V}$ be the forgetful functor. It turns out that an object of $\mathcal{V}$ of the form $\omega(x)$, $x \in \mathcal{D}$, has a natural structure of a $T$-comodule and, by this way, the functor $\omega: \mathcal{D} \to \mathcal{V}$ lifts to a strict monoidal functor $\widetilde{\omega}: \mathcal{D} \to \lcomod{T}$ such that $U^T \circ \widetilde{\omega} = \omega$. The bimonad $T$ also has the following universal property:

\begin{theorem}
  \label{thm:FRT-bimonad-universality}
  Let $T'$ be a bimonad on $\mathcal{V}$, and let $\omega': \mathcal{D} \to \lcomod{T'}$ be a strict monoidal functor such that $U^T \circ \omega' = \omega$. Then there is a unique morphism $\phi: T \to T'$ of bimonads on $\mathcal{V}$ such that the equation
  \begin{equation*}
    \omega' = \phi_{\sharp} \circ \widetilde{\omega}
  \end{equation*}
  holds, where $\phi_{\sharp}: \lcomod{T} \to \lcomod{T'}$ is the functor induced by $\phi$.
\end{theorem}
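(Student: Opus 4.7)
The plan is to invoke the universal property of the coend $T(M) = \int^{x \in \mathcal{D}} \omega(x) \otimes M \otimes \omega(x)^{\vee}$ to construct $\phi$. Writing $\omega'(x) = (\omega(x), \rho'_x)$ and $\widetilde{\omega}(x) = (\omega(x), \widetilde{\rho}_x)$, the $T$-coaction $\widetilde{\rho}_x$ is the obvious one built from $i_x(M)$, namely
\begin{equation*}
  \widetilde{\rho}_x(M) = (i_x(M) \otimes \id_{\omega(x)}) \circ (\id_{\omega(x) \otimes M} \otimes \coev_{\omega(x)}).
\end{equation*}
The required equation $\omega' = \phi_{\sharp} \circ \widetilde{\omega}$ unfolds on objects to $\rho'_x(M) = (\phi_M \otimes \id_{\omega(x)}) \circ \widetilde{\rho}_x(M)$; under the duality adjunction between $(-) \otimes \omega(x)^{\vee}$ and $(-) \otimes \omega(x)$, this is equivalent to
\begin{equation*}
  \phi_M \circ i_x(M) = (\id_{T'(M)} \otimes \eval_{\omega(x)}) \circ (\rho'_x(M) \otimes \id_{\omega(x)^{\vee}}) =: \phi_{M, x}.
\end{equation*}
Since the family $\{i_x(M)\}_{x \in \mathcal{D}}$ is jointly universal dinatural, this reformulation immediately forces uniqueness of $\phi$.

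For existence, I take $\phi_{M,x}$ to be the right-hand side above and verify dinaturality in $x$: given $f : x \to y$ in $\mathcal{D}$, $\omega(f)$ is a morphism $\omega'(x) \to \omega'(y)$ in $\lcomod{T'}$, which combined with the dinaturality of $\eval$ against $\omega(f)^{\vee}$ yields
\begin{equation*}
  \phi_{M,x} \circ (\id_{\omega(x) \otimes M} \otimes \omega(f)^{\vee}) = \phi_{M,y} \circ (\omega(f) \otimes \id_{M \otimes \omega(y)^{\vee}}).
\end{equation*}
The universal property of the coend then produces a unique $\phi_M : T(M) \to T'(M)$ with $\phi_M \circ i_x(M) = \phi_{M,x}$, and naturality in $M$ follows from the naturality of $\rho'_x$ and $\eval$ in $M$ together with the universal property.

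It remains to check that $\phi$ is a morphism of bimonads, i.e.\ compatible with $\eta, \mu, \delta, \varepsilon$ and their primed counterparts on $T'$. Each such identity equates morphisms out of a coend ($T(M)$ or $TT(M)$), so by universality it suffices to verify it after pre-composing with $i_x(M)$, respectively $i^{(2)}_{x,y}(M)$. The compatibility $\varepsilon' \circ \phi_{\unitobj} = \varepsilon$ reduces, via \eqref{eq:FRT-bimonad-eps}, to axiom (Z2) for $\omega'(x)$; the compatibility $\phi \circ \eta = \eta'$ reduces via \eqref{eq:FRT-bimonad-eta} to the strict-monoidal identity $\omega'(\unitobj_{\mathcal{D}}) = (\unitobj, \eta')$; and the comultiplicative compatibility $\delta' \circ \phi = (\phi \otimes \phi) \circ \delta$ reduces via \eqref{eq:FRT-bimonad-delta} to axiom (Z1) together with a short coevaluation-tracking step.

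The main obstacle is multiplicativity, $\phi \circ \mu = \mu' \circ T'(\phi) \circ \phi_{T(-)}$. Pre-composing with $i^{(2)}_{x,y}(M)$ and invoking \eqref{eq:FRT-bimonad-mu}, the left-hand side becomes $\phi_{M, x \otimes y}$ twisted by the monoidal data $\omega^{(2)}_{x,y}$ and its dual $\check{\omega}^{(2)}_{x,y}$. On the right-hand side the key input is that, by strict monoidality of $\omega'$, the coaction $\rho'_{x \otimes y}$ on $\omega'(x \otimes y) = \omega'(x) \otimes \omega'(y)$ is given by the tensor-product coaction \eqref{eq:T-comod-tensor-def}, built from $\rho'_x$, $\rho'_y$ and $\mu'$. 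Reconciling the two expressions is a careful diagrammatic computation involving $\vartheta^{(2)}$ and repeated use of the triangle identity; the nested $\mu'$ and multiple $\phi$-components make this the verification most vulnerable to bookkeeping errors, but it follows the same template as the three easier identities.
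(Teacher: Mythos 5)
Your proposal is correct and follows essentially the same route as the paper: uniqueness and the definition of $\phi_M$ both come from the correspondence $\Hom_{\mathcal{V}}(T(M),N)\cong\Nat(\omega\otimes M, N\otimes\omega)$ (the paper packages the coend universal property and the duality adjunction into Lemma~\ref{lem:partial-universality}, which is exactly your two-step reduction), and the bimonad-morphism axioms are reduced in the same way — counit to (Z2), comultiplication to (Z1), and unit/multiplication to the fact that the monoidal structure maps of $\omega$ are morphisms in $\lcomod{T'}$, i.e.\ your observation that $\rho'_{x\otimes y}$ is the tensor-product coaction. The paper carries out the multiplicativity check you flag as delicate by a string-diagram computation using \eqref{eq:FRT-bimonad-alter-mu}, but no new idea is needed there.
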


There are some interesting consequences of the above theorems. We first consider the case where $\mathcal{V}$ is a braided monoidal category (such as the category of modules over a fixed commutative ring). By Part (2) of the above theorem, we obtain a bialgebra $B$ in $\mathcal{V}$ such that $T \cong B \otimes (-)$ as bimonads (the structure of $B$ are given explicitly in Remark~\ref{rem:bialgebra-structure}). By Part (5), the bialgebra $B$ is a Hopf algebra in $\mathcal{V}$ if $\mathcal{D}$ is right rigid (a description of the antipode is found in Remark~\ref{rem:bialgebra-structure-antipode}). By Part (6), the category of $B$-comodules has a lax braiding if $\mathcal{D}$ is lax braided.

Next, we fix an algebra $A$ over a commutative ring $k$ and consider the case where $\mathcal{V}$ is the category of bimodules over $A$. Then, since $\mathcal{V}$ is complete, the functor $T$ has a right adjoint by Part (1). Thus, by a result of Szlach\'anyi \cite{MR1984397} which will be recalled in Subsection~\ref{subsec:bialgebroid-modules}, the bimonad $T$ actually arises from a left bialgebroid, say $B$, over the algebra $A$. By Part (3), the bialgebroid $B$ is a left Hopf algebroid if $\mathcal{D}$ is right rigid. Now we suppose that $\mathcal{D}$ has a lax braiding. Then, by Part (6), the category of left $B$-comodules has a lax braiding. It turns out that this lax braiding comes from a lax universal R-form $\mathbf{r} : B \times B \to A$ (Theorem~\ref{thm:B-omega-lax-universal-R}).

\begin{remark}
  Let $\mathcal{D}$ be a monoidal category, and let $\omega : \mathcal{D} \to \mathcal{V}$ be a strong monoidal functor. We assume that $\omega(x)$ has a left dual object in $\mathcal{V}$ for all objects $x \in \mathcal{D}$ and a coend
  $T'(M) = \int^{x \in \mathcal{D}} \omega(x)^{\ell} \otimes M \otimes \omega(x)$
  exists for all objects $M \in \mathcal{V}$, where $(-)^{\ell}$ denotes the left dual. This assumption is equivalent to that the pair $(\mathcal{D}^{\rev}, \omega^{\rev})$ is a construction data over $\mathcal{V}^{\rev}$ and thus the endofunctor $T'$ on $\mathcal{V}$ is a bimonad on $\mathcal{V}^{\rev}$, which is the same thing as a bimonad on $\mathcal{V}$. By applying Theorem~\ref{thm:FRT-bimonad} to $(\mathcal{D}^{\rev}, \omega^{\rev})$ and rephrasing the result in terms of $\mathcal{V}$, we obtain:
  \begin{enumerate}
  \item There is an isomorphism $\lmod{T'} \cong \lZlax(\omega)$ of monoidal categories.
  \item There is an isomorphism $\rcomod{T'} \cong \rZlax(U_{T'})$ of monoidal categories.
  \item If $\mathcal{V}$ is complete, then $T'$ admits a right adjoint.
  \item If $\mathcal{V}$ is lax braided, then $T' \cong (-) \otimes B$ for some bialgebra $B$ in $\mathcal{V}$.
  \item If $\mathcal{D}$ is left rigid, then $T'$ is a left Hopf monad.
  \item If $\mathcal{D}$ is right rigid, then $T'$ is a right Hopf monad.
  \item If $\mathcal{D}$ has a lax braiding, then so does $\rcomod{T'}$.
  \end{enumerate}
  Here, $\rcomod{T'} := \rZlax(\id_{\mathcal{C}}, T')$ is the monoidal category of `right' comodules over $T'$, which is defined in a similar way as $\lcomod{T'}$.
\end{remark}

\subsection{Graphical convention}

Till the end of this section, we fix a monoidal category $\mathcal{V}$ with cocontinuous tensor product and a construction data $(\mathcal{D}, \omega)$ over $\mathcal{V}$. To prove Theorems~\ref{thm:construction-bimonad}--~\ref{thm:FRT-bimonad-universality}, we use the graphical technique to represent morphisms in a monoidal category by string diagrams. Our convention is that a morphism goes from the top to the bottom of the diagram. If $X$ is a right rigid object in $\mathcal{V}$, then the evaluation $\eval_X: X \otimes X^{\vee} \to \unitobj$ and the coevaluation $\coev_X: \unitobj \to X^{\vee} \otimes X$ are expressed by a cup and a cap, respectively, like the following diagrams:
\begin{equation*}
  \PIC{gra-cup-01}
  = \PIC{gra-cup-02} \qquad \qquad
  \PIC{gra-cup-03}
  = \PIC{gra-cup-04}
\end{equation*}
By the definition of a right dual object, we have
\begin{equation}
  \label{eq:gra-cal-duality}
  \PIC{gra-cup-zigzag-01}
  \qquad \text{and} \qquad
  \PIC{gra-cup-zigzag-02}.
\end{equation}
Cockett and Seely \cite{MR1731041} introduced an idea representing a functor as a region on a diagram. We adopt McCurdy's variant \cite{MR2948489} of such a graphical expression and represent $\omega$ by a gray-colored region. For example, given a morphism $f: x \otimes y \to z$ in $\mathcal{D}$, we represent the morphism $\omega(f): \omega(x \otimes y) \to \omega(z)$ by
\begin{equation*}
  \PIC{gra-omega-01}
  \quad \text{or} \qquad
  \PIC{gra-omega-02}
\end{equation*}
As in \cite{MR2948489}, the monoidal structure of $\omega$ is expressed as follows:
\begin{equation*}
  \renewcommand{\PICSCALE}{.8}
  \omega^{(2)}_{x,y}
  = \PIC{gra-omega-04}\,, \quad
  \omega_0 =
  \begin{array}{c}
    \rotatebox{180}{$\PIC{gra-omega-06}$}
  \end{array},
  \quad
  (\omega^{(2)}_{x,y})^{-1}
  = \PIC{gra-omega-08}\,, \quad
  \omega_0^{-1} =
  \begin{array}{c}
    \PIC{gra-omega-06}
  \end{array}.
\end{equation*}
For each integer $n \ge 2$, there is the natural isomorphism
\begin{equation*}
  \omega^{(n)}_{x_1, \dotsc, x_n}:
  \omega(x_1) \otimes \dotsb \otimes \omega(x_n)
  \to\omega(x_1 \otimes \dotsb \otimes x_n)
  \quad (x_1, \dotsc, x_n \in \mathcal{D})
\end{equation*}
obtained by iterative use of $\omega^{(2)}$. We represent $\omega^{(3)}_{x, y, z}$ and its inverse as
\begin{equation*}
  \renewcommand{\PICSCALE}{.8}
  \omega^{(3)}_{x,y,z}
  = \PIC{gra-omega-12}
  \qquad \text{and} \qquad
  (\omega^{(3)}_{x,y,z})^{-1}
  = \PIC{gra-omega-14} \ ,
\end{equation*}
respectively, and analogous pictures are used to represent $\omega^{(n)}_{-,\cdots,-}$ for $n \ge 3$ and their inverses. An important observation is that, in a sense, a gray-colored region can be deformed continuously (see \cite{MR2948489}). For example, we have
\begin{equation}
  \label{eq:gra-cal-omega-assoc}
  \PIC{gra-omega-21}
  \ = \ \PIC{gra-omega-22}
  \ = \ \PIC{gra-omega-23}
\end{equation}
since the following equations hold:
\begin{align*}
  \omega^{(2)}_{x \otimes y, z} \circ (\omega^{(2)}_{x, y} \otimes \id_{\omega(z)}) 
  = \omega^{(3)}_{x, y, z}
  = \omega^{(2)}_{x, y \otimes z} \circ (\id_{\omega(x)} \otimes \omega^{(2)}_{y, z}).
\end{align*}
By the definition of a monoidal functor, we also have:
\begin{equation}
  \label{eq:gra-cal-omega-unit}
  \PIC[1.0]{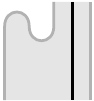}
  \ = \ \PIC[1.0]{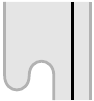}
  \ = \ \PIC[1.0]{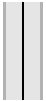}
  \ = \ \reflectbox{$\PIC[1.0]{gra-omega-32}$}
  \ = \ \reflectbox{$\PIC[1.0]{gra-omega-31}$}.
\end{equation}
Since $\omega$ is assumed to be strong monoidal, the following `non-continuous' deformations of regions are also allowed:
\begin{equation}
  \label{eq:gra-cal-omega-slimy}
  \PIC[1.0]{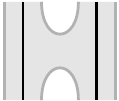}
  = \PIC[1.0]{gra-omega-33}
  \hspace{.5em} \PIC[1.0]{gra-omega-33} \,, \quad
  \PIC[1.0]{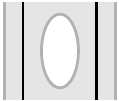}
  = \PIC[1.0]{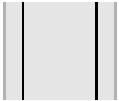} \,, \quad
  \PIC[1.0]{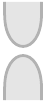}
  = \PIC[1.0]{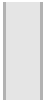} \,, \quad
  \PIC[1.0]{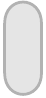}
  = \id_{\unitobj}.
\end{equation}

\subsection{Proof of Theorem~\ref{thm:construction-bimonad}}
\label{subsec:proof-bimonad-construction}

In this subsection, we show that the functor $T$ and the structure morphisms defined by~\eqref{eq:FRT-bimonad-mu}--\eqref{eq:FRT-bimonad-eps} form a bimonad on $\mathcal{V}$.
For our purpose, it is convenient to introduce the natural transformation
\begin{equation}
  \label{eq:partial-def}
  \begin{gathered}
    \partial_x(M): \omega(x) \otimes M \to T(M) \otimes \omega(x)
    \quad (x \in \mathcal{D}, M \in \mathcal{V}), \\
    \partial_x(M) := (i_{x}(M) \otimes \id_{\omega(x)}) \circ (\id_{\omega(x)} \otimes \id_{M} \otimes \coev_{\omega(x)}).
  \end{gathered}
\end{equation}
In string diagrams, we express the morphism $\partial_x(M)$ as follows:
\begin{equation*}
  \PIC{gra-omega-round-01}
  = \PIC{gra-omega-round-02}
\end{equation*}
Let $f: x \to y$ and $g: M \to N$ be morphisms in $\mathcal{D}$ and $\mathcal{V}$, respectively. 
The naturality of $\partial_x(M)$ allows the following deformation of diagrams:
\begin{equation}
  \label{eq:gra-nat-partial}
  \renewcommand{\PICSCALE}{.8}
  \PIC{gra-omega-round-11}
  = \PIC{gra-omega-round-12}
  \qquad
  \PIC{gra-omega-round-13}
  = \PIC{gra-omega-round-14}
\end{equation}
For a positive integer $n$, we define the natural transformation
\begin{equation*}
  \partial^{(n)}_{x_1, \cdots, x_n}(M) :
  \omega(x_1) \otimes \dotsb \otimes \omega(x_n) \otimes M
  \to T^n(M) \otimes \omega(x_1) \otimes \dotsb \otimes \omega(x_n)
\end{equation*}
for $x_1, \cdots, x_n \in \mathcal{D}$ and $M \in \mathcal{V}$ inductively by $\partial^{(1)}_{x_1}(M) = \partial_{x_1}(M)$ and
\begin{align*}
  \partial_{x_1, \cdots, x_n}^{(n)}(M)
  & = (\partial_{x_1, \cdots, x_{n-1}}^{(n-1)}(T(M)) \otimes \id_{\omega(x_n)}) \\
  & \qquad \circ (\id_{\omega(x_1)} \otimes \dotsb \otimes \id_{\omega(x_{n-1})} \otimes \partial_{x_n}(M))  
\end{align*}
for $n \ge 2$. By the same way as \cite[Lemma 5.4]{MR2869176}, we prove:

\begin{lemma}
  \label{lem:partial-universality}
  There is a natural isomorphism
  \begin{gather*}
    \Phi_{M,N}^{(n)}: \Hom_{\mathcal{V}}(T^n(M), N)
    \to \Nat(\omega^{\otimes n} \otimes M, N \otimes \omega^{\otimes n}), \\
    f \mapsto (f \otimes \id_{\omega(-)} \otimes \dotsb \otimes \id_{\omega(-)}) \circ \partial^{(n)}_{-, \dotsc, -}(M)
  \end{gather*}
  for $M, N \in \mathcal{V}$.
\end{lemma}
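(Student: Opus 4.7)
The plan is to argue by induction on $n$, basing the argument on the defining universal property of the coend $T(M)$ combined with the right-dual adjunction and the cocontinuity of the tensor product.

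For the base case $n = 1$, the universal property of the coend $T(M) = \int^{x \in \mathcal{D}} \omega(x) \otimes M \otimes \omega(x)^{\vee}$ puts morphisms $f : T(M) \to N$ in $\mathcal{V}$ in natural bijection with dinatural transformations $\alpha_x : \omega(x) \otimes M \otimes \omega(x)^{\vee} \to N$ via $\alpha_x = f \circ i_x(M)$. I would then invoke the right-dual adjunction $\Hom_{\mathcal{V}}(A \otimes \omega(x)^{\vee}, B) \cong \Hom_{\mathcal{V}}(A, B \otimes \omega(x))$ to translate such an $\alpha$ into $\beta_x = (\alpha_x \otimes \id_{\omega(x)}) \circ (\id_{\omega(x) \otimes M} \otimes \coev_{\omega(x)}) : \omega(x) \otimes M \to N \otimes \omega(x)$, with inverse $\alpha_x = (\id_N \otimes \eval_{\omega(x)}) \circ (\beta_x \otimes \id_{\omega(x)^{\vee}})$ given by the zigzag identities \eqref{eq:gra-cal-duality}. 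The heart of the base case is to verify that dinaturality of $\alpha$ in $x$ is equivalent to naturality of $\beta$ in $x$: for a morphism $g : x \to y$ in $\mathcal{D}$, this reduces to a short string-diagram manipulation of $\omega(g)^{\vee}$ against a $\coev$-cup using \eqref{eq:gra-cal-duality}. By the definition \eqref{eq:partial-def} of $\partial_x(M)$, the resulting bijection is exactly $\Phi^{(1)}_{M,N}$.

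For the inductive step $n \ge 2$, cocontinuity of the tensor product and the Fubini theorem for coends identify $T^n(M)$ with the multi-indexed coend $\int^{x_1, \dotsc, x_n \in \mathcal{D}} \omega(x_1) \otimes \dotsb \otimes \omega(x_n) \otimes M \otimes \omega(x_n)^{\vee} \otimes \dotsb \otimes \omega(x_1)^{\vee}$, whose universal dinatural transformation is the iterated composite appearing implicitly in the recursive definition of $\partial^{(n)}$. Applying the base-case bijection in each of the $n$ variables $x_1, \dotsc, x_n$ successively then yields a bijection between morphisms $T^n(M) \to N$ and families of morphisms $\omega(x_1) \otimes \dotsb \otimes \omega(x_n) \otimes M \to N \otimes \omega(x_1) \otimes \dotsb \otimes \omega(x_n)$ natural in each $x_i$ separately, and hence jointly. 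The recursive definition of $\partial^{(n)}$ in terms of $\partial^{(n-1)}$ and $\partial_{x_n}$ ensures that this composite bijection is precisely $\Phi^{(n)}_{M,N}$.

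The main obstacle is the bookkeeping needed to confirm that the iterated duality adjunction really does interchange dinaturality in each of the $n$ variables with naturality in that variable, and that these separate conditions assemble into joint naturality on the target side. The string-diagram conventions set up just before the lemma, together with \eqref{eq:gra-nat-partial}, make the required manipulations essentially visual. Naturality of $\Phi^{(n)}_{M,N}$ in $M$ and $N$ is then automatic, as each ingredient (the coend universal property, the $(\coev, \eval)$-adjunction, and $\partial^{(n)}$ itself) is natural in these arguments.
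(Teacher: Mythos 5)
Your proposal is correct and follows essentially the same route as the paper, which proves this lemma exactly as in [BV12, Lemma~5.4]: the coend universal property turns morphisms out of $T^n(M)$ into dinatural families, the $(\coev,\eval)$-mate correspondence converts dinaturality in each variable into naturality and strips off the duals one at a time, and Fubini plus cocontinuity handles the iteration, with the recursive definition of $\partial^{(n)}$ matching the iterated universal dinatural transformation $i^{(n)}$. No gaps.
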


By using this lemma, we often eliminate dual objects from expressions involving the coend $T^n(M)$. For example, the structure morphisms of the bimonad $T$ can also be defined to be unique morphisms such that the equations
\begin{gather}
  \label{eq:FRT-bimonad-alter-mu}
  (\mu_M \otimes \id \otimes \id) \circ \partial^{(2)}_{x, y}(M)
  = (\id_M \otimes (\omega^{(2)}_{x, y})^{-1}) \circ \partial_{x \otimes y}(M) \circ (\omega^{(2)}_{x, y} \otimes \id_M), \\
  \label{eq:FRT-bimonad-alter-eta}
  \eta_M = (\id_{T(M)} \otimes \omega_0^{-1}) \circ \partial_{\unitobj}(M)
  \circ (\omega_0 \otimes \id_M), \\
  \label{eq:FRT-bimonad-alter-delta}
  (\delta_{M,N} \otimes \id_{\omega(x)}) \circ \partial_x(M \otimes N)
  = (\id_{T(M)} \otimes \partial_x(N)) \circ (\partial_x(M) \otimes \id_{N}), \\
  \label{eq:FRT-bimonad-alter-eps}
  (\varepsilon \otimes \id_{\omega(x)}) \circ \partial_x(\unitobj)
  = \id_{\omega(x)}
\end{gather}
hold for all objects $M, N \in \mathcal{V}$ and $x, y \in \mathcal{D}$.
                           
\begin{proof}[Proof of Theorem~\ref{thm:construction-bimonad}]
  We first show that $(T, \mu, \eta)$ is a monad on $\mathcal{V}$.
  We fix an object $M \in \mathcal{V}$.
  By using \eqref{eq:FRT-bimonad-alter-mu} expressed graphically, we have
  \begin{equation*}
    \renewcommand{\PICSCALE}{.8}
    \PIC{T-verify-mu-01}
    \mathop{=}^{\text{\eqref{eq:FRT-bimonad-alter-mu}}}
    \PIC{T-verify-mu-02}
    \mathop{=}^{\text{\eqref{eq:FRT-bimonad-alter-mu}}}
    \PIC{T-verify-mu-03}
  \end{equation*}
  for all objects $x, y, z \in \mathcal{D}$. In a similar way, we compute:
  \begin{equation*}
    \renewcommand{\PICSCALE}{.8}
    \PIC{T-verify-mu-04}
    \mathop{=}^{\text{\eqref{eq:gra-nat-partial}}}
    \PIC{T-verify-mu-05}
    \mathop{=}^{\text{\eqref{eq:FRT-bimonad-alter-mu}}}
    \PIC{T-verify-mu-03}
  \end{equation*}
  Thus, by Lemma~\ref{lem:partial-universality}, we obtain $\mu_M \mu_{T(M)} = T(\mu_M) \mu_M$. Other axioms of monads are also verified with the help of this lemma:
  For all $x \in \mathcal{D}$, we have
  \begin{align*}
    & \ (\mu_M \eta_{T(M)} \otimes \id_{\omega(x)}) \circ \partial_x(M) \\
    {}^{\eqref{eq:FRT-bimonad-alter-eta}}=
    & \ (\mu_M \otimes \omega_0^{-1} \otimes \id_{\omega(x)}) \circ \partial^{(2)}_{\unitobj,x}(M)
      \circ (\omega_0 \otimes \id_{\omega(x)} \otimes \id_M) \\
    {}^{\eqref{eq:FRT-bimonad-alter-mu}}=
    & \ (\id \otimes (\omega_{\unitobj,x}^{(2)} (\omega_0 \otimes \id))^{-1})
      \circ \partial_{\unitobj \otimes x}(M)
      \circ (\omega_{\unitobj,x}^{(2)} (\omega_0 \otimes \id) \otimes \id)
    = \partial_x(M),
  \end{align*}
  which implies $\mu_M \eta_M = \id_{M}$. For all $x \in \mathcal{D}$, we also have
  \begin{align*}
    & \ (\mu_M T(\eta_{M}) \otimes \id_{\omega(x)}) \circ \partial_x(M) \\
    {}^{\eqref{eq:gra-nat-partial}}=
    & \ (\mu_M \otimes \id_{\omega(x)}) \circ \partial_x(T(M)) \circ (\id_{\omega(x)} \otimes \eta_M) \\
    {}^{\eqref{eq:FRT-bimonad-alter-eta}}=
    & \ (\mu_M \otimes \id_{\omega(x)} \otimes \omega_0^{-1})
      \circ \partial_{x,\unitobj}^{(2)}(M) \circ (\id_{\omega(x)} \otimes \omega_0 \otimes \id_M) \\
    {}^{\eqref{eq:FRT-bimonad-alter-mu}}=
    & \ (\id \otimes (\omega^{(2)}_{x,\unitobj} (\id \otimes \omega_0))^{-1})
      \circ \partial_{\unitobj \otimes x}(M)
      \circ (\omega^{(2)}_{x,\unitobj} (\id \otimes \omega_0) \otimes \id)
      = \partial_x(M),
  \end{align*}
  which implies $\mu_M T(\eta_M) = \id_M$. Now we have completed the proof that $(T, \mu, \eta)$ is a monad on $\mathcal{V}$.

  Secondly, we show that $(T, \delta, \varepsilon)$ is a monoidal endofunctor.
  We fix three objects $L$, $M$ and $N$ of $\mathcal{V}$.
  By using \eqref{eq:FRT-bimonad-alter-delta} expressed graphically, we compute
  \begin{gather*}
    \renewcommand{\PICSCALE}{.8}
    \PIC{T-verify-comonoidal-01}
    \hspace{-.75em}
    \mathop{=}^{\eqref{eq:FRT-bimonad-alter-delta}}
    \hspace{-.5em}
    \PIC{T-verify-comonoidal-02}
    \mathop{=}^{\eqref{eq:FRT-bimonad-alter-delta}}
    \hspace{-.5em}
    \PIC{T-verify-comonoidal-03}
    \hspace{-.25em}
    \mathop{=}^{\eqref{eq:FRT-bimonad-alter-delta}}
    \hspace{.25em}
    \PIC{T-verify-comonoidal-04}
  \end{gather*}
  for all $x \in \mathcal{D}$. We also compute
  \begin{equation*}
    ((\varepsilon \otimes \id)\delta_{\unitobj,M} \otimes \id_{\omega(x)}) \circ \partial_x(M)
    \mathop{=}^{\eqref{eq:FRT-bimonad-alter-delta}}
    (\varepsilon \otimes \partial_x(M)) \circ (\partial_x(\unitobj) \otimes \id)
    \mathop{=}^{\eqref{eq:FRT-bimonad-alter-eps}} \partial_x(M)
  \end{equation*}
  for all $x \in \mathcal{D}$. Thus, by Lemma~\ref{lem:partial-universality}, we obtain
  \begin{equation*}
    (\delta_{L,M} \otimes \id) \circ \delta_{L \otimes M,N} = (\id \otimes \delta_{M,N}) \circ \delta_{L, M \otimes N}
    \quad \text{and} \quad
    (\varepsilon \otimes \id) \circ \delta_{\unitobj,M} = \id_M
  \end{equation*}
  for $L, M, N \in \mathcal{V}$. One can also prove $(\varepsilon \otimes \id) \circ \delta_{\unitobj,M} = \id_M$ in a similar way.
  Hence we have completed the proof that $T$ is a comonoidal functor.

  Next, we check that the multiplication $\mu : T T \to T$ is a comonoidal natural transformations.
  The compatibility between $\mu$ and the comultiplication $\delta$,
  \begin{equation}
    \label{eq:T-verify-mu-comonoidal-1}
    (\mu_M \otimes \mu_N) \circ \delta_{T(M),T(N)} \circ T(\delta_{M,N})
    = \delta_{M,N} \circ \mu_{M \otimes N}
    \quad (M, N \in \mathcal{V}),
  \end{equation}
  is verified by Figure~\ref{fig:T-verify-mu-comonoidal-1} and Lemma~\ref{lem:partial-universality}. To check the compatibility between $\mu$ and the counit $\varepsilon$, the universal property of $T T(\unitobj)$ as a coend seems to be rather useful. For all $x, y \in \mathcal{D}$, we have
  \begin{gather*}
    \varepsilon \circ \mu_{\unitobj} \circ i^{(2)}_{x,y}(\unitobj)
    \mathop{=}^{\eqref{eq:FRT-bimonad-mu}}
    \varepsilon \circ i_{x \otimes y}(\unitobj) \circ (\omega^{(2)}_{x,y} \otimes \check{\omega}^{(2)}_{x,y})
    \mathop{=}^{\eqref{eq:FRT-bimonad-eps}}
    \eval_{\omega(x \otimes y)} \circ (\omega^{(2)}_{x,y} \otimes \check{\omega}^{(2)}_{x,y}) \\[5pt]
    \mathop{=}^{\eqref{eq:rigid-mon-cat-eval-2}}
    \eval^{(2)}_{\omega(y),\omega(x)}
    \mathop{=}^{\eqref{eq:FRT-bimonad-eps}}
    \varepsilon i_x(T(\unitobj)) \circ (\id \otimes \varepsilon i_y(\unitobj) \otimes \id)
    = \varepsilon T(\varepsilon) \circ i_{x,y}^{(2)}(\unitobj)
  \end{gather*}
  and hence $\varepsilon \mu_{\unitobj} = \varepsilon T(\varepsilon)$ by the universal property.

  Finally, we check that the unit $\eta : \id_{\mathcal{V}} \to T$ is a comonoidal natural transformation.
  For all objects $M, N \in \mathcal{V}$, we have
  \begin{align*}
    \delta_{M,N} \circ \eta_{M \otimes N}
    \ {}^{\eqref{eq:FRT-bimonad-alter-eps}}=
    & \ (\delta_{M,N} \otimes \omega_0^{-1})
      \circ \partial_{\unitobj}(M \otimes N) \circ (\omega_0 \otimes \id_M \otimes \id_N), \\
    \ {}^{\eqref{eq:FRT-bimonad-alter-delta}}=
    & \ (\id_{T(M)} \otimes \id_{T(N)} \otimes \omega_0^{-1})
      \circ (\id_{T(M)} \otimes \partial_{\unitobj}(N)) \\
    & \qquad \circ (\partial_{\unitobj}(M) \otimes \id_N)
      \circ (\omega_0 \otimes \id_M \otimes \id_N) \\[-3pt]
    \ {}^{\eqref{eq:FRT-bimonad-alter-eps}}=
    & \ (\id_{T(M)} \otimes \eta_N) \circ (\eta_M \otimes \id_N)
      = \eta_M \otimes \eta_N
  \end{align*}
  and
  $\displaystyle \varepsilon \circ \eta_{\unitobj}
  \mathop{=}^{\text{\eqref{eq:FRT-bimonad-alter-eta}}}
  (\varepsilon \otimes \omega_0^{-1}) \circ \partial_{\unitobj}(\unitobj) \circ \omega_0
  \mathop{=}^{\text{\eqref{eq:FRT-bimonad-alter-eps}}}
  \id_{\unitobj}$. The proof is done.
\end{proof}

\begin{figure}
  \begin{equation*}
    \renewcommand{\PICSCALE}{.8}
    \begin{gathered}
      \PIC{T-verify-mu-comonoidal-01}
      \mathop{=}^{\text{\eqref{eq:gra-nat-partial}}}        
      \PIC{T-verify-mu-comonoidal-02}
      \mathop{=}^{\text{\eqref{eq:FRT-bimonad-alter-delta}}}
      \PIC{T-verify-mu-comonoidal-03} \\[10pt]
      \mathop{=}^{\text{\eqref{eq:FRT-bimonad-alter-mu}}}
      \hspace{-.5em}
      \PIC{T-verify-mu-comonoidal-04}
      \mathop{=}^{\text{\eqref{eq:gra-cal-omega-slimy}}}
      \hspace{-.5em}
      \PIC{T-verify-mu-comonoidal-05}
      \mathop{=}^{\text{\eqref{eq:FRT-bimonad-alter-delta}}}
      \hspace{-.5em}
      \PIC{T-verify-mu-comonoidal-06}
      \hspace{-.5em}
      \mathop{=}^{\text{\eqref{eq:FRT-bimonad-alter-mu}}}
      \PIC{T-verify-mu-comonoidal-07}
    \end{gathered}
  \end{equation*}
  \caption{Proof of equation \eqref{eq:T-verify-mu-comonoidal-1}}
  \label{fig:T-verify-mu-comonoidal-1}
\end{figure}

\subsection{Proof of Theorem~\ref{thm:FRT-bimonad} (\ref{item:main-thm-T-mod})}

In this subsection, we prove Part (\ref{item:main-thm-T-mod}) of Theorem~\ref{thm:FRT-bimonad}.
We fix an object $M \in \mathcal{V}$. Let $\rho(x) : \omega(x) \otimes M \to M \otimes \omega(x)$ ($x \in \mathcal{D}$) be a natural transformation, and let $a: T(M) \to M$ be the morphism in $\mathcal{V}$ corresponding to $\rho$ by Lemma~\ref{lem:partial-universality}. Then, by~\eqref{eq:FRT-bimonad-alter-mu}, we have
\begin{gather*}
  (a \mu_M \otimes \id \otimes \id) \circ \partial^{(2)}_{x,y}(M)
  = (\id_M \otimes (\omega^{(2)}_{x, y})^{-1}) \circ \rho(x \otimes y) \circ (\omega^{(2)}_{x, y} \otimes \id_M), \\
  (a T(a) \otimes \id \otimes \id) \circ \partial^{(2)}_{x,y}(M)
  = (\rho(x) \otimes \id_{\omega(y)}) \circ (\id_{\omega(x)} \otimes \rho(y))
\end{gather*}
for all objects $x, y \in \mathcal{D}$. By \eqref{eq:FRT-bimonad-alter-eta}, we also have
\begin{gather*}
  a \circ \eta_M = (\id_M \otimes \omega_0^{-1}) \circ \rho(\unitobj) \circ (\omega_0 \otimes \id_{M}).
\end{gather*}
Thus $(M, a)$ is a $T$-module if and only if $(T, \rho) \in \rZlax(\omega)$. This observation establishes a bijection between $\Obj(\lmod{T})$ and $\Obj(\rZlax(\omega))$. This bijection extends to an isomorphism $\lmod{T} \cong \rZlax(\omega)$ of categories. By \eqref{eq:FRT-bimonad-alter-delta} and \eqref{eq:FRT-bimonad-alter-eps}, we see that this category isomorphism is in fact an isomorphism of monoidal categories. The proof is done.

\subsection{Proof of Theorem~\ref{thm:FRT-bimonad} (\ref{item:main-thm-T-comod})}

This is a special case of Theorem~\ref{thm:T-comod-and-lax-centralizer}.

\subsection{Proof of Theorem~\ref{thm:FRT-bimonad} (\ref{item:main-thm-V-complete})}

In this subsection, we assume that $\mathcal{V}$ is complete and prove Theorem~\ref{thm:FRT-bimonad} (\ref{item:main-thm-V-complete}).
By the completeness of $\mathcal{V}$, the end
\begin{equation*}
  T^{\natural}(M) := \int_{x \in \mathcal{D}} \omega(x)^{\vee} \otimes M \otimes \omega(x)
\end{equation*}
exists for each object $M \in \mathcal{V}$. By the parameter theorem for ends \cite[IX.7]{MR1712872}, we extend the assignment $M \mapsto T^{\natural}(M)$ to an endofunctor $T^{\natural}$ on $\mathcal{V}$. The functor $T^{\natural}$ is right adjoint to $T$. Indeed, we have natural isomorphisms
\begin{align*}
  \Hom_{\mathcal{V}}(M, T^{\natural}(N))
  & \cong \textstyle \int_{x \in \mathcal{D}} \Hom_{\mathcal{V}}(M, \omega(x)^{\vee} \otimes N \otimes \omega(x)) \\
  & \cong \textstyle \int_{x \in \mathcal{D}} \Hom_{\mathcal{V}}(\omega(x) \otimes M \otimes \omega(x)^{\vee}, N)
    \cong \Hom_{\mathcal{V}}(T(M), N)
\end{align*}
for $M, N \in \mathcal{V}$. The proof is done.

\begin{remark}
  Since $T$ is a bimonad ($=$ a comonoidal monad) on $\mathcal{V}$, its right adjoint $T^{\natural}$ has a natural structure of a monoidal comonad on $\mathcal{V}$.
\end{remark}

\subsection{Proof of Theorem~\ref{thm:FRT-bimonad} (\ref{item:main-thm-V-braided})}

In this subsection, we assume that $\mathcal{V}$ has a lax braiding $\sigma$ and aim to give a bialgebra $B$ in $\mathcal{V}$ such that $T \cong B \otimes (-)$. For this purpose, we first equip a left regular augmentation with the bimonad $T$. By using the lax braiding of $\mathcal{V}$, we define $e_M: T(M) \to M$ ($M \in \mathcal{V}$) to be the unique morphism in $\mathcal{V}$ such that the equation
\begin{equation}
  \label{eq:T-aug-def}
  (e_M \otimes \id_{\omega(x)}) \circ \partial_x(M) = \sigma_{\omega(x), M}
\end{equation}
holds for all objects $x \in \mathcal{D}$. The naturality of $\sigma$ ensures that the family $e = \{ e_M \}$ of morphisms in $\mathcal{V}$ is a natural transformation from $T$ to $\id_{\mathcal{V}}$.

\begin{lemma}
  The natural transformation $e$ is an augmentation of $T$.
\end{lemma}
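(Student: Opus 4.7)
The plan is to verify, for all $M, N \in \mathcal{V}$, the four conditions characterising a morphism $T \to \id_{\mathcal{V}}$ in $\BiMon(\mathcal{V})$ (with $\id_{\mathcal{V}}$ carrying its trivial bimonad structure): namely (a) $e_M \circ \eta_M = \id_M$, (b) $e_M \circ \mu_M = e_M \circ e_{T(M)}$, (c) $e_{M \otimes N} = (e_M \otimes e_N) \circ \delta_{M,N}$, and (d) $e_{\unitobj} = \varepsilon$. Each identity will be handled by pre-composing with $\partial_x$ or $\partial^{(2)}_{x,y}$ and invoking Lemma~\ref{lem:partial-universality}, together with the alternative characterisations \eqref{eq:FRT-bimonad-alter-mu}--\eqref{eq:FRT-bimonad-alter-eps}, the naturality of $\sigma$, and the lax-braiding axioms \eqref{eq:def-lax-br-1}--\eqref{eq:def-lax-br-3}.

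Identities (d) and (a) are essentially formal. For (d), both $\varepsilon$ and $e_{\unitobj}$ are the unique morphism $\alpha$ satisfying $(\alpha \otimes \id_{\omega(x)}) \circ \partial_x(\unitobj) = \id_{\omega(x)}$: the former by \eqref{eq:FRT-bimonad-alter-eps}, the latter because \eqref{eq:T-aug-def} gives $\sigma_{\omega(x), \unitobj}$, which is $\id_{\omega(x)}$ by \eqref{eq:def-lax-br-3}. For (a), substitute \eqref{eq:FRT-bimonad-alter-eta}, apply \eqref{eq:T-aug-def} to produce $\sigma_{\omega(\unitobj_{\mathcal{D}}), M}$, and then slide $\omega_0$ past $\sigma$ by naturality; $\sigma_{\unitobj_{\mathcal{V}}, M} = \id_M$ collapses the expression to $\id_M$. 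Identity (c) is reduced via Lemma~\ref{lem:partial-universality} to showing equality after pre-composition with $\partial_x(M \otimes N)$: the left-hand side becomes $\sigma_{\omega(x), M \otimes N}$, while the right-hand side, after using \eqref{eq:FRT-bimonad-alter-delta} to unwind $\delta_{M,N}$ and applying \eqref{eq:T-aug-def} twice, becomes $(\id_M \otimes \sigma_{\omega(x), N}) \circ (\sigma_{\omega(x), M} \otimes \id_N)$, and the two agree by the hexagon \eqref{eq:def-lax-br-2}.

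The main obstacle, and the step that genuinely uses the hexagon rather than merely unit/naturality, is (b). Composing both sides with $\partial^{(2)}_{x,y}(M)$, the left-hand side reduces via \eqref{eq:FRT-bimonad-alter-mu}, the definition \eqref{eq:T-aug-def} of $e$, and the naturality of $\sigma$ to $\sigma_{\omega(x) \otimes \omega(y), M}$. For the right-hand side, one writes $\partial^{(2)}_{x,y}(M) = (\partial_x(T(M)) \otimes \id_{\omega(y)}) \circ (\id_{\omega(x)} \otimes \partial_y(M))$, applies \eqref{eq:T-aug-def} to the outer $\partial_x$ to produce $\sigma_{\omega(x), T(M)}$, uses naturality of $\sigma$ to commute $e_M$ across so that $T(M)$ is replaced by $M$, and then applies \eqref{eq:T-aug-def} to the inner $\partial_y$; the outcome is $(\sigma_{\omega(x), M} \otimes \id_{\omega(y)}) \circ (\id_{\omega(x)} \otimes \sigma_{\omega(y), M})$. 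Equality of the two sides is precisely the hexagon identity \eqref{eq:def-lax-br-1}, and invoking the $n = 2$ case of Lemma~\ref{lem:partial-universality} completes the argument.
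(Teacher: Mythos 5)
Your proposal is correct and follows essentially the same route as the paper: the four identities characterising an augmented bimonad morphism $T \to \id_{\mathcal{V}}$ are each reduced via Lemma~\ref{lem:partial-universality} and the characterisations \eqref{eq:FRT-bimonad-alter-mu}--\eqref{eq:FRT-bimonad-alter-eps} to the naturality of $\sigma$ and the lax-braiding axioms \eqref{eq:def-lax-br-1}--\eqref{eq:def-lax-br-3}. The only cosmetic differences are that you state the monad-morphism condition as $e_M \circ \mu_M = e_M \circ e_{T(M)}$ rather than $e_M \circ T(e_M)$ (equivalent by naturality of $e$) and that you invoke the hexagon identities explicitly where the paper's string-diagram calculus absorbs them into the graphical conventions.
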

\begin{proof}
  We shall check that the equations
  \begin{gather}
    \label{eq:T-aug-1}
    e_M \circ \mu_M = e_M \circ T(e_M), \\
    \label{eq:T-aug-2}
    e_M \circ \eta_M = \id_M, \\
    \label{eq:T-aug-3}
    (e_M \otimes e_N) \circ \delta_{M, N} = e_{M \otimes N}, \\
    \label{eq:T-aug-4}
    e_{\unitobj} = \varepsilon.
  \end{gather}
  hold for all $M, N \in \mathcal{V}$. Equation~\eqref{eq:T-aug-1} is verified as follows:
  \begin{equation*}
    \renewcommand{\PICSCALE}{.8}
      \PIC{aug-mu-01}
      \mathop{=}^{\text{\eqref{eq:FRT-bimonad-alter-mu}}}
      \PIC{aug-mu-02}
      \mathop{=}^{\text{\eqref{eq:T-aug-def}}}
      \PIC{aug-mu-03}
      =
      \PIC{aug-mu-04}
      \mathop{=}^{\text{\eqref{eq:T-aug-def}}}
      \PIC{aug-mu-05}
      \mathop{=}^{\text{\eqref{eq:gra-nat-partial}}}
      \PIC{aug-mu-06}
  \end{equation*}
  The third equality follows from the naturality of the lax braiding. Equation~\eqref{eq:T-aug-2} is verified as follows:
  \begin{align*}
    e_M \circ \eta_M
    \mathop{=}^{\eqref{eq:FRT-bimonad-alter-eta}}
    & \ e_M \circ (\id_{T(M)} \otimes \omega_0^{-1}) \circ \partial_{\unitobj}(M) \circ (\omega_0 \otimes \id_M) \\
    \mathop{=}^{\eqref{eq:T-aug-def}}
    & \ (\id_{T(M)} \otimes \omega_0^{-1}) \circ \sigma_{\omega(\unitobj), M} \circ (\omega_0 \otimes \id_M) = \id_M.
  \end{align*}
  Equation~\eqref{eq:T-aug-3} is verified as follows:
  \begin{equation*}
    \renewcommand{\PICSCALE}{.8}
    \PIC{aug-delta-01}
    \mathop{=}^{\text{\eqref{eq:FRT-bimonad-alter-delta}}}
    \PIC{aug-delta-02}
    \mathop{=}^{\text{\eqref{eq:T-aug-def}}}
    \PIC{aug-delta-03}
    = \PIC{aug-delta-04}
    \mathop{=}^{\text{\eqref{eq:T-aug-def}}}
    \PIC{aug-delta-05}
  \end{equation*}
  Equation~\eqref{eq:T-aug-4} is verified as follows:
  \begin{equation*}
    \renewcommand{\PICSCALE}{.8}
    (e_{\unitobj} \otimes \id_{\omega(x)}) \circ \partial_x(\unitobj)
    \mathop{=}^{\text{\eqref{eq:T-aug-def}}}
    \sigma_{\omega(x), \unitobj}
    = \id_{\omega(x)}
    \mathop{=}^{\text{\eqref{eq:FRT-bimonad-alter-eps}}}
    (\varepsilon \otimes \id_{\omega(x)}) \circ \partial_x(\unitobj).
    \qedhere
  \end{equation*}
\end{proof}

Now we consider the natural transformation
\begin{equation*}
  u^{e}_M := (\id_{T(\unitobj)} \otimes e_M) \circ \delta_{\unitobj, M}:
  T(M) \to T(\unitobj) \otimes M
  \quad (M \in \mathcal{V}).
\end{equation*}
We construct the inverse of $u^{e}$ as follows: Let $M$ be an object of $\mathcal{V}$. Since the tensor product of $\mathcal{V}$ is assumed to be cocontinuous, the object $T(\unitobj) \otimes M$ is a coend
\begin{equation*}
  T(\unitobj) \otimes M = \int^{x \in \mathcal{D}} \omega(x) \otimes \omega(x)^{\vee} \otimes M
\end{equation*}
with the universal dinatural transformation $i_{\unitobj}(x) \otimes \id_M$. By using the universal property, we define $\overline{u}{}^{e}_M: T(\unitobj) \otimes M \to T(M)$ to be the unique morphism in $\mathcal{V}$ such that the following equation holds for all objects $x \in \mathcal{D}$.
\begin{equation}
  \label{eq:T-aug-left-reg-2}
  \overline{u}{}^{e}_M \circ (i_x(\unitobj) \otimes \id_M)
  = i_x(M) \circ (\id_{\omega(x)} \otimes \sigma_{\omega(x)^{\vee}, M}).
\end{equation}

\begin{lemma}
  $\overline{u}{}^{e}$ is the inverse of $u^e$.
\end{lemma}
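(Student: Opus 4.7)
The plan is to verify the two identities $\overline{u}^{e}_M \circ u^e_M = \id_{T(M)}$ and $u^e_M \circ \overline{u}^{e}_M = \id_{T(\unitobj) \otimes M}$ separately, each via an appropriate universal property. The key ingredients are the defining relation \eqref{eq:T-aug-def} of the augmentation $e_M$ and \eqref{eq:T-aug-left-reg-2} of $\overline{u}^{e}_M$, the lax braiding hexagon \eqref{eq:def-lax-br-1}, the normalization $\sigma_{\unitobj, M} = \id_M$ from \eqref{eq:def-lax-br-3}, the naturality of $\sigma$ applied to $\coev_{\omega(x)}$ and $\eval_{\omega(x)}$, and the zig-zag identities for the right duality.

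For the first identity I would invoke Lemma~\ref{lem:partial-universality} and check that $(\overline{u}^{e}_M \circ u^e_M \otimes \id_{\omega(x)}) \circ \partial_x(M) = \partial_x(M)$ for every $x \in \mathcal{D}$. Using \eqref{eq:FRT-bimonad-alter-delta} and \eqref{eq:T-aug-def}, one first rewrites $(u^e_M \otimes \id_{\omega(x)}) \circ \partial_x(M)$ so that it factors through $i_x(\unitobj)$ and carries the braiding $\sigma_{\omega(x), M}$. Applying $\overline{u}^{e}_M \otimes \id_{\omega(x)}$ and invoking \eqref{eq:T-aug-left-reg-2} produces a second braiding $\sigma_{\omega(x)^\vee, M}$. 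These two braidings combine via \eqref{eq:def-lax-br-1} into $\sigma_{\omega(x)^\vee \otimes \omega(x), M}$, and naturality of $\sigma$ applied to the morphism $\coev_{\omega(x)} : \unitobj \to \omega(x)^\vee \otimes \omega(x)$, together with $\sigma_{\unitobj, M} = \id_M$, turns $\sigma_{\omega(x)^\vee \otimes \omega(x), M} \circ (\coev_{\omega(x)} \otimes \id_M)$ into $\id_M \otimes \coev_{\omega(x)}$. This reassembles precisely $\partial_x(M)$.

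For the second identity I would use the coend universal property of $T(\unitobj) \otimes M = \int^{x \in \mathcal{D}} \omega(x) \otimes \omega(x)^\vee \otimes M$, valid because $\otimes$ is cocontinuous, reducing the claim to the pointwise statement $u^e_M \circ i_x(M) \circ (\id_{\omega(x)} \otimes \sigma_{\omega(x)^\vee, M}) = i_x(\unitobj) \otimes \id_M$ for each $x \in \mathcal{D}$. Expand the left-hand side using \eqref{eq:FRT-bimonad-delta} to factor $u^e_M \circ i_x(M)$ through $i_x(\unitobj)$ and $e_M \circ i_x(M)$, and use the zig-zag to identify $e_M \circ i_x(M) = (\id_M \otimes \eval_{\omega(x)}) \circ (\sigma_{\omega(x), M} \otimes \id_{\omega(x)^\vee})$, which is forced by \eqref{eq:T-aug-def}. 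After reorganizing by the middle-four interchange and combining the two braidings via \eqref{eq:def-lax-br-1}, naturality of $\sigma$ applied to $\eval_{\omega(x)} : \omega(x) \otimes \omega(x)^\vee \to \unitobj$ (with $\sigma_{\unitobj, M} = \id_M$) gives $(\id_M \otimes \eval_{\omega(x)}) \circ \sigma_{\omega(x) \otimes \omega(x)^\vee, M} = \eval_{\omega(x)} \otimes \id_M$, and what remains is the standard zig-zag $(\id_{\omega(x)^\vee} \otimes \eval_{\omega(x)}) \circ (\coev_{\omega(x)} \otimes \id_{\omega(x)^\vee}) = \id_{\omega(x)^\vee}$.

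The principal difficulty will be bookkeeping rather than conceptual: several braidings and duality morphisms must be slid past each other using the middle-four interchange and the hexagon \eqref{eq:def-lax-br-1} before the naturality of $\sigma$ with respect to $\coev$ or $\eval$ can collapse things to a zig-zag. Working in the graphical calculus already deployed in the paper should render the rearrangements transparent. The slightly surprising point is that the pair of braidings built into $u^e_M$ and $\overline{u}^{e}_M$ cancel each other already in the lax-braided setting, without any invertibility of $\sigma$.
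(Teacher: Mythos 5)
Your proposal is correct and follows essentially the same route as the paper: both first record the formula for $u^e_M \circ i_x(M)$ forced by \eqref{eq:FRT-bimonad-delta} and \eqref{eq:T-aug-def}, then verify the two composites on the universal dinatural transformations, cancelling the two braidings via the hexagon \eqref{eq:def-lax-br-1}, naturality of $\sigma$ against $\coev_{\omega(x)}$ resp.\ $\eval_{\omega(x)}$, the normalization $\sigma_{\unitobj,M}=\id_M$, and a zig-zag. Your use of $\partial_x$ and Lemma~\ref{lem:partial-universality} for the first identity is only a cosmetic repackaging of the paper's direct appeal to the coend property, and your closing observation that invertibility of $\sigma$ is never needed is accurate.
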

\begin{proof}
  By the definitions of $\delta$ and $e$, we have
  \begin{equation}
    \label{eq:T-aug-left-reg-1}
    \renewcommand{\PICSCALE}{.8}
    u^e_M \circ i_{x}(M)
    \mathop{=}^{\text{\eqref{eq:FRT-bimonad-delta}}}
    \PIC{aug-u-def-01}
    \mathop{=}^{\text{\eqref{eq:T-aug-def}}}    
    \PIC{aug-u-def-02}.
  \end{equation}
  for all objects $x \in \mathcal{D}$ and $M \in \mathcal{V}$. We have
  \begin{equation*}
    \renewcommand{\PICSCALE}{.8}
    \PIC{aug-u-inv-01}
    \mathop{=}^{\text{\eqref{eq:T-aug-left-reg-2}}}
    \PIC{aug-u-inv-02}
    \mathop{=}^{\text{\eqref{eq:T-aug-left-reg-1}}}
    \PIC{aug-u-inv-03}
    = i_{\unitobj}(x) \otimes \id_M
  \end{equation*}
  for all $x \in \mathcal{D}$ and $M \in \mathcal{V}$. Thus $u^e \circ \overline{u}{}^e$ is the identity. We also have
  \begin{equation*}
    \renewcommand{\PICSCALE}{.8}
    \PIC{aug-u-inv-11}
    \mathop{=}^{\text{\eqref{eq:T-aug-left-reg-1}}}
    \PIC{aug-u-inv-12}
    \mathop{=}^{\text{\eqref{eq:T-aug-left-reg-2}}}
    \PIC{aug-u-inv-13}
    = i_x(M)
  \end{equation*}
  for all $x \in \mathcal{D}$ and $M \in \mathcal{V}$. Thus $\overline{u}{}^{e} \circ u^e$ is the identity. The proof is done.
\end{proof}

\begin{proof}[Proof of Theorem~\ref{thm:FRT-bimonad} (\ref{item:main-thm-V-braided})]
  We set $B = T(\unitobj)$ and define
  \begin{equation*}
    \tau(M) = (e_M \otimes \id_{B}) \circ \delta_{M, \unitobj} \circ \overline{u}_M^e
    : B \otimes M \to M \otimes B
  \end{equation*}
  for $M \in \mathcal{V}$. By the above lemma and the proof of \cite[Theorem 5.17]{MR2793022} (especially \cite[Lemma 5.15]{MR2793022}), we see that $(B, \tau)$ has a structure of a bialgebra in the lax center $\lZlax(\mathcal{C})$ such that $T \cong B \otimes (-)$ as bimonads. However, we have
  \begin{equation*}
    \renewcommand{\PICSCALE}{.8}
    \PIC{aug-tau-01}
    \mathop{=}^{\substack{
        \eqref{eq:FRT-bimonad-delta} \\
        \eqref{eq:T-aug-left-reg-2}}}
    \ \PIC{aug-tau-02}
    \mathop{=}^{\eqref{eq:T-aug-def}}
    \PIC{aug-tau-03}
    \mathop{=}^{}
    \PIC{aug-tau-04}
  \end{equation*}
  for all $M \in \mathcal{V}$ and $x \in \mathcal{D}$, where the third equality follows from the naturality of the lax braiding. Namely, $\tau$ is a component of the lax braiding of $\mathcal{V}$. This means that $B$ is in fact a bialgebra in $\mathcal{V}$. The proof is done.
\end{proof}

\begin{remark}
  \label{rem:bialgebra-structure}
  According to \cite[Lemma 5.15]{MR2793022}, the multiplication, the unit, the comultiplication and the counit of the bialgebra $B = T(\unitobj)$ in the above proof are $\mu_{\unitobj}\overline{u}{}^{e}_B$, $\eta_{\unitobj}$, $\delta_{\unitobj, \unitobj}$ and $\varepsilon$, respectively. The natural transformation $u^e$ is in fact an isomorphism of bimonads.
\end{remark}

\subsection{Proof of Theorem~\ref{thm:FRT-bimonad} (\ref{item:main-thm-D-right-rigid})}

In this subsection, we aim to show that $T$ is a left Hopf monad if $\mathcal{D}$ is right rigid. For this purpose, we first give a description of the left fusion operator of $T$. We recall that it is the natural transformation
\begin{equation*}
  H^{\ell}_{M,N}: T(M \otimes T(N)) \to T(M) \otimes T(N)
  \quad (M, N \in \mathcal{V})
\end{equation*}
defined by $H^{\ell}_{M,N} = (\id_{T(M)} \otimes \mu_N) \circ \delta_{M, T(N)}$ for $M, N \in \mathcal{V}$. By the Fubini theorem for coends and the assumption that the tensor product of $\mathcal{V}$ preserves colimits, we have natural isomorphisms
\begin{align*}
  & \Hom_{\mathcal{V}}(T(M \otimes T(N)), L) \\
  & \cong \textstyle \int_{y \in \mathcal{D}}
    \Hom_{\mathcal{V}}(\omega(x) \otimes M \otimes T(N) \otimes \omega(x)^{\vee}, L) \\
  & \cong \textstyle \int_{x, y \in \mathcal{D}}
    \Hom_{\mathcal{V}}(\omega(x) \otimes M \otimes \omega(y) \otimes N \otimes \omega(y)^{\vee} \otimes \omega(x)^{\vee}, L) \\
  & \cong \textstyle \int_{x, y \in \mathcal{D}}
    \Hom_{\mathcal{V}}(\omega(x) \otimes M \otimes \omega(y) \otimes N, L \otimes \omega(x) \otimes \omega(y)) \\
  & \cong \Nat(\omega \otimes M \otimes \omega \otimes N, L \otimes \omega \otimes \omega)
\end{align*}
for $L, M, N \in \mathcal{V}$. Given a morphism $f: T(M \otimes T(N)) \to L$, the corresponding natural transformation is given by
\begin{equation*}
  (f \otimes \id_{\omega(x)} \otimes \id_{\omega(y)})
  \circ (\partial_x(M \otimes T(N)) \otimes \id_{\omega(y)})
  \circ (\id_{\omega(x)} \otimes \id_{M} \otimes \partial_y(N))
\end{equation*}
for $x, y \in \mathcal{D}$. The natural transformation corresponding to the morphism $H^{\ell}_{M,N}$ is graphically computed as follows:
\begin{equation}
  \label{eq:T-left-fusion}
  \renewcommand{\PICSCALE}{.9}
  \PIC{T-fusion-01}
  \mathop{=}^{\text{\eqref{eq:FRT-bimonad-alter-delta}}}
  \PIC{T-fusion-02}
  \mathop{=}^{\text{\eqref{eq:FRT-bimonad-alter-mu}}}
  \PIC{T-fusion-03}.
\end{equation}
Here, the unlabeled box means the identity morphism on $M \otimes T(N)$, which is placed to bunch two strands representing $M$ and $T(N)$.

\begin{proof}[Proof of Theorem~\ref{thm:FRT-bimonad} (\ref{item:main-thm-D-right-rigid})]
  We assume that $\mathcal{D}$ is right rigid. For each $x \in \mathcal{D}$, we fix its right dual object $x^{\vee}$ in $\mathcal{D}$. There is an isomorphism $\zeta_x: \omega(x)^{\vee} \to \omega(x^{\vee})$ in $\mathcal{V}$ characterized by either of the equations
  \begin{gather}
    \label{eq:def-duality-trans-1}
    (\omega^{(2)}_{x^{\vee}, x})^{-1} \circ \omega(\coev_x) \circ \omega_0
    = (\zeta_x \otimes \id_{\omega(x)}) \circ \coev_{\omega(x)}, \\
    \label{eq:def-duality-trans-2}
    \omega_0^{-1} \circ \omega(\eval_x) \circ \omega^{(2)}_{x, x^{\vee}}
    \circ (\id_{\omega(x)} \otimes \zeta_x)
    = \eval_{\omega(x)}
  \end{gather}
  for $x \in \mathcal{D}$. The family $\zeta = \{ \zeta_x \}_{x \in \mathcal{D}}$ of morphisms is an isomorphism of monoidal functors \cite[Section 1]{MR2381536}, which we call the {\em duality transformation}. We identify $\omega(x^{\vee})$ with $\omega(x)^{\vee}$ through the duality transformation. In view of \eqref{eq:def-duality-trans-1} and~\eqref{eq:def-duality-trans-2}, we may illustrate $\eval_{\omega(x)}$ and $\coev_{\omega(x)}$ as follows:
  \begin{equation*}
    \eval_{\omega(x)} = \PIC{gra-omega-eval-01}
    \qquad
    \coev_{\omega(x)} = \PIC{gra-omega-eval-02}
  \end{equation*}
  The object $T(M) \otimes T(N)$ is a coend
  \begin{equation*}
    T(M) \otimes T(N) = \int_{x, y \in \mathcal{D}} \omega(x) \otimes M \otimes \omega(x)^{\vee} \otimes \omega(y) \otimes N \otimes \omega(y)^{\vee}
  \end{equation*}
  with the universal dinatural transformation $i_x(M) \otimes i_y(N)$. By the universal property, we define the natural transformation
  \begin{equation*}
    \overline{H}{}^{\ell}_{M,N}: T(M) \otimes T(N) \to T(M \otimes T(N))
    \quad (M, N \in \mathcal{V})
  \end{equation*}
  to be the unique morphism in $\mathcal{V}$ such that the equation
  \begin{align*}
    (\overline{H}{}^{\ell}_{M,N} & \otimes \id_{\omega(y)}) \circ (i_x(M) \otimes \partial_y(N)) \\
    & = (\id_{T(M \otimes T(N)} \otimes \eval_{\omega(x)} \otimes \id_{\omega(y)})
      \circ (\partial_x(M \otimes T(N)) \otimes (\omega^{(2)}_{x^{\vee}, y})^{-1}) \\
    & \qquad \circ (\id_{\omega(x)} \otimes \id_M \otimes \partial_{x^{\vee} \otimes y}(N))
      \circ (\id_{\omega(x)} \otimes \id_M \otimes \omega^{(2)}_{x^{\vee}, y} \otimes \id_N) \\
    & \qquad \circ (\id_{\omega(x)} \otimes \id_M \otimes \zeta_x \otimes \id_{\omega(y)} \otimes \id_N)
  \end{align*}
  holds for all $x, y \in \mathcal{D}$. Graphically, this equation is expressed as follows:
  \begin{equation}
    \label{eq:T-left-fusion-inv}
    \renewcommand{\PICSCALE}{.9}
    \PIC{left-fusion-inv-01}
    \ = \PIC{left-fusion-inv-02}
  \end{equation}
  Figure~\ref{fig:fusion-inverse-verification-1} shows that the equation
  \begin{align*}
    (\overline{H}{}^{\ell}_{M,N} H^{\ell}_{M,N} \otimes \id \otimes \id) \circ \mbox{}
    & (\partial_{x}(M \otimes T(N)) \otimes \id_{\omega(y)}) \circ (\id_{\omega(x)} \otimes \id_M \otimes \partial_y(N)) \\
    = \, & (\partial_{x}(M \otimes T(N)) \otimes \id_{\omega(y)}) \circ (\id_{\omega(x)} \otimes \id_M \otimes \partial_y(N))
  \end{align*}
  holds for all $x, y \in \mathcal{D}$.
  Figure~\ref{fig:fusion-inverse-verification-2} shows that the equation
  \begin{equation*}
    (H^{\ell}_{M,N} \overline{H}{}^{\ell}_{M,N} \otimes \id_{\omega(y)})
    \circ (i_x(M) \otimes \partial_y(N)) = i_x(M) \otimes \partial_y(N)
  \end{equation*}
  holds for all $x, y \in \mathcal{D}$. Thus both $\overline{H}{}^{\ell}_{M,N} H^{\ell}_{M,N}$ and $H^{\ell}_{M,N} \overline{H}{}^{\ell}_{M,N}$ are the identity morphisms. The proof is done.
\end{proof}
\begin{figure}
  \renewcommand{\PICSCALE}{.75}
  \addtolength{\jot}{12pt}
  \makebox[\textwidth]{
    \begin{minipage}{1.15\linewidth}
      \begin{gather*}
        \PIC{left-fusion-inv-11}
        \mathop{=}^{\text{\eqref{eq:T-left-fusion}}}
        \PIC{left-fusion-inv-12}
        \mathop{=}^{\text{\eqref{eq:gra-cal-duality}}}
        \PIC{left-fusion-inv-13} \\
        \mathop{=}^{\text{\eqref{eq:T-left-fusion-inv}}}
        \PIC{left-fusion-inv-14}
        \mathop{=}^{\substack{\text{\eqref{eq:gra-cal-duality}} \\
            \text{\eqref{eq:gra-cal-omega-unit}} \\
            \text{\eqref{eq:gra-cal-omega-slimy}}}}
        \PIC{left-fusion-inv-15}
        \mathop{=}^{\substack{\text{\eqref{eq:gra-cal-duality}} \\
            \text{\eqref{eq:gra-cal-omega-unit}} \\
            \text{\eqref{eq:gra-cal-omega-slimy}}}}
        \PIC{left-fusion-inv-16} \\[-5pt]
        \mathop{=}^{\text{\eqref{eq:gra-cal-omega-slimy}}}
        (\partial_{x}(M \otimes T(N)) \otimes \id_{\omega(y)})
        \circ (\id_{\omega(x)} \otimes \id_M \otimes \partial_y(N))
      \end{gather*}
      \caption{Verification of $\overline{H}{}^{\ell}_{M,N} H^{\ell}_{M,N} = \id_{T(M \otimes T(N))}$}
      \label{fig:fusion-inverse-verification-1}
    \end{minipage}} \par
  \bigskip\bigskip
  \makebox[\textwidth]{
    \begin{minipage}{1.15\linewidth}
      \begin{gather*}
        \PIC{left-fusion-inv-21}
        \mathop{=}^{\text{\eqref{eq:T-left-fusion-inv}}}
        \PIC{left-fusion-inv-22}
        \mathop{=}^{\text{\eqref{eq:T-left-fusion}}}
        \PIC{left-fusion-inv-23} \\
        \mathop{=}^{\substack{
            \text{\eqref{eq:gra-cal-omega-unit}} \\
            \text{\eqref{eq:gra-cal-omega-slimy}}}}
        \PIC{left-fusion-inv-24}
        \mathop{=}^{\substack{
            \text{\eqref{eq:gra-cal-omega-unit}} \\
            \text{\eqref{eq:gra-cal-omega-slimy}}}}
        \PIC{left-fusion-inv-25}
        \mathop{=}^{\substack{
            \text{\eqref{eq:gra-cal-omega-slimy}}}}
        \PIC{left-fusion-inv-26}
      \end{gather*}
      \caption{Verification of $H^{\ell}_{M,N} \overline{H}{}^{\ell}_{M,N} = \id_{T(M) \otimes T(N)}$}
      \label{fig:fusion-inverse-verification-2}
    \end{minipage}}
\end{figure}

\begin{remark}
  \label{rem:bialgebra-structure-antipode}
  Suppose that $\mathcal{V}$ has a lax braiding $\sigma$. Then, as we have already proved, $B := T(\unitobj)$ has a structure of a bialgebra in $\mathcal{V}$ such that $T \cong B \otimes (-)$ as bimonads. Since $T$ is a left Hopf monad, the bialgebra $B$ is in fact a Hopf algebra by \cite[Proposition 5.4]{MR2793022}. In the same way as \cite[Chapter 5]{MR1862634}, one can check that the morphism $S: B \to B$ in $\mathcal{V}$ determined by
  \begin{equation*}
    \renewcommand{\PICSCALE}{.8}
    S \circ i_{\unitobj}(x) = \PIC{antipode}
    \qquad (x \in \mathcal{D})
  \end{equation*}
  is the antipode of $B$.
\end{remark}

\subsection{Proof of Theorem~\ref{thm:FRT-bimonad} (\ref{item:main-thm-D-left-rigid})}

In this subsection, we assume that $\mathcal{D}$ is left rigid and show that the bimonad $T$ is a right Hopf monad.
We begin with the following easy observation for coends: Let $\mathcal{A}$ and $\mathcal{B}$ categories, and let $G: \mathcal{A}^{\op} \times \mathcal{A} \to \mathcal{B}$ be a functor. Since $(\mathcal{A}^{\op})^{\op} = \mathcal{A}$, we have a functor
\begin{equation*}
  \overline{G}: (\mathcal{A}^{\op})^{\op} \times \mathcal{A}^{\op} \to \mathcal{B},
  \quad (x, y) \mapsto G(y, x).
\end{equation*}
A coend of $G$ (if it exists) is also a coend of $\overline{G}$ with the same universal dinatural transformation. In this sense, we write
\begin{equation}
  \label{eq:coend-over-op-cat}
  \int^{x \in \mathcal{A}} G(x,x) = \int^{x \in \mathcal{A}^{\op}} \overline{G}(x,x).
\end{equation}

\begin{proof}[Proof of Theorem~\ref{thm:FRT-bimonad} (\ref{item:main-thm-D-left-rigid})]
  We consider the strong monoidal functor
  \begin{equation*}
    \rho: \mathcal{D}^{\op} \to \mathcal{V}^{\rev},
    \quad \rho(x) = \omega(x)^{\vee}
    \quad (x \in \mathcal{D}^{\op}).
  \end{equation*}
  Given an object $X \in \mathcal{V}$, we denote by $X^{\dagger}$ a left dual object of $X$ in $\mathcal{V}$ (if it exists), which is the same thing as a right dual object of $X \in \mathcal{V}^{\rev}$. There is a canonical isomorphism $\rho(y) \otimes^{\rev} M \otimes^{\rev} \rho(x)^{\dagger} \cong \omega(x) \otimes M \otimes \omega(y)^{\vee}$ for $x, y \in \mathcal{D}$ and $M \in \mathcal{V}$. Thus, by \eqref{eq:coend-over-op-cat}, the coend
  \begin{equation*}
    \overline{T}(M) := \int^{x \in \mathcal{D}^{\op}} \rho(x) \otimes^{\rev} M \otimes^{\rev} \rho(x)^{\dagger}
  \end{equation*}
  exists and it is canonically isomorphic to $T(M)$.

  Since $\mathcal{D}$ is assumed to be left rigid, $\mathcal{D}^{\op}$ is right rigid.
  By Theorem~\ref{thm:FRT-bimonad} (\ref{item:main-thm-D-left-rigid}), the bimonad $\overline{T}$ on $\mathcal{V}^{\rev}$ is a left Hopf monad. Since the right fusion operator of the bimonad $T$ on $\mathcal{V}$ is equal to the left fusion operator of the bimonad $\overline{T}$ on $\mathcal{V}^{\rev}$, we conclude that $T$ is a right Hopf monad on $\mathcal{V}$. The proof is done.
\end{proof}

\subsection{Proof of Theorem~\ref{thm:FRT-bimonad} (\ref{item:main-thm-D-braided})}
\label{subsec:proof-main-thm-D-braided}

In this subsection, we assume that $\mathcal{D}$ has a lax braiding $\sigma$ and aim to equip a lax braiding to the monoidal category $\lcomod{T}$. Given an object $x \in \mathcal{D}$, we set $\Gamma_x = \omega(x) \otimes T(\omega(x)^{\vee})$. By the assumption that the tensor product of $\mathcal{V}$ is cocontinuous, the object $\Gamma_x$ is in fact a coend
\begin{equation*}
  \Gamma_x = \int^{z \in \mathcal{D}} \omega(x) \otimes \omega(z) \otimes \omega(x)^{\vee} \otimes \omega(z)^{\vee}
\end{equation*}
with the universal dinatural transformation
\begin{equation*}
  \id_{\omega(x)} \otimes i_z(\omega(x)^{\vee}):
  \omega(x) \otimes \omega(z) \otimes \omega(x)^{\vee} \otimes \omega(z)^{\vee} \to \Gamma_x
  \quad (z \in \mathcal{D}).
\end{equation*}
By the universal property, we define $r_x: \Gamma_x \to \unitobj$ to be the unique morphism in $\mathcal{V}$ such that the equation
\begin{equation}
  \label{eq:T-coqt-r-def-1}
  r_x \circ (\id_{\omega(x)} \otimes i_z(\omega(x)^{\vee}))
  = \eval_{\omega(z),\omega(x)}^{(2)} \circ (\Sigma_{x, z} \otimes \id_{\omega(x)^{\vee}} \otimes \id_{\omega(z)^{\vee}})
\end{equation}
holds for all $z \in \mathcal{D}$, where $\eval^{(2)}_{\omega(z), \omega(x)}$ is given by~\eqref{eq:rigid-mon-cat-eval-2} and
\begin{equation}
  \label{eq:T-coqt-Sigma-def}
  \Sigma_{x,z} = (\omega^{(2)}_{z, x})^{-1} \circ \omega(\sigma_{x, z}) \circ \omega^{(2)}_{x, z}
  : \omega(x) \otimes \omega(z) \to \omega(z) \otimes \omega(x).
\end{equation}
In the rest of this subsection, we usually denote an object of $\lcomod{T}$ by a bold face capital letter, such as $\mathbf{M}$. The underlying object of $\mathbf{M} \in \lcomod{T}$ and the coaction of $T$ on $\mathbf{M}$ are expressed by $M$ and $\rho_M: M \otimes (-) \to T(-) \otimes M$, respectively, by using the corresponding letter. Given $\mathbf{M} \in \lcomod{T}$, we define $a_M: T(M) \to M$ to be the unique morphism such that the equation
\begin{equation}
  \label{eq:T-coqt-action-def}
  a_M \circ i_x(M) = (r_x \otimes \id_{M}) \circ (\id_{\omega(x)} \otimes \rho_M(\omega(x)^{\vee}))
\end{equation}
holds for all objects $x \in \mathcal{D}$. For $\mathbf{M}, \mathbf{N} \in \lcomod{T}$, we define
\begin{equation}
  \label{eq:T-coqt-lax-br-def}
  \widetilde{\sigma}_{\mathbf{M}, \mathbf{N}} = (a_N \otimes \id_{M}) \circ \rho_M(N).
\end{equation}
In what follows, we will show that $\widetilde{\sigma} = \{ \widetilde{\sigma}_{\mathbf{M}, \mathbf{N}}: \mathbf{M} \otimes \mathbf{N} \to \mathbf{N} \otimes \mathbf{M} \}$ is a lax braiding of $\lcomod{T}$.

\begin{remark}
  \label{rem:lax-braiding-on-omega-x}
  The pair $\boldsymbol{\omega}(x) := (\omega(x), \partial_x)$ is a $T$-comodule for all $x \in \mathcal{D}$. The lax braiding $\widetilde{\sigma}$ is an `extension' of $\Sigma$ in the sense that the equation $\widetilde{\sigma}_{\boldsymbol{\omega}(x), \boldsymbol{\omega}(y)} = \Sigma_{x,y}$ holds for all $x, y \in \mathcal{D}$.
\end{remark}

We first prove some technical identities involving the structure morphisms of $T$ and the morphism $r_x$.
For $x, y \in \mathcal{D}$, we set
\begin{gather}
  \label{eq:T-coqt-r-def-3}
  r^{(2)}_{x,y} = r_x \circ (\id_{\omega(x)} \otimes r_y \otimes \id_{T(\omega(x)^{\vee})}), \\
  \label{eq:T-coqt-r-def-4}
  r^{\sharp}_x = (\id_{\omega(x)^{\vee}} \otimes r_x) \circ (\coev_{\omega(x)} \otimes \id_{T(\omega(x)^{\vee})}).
\end{gather}

\begin{lemma}
  For all objects $x, y \in \mathcal{D}$, the following equations hold:
  \begin{gather}
    \label{eq:T-coqt-r-1}
    r_{x \otimes y} \circ (\omega^{(2)}_{x, y} \otimes T(\check{\omega}^{(2)}_{x, y}))
    = r^{(2)}_{x,y}
    \circ (\id_{\omega(x)} \otimes \id_{\omega(y)} \otimes \delta_{\omega(y)^{\vee}, \omega(x)^{\vee}}). \\
    \label{eq:T-coqt-r-2}
    r_{\unitobj} \circ (\omega_0 \otimes T(\check{\omega}_{0})) = \varepsilon. \\
    \label{eq:T-coqt-r-3}
    r_x \circ (\id_{\omega(x)} \otimes \mu_{\omega(x)^{\vee}})
    = r_x \circ (\id_{\omega(x)} \otimes T(r^{\sharp}_x)). \\
    \label{eq:T-coqt-r-4}            
    r_x \circ (\id_{\omega(x)} \otimes \eta_{\omega(x)^{\vee}})
    = \eval_{\omega(x)}.
  \end{gather}
\end{lemma}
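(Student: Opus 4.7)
The plan is to verify each of the four identities by pre-composing both sides with the universal dinatural transformation of the coend appearing in the source and reducing to a direct manipulation in $\mathcal{V}$, using the defining formula \eqref{eq:T-coqt-r-def-1} for $r_x$, the explicit formulas \eqref{eq:FRT-bimonad-mu}--\eqref{eq:FRT-bimonad-eps} for the bimonad structure of $T$, the naturality axioms of the lax braiding $\sigma$ on $\mathcal{D}$ (equations \eqref{eq:def-lax-br-1}--\eqref{eq:def-lax-br-3}), and the monoidality of $\omega$ (the graphical rules \eqref{eq:gra-cal-omega-assoc}--\eqref{eq:gra-cal-omega-slimy}). I intend to present the computations in the string-diagrammatic style already used in Subsection~\ref{subsec:proof-bimonad-construction}.

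I would begin with \eqref{eq:T-coqt-r-4}, which is the simplest: by \eqref{eq:FRT-bimonad-eta} the unit $\eta_{\omega(x)^{\vee}}$ factors through $i_{\unitobj}(\omega(x)^{\vee}) \circ (\omega_0 \otimes \check{\omega}_0)$, so \eqref{eq:T-coqt-r-def-1} applied at $z = \unitobj$ reduces the left-hand side to $\eval^{(2)}_{\omega(\unitobj),\omega(x)} \circ (\Sigma_{x,\unitobj} \otimes \id \otimes \check{\omega}_0)$; the lax braiding axiom \eqref{eq:def-lax-br-3} gives $\sigma_{x,\unitobj} = \id$, so $\Sigma_{x,\unitobj}$ collapses by \eqref{eq:gra-cal-omega-unit} and what remains is $\eval_{\omega(x)}$ via the definition of $\check{\omega}_0$ from \eqref{eq:omega-check-def}. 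Next for \eqref{eq:T-coqt-r-2}, I would use the universal property $T(\unitobj) = \int^{z} \omega(z) \otimes \omega(z)^{\vee}$: after precomposing with $i_z(\unitobj)$, the right-hand side becomes $\eval_{\omega(z)}$ by \eqref{eq:FRT-bimonad-eps}, while the left-hand side becomes $\eval^{(2)}_{\omega(z),\omega(\unitobj)} \circ (\Sigma_{\unitobj,z} \otimes \id \otimes \id) \circ (\omega_0 \otimes \id \otimes \id \otimes \check{\omega}_0)$; once more $\sigma_{\unitobj, z} = \id$ (equation \eqref{eq:def-lax-br-3}) and the $\omega_0$, $\check{\omega}_0$ cancel against $\eval^{(2)}$ to produce the same $\eval_{\omega(z)}$.

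For \eqref{eq:T-coqt-r-3} I would test both sides against $\id_{\omega(x)} \otimes i^{(2)}_{y_1,y_2}(\omega(x)^{\vee})$, using the Fubini-style realisation of $TT(\omega(x)^{\vee})$ as a double coend recalled in Subsection~\ref{subsec:bimonad-intro}. On the left, \eqref{eq:FRT-bimonad-mu} turns $\mu_{\omega(x)^{\vee}} \circ i^{(2)}_{y_1,y_2}$ into $i_{y_1 \otimes y_2} \circ (\omega^{(2)}_{y_1,y_2} \otimes \id \otimes \check{\omega}^{(2)}_{y_1,y_2})$ and then \eqref{eq:T-coqt-r-def-1} produces a factor involving $\Sigma_{x, y_1 \otimes y_2}$. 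On the right, the naturality of the universal dinatural transformation together with two applications of \eqref{eq:T-coqt-r-def-1} (once for the outer $r_x$ and once, hidden in $r^{\sharp}_x$, again for $r_x$) produces the composite $\Sigma_{x,y_1}$ and $\Sigma_{x,y_2}$ occurring in sequence; the lax braiding axiom \eqref{eq:def-lax-br-2}, transported to $\mathcal{V}$ through the strong monoidality of $\omega$, matches the two expressions.

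The identity \eqref{eq:T-coqt-r-1} will be the main obstacle, since it involves the most structure at once. I would test both sides against $\id_{\omega(x)} \otimes \id_{\omega(y)} \otimes i_z(\omega(y)^{\vee} \otimes \omega(x)^{\vee})$ for $z \in \mathcal{D}$. Using \eqref{eq:FRT-bimonad-alter-delta} (in the form converting $\delta \circ i_z$ to a pair of $\partial$'s) together with \eqref{eq:T-coqt-r-def-1} twice, the right-hand side unfolds into $\eval^{(2)}$'s paired with $\Sigma_{x,z}$ and $\Sigma_{y,z}$ in succession. On the other side, the naturality of the universal dinatural transformation lets us pull $\check{\omega}^{(2)}_{x,y}$ through $i_{?}$; after applying \eqref{eq:T-coqt-r-def-1} at $z$, one obtains $\Sigma_{x \otimes y, z}$ together with a $\eval^{(2)}_{\omega(z),\omega(x \otimes y)}$ that must be reshaped using \eqref{eq:rigid-mon-cat-eval-2} and the formula \eqref{eq:omega-check-def} for $\check{\omega}^{(2)}$. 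The match of the two expressions is exactly the lax braiding axiom \eqref{eq:def-lax-br-1} applied in $\mathcal{D}$ and then transported through the strong monoidal functor $\omega$; the main effort will be the bookkeeping of the $\omega^{(2)}$, $\vartheta^{(2)}$, and $\eval^{(2)}$ morphisms, for which I would rely on the graphical moves \eqref{eq:gra-cal-duality}, \eqref{eq:gra-cal-omega-assoc} and \eqref{eq:gra-cal-omega-slimy} in the same spirit as Figures~\ref{fig:T-verify-mu-comonoidal-1}, \ref{fig:fusion-inverse-verification-1} and \ref{fig:fusion-inverse-verification-2}.
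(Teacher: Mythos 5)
Your proposal is correct and follows essentially the same route as the paper: the paper's proof identifies each relevant Hom-set with an end of Hom-sets of the form $\int_{z}\Hom_{\mathcal{V}}(\omega(\cdot)\otimes\omega(z),\omega(z)\otimes\omega(\cdot))$ and checks that both sides of \eqref{eq:T-coqt-r-1}, \eqref{eq:T-coqt-r-2}, \eqref{eq:T-coqt-r-3} correspond to $\Sigma_{x\otimes y,z}$, the identity, and $\Sigma_{x,y\otimes z}$ respectively, which is exactly your computation of precomposing with the universal dinatural transformations, stripping duals, and invoking the lax braiding axioms \eqref{eq:def-lax-br-1}--\eqref{eq:def-lax-br-3} transported through $\omega$. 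The only difference is presentational (Hom-set bijections versus explicit string-diagram manipulation), not mathematical.
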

\begin{proof}
  For $x, y \in \mathcal{D}$, there are canonical bijections
  \begin{align*}
    & \Hom_{\mathcal{V}}(\omega(x) \otimes \omega(y) \otimes T(\omega(y)^{\vee} \otimes \omega(x)^{\vee}), \unitobj) \\
    & \textstyle \cong \int_{z \in \mathcal{D}}
      \Hom_{\mathcal{V}}(\omega(x) \otimes \omega(y)
      \otimes \omega(z) \otimes \omega(y)^{\vee} \otimes \omega(x)^{\vee} \otimes \omega(z)^{\vee}, \unitobj) \\
    & \textstyle \cong \int_{z \in \mathcal{D}}
      \Hom_{\mathcal{V}}(\omega(x) \otimes \omega(y) \otimes \omega(z),
      \omega(z) \otimes \omega(x) \otimes \omega(y)) \\
    & \textstyle \cong \int_{z \in \mathcal{D}}
      \Hom_{\mathcal{V}}(\omega(x \otimes y) \otimes \omega(z),
      \omega(z) \otimes \omega(x \otimes y))
  \end{align*}
  and, under this bijection, one can check that the both sides of \eqref{eq:T-coqt-r-1} correspond to the element whose $z$-component is $\Sigma_{x \otimes y, z}$. The both sides of \eqref{eq:T-coqt-r-2} correspond to the identity natural transformation under $\Hom_{\mathcal{V}}(T(\unitobj), \unitobj) \cong \Nat(\omega, \omega)$. There are also canonical bijections
  \begin{align*}
    & \Hom_{\mathcal{V}}(\omega(x) \otimes T^2(\omega(x)^{\vee}), \unitobj) \\
    & \textstyle \cong \int_{y, z \in \mathcal{D}}
      \Hom_{\mathcal{V}}(\omega(x) \otimes \omega(y) \otimes \omega(z)
      \otimes \omega(x)^{\vee} \otimes \omega(z)^{\vee} \otimes \omega(y)^{\vee}, \unitobj) \\
    & \textstyle \cong \int_{y, z \in \mathcal{D}}
      \Hom_{\mathcal{V}}(\omega(x) \otimes \omega(y \otimes z),
      \omega(y \otimes z) \otimes \omega(x))
  \end{align*}
  and the both sides of \eqref{eq:T-coqt-r-3} correspond to the element whose $(y,z)$-component is the morphism $\Sigma_{x, y \otimes z}$. Equation \eqref{eq:T-coqt-r-4} is verified straightforwardly.
\end{proof}

The family $\{ r_x \}_{x \in \mathcal{D}}$ of morphisms plays a role like a universal R-form for a bialgebra.
Lemmas~\ref{lem:bimonad-lax-br-1} and~\ref{lem:bimonad-lax-br-2} below are an analogue of the fact that a universal R-form on a bialgebra $B$ turns a $B$-comodule into a $B$-module, and this construction gives rise to a strict monoidal functor from the category of $B$-comodules to the category of $B$-modules.

\begin{lemma}
  \label{lem:bimonad-lax-br-1}
  For all $\mathbf{M} \in \lcomod{T}$, the pair $(M, a_M)$ is a $T$-module.
\end{lemma}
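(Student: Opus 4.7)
The plan is to verify the two monad action axioms
\begin{equation*}
  a_M \circ \eta_M = \id_M
  \quad \text{and} \quad
  a_M \circ \mu_M = a_M \circ T(a_M)
\end{equation*}
by exploiting the universal property of the coends $T(M)$ and $TT(M)$, together with the identities \eqref{eq:T-coqt-r-1}--\eqref{eq:T-coqt-r-4} that relate the family $\{r_x\}$ to the bimonad structure of $T$, and the comodule axioms \eqref{eq:T-comod-def-0}--\eqref{eq:T-comod-def-2} for $\mathbf{M}$.

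For the unit axiom, I would substitute the expression \eqref{eq:FRT-bimonad-eta} for $\eta_M$ into $a_M\circ\eta_M$, apply the defining equation \eqref{eq:T-coqt-action-def} of $a_M$ with $x = \unitobj$, and use naturality of $\rho_M$ in its argument to pull the scalars $\omega_0$ and $\check{\omega}_0$ inside. Identity \eqref{eq:T-coqt-r-2} then converts the resulting $r_\unitobj$ into the counit $\varepsilon$ of the bimonad, and the counit axiom \eqref{eq:T-comod-def-2} for the $T$-comodule $\mathbf{M}$ finishes the computation.

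For the associativity axiom, I would precompose both sides with the universal dinatural $i^{(2)}_{x,y}(M)$ for arbitrary objects $x,y \in \mathcal{D}$; this suffices by the universal property of the coend $TT(M)$. On the left, I would first rewrite $\mu_M \circ i^{(2)}_{x,y}(M)$ using \eqref{eq:FRT-bimonad-mu}, then apply the definition \eqref{eq:T-coqt-action-def} of $a_M$ at $x \otimes y$, and use naturality of $\rho_M$ to slide $\omega^{(2)}_{x,y} \otimes T(\check{\omega}^{(2)}_{x,y})$ to meet $r_{x \otimes y}$; identity \eqref{eq:T-coqt-r-1} then converts this into $r^{(2)}_{x,y}$ composed with $\delta_{\omega(y)^{\vee},\omega(x)^{\vee}} \otimes \id_M$; finally the comodule axiom \eqref{eq:T-comod-def-1} replaces this $\delta$--composed coaction with the iterated coaction $(\id \otimes \rho_M(\omega(x)^{\vee})) \circ (\rho_M(\omega(y)^{\vee}) \otimes \id)$. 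On the right, I would use the naturality \eqref{eq:T-comod-def-0} (more directly, naturality of $i_x$ in its second slot) to get $T(a_M) \circ i_x(T(M)) = i_x(M) \circ (\id \otimes a_M \otimes \id)$, then apply the definition \eqref{eq:T-coqt-action-def} of $a_M$ twice, once at $x$ and once at $y$. Unwinding $r^{(2)}_{x,y} = r_x \circ (\id \otimes r_y \otimes \id)$ from \eqref{eq:T-coqt-r-def-3}, the two sides agree after observing that $r_y$ and $\rho_M(\omega(x)^{\vee})$ act on disjoint tensor factors and hence commute.

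The real substance is the middle step of the multiplication axiom, where the compatibility \eqref{eq:T-coqt-r-1} of $r$ with $\delta$ meets the comodule compatibility \eqref{eq:T-comod-def-1}; the rest is bookkeeping of tensor factors and repeated use of naturality. The main obstacle I expect is keeping the ordering of tensor factors straight, which is essentially the same issue encountered in showing that a universal R-form on an ordinary bialgebra turns a comodule into a module; once the correct parenthesization is chosen, each step is forced.
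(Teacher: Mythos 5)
Your proposal is correct and follows essentially the same route as the paper's proof: the unit axiom is handled via \eqref{eq:T-coqt-action-def}, \eqref{eq:T-coqt-r-2} and \eqref{eq:T-comod-def-2}, and the multiplication axiom by precomposing with $i^{(2)}_{x,y}(M)$ and chaining \eqref{eq:FRT-bimonad-mu}, \eqref{eq:T-coqt-action-def}, \eqref{eq:T-coqt-r-1} and \eqref{eq:T-comod-def-1} exactly as the paper does. You have correctly identified the pivotal step as the interaction between \eqref{eq:T-coqt-r-1} and the comodule compatibility \eqref{eq:T-comod-def-1}.
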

\begin{proof}
  For all objects $x, y \in \mathcal{D}$, we have
  \allowdisplaybreaks
  \begin{align*}
    & a_M \circ \mu_M \circ i^{(2)}_{x,y}(M) \\
    {}^{\eqref{eq:FRT-bimonad-mu}}\!
    & = a_M \circ i_{x \otimes y}(M) \circ (\omega^{(2)}_{x,y} \otimes \id_M \otimes \check{\omega}^{(2)}_{x,y}) \\
    {}^{\eqref{eq:T-coqt-action-def}}\!
    & = r_{x \otimes y} \circ (\id_{\omega(x \otimes y)} \otimes \rho_M(\omega(x \otimes y)^{\vee}))
      \circ (\omega^{(2)}_{x,y} \otimes \id_M \otimes \check{\omega}^{(2)}_{x,y}) \\
    {}^{\eqref{eq:T-coqt-r-1}}\!
    & = \begin{aligned}[t]
      (r_{x,y}^{(2)} \otimes \id_M)
      & \circ (\id_{\omega(x)} \otimes \id_{\omega(y)} \otimes \delta_{\omega(x)^{\vee}, \omega(y)^{\vee}} \otimes \id_M) \\
      & \circ (\id_{\omega(x)} \otimes \id_{\omega(y)} \otimes \rho_M(\omega(y)^{\vee} \otimes \omega(x)^{\vee})) \\
    \end{aligned} \\
    {}^{\eqref{eq:T-comod-def-1}}\!
    & = \begin{aligned}[t]
      (r_{x,y}^{(2)} \otimes \id_M)
      & \circ (\id_{\omega(x)} \otimes \id_{\omega(y)} \otimes \id_{T(\omega(y)^{\vee}} \otimes \rho_M(\omega(x)^{\vee}) \\
      & \circ (\id_{\omega(x)} \otimes \id_{\omega(y)} \otimes \rho_M(\omega(y)^{\vee}) \otimes \id_{\omega(x)})
    \end{aligned} \\
    {}^{\eqref{eq:T-coqt-action-def}}\!
    & = (r_x \otimes \id_M) \circ (\id_{\omega(x)} \otimes \rho_M(\omega(x)^{\vee}))
      \circ (\id_{\omega(x)} \otimes (a_M \circ i_y(M)) \otimes \omega(x)^{\vee}) \\
    {}^{\eqref{eq:T-coqt-action-def}}\!
    & = a_M \circ T(a_M) \circ i^{(2)}_{x,y}(M).
  \end{align*}
  This shows $a_M \circ \mu_M = a_M \circ T(a_M)$. We also have
  \begin{gather*}
    a_M \circ \eta_M
    \mathop{=}^{\eqref{eq:T-coqt-action-def}}
    (r_{\unitobj} \otimes \id_M)
    \circ (\id_{\omega(\unitobj)} \otimes \rho_M(\omega(\unitobj)^{\vee}))
    \circ (\omega_0 \otimes \id_M \otimes \check{\omega}_0) \\
    \mathop{=}^{\eqref{eq:T-coqt-r-2}} (\varepsilon \otimes \id_M) \circ \rho_M(\unitobj)
    \mathop{=}^{\eqref{eq:T-comod-def-2}} \id_M. \qedhere
  \end{gather*}
\end{proof}

By the above lemma, we have a functor
\begin{equation*}
  \Phi : \lcomod{T} \to \lmod{T},
  \quad \mathbf{M} \mapsto (M, a_M).
\end{equation*}

\begin{lemma}
  \label{lem:bimonad-lax-br-2}
  The functor $\Phi$ is strict monoidal.
\end{lemma}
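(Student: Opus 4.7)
The plan is to verify the two defining equations of a strict monoidal functor: that $\Phi$ sends the tensor product of two $T$-comodules to the tensor product of the corresponding $T$-modules, and that it sends the unit comodule $\boldsymbol{\unitobj} = (\unitobj,\eta)$ to the unit module $(\unitobj,\varepsilon)$. Since $\Phi$ is the identity on underlying objects and morphisms (essentially by construction), strictness reduces to showing the two equations
\begin{equation*}
  a_{\unitobj} = \varepsilon
  \qquad \text{and} \qquad
  a_{M \otimes N} = (a_M \otimes a_N) \circ \delta_{M, N}
\end{equation*}
for $\mathbf{M}, \mathbf{N} \in \lcomod{T}$, where on the left-hand sides the coactions are the ones on $\boldsymbol{\unitobj}$ and $\mathbf{M} \otimes \mathbf{N}$ determined by~\eqref{eq:T-comod-unit-def} and~\eqref{eq:T-comod-tensor-def}.

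For the unit equation, I would check it directly by precomposing with $i_x(\unitobj)$ and invoking the universal property of the coend $T(\unitobj)$. Since the coaction on $\boldsymbol{\unitobj}$ is $\eta$, the defining equation~\eqref{eq:T-coqt-action-def} of $a_{\unitobj}$ reduces, at the component indexed by $x$, to $r_x \circ (\id_{\omega(x)} \otimes \eta_{\omega(x)^{\vee}})$, which equals $\eval_{\omega(x)}$ by~\eqref{eq:T-coqt-r-4}. On the other hand, $\varepsilon \circ i_x(\unitobj) = \eval_{\omega(x)}$ by~\eqref{eq:FRT-bimonad-eps}. So both morphisms agree on every wedge.

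For the tensor product equation, I would again use the universal property of the coend $T(M \otimes N)$: it suffices to check equality after precomposition with $i_x(M \otimes N)$. Unfolding the right-hand side via~\eqref{eq:FRT-bimonad-delta} produces $(a_M \otimes a_N)$ composed with $(i_x(M) \otimes i_x(N))$ and a $\coev_{\omega(x)}$ inserted between the $M$ and $N$ strands; expanding $a_M$ and $a_N$ using~\eqref{eq:T-coqt-action-def} then gives an expression with two copies of $r_x$ joined through $\coev_{\omega(x)}$, which is exactly the morphism $r^{(2)}_{x,x}$ appearing after the coevaluation has been absorbed into the auxiliary dual strand created by $r^{\sharp}_x$ as in~\eqref{eq:T-coqt-r-def-4}. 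Unfolding the left-hand side via~\eqref{eq:T-comod-tensor-def} produces an $r_x$ composed with $\mu_{\omega(x)^{\vee}}$ and two coactions $\rho_M$ and $\rho_N$ sandwiched appropriately.

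The key step that reconciles the two sides is~\eqref{eq:T-coqt-r-3}, which rewrites $r_x \circ (\id_{\omega(x)} \otimes \mu_{\omega(x)^{\vee}})$ as $r_x \circ (\id_{\omega(x)} \otimes T(r^{\sharp}_x))$. After applying this rewriting, one uses the naturality condition~\eqref{eq:T-comod-def-0} of the coaction $\rho_M$ to pull the morphism $T(r^{\sharp}_x)$ across $\rho_M$ and produce a $\coev_{\omega(x)}$ and a $r_x$ factor on the $N$-side, matching precisely the right-hand side. I expect the main obstacle to be bookkeeping: keeping track of which $\omega(x)$ strands are created by the coevaluation on the $\delta$-side and matching them with the strand produced by $r^{\sharp}_x$ on the $\mu$-side. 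The calculation is most naturally carried out in string diagrams (following the graphical conventions of Subsection~\ref{subsec:proof-bimonad-construction}), where the equality becomes a sequence of local moves using~\eqref{eq:T-coqt-r-3}, \eqref{eq:T-comod-def-0}, \eqref{eq:T-coqt-r-def-4}, and the zig-zag identities~\eqref{eq:gra-cal-duality}. Once both equalities are established, the fact that $\Phi$ acts as the identity on morphisms makes it automatically a strict monoidal functor.
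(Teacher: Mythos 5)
Your proposal is correct and follows essentially the same route as the paper: the unit equation is checked on each wedge $i_x(\unitobj)$ using \eqref{eq:T-coqt-r-4} and \eqref{eq:FRT-bimonad-eps}, and the tensor product equation is checked on each $i_x(M\otimes N)$ by expanding via \eqref{eq:FRT-bimonad-delta} and \eqref{eq:T-coqt-action-def}, absorbing the coevaluation through $r^{\sharp}_x$ as in \eqref{eq:T-coqt-r-def-4}, pulling $T(r^{\sharp}_x)$ across the coaction by \eqref{eq:T-comod-def-0}, and concluding with \eqref{eq:T-coqt-r-3} and \eqref{eq:T-comod-tensor-def}. This is exactly the paper's chain of equalities.
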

\begin{proof}
  Let $\mathbf{M}$ and $\mathbf{N}$ be two $T$-comodules.
  For all $x \in \mathcal{D}$, we have
  \allowdisplaybreaks
  \begin{align*}
    & (a_M \otimes a_N) \circ \delta_{M,N} \circ i_{x}(M \otimes N) \\
    \mathop{}^{\eqref{eq:FRT-bimonad-delta}, \eqref{eq:T-coqt-action-def}} \!
    & = \begin{aligned}[t]
      & (r_x \otimes \id_M \otimes r_x \otimes \id_N)
      \circ (\id_{\omega(x)} \otimes \rho_M(\omega(x)^{\vee}) \otimes \id_{\omega(x)} \otimes \rho_N(\omega(x)^{\vee})) \\
      & \qquad \circ (\id_{\omega(x)} \otimes \id_M \otimes \coev_{\omega(x)} \otimes \id_N \otimes \id_{\omega(x)^{\vee}})
    \end{aligned} \\
    \mathop{}^{\eqref{eq:T-coqt-r-def-4}} \!
    & = \begin{aligned}[t]
      & (r_x \otimes \id_M \otimes \id_N)
      \circ (\id_{\omega(x)} \otimes \rho_M(\omega(x)^{\vee}) \otimes \id_N) \\
      & \qquad \circ (\id_{\omega(x)} \otimes \id_M \otimes r^{\sharp}_x \otimes \id_N)
      \circ (\id_{\omega(x)} \otimes \id_M \otimes \rho_N(\omega(x)^{\vee}))
    \end{aligned} \\
    \mathop{}^{\eqref{eq:T-comod-def-0}} \!
    & = \begin{aligned}[t]
      & (r_x \otimes \id_M \otimes \id_N)
      \circ (\id_{\omega(x)} \otimes T(r^{\sharp}_x) \otimes \id_{M} \otimes \id_{N}) \\
      & \qquad \circ (\id_{\omega(x)} \otimes \rho_M(\omega(x)^{\vee}) \otimes \id_N)
      \circ (\id_{\omega(x)} \otimes \id_M \otimes \rho_N(\omega(x)^{\vee}))
    \end{aligned} \\
    \mathop{}^{\eqref{eq:T-comod-tensor-def}, \eqref{eq:T-coqt-r-3}} \!
    & = (r_x \otimes \id_M \otimes \id_N)
      \circ (\id_{\omega(x)} \otimes \rho_{M \otimes N}(\omega(x)^{\vee}) \\
    \mathop{}^{\eqref{eq:T-coqt-action-def}} \!
    & = a_{M \otimes N} \circ i_x(M \otimes N).
  \end{align*}
  Thus $\Phi(\mathbf{M} \otimes \mathbf{N}) = \Phi(\mathbf{M}) \otimes \Phi(\mathbf{N})$ as $T$-modules. We also have
  \begin{gather*}
    a_{\unitobj} \circ i_x(\unitobj)
    \mathop{=}^{\eqref{eq:T-coqt-action-def}}
    r_x \circ \rho_{\unitobj}(\omega(x)^{\vee})
    \mathop{=}^{\eqref{eq:T-comod-unit-def}}
    r_x \circ (\id_{\omega(x)} \otimes \eta_{\omega(x)^{\vee}})    
    \mathop{=}^{\eqref{eq:T-coqt-r-4}}
    \eval_{\omega(x)}
    \mathop{=}^{\eqref{eq:FRT-bimonad-eps}} \varepsilon \circ i_x(\unitobj)
  \end{gather*}
  for all $x \in \mathcal{D}$. Thus $\Phi(\unitobj_{\lcomod{T}}) = \unitobj_{\lmod{T}}$. The proof is done.
\end{proof}

Now let $\mathbf{M}$ and $\mathbf{N}$ be $T$-comodules. We prove:

\begin{lemma}
  \label{lem:T-coqt-sigma-colinearity}
  The morphism $\widetilde{\sigma}_{\mathbf{M}, \mathbf{N}}$ is a morphism in $\lcomod{T}$.
\end{lemma}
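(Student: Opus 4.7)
The goal is to verify, for every $X\in\mathcal{V}$, the equality
\begin{equation*}
  (\id_{T(X)} \otimes \widetilde{\sigma}_{\mathbf{M},\mathbf{N}}) \circ \rho_{M\otimes N}(X)
  = \rho_{N\otimes M}(X) \circ (\widetilde{\sigma}_{\mathbf{M},\mathbf{N}} \otimes \id_X).
\end{equation*}
My plan is to substitute the definition~\eqref{eq:T-coqt-lax-br-def} of $\widetilde{\sigma}_{\mathbf{M},\mathbf{N}} = (a_N \otimes \id_M)\circ \rho_M(N)$ together with the formula~\eqref{eq:T-comod-tensor-def} for the tensor product coaction into both sides, and then show that the two resulting composites coincide by exploiting the comodule axioms for $\rho_M$ and $\rho_N$, the compatibility~\eqref{eq:T-coqt-r-3} of $r_x$ with $\mu$, and the fact (Lemma~\ref{lem:bimonad-lax-br-1}) that $a_N$ is a $T$-module action.

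First I would rewrite the right-hand side. Using naturality~\eqref{eq:T-comod-def-0} of $\rho_M$ applied to the morphism $a_N : T(N) \to N$, the expression $(\id_N \otimes \rho_M(X))\circ(a_N \otimes \id_M \otimes \id_X)\circ(\rho_M(N)\otimes \id_X)$ can be rearranged so that $a_N$ is applied after the coactions on $M$ are unfolded; the coassociativity axiom~\eqref{eq:T-comod-def-1} for $\rho_M$ then lets me replace the iterated coactions by a single $\rho_M(T(X))$ preceded by the comultiplication $\delta$. For the left-hand side, the $\widetilde{\sigma}$ factor unpacks in-place and $\rho_M(T(X))$ appears directly from~\eqref{eq:T-comod-tensor-def}. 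The strategy is to get both sides into the common shape
\begin{equation*}
  (\mu_X \otimes \id_N \otimes \id_M)
  \circ (\rho_N(T(X))\otimes \id_M)
  \circ (\id_N \otimes\rho_M(X))
  \circ (a_N\otimes \id_M \otimes \id_X)
  \circ (\rho_M(N)\otimes \id_X)
\end{equation*}
up to the reordering permitted by the axioms above, using in addition that $a_N \circ \mu_N = a_N \circ T(a_N)$ (i.e.\ that $(N,a_N)$ is a $T$-module) whenever a double application of $T$ to the $N$-slot appears.

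The main obstacle will be the bookkeeping around the occurrences of $\mu_X$ coming from the tensor product coactions on each side, since one $\mu_X$ sits to the left of $\widetilde{\sigma}$ on the LHS while the other lies inside $\rho_{N\otimes M}(X)$ on the RHS. These two $\mu_X$'s are matched by invoking the module axiom for $a_N$ in the form above, which converts the composite $T(a_N)$ picked up from naturality on the RHS into $a_N \circ \mu_N$ that pairs with the coassociativity output on the LHS. Once this reconciliation is carried out, the remaining instances of $\rho_M$ and $\rho_N$ align, and the identity follows. An entirely analogous strategy handles the right-handed axiom.
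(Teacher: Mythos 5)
Your plan reduces the colinearity of $\widetilde{\sigma}_{\mathbf{M},\mathbf{N}}$ to formal manipulations with the comodule axioms \eqref{eq:T-comod-def-0}--\eqref{eq:T-comod-def-1}, the module axiom for $a_N$, and the identity \eqref{eq:T-coqt-r-3}. This toolkit is not sufficient, and the gap is exactly at the step you describe as ``reconciling the two $\mu_X$'s'': to move $a_N$ past the coaction $\rho_N$ you need a genuine compatibility between the \emph{induced action} $a_N$ and the \emph{coaction} $\rho_N$, namely (in the paper's notation)
\begin{equation*}
  (\mu_V \otimes \id_N) \circ \rho_N(T(V)) \circ (a_N \otimes \id_{T(V)}) \circ \delta_{N, V}
  = (\mu_V \otimes a_N) \circ \delta_{T(V), N} \circ T(\rho_N(V)),
\end{equation*}
which is equation \eqref{eq:T-coqt-lemma-4-1}. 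This is the bimonad analogue of the quasi-commutativity axiom \eqref{eq:def-univ-R-4} for a universal R-form ($\rform(x_{(1)},y_{(1)})x_{(2)}y_{(2)} = \rform(x_{(2)},y_{(2)})y_{(1)}x_{(1)}$), and just as that axiom is independent of the multiplicativity axioms of an R-form, \eqref{eq:T-coqt-lemma-4-1} does not follow from the module axiom $a_N\mu_N = a_N T(a_N)$, the comodule axioms, or \eqref{eq:T-coqt-r-3} (the latter is the multiplicativity-in-the-second-slot identity, used in the paper only to show that $\Phi$ is monoidal). Without it, your two sides cannot be brought to the common shape you propose.

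The paper obtains \eqref{eq:T-coqt-lemma-4-1} from a further identity \eqref{eq:T-coqt-lemma-4-3}, which is proved by descending to the coend presentation of $\omega(x)\otimes T(V\otimes\omega(x)^{\vee})$, eliminating the dual objects via Lemma~\ref{lem:partial-universality}, and then computing graphically (Figure~\ref{fig:T-coqt-lemma-3-proof}); that computation uses the definition \eqref{eq:T-coqt-r-def-1} of $r_x$ in terms of $\Sigma_{x,z}$, the naturality of the lax braiding of $\mathcal{D}$, and the multiplication formula \eqref{eq:FRT-bimonad-alter-mu}. In other words, the braiding of $\mathcal{D}$ must re-enter the argument at this point; a purely formal computation at the level of $a_N$, $\rho$, $\mu$, $\delta$ cannot see it. If you supply \eqref{eq:T-coqt-lemma-4-1} (with its coend-level proof), the remaining bookkeeping you outline does go through essentially as in the paper.
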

\begin{proof}
  To prove this lemma, we first show that the equation
  \begin{equation}
    \label{eq:T-coqt-lemma-4-3}
    \begin{aligned}
      & (r_x \otimes \mu_V) \circ (\id_{\omega(x)} \otimes \delta_{\omega(x)^{\vee}, T(V)}) \circ (\id_{\omega(x)} \otimes T(i^{\sharp}_x(V))) \\
      & \qquad = \mu_V \circ i_{x}(T(V)) \circ (\id_{\omega(x)} \otimes \id_{T(V)} \otimes r^{\sharp}_x)
      \circ (\id_{\omega(x)} \otimes \delta_{V, \omega(x)^{\vee}})
    \end{aligned}
  \end{equation}
  holds for all objects $x \in \mathcal{D}$, where
  \begin{equation}
    \label{eq:T-coqt-lemma-4-2}
    i^{\sharp}_x(V) = (\id_{\omega(x)^{\vee}} \otimes i_x(V)) \circ (\coev_{\omega(x)} \otimes \id_{V} \otimes \id_{\omega(x)^{\vee}}).
  \end{equation}
  Let $P$ and $Q$ be the left and the right hand side of \eqref{eq:T-coqt-lemma-4-3}, respectively.  Since the tensor product of $\mathcal{V}$ preserves colimits, we have isomorphisms
  \begin{align*}
    & \Hom_{\mathcal{V}}(\omega(x) \otimes T(V \otimes \omega(x)^{\vee}), T(V)) \\
    & \qquad \cong \textstyle \int_{y \in \mathcal{D}}
      \Hom_{\mathcal{V}}(\omega(x) \otimes \omega(y) \otimes V \otimes \omega(x)^{\vee} \otimes \omega(y)^{\vee}, T(V)) \\
    & \qquad \cong \textstyle \int_{y \in \mathcal{D}}
      \Hom_{\mathcal{V}}(\omega(x) \otimes \omega(y) \otimes V, T(V) \otimes \omega(y) \otimes \omega(x)).
  \end{align*}
  Given a morphism $f: \omega(x) \otimes T(V \otimes \omega(x)^{\vee}) \to T(V)$ in $\mathcal{V}$, we denote by
  \begin{equation*}
    \Xi(f)_y: \omega(x) \otimes \omega(y) \otimes V
    \to T(V) \otimes \omega(y) \otimes \omega(x)
  \end{equation*}
  the $y$-th component of the element corresponding to $f$.
  We prove $\Xi(P)_y = \Xi(Q)_y$ for all $y \in \mathcal{D}$ as in Figure~\ref{fig:T-coqt-lemma-3-proof}.
  This shows \eqref{eq:T-coqt-lemma-4-3}.

  \allowdisplaybreaks
  To save space, we denote the both sides of \eqref{eq:FRT-bimonad-delta} by $i_x(M, N)$. Then,
  \begin{align*}
    & (\mu_V \otimes \id_N) \circ \rho_N(T(V)) \circ (a_N \otimes \id_{T(V)}) \circ \delta_{N, V} \circ i_x(N \otimes V) \\
    {}^{\eqref{eq:FRT-bimonad-delta}}\!
    & = (\mu_V \otimes \id_N) \circ \rho_N(T(V)) \circ (a_N \otimes \id_{T(V)}) \circ i_x(N, V) \\
    {}^{\eqref{eq:T-comod-def-1}, \eqref{eq:T-coqt-lemma-4-2}}\!
    & = \begin{aligned}[t]
      & (r_x \otimes \mu_V \otimes \id_N) \circ (\id_{\omega(x)} \otimes \delta_{\omega(x)^{\vee}, T(V)} \otimes \id_N) \\
      & \qquad \circ (\id_{\omega(x)} \otimes \rho_N(\omega(x)^{\vee} \otimes T(V))) \circ (\id_{\omega(x)} \otimes \id_N \otimes i^{\sharp}_x(V))        
    \end{aligned} \\
    {}^{\eqref{eq:T-coqt-lemma-4-3}}\!
    & = \begin{aligned}[t]
      & ((\mu_V \, i_{x}(T(V)) \, (\id_{\omega(x)} \otimes \id_{T(V)} \otimes r^{\sharp}_x)
      \, (\id_{\omega(x)} \otimes \delta_{V, \omega(x)^{\vee}})) \otimes \id_N) \\
      & \qquad \circ (\id_{\omega(x)} \otimes \rho_N(V \otimes \omega(x)^{\vee}))
    \end{aligned} \\
    {}^{\eqref{eq:T-comod-def-1}}\!
    & = \begin{aligned}[t]
      & ((\mu_V \circ i_{x}(T(V)) \circ (\id_{\omega(x)} \otimes \id_{T(V)} \otimes r^{\sharp}_x)) \otimes \id_N) \\
      & \qquad \circ (\id_{\omega(x)} \otimes \id_{T(V)} \otimes \rho_N(\omega(x)^{\vee}))
      \circ (\id_{\omega(x)} \otimes \rho_N(V) \otimes \id_{\omega(x)^{\vee}}) \\
    \end{aligned} \\
    {}^{\eqref{eq:T-coqt-action-def}, \eqref{eq:T-coqt-r-def-4}}\!
    & = (\mu_V \otimes a_N) \circ i_x(T(V), N) \circ (\id_{\omega(x)} \otimes \rho_N(V) \otimes \id_{\omega(x)^{\vee}}) \\
    {}^{\eqref{eq:FRT-bimonad-delta}}\!
    & = (\mu_V \otimes a_N) \circ \delta_{T(V), N} \circ i_x(T(V) \otimes N) \circ (\id_{\omega(x)} \otimes \rho_N(V) \otimes \id_{\omega(x)^{\vee}}) \\
    & = (\mu_V \otimes a_N) \circ \delta_{T(V), N} \circ T(\rho_N(V)) \circ i_x(N \otimes V).
  \end{align*}
  Thus we have
  \begin{equation}
    \label{eq:T-coqt-lemma-4-1}
    \begin{gathered}
      (\mu_V \otimes \id_N) \circ \rho_N(T(V)) \circ (a_N \otimes \id_{T(V)}) \circ \delta_{N, V} \\
      = (\mu_V \otimes a_N) \circ \delta_{T(V), N} \circ T(\rho_N(V)).
    \end{gathered}
  \end{equation}
  For all objects $V \in \mathcal{V}$, we have
  \begin{equation*}
    \renewcommand{\PICSCALE}{.8}
    \PIC{coqt-sigma-colin-32}
    \ \mathop{=}^{\eqref{eq:T-comod-def-1}}
    \PIC{coqt-sigma-colin-34}
    \mathop{=}^{\eqref{eq:T-coqt-lemma-4-1}}
    \ \PIC{coqt-sigma-colin-35}
    \mathop{=}^{\substack{ \eqref{eq:T-comod-def-1} \\ \eqref{eq:T-comod-def-0}}}
    \ \PIC{coqt-sigma-colin-37}
  \end{equation*}
  If we denote by $\rho_{\mathbf{X} \otimes \mathbf{Y}}$ the coaction of $T$ on $\mathbf{X} \otimes \mathbf{Y}$ for $\mathbf{X}, \mathbf{Y} \in \lcomod{T}$, then the above computation shows that the equation
  \begin{equation*}
    \rho_{\mathbf{N} \otimes \mathbf{M}}(V) \circ (\widetilde{\sigma}_{\mathbf{M}, \mathbf{N}} \otimes \id_V)
    = (\id_{T(V)} \otimes \widetilde{\sigma}_{\mathbf{M}, \mathbf{N}}) \circ \rho_{\mathbf{M} \otimes \mathbf{N}}(V)
  \end{equation*}
  holds for all $V \in \mathcal{V}$. The proof is done.
\end{proof}

\begin{figure}
  \begin{minipage}{1.0\linewidth}
    \addtolength{\jot}{1.5em}
    \renewcommand{\PICSCALE}{.8}
    \begin{gather*}
      \PIC{coqt-sigma-colin-01}
      \mathop{=}^{\substack{\eqref{eq:T-comod-def-0} \\ \eqref{eq:FRT-bimonad-alter-delta}}}
      \PIC{coqt-sigma-colin-02}
      \mathop{=}^{\substack{\eqref{eq:partial-def} \\ \eqref{eq:T-coqt-lemma-4-2}}}
      \PIC{coqt-sigma-colin-03} \\
      \mathop{=}^{\eqref{eq:T-coqt-r-def-1}}
      \quad \PIC{coqt-sigma-colin-04}
      \quad \mathop{=}^{\eqref{eq:gra-cal-duality}}
      \quad \PIC{coqt-sigma-colin-05}
      \quad \mathop{=}^{\substack{\eqref{eq:FRT-bimonad-alter-mu} \\ \eqref{eq:T-coqt-Sigma-def}}}
      \quad \PIC{coqt-sigma-colin-06} \\
      \mathop{=}^{\eqref{eq:gra-nat-partial}}
      \quad \PIC{coqt-sigma-colin-07}
      \quad \mathop{=}^{\eqref{eq:FRT-bimonad-alter-mu}}
      \quad \PIC{coqt-sigma-colin-08}
      \quad \mathop{=}^{\substack{\eqref{eq:gra-cal-duality} \\ \eqref{eq:partial-def}}}
      \quad \PIC{coqt-sigma-colin-09} \\
      \mathop{=}^{\eqref{eq:T-coqt-r-def-1}}
      \PIC{coqt-sigma-colin-10}
      \mathop{=}^{\eqref{eq:T-coqt-r-def-4}}
      \PIC{coqt-sigma-colin-11}
      \mathop{=}^{\eqref{eq:FRT-bimonad-alter-delta}}
      \PIC{coqt-sigma-colin-12}
    \end{gather*}
    \caption{Verification of equation \eqref{eq:T-coqt-lemma-4-3}}
    \label{fig:T-coqt-lemma-3-proof}
  \end{minipage}
\end{figure}

By the definition of morphisms in $\lcomod{T}$, it is easy to see that the morphism $\widetilde{\sigma}_{\mathbf{M}, \mathbf{N}}$ is natural in $\mathbf{M}$. The fact that $\Phi$ is a functor implies that it is also natural in the variable $\mathbf{N}$. Hence we obtain a natural transformation
\begin{equation*}
  \widetilde{\sigma} = \{ \widetilde{\sigma}_{\mathbf{M}, \mathbf{N}}: \mathbf{M} \otimes \mathbf{N} \to \mathbf{N} \otimes \mathbf{M} \}_{\mathbf{M}, \mathbf{N} \in \lcomod{T}}.
\end{equation*}

\begin{proof}[Proof of Theorem~\ref{thm:FRT-bimonad} (\ref{item:main-thm-D-braided})]
  We show that $\widetilde{\sigma}$ is a lax braiding of $\lcomod{T}$. We denote by $\mathbf{1}$ the unit object of $\lcomod{T}$. Since $\Phi$ is strong monoidal, the equations
  \begin{equation*}
    \widetilde{\sigma}_{\mathbf{L}, \mathbf{M} \otimes \mathbf{N}}
    = (\id_M \otimes \widetilde{\sigma}_{\mathbf{L}, \mathbf{N}})
    \circ (\widetilde{\sigma}_{\mathbf{L}, \mathbf{M}} \otimes \id_N)
    \quad \text{and} \quad
    \widetilde{\sigma}_{\mathbf{M}, \mathbf{1}} = \id_M
  \end{equation*}
  hold for all objects $\mathbf{L}, \mathbf{M}, \mathbf{N} \in \lcomod{T}$.
  We also have
  \begin{align*}
    \widetilde{\sigma}_{\mathbf{L} \otimes \mathbf{M}, \mathbf{N}}
    \ {}^{\eqref{eq:T-coqt-lax-br-def}} \! =
    & \ (a_N \mu_N \otimes \id_{L} \otimes \id_{M}) \circ (\rho_L(T(N)) \otimes \id_{M}) \circ (\id_{L} \otimes \rho_{M}(N)) \\
    {}^{\eqref{eq:def-monad-module-1}} \! =
    & \ (a_N T(a_N) \otimes \id_{L} \otimes \id_{M}) \circ (\rho_L(T(N)) \otimes \id_{M}) \circ (\id_{L} \otimes \rho_{M}(N)) \\
    {}^{\eqref{eq:T-comod-def-0}} \! =
    & \ ((a_N \otimes \id_L) \rho_L(N) \otimes \id_M)
      \circ (\id_L \otimes (a_N \otimes \id_M) \rho_{M}(N)) \\
    {}^{\eqref{eq:T-coqt-lax-br-def}} \! =
    & \ (\widetilde{\sigma}_{\mathbf{M}, \mathbf{N}} \otimes \id_M)
      \circ (\id_L \otimes \widetilde{\sigma}_{\mathbf{M}, \mathbf{N}}).
  \end{align*}
  The equation $\widetilde{\sigma}_{\mathbf{1}, \mathbf{M}} = \id_M$ is easily verified. Thus $\widetilde{\sigma}$ is a lax braiding of $\lcomod{T}$. The proof is done.
\end{proof}

\subsection{Proof of Theorem~\ref{thm:FRT-bimonad-universality}}

In this subsection, we prove the universal property of the bimonad $T$. By the definition of the structure morphisms of $T$, we easily prove the following lemma:

\begin{lemma}
  \label{lem:T-coact-omega-x}
  The functor $\widetilde{\omega}: \mathcal{D} \to \lcomod{T}$ defined by $\widetilde{\omega}(x) =  (\omega(x), \partial_x)$ is a strong monoidal functor with the same structure morphism as $\omega$.
\end{lemma}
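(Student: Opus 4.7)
The plan is to verify, in three steps, that $\widetilde{\omega}$ is well-defined on objects and morphisms, carries the required monoidal structure, and inherits the coherence axioms from $\omega$.

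\emph{Step 1 (objects).} For each $x \in \mathcal{D}$ I need to check the three $T$-comodule axioms \eqref{eq:T-comod-def-0}, \eqref{eq:T-comod-def-1}, \eqref{eq:T-comod-def-2} for the pair $(\omega(x), \partial_x)$. These are instances of, respectively, the naturality of $\partial_x$ recorded in \eqref{eq:gra-nat-partial}, the defining equation \eqref{eq:FRT-bimonad-alter-delta} of the comultiplication $\delta$ evaluated at $x$, and the defining equation \eqref{eq:FRT-bimonad-alter-eps} of the counit $\varepsilon$. All three are thus already built into the construction of the bimonad $T$.

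\emph{Step 2 (morphisms).} For $f\colon x \to y$ in $\mathcal{D}$, the $T$-colinearity condition $(\id_{T(V)} \otimes \omega(f)) \circ \partial_x(V) = \partial_y(V) \circ (\omega(f) \otimes \id_V)$ unfolds via \eqref{eq:partial-def} to an identity between $(i_x(V) \otimes \omega(f)) \circ (\id_{\omega(x) \otimes V} \otimes \coev_{\omega(x)})$ and $(i_y(V) \otimes \id_{\omega(y)}) \circ (\omega(f) \otimes \id_V \otimes \coev_{\omega(y)})$. The strategy is to push $\omega(f)$ from the right-most strand across the cap using the characterizing identity $(\id_{\omega(x)^\vee} \otimes \omega(f)) \circ \coev_{\omega(x)} = (\omega(f)^\vee \otimes \id_{\omega(y)}) \circ \coev_{\omega(y)}$ of the dual morphism, and then to absorb the resulting $\omega(f)^\vee$ into the $i$-leg by the dinaturality of the wedge $i_\bullet(V)$ at $f$. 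A short diagram chase then turns the left-hand side into the right-hand side.

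\emph{Step 3 (monoidal structure and coherence).} The coaction on $\widetilde{\omega}(x) \otimes \widetilde{\omega}(y)$, spelled out from \eqref{eq:T-comod-tensor-def}, equals $(\mu_V \otimes \id \otimes \id) \circ \partial^{(2)}_{x,y}(V)$, which by \eqref{eq:FRT-bimonad-alter-mu} rewrites as $(\id_{T(V)} \otimes (\omega^{(2)}_{x,y})^{-1}) \circ \partial_{x \otimes y}(V) \circ (\omega^{(2)}_{x,y} \otimes \id_V)$. Composing on the right-most strand with $\omega^{(2)}_{x,y}$ cancels its inverse and yields exactly the $T$-colinearity of $\omega^{(2)}_{x,y}\colon \widetilde{\omega}(x) \otimes \widetilde{\omega}(y) \to \widetilde{\omega}(x \otimes y)$. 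Likewise, the $T$-colinearity of $\omega_0\colon (\unitobj, \eta) \to \widetilde{\omega}(\unitobj)$ is a one-line rearrangement of \eqref{eq:FRT-bimonad-alter-eta} once one recalls that the unit coaction is $\eta$ by \eqref{eq:T-comod-unit-def}. Finally, the pentagon and triangle axioms for $\widetilde{\omega}$ as a strong monoidal functor become, after applying the faithful strict monoidal forgetful functor $U^T\colon \lcomod{T} \to \mathcal{V}$, the corresponding axioms for $\omega$, which already hold; so $\widetilde{\omega}$ is strong monoidal with the same structure morphisms as $\omega$.

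The only nontrivial step is Step 2, where the interplay between the dinaturality of $i_\bullet(V)$ and the dual morphism $\omega(f)^\vee$ must be handled carefully; Steps 1 and 3 are essentially direct reformulations of the characterizing equations \eqref{eq:FRT-bimonad-alter-mu}--\eqref{eq:FRT-bimonad-alter-eps} of the bimonad structure of $T$.
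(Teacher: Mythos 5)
Your proof is correct and is exactly the argument the paper has in mind: the paper omits the proof entirely (``By the definition of the structure morphisms of $T$, we easily prove the following lemma''), and your three steps simply read off the comodule axioms from \eqref{eq:FRT-bimonad-alter-delta}--\eqref{eq:FRT-bimonad-alter-eps}, the colinearity of $\omega(f)$ from the dinaturality identity already recorded in \eqref{eq:gra-nat-partial}, and the colinearity of $\omega^{(2)}$ and $\omega_0$ from \eqref{eq:FRT-bimonad-alter-mu} and \eqref{eq:FRT-bimonad-alter-eta}. The only cosmetic quibble is that the coherence axioms for a monoidal \emph{functor} are the associativity and unit compatibilities rather than the ``pentagon and triangle,'' but your faithfulness argument via $U^T$ is the right way to inherit them from $\omega$.
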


Given a bimonad $S$ on $\mathcal{V}$, we denote by $U^S$ the forgetful functor from $\lcomod{S}$ to $\mathcal{V}$. It is obvious that $U^T \circ \widetilde{\omega} = \omega$ as monoidal functors. We now give a proof of Theorem~\ref{thm:FRT-bimonad-universality}, which states that the pair $(T, \widetilde{\omega})$ is universal among such a pair.

\begin{proof}[Proof of Theorem~\ref{thm:FRT-bimonad-universality}]
  Let $T' = (T', \mu', \eta', \delta', \varepsilon')$ be a bimonad on $\mathcal{V}$, and let $\omega': \mathcal{D} \to \lcomod{T'}$ be a strong monoidal functor such that $U^{T'} \circ \omega' = \omega$ as monoidal functors. We can write $\omega'(x) = (\omega(x), \beta_x)$ for some natural transformation
  \begin{equation*}
    \beta_x(M): \omega(x) \otimes M \to T'(M) \otimes \omega(x)
    \quad (M \in \mathcal{V}).
  \end{equation*}
  We first prove the `uniqueness' part of this theorem. If $\phi: T \to T'$ is a morphism of bimonads such that $\phi_{\sharp} \circ \widetilde{\omega} = \omega'$, then, by the definition of $\widetilde{\omega}$ and $\phi_{\sharp}$, the morphism $\phi_M$ ($M \in \mathcal{V}$) satisfies the equation
  \begin{equation}
    \label{eq:T-universal-proof-1}
    \beta_x(M)
    = (\phi_M \otimes \id_{\omega(x)}) \circ \partial_x(M)
  \end{equation}
  for all $x \in \mathcal{D}$. By Lemma~\ref{lem:partial-universality}, this equation uniquely determines $\phi_M$.

  Now we show the `existence' part of this theorem. In view of the above discussion, we shall define $\phi: T \to T'$ by \eqref{eq:T-universal-proof-1}. The assumption that $U^{T'} \circ \omega' = \omega$ as monoidal functors implies that
  \begin{equation*}
    \omega^{(2)}_{x, y}: \omega'(x) \otimes \omega'(y) \to \omega'(x \otimes y)
    \quad \text{and} \quad
    \omega_0: \unitobj_{\lcomod{T'}} \to \omega'(\unitobj)
  \end{equation*}
  are morphisms in $\lcomod{T'}$. Thus we have
  \begin{gather}
    \label{eq:T-universal-proof-2}
    \begin{aligned}
      & (\mu'_M \otimes \id_{\omega(x)} \otimes \id_{\omega(y)})
      \circ (\beta_x(T'(M)) \otimes \id_{\omega(y)})
      \circ (\id_{\omega(x)} \otimes \beta_y(M)) \\
      & \qquad = (\id_{T'(M)} \otimes (\omega^{(2)}_{x, y})^{-1})
      \circ \beta_{x \otimes y}(M) \circ (\omega^{(2)}_{x, y} \otimes \id_M),
    \end{aligned} \\
    \label{eq:T-universal-proof-3}
    \eta'_M = (\id_{T'(M)} \otimes \omega_0^{-1})
    \circ \beta_{\unitobj}(M) \circ (\omega_0 \otimes \id_M)
  \end{gather}
  for $x, y \in \mathcal{D}$ and $M \in \mathcal{V}$. By the definition of comodules, we also have
  \begin{gather}
    \label{eq:T-universal-proof-4}
    (\delta'_{M,N} \otimes \id_{\omega(x)})
    \circ \beta_{x}(M \otimes N)
    = (\id_{T'(M)} \otimes \beta_x(N))
    \circ (\beta_x(M) \otimes \id_N), \\
    \label{eq:T-universal-proof-5}
    (\varepsilon' \otimes \id_{\omega(x)}) \circ \beta_x(\unitobj)
    = \id_{\omega(x)}
  \end{gather}
  for $x \in \mathcal{D}$ and $M, N \in \mathcal{V}$. Equations \eqref{eq:T-universal-proof-2}--\eqref{eq:T-universal-proof-5} imply that $\phi$ is a morphism of bimonads. Indeed, the left-hand side of~\eqref{eq:T-universal-proof-2} is computed as follows:
  \begin{equation*}
    \renewcommand{\PICSCALE}{.8}
    \PIC{univ-phi-01}
    \mathop{=}^{\eqref{eq:T-universal-proof-1}}
    \PIC{univ-phi-02}
    \ \mathop{=}^{\eqref{eq:gra-nat-partial}}
    \ \PIC{univ-phi-03}
  \end{equation*}
  On the other hand, the right-hand side of~\eqref{eq:T-universal-proof-2} is computed as follows:
  \begin{equation*}
    \renewcommand{\PICSCALE}{.8}
    \PIC{univ-phi-11}
    \ \mathop{=}^{\eqref{eq:T-universal-proof-1}}
    \PIC{univ-phi-12}
    \mathop{=}^{\eqref{eq:FRT-bimonad-alter-mu}}
    \PIC{univ-phi-13}
  \end{equation*}
  Thus we have $\mu'_M \circ \phi_{T(M)} \circ T(\phi_M) = \phi_M \circ \mu_M$. One can easily derive
  \begin{equation*}
    \eta'_M = \phi \circ \eta_M,
    \quad
    \delta'_{M,N} \circ (\phi_M \otimes \phi_N)
    = \phi \circ \delta_{M,N}
    \quad \text{and} \quad
    \varepsilon = \varepsilon' \circ \phi
  \end{equation*}
  from \eqref{eq:T-universal-proof-3}, \eqref{eq:T-universal-proof-4} and \eqref{eq:T-universal-proof-5}, respectively, in a similar way. Hence $\phi: T \to T'$ is a morphism of bimonads. The proof is done.
\end{proof}

\section{Bialgebroids and their universal R-forms}
\label{sec:bialgebroids}

\subsection{Bialgebroids}
\label{subsec:bialgebroids}

In this section, we first review basic results on modules and comodules over bialgebroids from the viewpoint of the theory of bimonad. We then introduce the notion of a (lax) {\em universal R-form} on a left bialgebroid. The main result states that a (lax) universal R-form on a left bialgebroid gives rise to a (lax) braiding of the category of left comodules (Theorem \ref{thm:univ-R-lax-braiding}).

Throughout this section, we fix a commutative ring $k$.
By a $k$-algebra, we mean a ring $R$ equipped with a ring homomorphism $k \to R$, called the unit map, whose image is contained in the center of $R$.
A $k$-algebra is always viewed as a $k$-module through the unit map.
We also fix a $k$-algebra $A$, which is not necessarily commutative, and denote by ${}_A \Mod_A$ the monoidal category of $A$-bimodules. This category is often identified with the category of left modules over the enveloping algebra $A^{\env} := A \otimes_k A^{\op}$.

Given a $k$-algebra $R$, an $R$-ring (see \cite{MR2553659}) is a $k$-algebra $B$ equipped with a $k$-algebra homomorphism $R \to B$ (whose image is not necessarily contained in the center of $B$). Thus an $A^{\env}$-ring is the same thing as a triple $(B, \src, \tgt)$ consisting of a $k$-algebra $B$ and two $k$-algebra homomorphisms $\src: A \to B$ and $\tgt: A^{\op} \to B$ such that the following equation holds:
\begin{equation}
  \label{eq:Re-ring}
  \src(a) \tgt(a') = \tgt(a') \src(a)
  \quad (a \in A, a' \in A^{\op}).
\end{equation}
Unless otherwise noted, we view an $A^{\env}$-ring $B$ as an $A$-bimodule by the left action $\lhd$ and the right action $\rhd$ defined by
\begin{equation}
  \label{eq:bialgebroid-Re-action-left}
  a \rhd b \lhd a' = \src(a) \tgt(a') b
  \quad (a, a' \in A, b \in B).
\end{equation}
For an $A^{\env}$-ring $B$, Takeuchi's $\times_A$-product \cite{MR0506407} is defined by
\begin{equation*}
  B \times_A B := \{ X \in B \otimes_A B \mid \text{$X^1 \tgt(a) \otimes_A X^2 = X^1 \otimes_A X^2 \src(a)$ for all $a \in A$} \},
\end{equation*}
where an element $X \in B \otimes_A B$ is written loosely as $X = X^1 \otimes_A X^2$ despite that it is not a simple tensor in general.

A left bialgebroid over $A$ is first introduced by Takeuchi \cite{MR0506407} under the name of a $\times_A$-bialgebra.
We adopt the following definition, which is different but equivalent to Takeuchi's.

\begin{definition}[see \cite{MR2553659}]
  A {\em left bialgebroid} over $A$ is a data $(B, \src, \tgt, \Delta, \pi)$ such that the following conditions (B1)--(B4) are satisfied:
  \begin{enumerate}
    \renewcommand{\labelenumi}{(B\arabic{enumi})}
  \item The triple $(B, \src, \tgt)$ is an $A^{\env}$-ring.
  \item The triple $(B, \Delta, \pi)$ is a coalgebra object in ${}_A \Mod_A$.
  \item The image of $\Delta$ is contained in $B \times_A B$, which is in fact a $k$-algebra by the component-wise multiplication, and the corestriction $\Delta: B \to B \times_A B$ is a homomorphism of $k$-algebras.
  \item The following equations hold:
    \begin{equation}
      \label{eq:bialgebroid-counit-1}
      \pi(1_B) = 1_A,
      \quad \pi(b \tgt(\pi(b')))
      = \pi(b b')
      = \pi(b \src(\pi(b')))
      \quad (b, b' \in B).
    \end{equation}
  \end{enumerate}
\end{definition}

Let $B = (B, \src, \tgt, \Delta, \pi)$ be a left bialgebroid over $A$.
The maps $\Delta$ and $\pi$ are called the {\em comultiplication} and the {\em counit} of $B$, respectively.
The maps $\src$ and $\tgt$ are called the {\em source} and the {\em target} map, respectively.

The condition (B2) means that $\Delta: B \to B \otimes_A B$ and $\pi: B \to A$ are homomorphisms of $A$-bimodules and the equations
\begin{gather}
  \label{eq:bialgebroid-coassoci}
  \Delta(b_{(1)}) \otimes_A b_{(2)} = b_{(1)} \otimes_A \Delta(b_{(2)}), \\
  \label{eq:bialgebroid-counital}
  \pi(b_{(1)}) \rhd b_{(2)} = b = b_{(1)} \lhd \pi(b_{(2)})
\end{gather}
hold for all $b \in B$, where $b_{(1)} \otimes_A b_{(2)} = \Delta(b)$ is the comultiplication of $b \in B$ in the Sweedler notation. In view of \eqref{eq:bialgebroid-coassoci}, we write
\begin{equation}
  \Delta(b_{(1)}) \otimes_A b_{(2)}
  = b_{(1)} \otimes_A b_{(2)} \otimes_A b_{(3)}
  = b_{(1)} \otimes_A \Delta(b_{(2)})
  \quad (b \in B).
\end{equation}
The condition (B3) means that the following equations hold:
\begin{gather}
  \label{eq:bialgebroid-comul-cross-A}
  b_{(1)} \tgt(a) \otimes_A b_{(2)} = b_{(1)} \otimes_A b_{(2)} \src(a), \\
  \Delta(b b') = b_{(1)} b'_{(1)} \otimes_A b_{(2)} b'_{(2)},
  \quad \Delta(1_B) = 1_B \otimes_A 1_B.
\end{gather}
Since the counit $\pi: B \to A$ is a morphism of $A$-bimodules, we have $\pi(\src(a)) = \pi(a \rhd 1_B) = a \pi(1_B) = a$ for all $a \in A$. The equation $\pi(\tgt(a)) = a$ for $a \in A$ is proved in a similar way. By these observations and~\eqref{eq:bialgebroid-counit-1}, we have
\begin{equation}
  \label{eq:bialgebroid-counit-2}
  \pi(b \src(a)) = \pi(b \tgt(a))
\end{equation}
for all $b \in B$ and $a \in A$. Since $\Delta$ is a morphism of $A$-bimodules, we also have
\begin{equation}
  \Delta(a \rhd b \lhd a') = (a \rhd b_{(1)}) \otimes_A (b_{(2)} \lhd a')
\end{equation}
for all $b \in B$ and $a, a' \in A$.

\subsection{Modules over a bialgebroid}
\label{subsec:bialgebroid-modules}

Let $B$ be a left bialgebroid over $A$. Then the $k$-linear category ${}_B \Mod$ of left $B$-modules is a monoidal category. To describe its monoidal structure, we first note that $B$ is an $A^{\env}$-bimodule by the left $A^{\env}$-action given by \eqref{eq:bialgebroid-Re-action-left} and the right $A^{\env}$-action $\blacktriangleleft$ given by
\begin{equation}
  \label{eq:bialgebroid-Re-action-right}
  b \blacktriangleleft (a \otimes a')
  = b \src(a) \tgt(a')
  \quad (b \in B, a \in A, a' \in A^{\op}).
\end{equation}
Thus we have a $k$-linear endofunctor $T = B \otimes_{A^{\env}}(-)$ on ${}_A \Mod_A$ ($= {}_{A^e} \Mod$). We now define $A$-bimodule maps $\mu_X$, $\eta_X$, $\delta_{X,Y}$ and $\varepsilon$ as follows:
\begin{gather*}
  \mu_X: TT(X) \to T(X),
  \quad \mu_X(b \otimes_{A^{\env}} b' \otimes_{A^{\env}} x) = b b' \otimes_{A^{\env}} x, \\
  \eta_X: X \to T(X),
  \quad \eta_X(x) = 1 \otimes_{A^{\env}} x, \\
  \delta_{X,Y}: T(X \otimes_A Y) \to T(X) \otimes_A T(Y), \\
  \quad \delta_{X,Y}(b \otimes_{A^{\env}} (x \otimes_A y))
  = (b_{(1)} \otimes_{A^{\env}} x) \otimes_A (b_{(2)} \otimes_{A^{\env}} y), \\
  \varepsilon: T(A) \to A,
  \quad \varepsilon(b \otimes_{A^{\env}} a) = \pi(b \src(a)), \\
  (X, Y \in {}_A \Mod_A, a \in A, b, b' \in B, x \in X, y \in Y).
\end{gather*}
Szlach\'anyi observed that $(T, \mu, \eta, \delta, \varepsilon)$ is a bimonad on ${}_A \Mod_A$ and, moreover, proved that any bimonad on ${}_A \Mod_A$ admitting a right adjoint is obtained from a left bialgebroid over $A$ in this way \cite[Section 4]{MR1984397}. A module over the monad $T$ constructed in the above is nothing but a left $B$-module in the usual sense. Thus ${}_B \Mod$ is a $k$-linear monoidal category as the category of modules over a $k$-linear bimonad.

\subsection{Comodules over a bialgebroid}

Since the monoidal category ${}_A \Mod_A$ acts on the category ${}_A \Mod$ from the left by $\otimes_A$, a coalgebra object in ${}_A \Mod_A$ gives rise to a comonad on ${}_A \Mod$. Let $B$ be a left bialgebroid over $A$. We define the category ${}^B \Mod$ of left $B$-comodules to be the category of comodules over the comonad on ${}_A \Mod$ arising from the coalgebra object $(B, \Delta, \pi)$ in ${}_A \Mod_A$. Stated differently,

\begin{definition}
  We regard $B$ as an $A$-bimodule by \eqref{eq:bialgebroid-Re-action-left}. A {\em left $B$-comodule} is a pair $(M, \delta_M)$ consisting of a left $A$-module $M$ and a morphism $\delta_M: M \to B \otimes_A M$ in ${}_A \Mod$ such that the equations
  \begin{gather}
    \label{eq:def-bialgebroid-comod}
    \Delta(m_{(-1)}) \otimes_A m_{(0)}
    = m_{(-1)} \otimes_A \delta_M(m_{(0)}),
    \quad \pi(m_{(-1)}) m_{(0)} = m
  \end{gather}
  hold for all $m \in M$, where $m_{(-1)} \otimes_A m_{(0)} = \delta_M(m)$. Given two left $A$-comodules $(M, \delta_M)$ and $(N, \delta_N)$, a morphism of left $A$-comodules from $(M, \delta_M)$ to $(N, \delta_N)$ is a morphism $f: M \to N$ in ${}_A \Mod$ such that the equation
  \begin{equation}
    \label{eq:def-bialgebroid-comod-morphism}
    (\id_B \otimes_A f) \circ \delta_M = \delta_N \circ f
  \end{equation}
  hold. We denote by ${}^B \Mod$ the category of left $B$-comodules and morphisms between them.
\end{definition}

The category ${}^B \Mod$ is in fact a $k$-linear monoidal category admitting a `forgetful' functor to ${}_A \Mod_A$. This fact is not obvious unlike the case of ordinary bialgebras since an object of ${}^B \Mod$ is only a left $A$-module in nature. For a left $B$-comodule $M$, Hai \cite{MR2448024} introduced the following right action of $A$:
\begin{equation}
  \label{eq:bialgebroid-comod-right-act}
  m a
  := \pi(m_{(-1)} \src(a)) m_{(0)}
  \mathop{=}^{\eqref{eq:bialgebroid-counit-2}} \pi(m_{(-1)} \tgt(a)) m_{(0)}
  \quad (m \in M, a \in A).
\end{equation}
This action is well-defined and makes $M$ an $A$-bimodule. Moreover, the image of the coaction $\delta_M: M \to B \otimes_A M$ is contained in the set
\begin{equation*}
  B \times_A M := \{ X \in B \otimes_A M \mid
  \text{$X^1 \tgt(a) \otimes_A X^2 = X^1 \otimes_A X^2 a$ for all $a \in A$} \},
\end{equation*}
where $X \in B \otimes_A M$ is written as $X = X^1 \otimes_A X^2$. In other words, we have
\begin{equation}
  \label{eq:bialgebroid-comod-1}
  m_{(-1)} \tgt(a) \otimes_A m_{(0)} = 
  m_{(-1)} \otimes_A m_{(0)} a
  \quad (m \in M, a \in A).
\end{equation}
If we make $B \otimes_A M$ an $A$-bimodule by
\begin{equation*}
  a \cdot (b \otimes_A m) \cdot a' = \src(a) b \src(a') \otimes_A m
  \quad (a, a' \in A, b \in B, m \in M),
\end{equation*}
then $B \times_A M$ is a subbimodule and the coaction $\delta_M: M \to B \times_A M$ is in fact a morphism of $A$-bimodules. Namely, we have
\begin{gather}
  \label{eq:bialgebroid-comod-2}
  \delta_M(a m a') = \src(a) m_{(-1)} \src(a') \otimes_A m_{(0)}
\end{gather}
for $a, a' \in A$ and $m \in M$. The above observation shows that our definition of a left $B$-comodule agrees with that used, {\it e.g.}, in \cite{MR1800718,MR3572267}. Namely, we have:

\begin{lemma}
  A left $B$-comodule is the same thing as an $A$-bimodule $M$ equipped with an $A$-bimodule map $\delta_M: M \to B \times_A M$ satisfying~\eqref{eq:def-bialgebroid-comod}.
\end{lemma}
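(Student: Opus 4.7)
The plan is to show that the two descriptions of a left $B$-comodule are mutually inverse, so that the ``forgetful'' passage (from an $A$-bimodule with $B$-coaction down to a mere left $A$-module with $B$-coaction) is an isomorphism of categories. The forward direction has essentially been sketched in the paragraphs preceding the statement: starting from $(M,\delta_M) \in {}^B\Mod$, the formula~\eqref{eq:bialgebroid-comod-right-act} defines a right $A$-action on $M$ commuting with the given left action; the image of $\delta_M$ lies in $B\times_A M$ by~\eqref{eq:bialgebroid-comod-1}; and $\delta_M$ is an $A$-bimodule map as in~\eqref{eq:bialgebroid-comod-2}. I would verify these three claims by unfolding Sweedler notation and invoking coassociativity~\eqref{eq:bialgebroid-coassoci}, the multiplicativity of $\Delta$, and the counit identities~\eqref{eq:bialgebroid-counit-1} and~\eqref{eq:bialgebroid-counit-2}.

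For the converse, suppose that $M$ is an $A$-bimodule equipped with an $A$-bimodule map $\delta_M : M \to B \times_A M$ satisfying~\eqref{eq:def-bialgebroid-comod}. Forgetting the right $A$-action leaves a left $A$-module with a left $A$-linear coaction obeying the same two axioms, which is a left $B$-comodule in the sense of the definition. To close the loop, I need to verify that the right $A$-action on $M$ reconstructed from $\delta_M$ via~\eqref{eq:bialgebroid-comod-right-act} agrees with the original one. The right $A$-linearity of $\delta_M$ combined with the defining $B\times_A$-condition yields
\[
\delta_M(ma) \,=\, m_{(-1)} \otimes_A m_{(0)}\, a \,=\, m_{(-1)}\tgt(a) \otimes_A m_{(0)} \qquad (m \in M,\ a \in A).
\]
Applying the counit identity $\pi(n_{(-1)})\,n_{(0)} = n$ from~\eqref{eq:def-bialgebroid-comod} with $n = ma$, and then invoking~\eqref{eq:bialgebroid-counit-2}, gives $ma = \pi(m_{(-1)}\tgt(a))\,m_{(0)} = \pi(m_{(-1)}\src(a))\,m_{(0)}$, which is exactly~\eqref{eq:bialgebroid-comod-right-act}.

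Finally, the two notions of morphism coincide: a morphism $f : M \to N$ between left $B$-comodules in the first sense is required only to be left $A$-linear and to satisfy~\eqref{eq:def-bialgebroid-comod-morphism}, but by~\eqref{eq:bialgebroid-comod-right-act} and the colinearity of $f$ one has $f(ma) = \pi(m_{(-1)}\src(a))\,f(m_{(0)}) = f(m)\,a$, so $f$ is automatically right $A$-linear. The only mildly delicate point in the whole argument is the forward direction, specifically verifying that~\eqref{eq:bialgebroid-comod-right-act} indeed defines a right $A$-action commuting with the left action; this amounts to expanding $\delta_M$ twice by coassociativity and tracking the interplay of $\src$, $\tgt$, and $\pi$, with~\eqref{eq:bialgebroid-counit-2} being the key identity that allows the $\src$ and $\tgt$ descriptions to be interchanged.
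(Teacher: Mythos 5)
Your proposal is correct in substance and follows the same route as the paper, which in fact offers no separate proof of this lemma: the statement is presented as a summary of the preceding paragraphs, where the right action \eqref{eq:bialgebroid-comod-right-act}, the containment of the image of $\delta_M$ in $B\times_A M$, and the $A$-bilinearity of $\delta_M$ are asserted (with a reference to Hai) exactly as in your forward direction. You go further than the paper by spelling out the converse (that the right action reconstructed from $\delta_M$ agrees with the given one) and by checking that the two notions of morphism coincide; both of these checks are sound and worth having, since they are what actually make the two descriptions ``the same thing.''

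One local error in your converse step: with the paper's $A$-bimodule structure on $B\otimes_A M$, right $A$-linearity of $\delta_M$ gives $\delta_M(ma)=m_{(-1)}\src(a)\otimes_A m_{(0)}$, not $m_{(-1)}\otimes_A m_{(0)}\,a$; and the element $m_{(-1)}\src(a)\otimes_A m_{(0)}$ is in general \emph{not} equal to $m_{(-1)}\tgt(a)\otimes_A m_{(0)}=m_{(-1)}\otimes_A m_{(0)}\,a$ in $B\otimes_A M$, so your displayed chain of identities fails as an identity of tensors. The argument is repaired in one line: apply the counit axiom from \eqref{eq:def-bialgebroid-comod} to $\delta_M(ma)=m_{(-1)}\src(a)\otimes_A m_{(0)}$ to get $ma=\pi\bigl(m_{(-1)}\src(a)\bigr)m_{(0)}$, which is already the first expression in \eqref{eq:bialgebroid-comod-right-act}; the identity \eqref{eq:bialgebroid-counit-2} is then only needed to match the second expression there. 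This is a repairable slip, not a gap in the approach.
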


In what follows, we always view a left $B$-comodule as an $A$-bimodule by the right action given by \eqref{eq:bialgebroid-comod-right-act}. If $M, N \in {}^B \Mod$, then the tensor product $M \otimes_A N$ is a left $A$-comodule by the coaction given by
\begin{equation*}
  m \otimes_A n \mapsto m_{(-1)} n_{(-1)} \otimes_A m_{(0)} \otimes_A n_{(0)}
  \quad (m \in M, n \in N),
\end{equation*}
which is well-defined by \eqref{eq:bialgebroid-comod-1} and~\eqref{eq:bialgebroid-comod-2}.
The left $A$-module $A$ is the `trivial' left $B$-comodule by the coaction given by $a \mapsto \src(a) \otimes_A 1$ ($a \in A$).
The category ${}^B \Mod$ is a $k$-linear monoidal category by this tensor product and the unit object $A$ \cite[Corollary 1.7.2]{MR2448024}.

The bialgebroid $B$ itself is a left $B$-comodule by the left $A$-action \eqref{eq:bialgebroid-Re-action-left} and the coaction $\Delta$. When we construct the tensor product comodule of the form $B \otimes_A M$, we must use the right $A$-action on $B$ given by \eqref{eq:bialgebroid-comod-right-act}, which is different from \eqref{eq:bialgebroid-Re-action-left} in general. As this treatment of the comodule $B$ may be confusing, we note the following lemma:

\begin{lemma}
  \label{lem:left-regular-B-comod}
  The $k$-module $C := B$ is a left $B$-comodule by
  \begin{equation*}
    a \rhd c = \src(a) c, \quad \delta_C(c) = \Delta(c) \quad (a \in A, c \in C).
  \end{equation*}
  The right $A$-action~\eqref{eq:bialgebroid-comod-right-act} on $C$, which we denote by $\prec$, is given by
  \begin{equation}
    \label{eq:bialgebroid-comod-right-act-B}
    c \prec a = c \src(a) \quad (b \in B, a \in A).
  \end{equation}
\end{lemma}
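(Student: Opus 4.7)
The plan is to break the claim into two parts: (a) the given data makes $C$ into a left $B$-comodule in the sense of the definition preceding the lemma, and (b) the induced right $A$-action computed from formula \eqref{eq:bialgebroid-comod-right-act} agrees with $c \prec a = c\,\src(a)$.

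For (a), I will simply observe that the left $A$-action $a \rhd c = \src(a) c$ on $C$ is the restriction of the left $A$-action on $B$ given by \eqref{eq:bialgebroid-Re-action-left}, and that the coaction $\delta_C = \Delta$ is a morphism in ${}_A\Mod$ because $\Delta$ is an $A$-bimodule map by (B2). The coassociativity axiom in \eqref{eq:def-bialgebroid-comod} is then exactly \eqref{eq:bialgebroid-coassoci}, and the counit axiom becomes $\src(\pi(c_{(1)})) c_{(2)} = c$, which is the first half of \eqref{eq:bialgebroid-counital}. So (a) is immediate from the axioms.

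For (b), the main computation is to identify $c \prec a$ with $c\,\src(a)$. I will first compute $\Delta(c\,\src(a))$ using that $\Delta$ is multiplicative (by (B3)) together with $\Delta(\src(a)) = \src(a) \otimes_A 1_B$ (which follows from $\Delta$ being a left $A$-module map: $\Delta(\src(a) \cdot 1_B) = \src(a) \rhd \Delta(1_B) = \src(a) \otimes_A 1_B$). This gives
\begin{equation*}
  \Delta(c\,\src(a)) = c_{(1)}\src(a) \otimes_A c_{(2)}.
\end{equation*}
Then applying the counit identity $\src(\pi(b_{(1)})) b_{(2)} = b$ from \eqref{eq:bialgebroid-counital} to $b = c\,\src(a)$, I get
\begin{equation*}
  c\,\src(a) = \src(\pi(c_{(1)}\src(a)))\, c_{(2)} = \pi(c_{(1)}\src(a)) \rhd c_{(2)},
\end{equation*}
which is exactly $c \prec a$ by the definition \eqref{eq:bialgebroid-comod-right-act}.

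There is no real obstacle here; the only mildly subtle point is keeping track of which left $A$-action on $B$ is being used (the one from \eqref{eq:bialgebroid-Re-action-left}, namely $a \rhd b = \src(a) b$), so that the symbol $\pi(m_{(-1)}) m_{(0)}$ in the counit axiom is correctly interpreted as $\src(\pi(c_{(1)})) c_{(2)}$ rather than as ordinary multiplication. Once that is fixed, both parts reduce to a one-line application of the bialgebroid axioms.
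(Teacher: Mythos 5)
Your proposal is correct. Part (a) is handled the same way as in the paper (the comodule axioms for $(C,\Delta)$ are literally the coalgebra axioms \eqref{eq:bialgebroid-coassoci} and \eqref{eq:bialgebroid-counital}, read with the left action $a \rhd c = \src(a)c$). For part (b) you argue in the opposite direction from the paper and with different axioms: the paper starts from $c \prec a = \pi(c_{(1)}\src(a)) \rhd c_{(2)}$ and rewrites it as $c\,\src(a)$ by first trading $\src(a)$ for $\tgt(a)$ inside $\pi$ via \eqref{eq:bialgebroid-counit-2} and then sliding $\tgt(a)$ across the tensor symbol to $\src(a)$ on the second leg via the Takeuchi condition \eqref{eq:bialgebroid-comul-cross-A}, before applying counitality; you instead start from $c\,\src(a)$, compute $\Delta(c\,\src(a)) = c_{(1)}\src(a)\otimes_A c_{(2)}$ from the multiplicativity of $\Delta$ (B3) together with $\Delta(\src(a)) = \src(a)\otimes_A 1_B$, and then apply counitality to land on the defining formula \eqref{eq:bialgebroid-comod-right-act}. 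Both are valid one-line verifications; your route leans on (B3) where the paper leans on the $\src$/$\tgt$ exchange identities, and your explicit warning about which left $A$-action interprets $\pi(m_{(-1)})m_{(0)}$ is exactly the point the paper's choice of notation $\rhd$ is meant to guard.
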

\begin{proof}
  It is trivial from the definition of bialgebroids that $C$ is a left $B$-comodule as stated. For the right action of $A$, we compute
  \begin{equation*}
    c \prec a
    \mathop{=}^{\eqref{eq:bialgebroid-comod-right-act}}
    \pi(c_{(1)} \src(a)) \rhd c_{(2)}
    \mathop{=}^{\eqref{eq:bialgebroid-counit-1}}
    \pi(c_{(1)} \tgt(a)) \rhd c_{(2)}
    \mathop{=}^{\eqref{eq:bialgebroid-comul-cross-A}}
    \pi(c_{(1)}) \rhd c_{(2)} \src(a)
    \mathop{=}^{\eqref{eq:bialgebroid-counital}} c \src(a)
  \end{equation*}
  for $a \in A$ and $c \in C$. The proof is done.
\end{proof}

We have introduced the category of $T$-comodules for a bimonad $T$. We justify our terminology by proving the following theorem:

\begin{theorem}
  \label{thm:T-comod-is-B-comod}
  Let $B$ be a left bialgebroid over $A$, and let $T = B \otimes_{A^{\env}} (-)$ be the corresponding bimonad on ${}_A \Mod_A$. Then there is an isomorphism
  \begin{equation}
    \lcomod{T} \cong {}^B \Mod
  \end{equation}
  of $k$-linear monoidal categories.
\end{theorem}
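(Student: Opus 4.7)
The plan is to exhibit an explicit monoidal functor $\Phi\colon {}^B\Mod \to \lcomod{T}$ and prove it is an isomorphism of monoidal categories by constructing the inverse directly. Given a left $B$-comodule $(M,\delta_M)$ with $\delta_M(m)=m_{(-1)}\otimes_A m_{(0)}$, define
\[
  \rho_M(X)\colon M \otimes_A X \to T(X)\otimes_A M,
  \qquad m \otimes_A x \mapsto (m_{(-1)} \otimes_{A^{\env}} x)\otimes_A m_{(0)}
\]
for each $X \in {}_A\Mod_A$. First I would check this is well-defined over $\otimes_A$ using the Takeuchi-product condition~\eqref{eq:bialgebroid-comod-1}, and that it is an $A$-bimodule map using~\eqref{eq:bialgebroid-comod-2} together with the formula~\eqref{eq:bialgebroid-comod-right-act} for the right $A$-action on $M$. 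Naturality in $X$ is immediate from the formula.

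Next I would verify the $T$-comodule axioms for $\rho_M$. The coassociativity axiom~\eqref{eq:T-comod-def-1} translates, via the explicit formula $\delta_{X,Y}(b \otimes_{A^{\env}}(x\otimes_A y)) = (b_{(1)}\otimes_{A^{\env}} x)\otimes_A (b_{(2)}\otimes_{A^{\env}} y)$ from Subsection~\ref{subsec:bialgebroid-modules}, into the bialgebroid coassociativity~\eqref{eq:def-bialgebroid-comod}. The counit axiom~\eqref{eq:T-comod-def-2}, expanded using $\varepsilon(b\otimes_{A^{\env}} a) = \pi(b\src(a))$, reduces to the $B$-counit identity $\pi(m_{(-1)})m_{(0)} = m$. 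On morphisms, it is clear that a map $f\colon M\to N$ is $B$-colinear in the sense of~\eqref{eq:def-bialgebroid-comod-morphism} if and only if it intertwines $\rho_M$ and $\rho_N$ in the sense of Definition~\ref{def:left-Z-lax-F-G}, so $\Phi$ is a faithful functor. Monoidality of $\Phi$ is checked by comparing~\eqref{eq:T-comod-tensor-def} with the tensor product of $B$-comodules: the factor $\mu_X$ matches the multiplication of the first Sweedler legs $m_{(-1)}n_{(-1)}$, and the unit object $(\unitobj,\eta)$ of~\eqref{eq:T-comod-unit-def} corresponds to the trivial $B$-coaction $a\mapsto \src(a)\otimes_A 1$ on $A$.

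For the inverse, given $(M,\rho_M)\in\lcomod{T}$, I would define a candidate coaction
\[
  \delta_M\colon M \to B\otimes_A M,
  \qquad \delta_M(m) \;=\; (\mathrm{mult}_B\otimes\id_M)\bigl(\rho_M(B)(m\otimes_A 1_B)\bigr),
\]
where $B$ is regarded as an $A$-bimodule via~\eqref{eq:bialgebroid-Re-action-left} and $\mathrm{mult}_B\colon T(B)=B\otimes_{A^{\env}} B\to B$ is the multiplication (well-defined because $B$ is an $A^{\env}$-ring). Using the $A$-bimodule structure on $T(B)\otimes_A M$ together with the naturality of $\rho_M$ applied to the $A$-bimodule maps $\src,\tgt\colon A\to B$ (composed into bimodule maps $A\otimes_k A\to B$), I would show $\delta_M$ takes values in $B\times_A M$ and is $A$-bilinear for the right action~\eqref{eq:bialgebroid-comod-right-act}. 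The coassociativity and counit axioms for $\delta_M$ are then pulled back from~\eqref{eq:T-comod-def-1}--\eqref{eq:T-comod-def-2} by specialising to $X=Y=B$ and using the associativity and unitality of $\mathrm{mult}_B$.

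The two constructions are mutually inverse by direct inspection: $\Psi\circ\Phi$ recovers $\delta_M$ because $\mathrm{mult}_B(m_{(-1)}\otimes_{A^{\env}} 1_B)=m_{(-1)}$, and $\Phi\circ\Psi=\id$ follows because both $M\otimes_A(-)$ and $T(-)\otimes_A M$ preserve colimits in $X$, so a natural transformation between them is determined by its components on a generator of ${}_A\Mod_A$; one then checks that evaluating the $\rho_M$ built from $\delta_M = \Psi(\rho_M)$ at $X=B$ and $x=1_B$ returns the original $\rho_M(B)(m\otimes 1_B)$, whence equality at general $X$ by naturality and cocontinuity. \textbf{Main obstacle:} the delicate step is precisely this last naturality/cocontinuity argument, because $1_B\in B$ is not $A$-central and hence $m\mapsto m\otimes_A 1_B$ is not a bimodule map; one must either replace it by a genuine bimodule map out of a free $A$-bimodule presentation of $B$ (using $B = \bigcup \src(A)\cdot b\cdot \tgt(A)$-spans) or argue via Theorem~\ref{thm:T-comod-and-lax-centralizer} by identifying $\lcomod{T}$ with $\lZlax(U_T)$ for $U_T\colon {}_B\Mod\to {}_A\Mod_A$ and evaluating at the left regular $B$-module, where the freeness of $B$ over itself makes the bimodule linearity transparent.
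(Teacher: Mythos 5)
Your plan is essentially correct, but it takes a genuinely different route from the paper. The paper's proof is a two-line reduction: it quotes Schauenburg's result that ${}^B\Mod \cong \lZlax(U)$ for the forgetful functor $U: {}_B\Mod \to {}_A\Mod_A$ (\cite[Proposition 5.1]{MR3572267}) and combines it with Theorem~\ref{thm:T-comod-and-lax-centralizer}, which identifies $\lcomod{T}$ with $\lZlax(U)$ via the comonoidal adjunction of Lemma~\ref{lem:Z-lax-and-comonoidal-adj}. That is exactly your ``option (b)'' at the end, and it is the route that makes the bimodule-linearity issues invisible. Your primary route --- writing down $\Phi$ explicitly (your formula for $\rho_M$ is precisely the one the paper records in Remark~\ref{rem:T-comod-is-B-comod}) and inverting it by hand --- is viable and has the virtue of producing the correspondence concretely without citing Schauenburg, at the cost of the determination argument you correctly flag as the delicate step.

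On that step, one concrete repair: your choice of test object $X=B$ with the element $1_B$ and the map $\mathrm{mult}_B$ creates avoidable friction. Since $\mathrm{mult}_B: T(B) \to B$ is not injective, knowing $(\mathrm{mult}_B\otimes\id)\rho_M(B)(m\otimes_A 1_B)$ does not by itself recover $\rho_M(B)(m\otimes_A 1_B)$; moreover the elements $m\otimes_A 1_B$ generate only part of $M\otimes_A B$ under the $A$-actions. The clean fix is to evaluate at the projective generator $A^{\env}$ instead: $T(A^{\env})\cong B$ canonically, the elements $m\otimes_A(1_A\otimes_k 1_{A^{\op}})$ do generate $M\otimes_A A^{\env}$ as a right $A$-module, so $\rho_M(A^{\env})$ is determined by $\delta_M$, and then $\rho_M(X)$ for arbitrary $X$ is determined by naturality and right-exactness of both $M\otimes_A(-)$ and $T(-)\otimes_A M$ applied to a free presentation $\bigoplus A^{\env}\to\bigoplus A^{\env}\to X\to 0$. (Your $\delta_M$ agrees with this one, by naturality of $\rho_M$ along the bimodule map $A^{\env}\to B$, $a\otimes a'\mapsto \src(a)\tgt(a')$.) With that substitution the remaining verifications you list --- well-definedness via \eqref{eq:bialgebroid-comod-1}, the comodule axioms via \eqref{eq:def-bialgebroid-comod}, and monoidality via \eqref{eq:T-comod-tensor-def} against the Sweedler-leg multiplication --- go through as you describe.
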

\begin{proof}
  Let ${}_B \Mod \to {}_A \Mod_A$ be the forgetful functor. Then ${}^B\Mod$ is isomorphic to $\lZlax(U)$ as a monoidal category \cite[Proposition 5.1]{MR3572267}, and hence $\lcomod{T}$ is isomorphic to ${}^B\Mod$ by Theorem~\ref{lem:Z-lax-and-comonoidal-adj}.
\end{proof}

\begin{remark}
  \label{rem:T-comod-is-B-comod}
  The object of $\lcomod{T}$ corresponding to $M \in {}^B\Mod$ is the pair $(M, \rho_M)$, where $\rho_M(X)$ for $X \in {}_A \Mod_A$ is the $A$-bimodule homomorphism
  \begin{equation}
    \label{eq:T-comod-is-B-comod-pf-1}
    \begin{aligned}
      \rho_M(X) : M \otimes_A X & \to T(X) \otimes_A M, \\
      m \otimes_A x & \mapsto (m_{(-1)} \otimes_{A^{\env}} x) \otimes_A m_{(0)}.
    \end{aligned}
  \end{equation}
\end{remark}

\subsection{Hopf algebroids}

Given a left bialgebroid $B$ over $A$, we define
\begin{equation}
  \label{eq:bialgebroid-Galois-map}
  \beta: B_{\tgt} \mathop{\otimes}_{A^{\op}} {}_{\tgt} B \to {}_{\tgt} B \mathop{\otimes}_A {}_{\src} B,
  \quad b \mathop{\otimes}_{A^{\op}} b' \mapsto b_{(1)} \mathop{\otimes}_A b_{(2)} b'.
\end{equation}
Here, ${}_{\tgt}B$ is the $k$-module $B$ regraded as a left $A^{\op}$-module through $\tgt$, which is also regarded as a right $A$-module in an obvious way. The right $A^{\op}$-module $B_{\tgt}$ and the left $A$-module ${}_{\src}B$ are defined analogously.
The map $\beta$ is called the {\em Galois map}.

\begin{definition}[Schauenburg \cite{MR1800718}]
  A {\em left Hopf algebroid} over $A$ is a left bialgebroid over $A$ whose Galois map is invertible.
\end{definition}

Let $T$ be the bimonad on ${}_A \Mod_A$ associated to a left bialgebroid $B$. As remarked in \cite{MR2793022}, the Galois map~\eqref{eq:bialgebroid-Galois-map} is identified with the left fusion operator $H^{\ell}_{M,N}$ of $T$ at $M = N = A^{\env}$. Since $A^{\env}$ is a projective generator of ${}_A \Mod_A$, we see that $B$ is a left Hopf algebroid if and only if $T$ is a left Hopf monad on ${}_A \Mod_A$ \cite[Proposition 7.2]{MR2793022}.

We warn that the above definition is not equivalent to the antipode style definition of Hopf algebroids given in \cite{MR2553659}. A concrete counterexample ({\it i.e.}, a left Hopf algebroid in the above sense which is not a Hopf algebroid in the sense of \cite{MR2553659}) is given in \cite{MR3366558}.

\subsection{Universal R-forms}

A coquasitriangular bialgebra over $k$ is a bialgebra $B$ equipped with a universal R-form $B \times B \to k$. The conditions required for a universal R-form ensure that the category of comodules over $B$ has a braiding. We extend this kind of notion to bialgebroids as follows:

\newcommand{\rform}{\mathbf{r}}

\begin{definition}
  \label{def:univ-R}
  Let $B$ be a left bialgebroid over $A$. A {\em lax universal R-form} on $B$ is a $k$-bilinear map $\rform: B \times B \to A$ satisfying the equations
  \begin{gather}
    \label{eq:def-univ-R-1}
    \rform(x, \src(a)y) = a \rform(x, y), \quad
    \rform(\tgt(a) x, y) =  \rform(x, y) a, \\
    \label{eq:def-univ-R-2}
    \rform(x, \tgt(a) y) = \rform(x \tgt(a), y), \quad
    \rform(\src(a) x, y) = \rform(x, y \src(a)), \\
    % \label{eq:def-univ-R-3}
    % \rform(x, y \tgt(a)) = \rform(x \src(a), y), \\
    \label{eq:def-univ-R-4}
    \rform_{\src}(x_{(1)}, y_{(1)}) x_{(2)} y_{(2)}
    = \rform_{\tgt}(x_{(2)}, y_{(2)}) y_{(1)} x_{(1)}, \\
    \label{eq:def-univ-R-5}
    \rform(x, yz) = \rform(\rform_{\src}(x_{(1)}, z) x_{(2)}, y), \\
    \label{eq:def-univ-R-6}
    \rform(x y, z) = \rform(x, \rform_{\tgt}(y, z_{(2)}) z_{(1)}), \\
    \label{eq:def-univ-R-7}
    \rform(1, x) = \pi(x) = \rform(x, 1)
  \end{gather}
  for $a \in A$ and $x, y, z \in B$, where $\rform_{\src} = \src \circ \rform$ and $\rform_{\tgt} = \tgt \circ \rform$.
  A {\em universal R-form} on $B$ is a lax universal R-form $\rform$ on $B$ such that there exists a $k$-bilinear map $\overline{\rform} : B \times B \to A$ satisfying equations analogous to \eqref{eq:def-univ-R-1} and \eqref{eq:def-univ-R-2}, and
  \begin{gather}
    \label{eq:def-strong-univ-R}
    \rform(x_{(1)}, y_{(1)}) \, \overline{\rform}(y_{(2)}, x_{(2)})
    = \pi(x y)
    = \overline{\rform}(x_{(1)}, y_{(1)}) \, \rform(y_{(2)}, x_{(2)})
  \end{gather}
  for all $x, y \in B$.
  A (lax) {\em coquasitriangular left bialgebroid} is a left bialgebroid equipped with a (lax) universal R-form.
\end{definition}

We shall check the well-definedness of some expressions. Let $B$ be a left bialgebroid over $A$, and let $\rform: B \times B \to A$ be a $k$-bilinear map satisfying the equations \eqref{eq:def-univ-R-1} and \eqref{eq:def-univ-R-2}. For all $x_1, x_2, y \in B$ and $a \in A$, we have
\begin{equation*}
  \rform_{\src}(\tgt(a) x_1, y) x_2
  \mathop{=}^{\eqref{eq:def-univ-R-1}}
  \rform_{\src}(x_1, y) \src(a) x_2, \quad
  \rform_{\tgt}(y, \src(a) x_2) x_1
  \mathop{=}^{\eqref{eq:def-univ-R-1}}
  \rform_{\tgt}(y, x_2) \tgt(a) x_1.
\end{equation*}
Thus the expressions $\sum_i \rform_{\src}(x_i', y) x_i''$ and $\sum_i \rform_{\tgt}(y, x_i'') x_i'$ are well-defined for $y \in B$ and $\sum_i x_i \otimes_A x_i'' \in B \otimes_A B$. Hence the right-hand side of~\eqref{eq:def-univ-R-5} and that of \eqref{eq:def-univ-R-6} are well-defined. For all $x, y, y' \in B$ and $a \in A$, we have
\begin{equation*}
  \rform_{\src}(x_{(1)}, \tgt(a) y) x_{(2)} y'
  \mathop{=}^{\eqref{eq:def-univ-R-2}}
  \rform_{\src}(x_{(1)} \tgt(a), y) x_{(2)} y'
  \mathop{=}^{\eqref{eq:bialgebroid-comul-cross-A}} \rform_{\src}(x_{(1)}, y) x_{(2)} \src(a) y'.
\end{equation*}
This shows that the left-hand side of \eqref{eq:def-univ-R-4} is well-defined. We omit the verification of the well-definedness of other expressions in the definition.

\begin{theorem}
  \label{thm:univ-R-lax-braiding}
  Let $B$ be a left bialgebroid over $A$, and let $\rform: B \times B \to A$ be a $k$-bilinear map satisfying \eqref{eq:def-univ-R-1} and \eqref{eq:def-univ-R-2}. Then, for all left $B$-comodules $M$ and $N$, there is the following well-defined left $A$-linear map:
  \begin{equation}
    \label{eq:univ-R-lax-braiding}
    \sigma^{\rform}_{M,N}: M \otimes_A N \to N \otimes_A M,
    \quad m \otimes_A n \mapsto \rform(n_{(-1)}, m_{(-1)}) n_{(0)} \otimes_A m_{(0)}.
  \end{equation}
  The family $\{ \sigma^{\rform}_{M,N} \}$ of $A$-linear maps is a lax braiding on ${}^B\Mod$ if and only if $\rform$ is a lax universal R-form on $B$. If $\rform$ is a universal R-form, then the lax braiding $\sigma^{\rform}$ is invertible, {\it i.e.}, it is a braiding.
\end{theorem}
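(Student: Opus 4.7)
The strategy unfolds in three stages: well-definedness and $A$-linearity of $\sigma^{\rform}_{M,N}$ from the hypotheses~\eqref{eq:def-univ-R-1}--\eqref{eq:def-univ-R-2}; a term-by-term correspondence between the lax-braiding axioms and the R-form axioms~\eqref{eq:def-univ-R-4}--\eqref{eq:def-univ-R-7}; and invertibility when $\overline{\rform}$ exists. For the first stage, combining Hai's right $A$-action~\eqref{eq:bialgebroid-comod-right-act} with the bimodule form~\eqref{eq:bialgebroid-comod-2} of the coaction shows that the $\otimes_A$-balance $ma \otimes_A n = m \otimes_A an$ translates into $\rform(n_{(-1)}, m_{(-1)} \src(a)) = \rform(\src(a) n_{(-1)}, m_{(-1)})$, which is precisely~\eqref{eq:def-univ-R-2}; left $A$-linearity then becomes~\eqref{eq:def-univ-R-1}. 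Naturality of the family in both variables is immediate from the definition of morphisms in ${}^B \Mod$.

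For the second stage, I would unpack each lax-braiding axiom by combining the tensor-product coaction $\delta_{M \otimes_A N}(m \otimes_A n) = m_{(-1)} n_{(-1)} \otimes_A (m_{(0)} \otimes_A n_{(0)})$ with iterated coassociativity $\Delta(m_{(-1)}) \otimes_A m_{(0)} = m_{(-1)} \otimes_A \delta_M(m_{(0)})$. A direct calculation for arbitrary $M, N, L \in {}^B\Mod$ then shows that the colinearity of $\sigma^{\rform}_{M, N}$ corresponds to~\eqref{eq:def-univ-R-4}, the hexagon~\eqref{eq:def-lax-br-1} to~\eqref{eq:def-univ-R-5}, the hexagon~\eqref{eq:def-lax-br-2} to~\eqref{eq:def-univ-R-6}, and the unit axiom~\eqref{eq:def-lax-br-3} to~\eqref{eq:def-univ-R-7}. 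This establishes the forward direction. For the converse, I would specialize each identity to $M = L = N = B$, the left-regular $B$-comodule of Lemma~\ref{lem:left-regular-B-comod} (whose coaction is $\Delta$), and use~\eqref{eq:bialgebroid-counital} to strip off the extra tensor factors, thereby recovering~\eqref{eq:def-univ-R-4}--\eqref{eq:def-univ-R-7} as identities in $B$ (or $A$).

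Finally, for invertibility under the hypothesis that $\overline{\rform}$ exists, the candidate inverse $(\sigma^{\rform})^{-1}_{M, N}: N \otimes_A M \to M \otimes_A N$ is a map of the same general form as $\sigma^{\rform}$ but built from $\overline{\rform}$; its well-definedness and $A$-linearity use the analogues of~\eqref{eq:def-univ-R-1}--\eqref{eq:def-univ-R-2} assumed for $\overline{\rform}$, and both composition identities $(\sigma^{\rform})^{-1} \circ \sigma^{\rform} = \id$ and $\sigma^{\rform} \circ (\sigma^{\rform})^{-1} = \id$ follow by combining the two equations of~\eqref{eq:def-strong-univ-R} with the counital axiom~\eqref{eq:bialgebroid-counital} on the tensor-product $B$-comodules $M \otimes_A N$ and $N \otimes_A M$. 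The main technical obstacle throughout is the careful bookkeeping of scalars wrapped by $\src$ and $\tgt$ as they are moved across the $\otimes_A$-balance via~\eqref{eq:def-univ-R-1}--\eqref{eq:def-univ-R-2}; in particular, because $\pi$ is not multiplicative on $B$, every reduction of a $\pi(xy)$-type expression must proceed through the tensor-product counital axiom on the combined $B$-comodule rather than by a componentwise collapse.
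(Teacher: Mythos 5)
Your proposal is correct and follows essentially the same route as the paper: well-definedness and $A$-linearity from \eqref{eq:def-univ-R-1}--\eqref{eq:def-univ-R-2}, a direct axiom-by-axiom match between the lax-braiding conditions and \eqref{eq:def-univ-R-4}--\eqref{eq:def-univ-R-7}, the converse by specializing to the left-regular comodule of Lemma~\ref{lem:left-regular-B-comod} and stripping tensor factors with the counit, and invertibility via $\overline{\rform}$ and \eqref{eq:def-strong-univ-R}. The only slight mismatch is in your diagnosis of the technical obstacle: the issue is not that $\pi$ fails to be multiplicative but that $\pi: C \to A$ is only left $A$-linear for the comodule right action \eqref{eq:bialgebroid-comod-right-act-B}, so the maps $\id \otimes_A \pi$ must be applied to the rightmost factor first --- which is in substance the same careful order of operations you describe.
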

\begin{proof}
  Let $M$ and $N$ be left $B$-comodules. We first check that the map $\sigma_{M,N}^{\rform}$ is well-defined. For all $a \in A$, $b, b' \in B$, $m \in M$ and $n \in N$, we have
  \begin{equation*}
    \rform(\tgt(a) b, b') n \otimes_A m
    = \rform(b, b') a n \otimes_A m
  \end{equation*}
  by \eqref{eq:def-univ-R-1}. This implies that the expression $\sum_{i} \rform(b_i, b) n_i \otimes_A m$ is well-defined for all elements $b \in B$, $m \in M$ and $\sum_{i} b_i \otimes_A n_i \in B \otimes_A N$. Thus we have
  \begin{align*}
    \rform(n_{(-1)}, \tgt(a) b)n_{(0)} \otimes_A m
    & \mathop{=}^{\eqref{eq:def-univ-R-2}} \rform(n_{(-1)} \tgt(a), b)n_{(0)} \otimes_A m \\
    & \mathop{=}^{\eqref{eq:bialgebroid-comod-1}} \rform(n_{(-1)}, b)n_{(0)} a \otimes_A m
      \mathop{=} \rform(n_{(-1)}, b)n_{(0)} \otimes_A a m
  \end{align*}
  for all $a \in A$, $b \in B$, $n \in N$ and $m \in M$. Hence the map $\sigma^{\rform}_{M,N}$ is defined by the formula \eqref{eq:univ-R-lax-braiding}. The $A$-linearity of this map is shown as follows:
  \begin{align*}
    \sigma^{\rform}_{M,N}(a m \otimes_A n)
    & \mathop{=}^{\eqref{eq:bialgebroid-comod-2}} \rform(n_{(-1)}, \src(a) m_{(-1)}) n_{(0)} \otimes_A m_{(0)} \\
    & \mathop{=}^{\eqref{eq:def-univ-R-1}} a \rform(n_{(-1)}, m_{(-1)}) n_{(0)} \otimes_A m_{(0)}
    = a \sigma^{\rform}_{M,N}(m \otimes_A n)
  \end{align*}
  for $a \in A$, $m \in M$ and $n \in N$. 

  We examine when $\sigma^{\rform}_{M,N}$ is a morphism of $B$-comodules for all left $B$-comodules $M$ and $N$. Given a symbol $X$ expressing a left $B$-comodule, we denote the coaction of $B$ on $X$ by $\delta_X$. Now we set
  $F := (\id_B \otimes_A \sigma^{\rform}_{M,N})\delta_{M \otimes_A N}^{}$
  and $G := \delta_{N \otimes_A M}^{} \sigma^{\rform}_{M,N}$
  for simplicity of notation. Then we have
  \begin{align*}
    F(m \otimes_A n)
    & = m_{(-1)} n_{(-1)} \otimes_A \sigma^{\rform}_{M,N}(m_{(0)} \otimes_A n_{(0)}) \\
    {}^{\eqref{eq:univ-R-lax-braiding}}
    & = m_{(-2)} n_{(-2)} \otimes_A \rform(n_{(-1)}, m_{(-1)}) n_{(0)} \otimes_A m_{(0)} \\
    {}^{\eqref{eq:bialgebroid-Re-action-left}}
    & = \rform_{\tgt}(n_{(-1)}, m_{(-1)}) m_{(-2)} n_{(-2)} \otimes_A n_{(0)} \otimes_A m_{(0)}, \\
    G(m \otimes_A n)
    & = \delta_{N \otimes_A M}^{}(\rform(n_{(-1)}, m_{(-1)}) n_{(0)} \otimes_A m_{(0)}) \\
    {}^{\eqref{eq:bialgebroid-comod-2}}
    & = \rform_{\src}(n_{(-1)}, m_{(-1)}) (n_{(0)})_{(-1)} (m_{(-1)})_{(-1)} \otimes_A (n_{(0)})_{(0)} \otimes_A (m_{(0)})_{(0)} \\
    & = \rform_{\src}(n_{(-2)}, m_{(-2)}) n_{(-1)} m_{(-1)} \otimes_A n_{(0)} \otimes_A m_{(0)}
  \end{align*}
  in $B \otimes_A M \otimes_A N$ for $m \in M$ and $n \in N$. Thus, if $\rform$ satisfies \eqref{eq:def-univ-R-4}, then the map $\sigma_{M,N}^{\rform}$ is a morphism of $B$-comodules for all $M$ and $N$.

  In view of the above argument, we assume that $\rform$ satisfies \eqref{eq:def-univ-R-4}. Then we may ask whether the family $\sigma^{\rform} = \{ \sigma^{\rform}_{M,N} \}$ of morphisms in ${}^B\Mod$ is a lax braiding of ${}^B\Mod$ or not. It is obvious that $\sigma^{\rform}$ is a natural transformation. Now let $L$, $M$ and $N$ be left $B$-comodules. Then we have
  \begin{align*}
    \sigma^{\rform}_{L, M \otimes_A N}(\ell \otimes_A m \otimes_A n)
    & = \rform(m_{(-1)} n_{(-1)}, \ell_{(-1)}) m_{(0)} \otimes_A n_{(0)} \otimes_A \ell_{(0)}, \\
    \sigma^{\rform}_{L \otimes_A M, N}(\ell \otimes_A m \otimes_A n)
    & = \rform(n_{(-1)}, \ell_{(-1)} m_{(-1)}) n_{(0)} \otimes_A \ell_{(0)} \otimes_A m_{(0)}
  \end{align*}
  for all $\ell \in L$, $m \in M$, $n \in N$. We also have     \allowdisplaybreaks
  \begin{align*}
    (\id_M \otimes_A \sigma^{\rform}_{L,N})
    & (\sigma^{\rform}_{L,M} \otimes_A \id_N)(\ell \otimes_A m \otimes_A n) \\
    & = \rform(m_{(-1)}, \ell_{(-1)}) m_{(0)} \otimes_A
      \sigma^{\rform}_{L,N}(\ell_{(0)} \otimes_A n) \\
    & = \rform(m_{(-1)}, \ell_{(-2)}) m_{(0)} \otimes_A
      \rform(n_{(-1)}, \ell_{(-1)}) n_{(0)} \otimes_A \ell_{(0)} \\
    & = \rform(m_{(-1)}, \ell_{(-2)}) m_{(0)} \rform(n_{(-1)}, \ell_{(-1)})
      \otimes_A n_{(0)} \otimes_A \ell_{(0)} \\
    {}^{\eqref{eq:bialgebroid-comod-1}}
    & = \rform(m_{(-1)} \rform_{\tgt}(n_{(-1)}, \ell_{(-1)}), \ell_{(-2)}) m_{(0)}
      \otimes_A n_{(0)} \otimes_A \ell_{(0)}, \\
    (\sigma^{\rform}_{L,N} \otimes_A \id_M)
    & (\id_L \otimes_A \sigma^{\rform}_{M,N}) (\ell \otimes_A m \otimes_A n) \\
    & = \sigma^{\rform}_{L,N}(\ell \otimes_A \rform(n_{(-1)}, m_{(-1)}) n_{(0)}) \otimes_A m_{(0)} \\
    {}^{\eqref{eq:bialgebroid-comod-2}}
    & = \rform(\rform_{\src}(n_{(-2)}, m_{(-1)}) n_{(-1)}, \ell_{(-1)}) n_{(0)} \otimes_A \ell_{(0)} \otimes_A m_{(0)}.
  \end{align*}
  for all $\ell \in L$, $m \in M$, $n \in N$. Finally, we have
  \begin{gather*}
    \allowdisplaybreaks
    \sigma_{M,A}^{\rform}(m \otimes_A a)
    = \sigma_{M,A}^{\rform}(m a \otimes_A 1)
    = \rform(1, (ma)_{(-1)}) \otimes_A (ma)_{(0)}, \\
    \sigma_{A,M}^{\rform}(a \otimes_A m)
    = \sigma_{M,A}^{\rform}(1 \otimes_A a m)
    = \rform((am)_{(-1)}, 1) (a m)_{(0)} \otimes_A 1
  \end{gather*}
  for all $a \in A$ and $m \in M$. Thus $\sigma^{\rform}$ is a lax braiding on ${}^B\Mod$ if $\rform$ satisfies equations \eqref{eq:def-univ-R-1}--\eqref{eq:def-univ-R-7}, that is, if it is a lax universal R-form.

  To prove the converse, we assume that $\sigma^{\rform}$ is a lax braiding of ${}^B\Mod$. Let $C := B$ be the left $B$-comodule given in Lemma~\ref{lem:left-regular-B-comod}. By considering the case where $L = M = N = C$ in the above computation, we obtain the equations
  \begin{align}
    \label{eq:univ-R-pf-1}
    & \rform_{\tgt}(x_{(2)}, y_{(2)}) y_{(1)} x_{(1)} \otimes_A x_{(3)} \otimes_A y_{(3)} \\
    \notag
    & \qquad = \rform_{\src}(x_{(1)}, y_{(1)}) x_{(2)} y_{(2)} \otimes_A x_{(3)} \otimes_A y_{(3)}
    & \text{(in $B \otimes_A C \otimes_A C$)}, \\
    \label{eq:univ-R-pf-2}
    & \rform_{\src}(x_{(1)}, y_{(1)} z_{(1)}) x_{(2)} \otimes_A y_{(2)} \otimes_A z_{(2)} \\
    \notag
    & \quad = \rform_{\src}(\rform_{\src}(x_{(1)}, z_{(1)}) x_{(2)}, y_{(1)}) x_{(3)} \otimes_A y_{(1)} \otimes_A z_{(1)}
    & \text{(in $C \otimes_A C \otimes_A C$)}, \\
    \label{eq:univ-R-pf-3}
    & \rform_{\src}(x_{(1)} y_{(1)}, z_{(1)}) x_{(2)} \otimes_A y_{(2)} \otimes_A z_{(2)} \\
    \notag
    & \quad = \rform_{\src}(x_{(1)} \rform_{\tgt}(y_{(1)}, z_{(2)}), z_{(1)}) x_{(2)} \otimes_A y_{(2)} \otimes_A z_{(3)}
    & \text{(in $C \otimes_A C \otimes_A C$)}, \\
    \label{eq:univ-R-pf-4}
    & \rform_{\src}(1, x_{1})x_{(2)} = x = \rform_{\src}(x_{1}, 1) x_{(2)}
  \end{align}
  for all $x, y, z \in C$.

  Now we demonstrate that these equations imply that $\mathbf{r}$ is a lax universal R-form on $B$.
  As a first step, we deduce \eqref{eq:def-univ-R-4} from \eqref{eq:univ-R-pf-1}.
  Naively, this is done by applying the map `$\id_B \otimes_A \pi \otimes_A \pi$' to the both sides of  \eqref{eq:univ-R-pf-1}. The problem is that the map $\pi : C \to A$ is only a left $A$-module map in general (see Lemma~\ref{lem:left-regular-B-comod} for the right action of $A$ on $C$) and therefore `$\id_B \otimes_A \pi \otimes_A \pi$' does not make sense. Thus, instead, we apply the map
  \begin{equation*}
    \setlength{\arraycolsep}{2pt}
    \begin{array}{ccccc}
      B \otimes_A C \otimes_A C
      & \xrightarrow{\makebox[8em]{\scriptsize $\id_B \otimes_A \id_C \otimes_A \pi$}}
      & B \otimes_A C \otimes_A A
      & \xrightarrow{\quad \cong \quad}
      & B \otimes_A C \\[5pt]
      & \xrightarrow{\makebox[8em]{\scriptsize $\id_B \otimes_A \pi$}}
      & B \otimes_A A
      & \xrightarrow{\quad \cong \quad} & B      
    \end{array}
  \end{equation*}
  to the both sides of \eqref{eq:univ-R-pf-1}.

  For simplicity, we set $\pi_{\src} = \src \circ \pi$ and $\pi_{\tgt} = \tgt \circ \pi$.
  Since $\pi : C \to A$ is a morphism of left $A$-modules, the map $\id_{B} \otimes_A \id_C \otimes_A \pi$ is defined. By applying this map to the left-hand side of \eqref{eq:univ-R-pf-1}, we have
  \begin{align*}
    \rform_{\tgt}(x_{(2)},
    & \, y_{(2)}) y_{(1)} x_{(1)} \otimes_A x_{(3)} \otimes_A \pi(y_{(3)}) \\
    {}^{\eqref{eq:bialgebroid-comod-right-act-B}}
    & = \rform_{\tgt}(x_{(2)}, y_{(2)}) y_{(1)} x_{(1)} \otimes_A x_{(3)} \pi_{\src}(y_{(3)}) \otimes_A 1 \\
    {}^{\eqref{eq:bialgebroid-comul-cross-A}}
    & = \rform_{\tgt}(x_{(2)} \pi_{\tgt}(y_{(3)}), y_{(2)}) y_{(1)} x_{(1)} \otimes_A x_{(3)} \otimes_A 1 \\
    {}^{\eqref{eq:def-univ-R-2}}
    & = \rform_{\tgt}(x_{(2)}, \pi_{\tgt}(y_{(3)}) y_{(2)}) y_{(1)} x_{(1)} \otimes_A x_{(3)} \otimes_A 1 \\
    {}^{\eqref{eq:bialgebroid-counital}}
    & = \rform_{\tgt}(x_{(2)}, y_{(2)}) y_{(1)} x_{(1)} \otimes_A x_{(3)} \otimes_A 1.
  \end{align*}
  By applying $\id_{B} \otimes_A \id_{C} \otimes_A \pi$ to the right-hand side of \eqref{eq:univ-R-pf-1}, we also have
  \begin{equation*}
    \rform_{\src}(x_{(1)}, y_{(1)}) x_{(2)} y_{(2)} \otimes_A x_{(3)} \otimes_A \pi(y_{(3)})
    = \rform_{\src}(x_{(1)}, y_{(1)}) x_{(2)} y_{(2)} \otimes_A x_{(3)} \otimes_A 1.
  \end{equation*}
  Hence, through the isomorphism $B \otimes_A C \otimes_A A \cong B \otimes_A C$, we have
  \begin{equation*}
    \rform_{\tgt}(x_{(2)}, y_{(2)}) y_{(1)} x_{(1)} \otimes_A x_{(3)}
    = \rform_{\src}(x_{(1)}, y_{(1)}) x_{(2)} y_{(2)} \otimes_A x_{(3)}
  \end{equation*}
  in $B \otimes_A C$. By applying $\id_{B} \otimes_A \pi$ to the both sides, we have
  \begin{align*}
    \rform_{\tgt}(x_{(2)}, y_{(2)}) y_{(1)} x_{(1)} \otimes_A \pi(x_{(3)})
    & = \pi_{\tgt}(x_{(3)}) \rform_{\tgt}(x_{(2)}, y_{(2)}) y_{(1)} x_{(1)} \otimes_A 1 \\
    {}^{\eqref{eq:def-univ-R-1}}
    & = \rform_{\tgt}(\pi_{\tgt}(x_{(3)}) x_{(2)}, y_{(2)}) y_{(1)} x_{(1)} \otimes_A 1 \\
    {}^{\eqref{eq:bialgebroid-counital}}
    & = \rform_{\tgt}(x_{(2)}, y_{(2)}) y_{(1)} x_{(1)} \otimes_A 1, \\
    \rform_{\src}(x_{(1)}, y_{(1)}) x_{(2)} y_{(2)} \otimes_A \pi(x_{(3)})
    & = \rform_{\src}(x_{(1)}, y_{(1)}) x_{(2)} y_{(2)} \otimes_A 1
  \end{align*}
  in $B \otimes_A A$. Hence we obtain \eqref{eq:def-univ-R-4}.

  Next we deduce \eqref{eq:def-univ-R-5} from \eqref{eq:univ-R-pf-2}. By applying $\id_{C} \otimes_A \id_C \otimes_A \pi$ to the both sides of \eqref{eq:univ-R-pf-2} and regarding the result as an equation in $C \otimes_A C$, we obtain
  \begin{equation*}
    \rform_{\src}(x_{(1)}, y_{(1)} z) x_{(2)} \otimes_A y_{(2)}
    = \rform_{\src}(\rform_{\src}(x_{(1)}, z) x_{(2)}, y_{(1)}) x_{(3)} \otimes_A y_{(1)}.
  \end{equation*}
  By applying $\id_C \otimes_A \pi$ to the both sides, we obtain the equation
  \begin{equation}
    \label{eq:univ-R-pf-5}
    \rform_{\src}(x_{(1)}, y z) x_{(2)}
    = \rform_{\src}(\rform_{\src}(x_{(1)}, z) x_{(2)}, y) x_{(3)}
  \end{equation}
  in $C$. Since the counit $\pi: C \to A$ is a morphism of left $A$-modules, we have
  \begin{equation*}
    \pi(\rform_{\src}(x_{(1)}, y z) x_{(2)})
    = \rform(x_{(1)}, y z) \pi(x_{(2)})
    \mathop{=}^{\eqref{eq:def-univ-R-1}}
    \rform(\pi_{\tgt}(x_{(2)}) x_{(1)}, y z)
    \mathop{=}^{\eqref{eq:bialgebroid-counital}}
    \rform(x, y z)
  \end{equation*}
  in $A$. In a similar way, we have
  \begin{align*}
    \pi(\rform_{\src}(\rform_{\src}(x_{(1)}, z) x_{(2)}, y) x_{(3)})
    & = \rform(\rform_{\src}(x_{(1)}, z) x_{(2)}, y) \pi( x_{(3)}) \\
    {}^{\eqref{eq:def-univ-R-1}}
    & = \rform(\pi_{\tgt}(x_{(3)}) \rform_{\src}(x_{(1)}, z) x_{(2)}, y) \\
    {}^{\eqref{eq:Re-ring},\eqref{eq:bialgebroid-counital}}
    & = \rform(\rform_{\src}(x_{(1)}, z) x_{(2)}, y)
  \end{align*}
  in $A$. Thus we obtain~\eqref{eq:def-univ-R-5} by applying $\pi$ to \eqref{eq:univ-R-pf-5}.
  Equation \eqref{eq:def-univ-R-6} is deduced from \eqref{eq:univ-R-pf-3} in a similar way as above.
  Equation \eqref{eq:def-univ-R-7} is obtained by applying $\pi$ to \eqref{eq:univ-R-pf-4}.
  Hence we have proved that $\sigma^{\rform}$ is a lax braiding if and only if $\rform$ is a lax universal R-form.

  Finally, we assume that $\rform$ is a universal R-form. Then, by definition, there is a $k$-bilinear map $\overline{\rform} : B \to B \to A$ satisfying \eqref{eq:def-univ-R-1}, \eqref{eq:def-univ-R-2} and \eqref{eq:def-strong-univ-R}. By the first part of this theorem, there is a well-defined left $A$-linear map
  \begin{equation*}
    \sigma^{\overline{\rform}}_{M,N}: M \otimes_A N \to N \otimes_A M,
    \quad m \otimes_A n \mapsto \overline{\rform}(n_{(-1)}, m_{(-1)}) n_{(0)} \otimes_A m_{(0)},
  \end{equation*}
  for $M, N \in {}^B\Mod$. For $m \in M$ and $n \in N$, we have
  \begin{align*}
    \sigma^{\overline{\rform}}_{N,M}\sigma^{\rform}_{M,N}(m \otimes_A n)
    & = \sigma^{\overline{\rform}}_{N,M}(\rform(m_{(-1)},n_{(-1)}) n_{(0)} \otimes_A m_{(0)}) \\
    & = \rform(m_{(-1)},n_{(-1)}) \sigma^{\overline{\rform}}_{N,M}(n_{(0)} \otimes_A m_{(0)}) \\
    & = \rform(m_{(-2)},n_{(-2)}) \overline{\rform}(n_{(-1)}, m_{(-1)}) (m_{(0)} \otimes_A n_{(0)}) \\
    {}^{\eqref{eq:def-strong-univ-R}} \!
    & = \pi(m_{(-1)},n_{(-1)}) m_{(0)} \otimes_A n_{(0)} = m \otimes_A n.
  \end{align*}
  Thus $\sigma^{\overline{\rform}}_{N,M}\sigma^{\rform}_{M,N} = \id_{M \otimes_A N}$.
  One can verify that $\sigma^{\rform}_{M,N}\sigma^{\overline{\rform}}_{N,M}$ is the identity map in a similar way as above. The proof is done.
\end{proof}

\section{Tannaka construction for bialgebroids}
\label{sec:construction-bialgebroids}

\subsection{Duality in ${}_A\Mod_A$}

Throughout this section, we fix a commutative ring $k$ and a $k$-algebra $A$. As we have mentioned in Section~\ref{sec:monadic-FRT}, a construction data over the monoidal category ${}_A \Mod_A$ of $A$-bimodules defines a bimonad on ${}_A \Mod_A$ admitting a right adjoint and hence gives rise to a bialgebroid over $A$. The aim of this section is to give an explicit description of the resulting bialgebroid. Some properties of such a bialgebroid will also be discussed.

We first recall some basic facts on dual objects in ${}_A \Mod_A$. Given $M \in {}_A \Mod_A$, we denote by $M^{\vee}$ the $k$-module of left $A$-module homomorphisms from $M$ to $A$. The $k$-module $M^{\vee}$ is an $A$-bimodule by the actions defined by
\begin{equation*}
  \langle a \cdot f \cdot a', m \rangle = \langle f, m a \rangle a'
  \quad (a, a' \in A, f \in M^{\vee}, m \in M),
\end{equation*}
where $\xi(m) \in A$ for $\xi \in M^{\vee}$ and $m \in M$ is written as $\langle \xi, m \rangle$ for better readability.
There is an $A$-bimodule map
\begin{equation*}
  \eval_M: M \otimes_A M^{\vee} \to A,
  \quad m \otimes_A \xi \mapsto \langle \xi, m \rangle
  \quad (m \in M, \xi \in M^{\vee}),
\end{equation*}
which we call the evaluation map, no matter whether $M$ is a right rigid object of ${}_A \Mod_A$ or not.

It is known that an $A$-bimodule is a right rigid object of ${}_A \Mod_A$ if and only if it is finitely generated and projective as a left $A$-module. For such an $A$-bimodule $M$, there is a unique $A$-bimodule map $\coev_M: A \to M^{\vee} \otimes_A M$ such that the triple $(M^{\vee}, \eval_M, \coev_M)$ is a right dual object of $M$.
Let $m_1, \cdots, m_r$ be a finite number of elements of $M$, and let $m^1, \cdots, m^r$ be the same number of elements of $M^{\vee}$. We call $\{ m_i, m^i \}$ a {\em pair of dual bases} for $M$ if the equation $\coev_M(1_A) = m_i \otimes_A m^i$ holds, where the Einstein convention is used to suppress the sum over $i$. By the definition of a right dual object, we have
\begin{equation}
  \label{eq:dual-basis}
  \langle m^i, m \rangle \cdot m_i = m
  \quad \text{and} \quad
  m^i \cdot \langle \xi, m_i \rangle = \xi
  \quad (m \in M, \xi \in M^{\vee}).
\end{equation}

\subsection{Realization of coends}
\label{subsec:realize-coends}

Our construction of bimonads heavily relies on the notion of coends.
Let $\mathcal{D}$ be a small category, and let $\omega: \mathcal{D} \to {}_A \Mod_A$ be a functor such that $\omega(x)$ is right rigid for all objects $x \in \mathcal{D}$ (no monoidal structure is required at this stage). Since ${}_A \Mod_A$ admits small direct sums, the coend \eqref{eq:FRT-bimonad-def} can be constructed as follows: We fix an object $V \in {}_A \Mod_A$ and define the functor $G$ by
\begin{equation*}
  G: \mathcal{D} \times \mathcal{D}^{\op} \to {}_A \Mod_A,
  \quad G(x, y) = \omega(x) \otimes_A V \otimes_A \omega(y)^{\vee}
  \quad (x, y \in \mathcal{D}).
\end{equation*}
We set $G_f = G(x, y)$ for a morphism $f : x \to y$ in $\mathcal{D}$.
We consider the following two $A$-bimodules:
\begin{equation*}
  \mathcal{G}_0 = \bigoplus_{x \in \Obj(\mathcal{D})} G(x, x)
  \quad \text{and} \quad
  \mathcal{G}_1 = \bigoplus_{f \in \Mor(\mathcal{D})} G_f.
\end{equation*}
For $x \in \Obj(\mathcal{D})$, we denote by $j_x: G(x,x) \to \mathcal{G}_0$ the inclusion map. There are two $A$-bimodule maps $R, R': \mathcal{G}_1 \to \mathcal{G}_0$ such that, for all morphisms $f: x \to y$ in $\mathcal{D}$, the restrictions of $R$ and $R'$ to $G_f \subset \mathcal{G}_1$ are identical to
\begin{equation*}
  G_f
  \xrightarrow{\ G(f, \id_y) \ } G(y, y)
  \xrightarrow{\ j_y \ } \mathcal{G}_0
  \quad \text{and} \quad
  G_f
  \xrightarrow{\ G(\id_x, f) \ } G(x, x)
  \xrightarrow{\ j_x \ } \mathcal{G}_0,
\end{equation*}
respectively. The quotient $T(V) := \mathcal{G}_0 / \Img(R - R')$ is in fact a coend of the functor $G$ with universal dinatural transformation $G(x, x) \to T(V)$ given by the composition of $j_x$ and the quotient map.

For our purpose, it is important to find an $A^{\env}$-bimodule $B$ such that $B \otimes_{A^{\env}} V$ is isomorphic to the coend $T(V)$ constructed in the above. Such an $A^{\env}$-bimodule is given as follows: For $x \in \Obj(\mathcal{D})$, we set
\begin{equation}
  \label{eq:B-omega-E-tilde}
  \widetilde{E}_x = \omega(x) \otimes_k \omega(x)^{\vee}
\end{equation}
and make it an $A^{\env}$-bimodule by the action given by
\begin{equation}
  \label{eq:B-omega-Ae-actions}
  (a_1 \otimes_k a_2^{\op}) \rhd (m \otimes_k \xi) \lhd (a_3 \otimes_k a_4^{\op})
  = (a_1 m a_3) \otimes_k (a_4 \xi a_2)
\end{equation}
for $a_1, a_3 \in A$, $a_2, a_4 \in A^{\op}$, $m \in \omega(x)$ and $\xi \in \omega(x)^{\vee}$. There is an isomorphism
\begin{equation}
  \label{eq:construction-coend-iso-1}
  \omega(x) \otimes_A V \otimes_A \omega(x)^{\vee} \to \widetilde{E}_x \otimes_{A^{\env}} V,
  \quad m \otimes_A v \otimes_A \xi \mapsto (m \otimes_k \xi) \otimes_{A^{\env}} v
\end{equation}
for $V \in {}_A \Mod_A$. We now consider the direct sum
\begin{equation}
  \widetilde{B}_{\omega} = \bigoplus_{x \in \Obj(\mathcal{D})} \widetilde{E}_x
\end{equation}
of $A^{\env}$-bimodules.
For a morphism $f: x \to y$ in $\mathcal{D}$, we define the $k$-linear map
\begin{equation}
  \label{eq:map-Rel-f}
  \Rel_f: \omega(x) \otimes_k \omega(y)^{\vee} \to \widetilde{B}_{\omega}
\end{equation}
by the following formula:
\begin{equation*}
  \Rel_f(m \otimes \xi) = \omega(f)(m) \otimes_k \xi - m \otimes_k \omega(f)^{\vee}(\xi)
  \quad (m \in \omega(x), \xi \in \omega(y)^{\vee}).
\end{equation*}
If we view $\omega(x) \otimes_k \omega(y)^{\vee}$ as an $A^{\env}$-bimodule by the action defined by the same way as~\eqref{eq:B-omega-Ae-actions}, then $\Rel_f$ is a homomorphism of $A^{\env}$-bimodules. Thus
\begin{equation}
  \label{eq:ideal-J-omega}
  J_{\omega} := \sum_{f \in \Mor(\mathcal{D})} \Img(\Rel_f)
\end{equation}
is an $A^{\env}$-subbimodule of $\widetilde{B}_{\omega}$. There is an isomorphism $T(V) \cong (\widetilde{B}_{\omega}/J_{\omega}) \otimes_{A^{\env}} V$ of left $A^{\env}$-modules induced by \eqref{eq:construction-coend-iso-1}.
Now we set $B_{\omega} = \widetilde{B}_{\omega} / J_{\omega}$ and let $q: \widetilde{B}_{\omega} \to B_{\omega}$ be the quotient map. Summarizing the above discussion so far, we have

\begin{lemma}
  \label{lem:construction-coend}
  The left $A^{\env}$-bimodule $B_{\omega} \otimes_{A^{\env}} V$ is a coend
  \begin{equation*}
    B_{\omega} \otimes_{A^{\env}} V
    = \int^{x \in \mathcal{D}} \omega(x) \otimes_A V \otimes_A \omega(x)^{\vee}
  \end{equation*}
  with the universal dinatural transformation given by
  \begin{equation*}
    \omega(x) \otimes_A V \otimes_A \omega(x)^{\vee} \to B_{\omega} \otimes_{A^{\env}} V,
    \quad m \otimes_A v \otimes_A \xi \mapsto q(m \otimes_{k} \xi) \otimes_{A^{\env}} v
  \end{equation*}
  for $x \in \mathcal{D}$, $m \in \omega(x)$, $v \in V$ and $\xi \in \omega(x)^{\vee}$.
\end{lemma}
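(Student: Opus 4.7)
The plan is to combine the general coequalizer presentation of a coend recalled just above the lemma with the obvious identification $\omega(x) \otimes_A V \otimes_A \omega(x)^{\vee} \cong \widetilde{E}_x \otimes_{A^{\env}} V$ from \eqref{eq:construction-coend-iso-1}, and then recognize the image of $R - R'$ under this identification as exactly the image of the relations defining $B_{\omega}$ (tensored with $V$). Right exactness of $(-) \otimes_{A^{\env}} V$ will then finish the proof.

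First I recall the coequalizer presentation: $T(V)$ is the cokernel of $R - R' : \mathcal{G}_1 \to \mathcal{G}_0$, with universal dinatural transformation given by the composite of $j_x$ and the quotient map. Summing the isomorphisms \eqref{eq:construction-coend-iso-1} over $x \in \Obj(\mathcal{D})$ yields an isomorphism $\Phi : \mathcal{G}_0 \xrightarrow{\cong} \widetilde{B}_{\omega} \otimes_{A^{\env}} V$ of $A$-bimodules. I will track what happens to $R - R'$ through $\Phi$. For a morphism $f : x \to y$ in $\mathcal{D}$ and $m \otimes_A v \otimes_A \xi \in G_f$,
\begin{align*}
  \Phi(R(m \otimes_A v \otimes_A \xi)) &= (\omega(f)(m) \otimes_k \xi) \otimes_{A^{\env}} v, \\
  \Phi(R'(m \otimes_A v \otimes_A \xi)) &= (m \otimes_k \omega(f)^{\vee}(\xi)) \otimes_{A^{\env}} v,
\end{align*}
so $\Phi \circ (R - R')$ restricted to the summand $G_f$ of $\mathcal{G}_1$ equals $(\Rel_f \otimes_{A^{\env}} \id_V) \circ \Psi_f$, where $\Psi_f : G_f \xrightarrow{\cong} (\omega(x) \otimes_k \omega(y)^{\vee}) \otimes_{A^{\env}} V$ is the evident isomorphism. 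Summing over $f \in \Mor(\mathcal{D})$ shows
\begin{equation*}
  \Phi\bigl(\Img(R - R')\bigr) = \sum_{f \in \Mor(\mathcal{D})} \Img\bigl(\Rel_f \otimes_{A^{\env}} \id_V\bigr).
\end{equation*}

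Next, applying the right exact functor $(-) \otimes_{A^{\env}} V$ to the inclusion $J_{\omega} \hookrightarrow \widetilde{B}_{\omega}$ and the quotient $q : \widetilde{B}_{\omega} \twoheadrightarrow B_{\omega}$, and using that tensor product commutes with arbitrary sums, I obtain
\begin{equation*}
  B_{\omega} \otimes_{A^{\env}} V \,\cong\, \bigl(\widetilde{B}_{\omega} \otimes_{A^{\env}} V\bigr) \Big/ \sum_{f \in \Mor(\mathcal{D})} \Img\bigl(\Rel_f \otimes_{A^{\env}} \id_V\bigr).
\end{equation*}
Combined with $\Phi$, this gives an $A$-bimodule isomorphism $T(V) \cong B_{\omega} \otimes_{A^{\env}} V$, and the composition with $j_x$ and the quotient maps is precisely the map $m \otimes_A v \otimes_A \xi \mapsto q(m \otimes_k \xi) \otimes_{A^{\env}} v$. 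This verifies that $B_{\omega} \otimes_{A^{\env}} V$ together with this family of maps enjoys the universal property of the coend.

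The only mildly non-routine point is the interchange $\sum_f \Img(\Rel_f) \otimes_{A^{\env}} V \,=\, \sum_f \Img(\Rel_f \otimes_{A^{\env}} \id_V)$, which is exactly what right exactness (together with preservation of small colimits, hence of arbitrary sums) of the tensor product delivers; everything else is bookkeeping on summands. So no part of the argument should be a serious obstacle, but care is needed in checking that the $A$-bimodule structure transported along $\Phi$ matches the $A$-bimodule structure on $B_{\omega} \otimes_{A^{\env}} V$ coming from the left $A^{\env}$-action \eqref{eq:B-omega-Ae-actions}; this is immediate from the definitions of the two actions once written out.
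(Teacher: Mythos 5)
Your proposal is correct and follows essentially the same route as the paper, which presents this lemma as a summary of the preceding discussion in Subsection~\ref{subsec:realize-coends}: realize the coend as the cokernel of $R - R'$, transport along the isomorphism \eqref{eq:construction-coend-iso-1}, and identify the relations with $J_{\omega} \otimes_{A^{\env}} V$ via right exactness. You merely spell out the computation showing that $\Phi$ carries $\Img(R - R')$ to $\sum_f \Img(\Rel_f \otimes_{A^{\env}} \id_V)$, which the paper leaves implicit in the phrase ``induced by \eqref{eq:construction-coend-iso-1}.''
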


\subsection{The bialgebroid $B_{\omega}$}
\label{subsec:bialgebroid-B-omega}

Let $(\mathcal{D}, \omega)$ be a construction data over ${}_A \Mod_A$ with $\mathcal{D}$ a small category. We define the $A^{\env}$-bimodule $B_{\omega}$ as above. By Theorem~\ref{thm:construction-bimonad}, the endofunctor $T := B_{\omega} \otimes_{A^{\env}} (-)$ on ${}_A \Mod_A$ has a structure of a bimonad on ${}_A \Mod_A$ and therefore $B_{\omega}$ has a structure of a left bialgebroid over $A$.

Although our category $\mathcal{D}$ is not assumed to be additive, one can apply Tannaka construction theorems of \cite{MR2448024,2009arXiv0907.1578S} to $\omega$. The bialgebroid $B_{\omega}$ is constructed based on this idea. Thus we obtain explicit descriptions of the structure maps of $B_{\omega}$ along the same argument as \cite{2009arXiv0907.1578S}.

The detail is as follows:
As in Section~\ref{sec:monadic-FRT}, we denote by $\omega^{(i)}$ ($i = 0, 2$) the monoidal structure of $\omega$ and define $\check{\omega}^{(i)}$ ($i = 0, 2$) by \eqref{eq:omega-check-def}.
For an object $x \in \mathcal{D}$, we define the $A^{\env}$-bimodule $\widetilde{E}_x$ as in the previous subsection and write
\begin{equation*}
  \wE{m}{\xi} = m \otimes_{k} \xi \in \widetilde{E}_x,
  \quad E_x = q(\widetilde{E}_x)
  \quad \text{and} \quad
  \eE{m}{\xi} = q\left( \wE{m}{\xi} \right)
\end{equation*}
for $m \in \omega(x)$ and $\xi \in \omega(x)^{\vee}$, where $q : \widetilde{B}_{\omega} \to B_{\omega}$ is the quotient map. The universal dinatural transformation for the coend $T(V)$ given in Lemma \ref{lem:construction-coend} is expressed with this notation as follows:
\begin{equation}
  \label{eq:B-omega-universal-dinat}
  i_x(V): \omega(x) \otimes_A V \otimes_A \omega(x)^{\vee} \to T(V),
  \quad m \otimes_A v \otimes_A \xi \mapsto \eE{m}{\xi} \otimes_{A^{\env}} v.
\end{equation}
For $x_i \in \mathcal{D}$, $m_i \in \omega(x_i)$ and $\xi_i \in \omega(x_i)^{\vee}$ ($i = 1, 2$), we set
\begin{equation}
  \label{eq:B-omega-oast}
  m_1 \otimes_{\omega} m_2 = \omega^{(2)}_{x_1,x_2}(m_1 \otimes_A m_2), \quad
  \xi_1 \mathbin{\check{\otimes}}_{\omega} \xi_2 = \check{\omega}^{(2)}_{x_2, x_1}(\xi_2 \otimes_A \xi_1).
\end{equation}
Finally, for $a \in A$, we set $a_{\omega} = \omega_0(a)$ and $\check{a}_{\omega} = \check{\omega}_{0}(a)$.

\begin{theorem}
  The multiplication, the unit, the source map, the target map, the comultiplication and the counit of $B_{\omega}$ are given respectively by
  \begin{gather}
    \label{eq:B-omega-mult}
    \eE{m}{\xi} \cdot \eE{n}{\zeta}
    = \eE{m \otimes_{\omega} n}{\xi \mathbin{\check{\otimes}}_{\omega} \zeta}
    \in E_{x \otimes y}, \\
    \label{eq:B-omega-unit}
    1_{B_{\omega}} = \eE{1_{\omega}}{\check{1}_{\omega}} \in E_{\unitobj}, \\
    \label{eq:B-omega-src-tgt}
    \src(a) = \eE{a_{\omega}}{\check{1}_{\omega}} \in E_{\unitobj},
    \quad
    \tgt(a) = \eE{1_{\omega}}{\check{a}_{\omega}} \in E_{\unitobj}, \\
    \label{eq:B-omega-comul}
    \Delta\left( \eE{m}{\xi} \right)
    = \eE{m}{m^j} \otimes_A \eE{m_j}{\xi}, \\
    \label{eq:B-omega-counit}
    \pi\left( \eE{m}{\xi} \right) = \langle \xi, m \rangle
  \end{gather}
  for $x, y \in \mathcal{D}$, $m \in \omega(x)$, $n \in \omega(y)$, $\xi \in \omega(x)^{\vee}$ and $\zeta \in \omega(y)^{\vee}$.
  In the expression of the comultiplication, $\{ m_j, m^j \}$ is a pair of dual bases for $\omega(x) \in {}_A \Mod_A$ and the Einstein convention is used to suppress the sum over $j$.
\end{theorem}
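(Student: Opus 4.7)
The plan is to translate each of the five bimonad structure maps of $T$ (given by \eqref{eq:FRT-bimonad-mu}--\eqref{eq:FRT-bimonad-eps} in the proof of Theorem~\ref{thm:construction-bimonad}) into the corresponding bialgebroid structure map of $B_\omega$, using the explicit coend realization of Lemma~\ref{lem:construction-coend} and the universal dinatural transformation \eqref{eq:B-omega-universal-dinat}. Under the identification $T \cong B_\omega \otimes_{A^{\env}}(-)$, evaluating the bimonad structure maps at the appropriate unit object yields the algebra multiplication, unit, comultiplication and counit of $B_\omega$, while the source and target maps are recovered from the $A^{\env}$-bimodule structure \eqref{eq:B-omega-Ae-actions} via the Szlach\'anyi dictionary $\src(a) = (a \otimes_k 1) \rhd 1_{B_\omega}$ and $\tgt(a) = (1 \otimes_k a^{\op}) \rhd 1_{B_\omega}$.

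I first extract the unit: by \eqref{eq:FRT-bimonad-eta} and \eqref{eq:B-omega-universal-dinat}, the natural transformation $\eta$ evaluated at $1_A$ produces the class of $\wE{1_\omega}{\check{1}_\omega}$, yielding \eqref{eq:B-omega-unit}. The source and target formulas \eqref{eq:B-omega-src-tgt} then follow by acting on the representative $\wE{1_\omega}{\check{1}_\omega} \in \widetilde{B}_\omega$ via \eqref{eq:B-omega-Ae-actions}, using that $\omega_0 : A \to \omega(\unitobj)$ and $\check{\omega}_0 : A \to \omega(\unitobj)^{\vee}$ are morphisms of $A$-bimodules, so that $a \cdot 1_\omega = a_\omega$ and $\check{1}_\omega \cdot a = \check{a}_\omega$. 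For the multiplication \eqref{eq:B-omega-mult}, I evaluate \eqref{eq:FRT-bimonad-mu} on an element of the form $m \otimes_A n \otimes_A 1 \otimes_A \zeta \otimes_A \xi$: unfolding $i^{(2)}_{x,y}$ as $i_x \circ (\id \otimes i_y \otimes \id)$ gives $\eE{m}{\xi} \otimes_{A^{\env}} \eE{n}{\zeta}$ on the left-hand side, while the right-hand side produces $\eE{m \otimes_\omega n}{\xi \mathbin{\check\otimes}_\omega \zeta}$ in view of the definitions \eqref{eq:B-omega-oast} and the formulas for $\check{\omega}^{(2)}_{x,y}$ and $\omega^{(2)}_{x,y}$.

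For the comultiplication \eqref{eq:B-omega-comul}, I substitute $M = N = A$ in \eqref{eq:FRT-bimonad-delta}; the coevaluation $\coev_{\omega(x)}(1_A) = m^j \otimes_A m_j$ inserted between the two copies of $A$ yields $\Delta(\eE{m}{\xi}) = \eE{m}{m^j} \otimes_A \eE{m_j}{\xi}$ after applying $i_x(A) \otimes i_x(A)$. Finally, the counit formula \eqref{eq:B-omega-counit} is a direct transcription of \eqref{eq:FRT-bimonad-eps}: $\pi(\eE{m}{\xi}) = \varepsilon(i_x(\unitobj)(m \otimes_A \xi)) = \eval_{\omega(x)}(m \otimes_A \xi) = \langle \xi, m \rangle$.

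The main technical point is the bookkeeping across the three tensor products $\otimes_k$, $\otimes_A$, $\otimes_{A^{\env}}$, together with the interchange between the presentation $\omega(x) \otimes_A V \otimes_A \omega(x)^{\vee}$ underlying the coend and the presentation $\widetilde{E}_x = \omega(x) \otimes_k \omega(x)^{\vee}$ underlying $\widetilde{B}_\omega$ through the isomorphism \eqref{eq:construction-coend-iso-1}. Well-definedness of each formula modulo the relations $J_\omega$ of \eqref{eq:ideal-J-omega} is automatic, since each formula originates from a genuine natural transformation of the bimonad $T$ and therefore must factor through the universal dinatural transformations $i_x$; no separate verification at the level of $\widetilde{B}_\omega$ is needed.
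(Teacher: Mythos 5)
Your overall strategy is the one the paper uses: read off the bialgebroid structure of $B_{\omega}$ from the bimonad structure maps \eqref{eq:FRT-bimonad-mu}--\eqref{eq:FRT-bimonad-eps} through the explicit universal dinatural transformation \eqref{eq:B-omega-universal-dinat} and Szlach\'anyi's dictionary. Your treatment of the unit, the source and target maps (via the left $A^{\env}$-action on the representative $\wE{1_{\omega}}{\check{1}_{\omega}}$) and the counit matches the paper, and your closing remark that well-definedness modulo $J_{\omega}$ is automatic from the universal property of the coend is also how the paper handles it.

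There is one concrete gap: for the comultiplication you substitute $M=N=A$ into \eqref{eq:FRT-bimonad-delta}. The canonical map $B_{\omega} \to T(A) = B_{\omega}\otimes_{A^{\env}}A$, $b\mapsto b\otimes_{A^{\env}}1_A$, is surjective but not injective (it identifies $b\,\src(a)\otimes_{A^{\env}}1_A$ with $b\,\tgt(a)\otimes_{A^{\env}}1_A$ for all $a\in A$), so computing $\delta_{A,A}$ only determines $\Delta$ composed with the induced quotient $B_{\omega}\otimes_A B_{\omega}\to T(A)\otimes_A T(A)$; it does not pin down $\Delta\left(\eE{m}{\xi}\right)$ as an element of $B_{\omega}\otimes_A B_{\omega}$. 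Szlach\'anyi's correspondence recovers $\Delta$ as $(\Psi\otimes_A\Psi)^{-1}\circ\delta_{A^{\env},A^{\env}}\circ T(\underline{\Delta})\circ\Psi$, where $\Psi: B_{\omega}\to T(A^{\env})$ is the canonical isomorphism and $\underline{\Delta}: A^{\env}\to A^{\env}\otimes_A A^{\env}$ sends $a\otimes_k a'$ to $(a\otimes_k 1)\otimes_A(1\otimes_k a')$; the computation must therefore be carried out at the regular module $A^{\env}$, where $\Psi$ is an isomorphism, not at the unit object $A$. The same caveat applies to the multiplication and the unit if the ``$1$'' in your element $m\otimes_A n\otimes_A 1\otimes_A\zeta\otimes_A\xi$ and in $\eta(1)$ is meant to be $1_A$ rather than $1_{A^{\env}}$; with $M=A^{\env}$ throughout, your computations go through verbatim and reproduce the paper's proof. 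The counit step is unaffected, since $\pi$ genuinely factors as $B_{\omega}\to T(A)\xrightarrow{\ \varepsilon\ }A$.
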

\begin{proof}
  We reconstruct the structure maps of $B_{\omega}$ from the bimonad $T$ by the way of \cite[Section 4]{MR1984397}.
  Let $\mu$, $\eta$, $\delta$ and $\varepsilon$ be the multiplication, the unit, the comultiplication and the counit of $T$, respectively. There is a canonical isomorphism
  \begin{equation*}
    \Psi: B_{\omega} \to T(A^{\env}),
    \quad b \mapsto b \otimes_{A^{\env}} (1_A \otimes_k 1_{A^{\op}}) \quad (b \in B_{\omega})
  \end{equation*}
  of left $A^{\env}$-modules. The multiplication of $B_{\omega}$ is given by $b \cdot b' = \nabla(b \otimes_{A^{\env}} b')$ for $b, b' \in B_{\omega}$, where $\nabla$ is the unique map making the following diagram commutes:
  \begin{equation*}
    \xymatrix@C=48pt@R=16pt{
      B_{\omega} \otimes_{A^{\env}} B_{\omega}
      \ar[d]_{\nabla}
      \ar[r]^(.425){\id \otimes_{A^{\env}} \Psi}
      & B_{\omega} \otimes_{A^{\env}} B_{\omega} \otimes_{A^{\env}} A^{\env}
      \ar@{=}[r] & T T(A^{\env})
      \ar[d]^{\mu_{A^{\env}}^{}} \\
      B_{\omega}
      \ar[r]^(.425){\Psi}
      & B_{\omega} \otimes_{A^{\env}} A^{\env}
      \ar@{=}[r] & T (A^{\env})
    }
  \end{equation*}
    We define $A$-bimodule maps $\underline{\Delta}: A^{\env} \to A^{\env} \otimes_A A^{\env}$ and $\underline{\varepsilon}: A^{\env} \to A$ by
  \begin{equation*}
    \underline{\Delta}(a \otimes_k a') = (a \otimes_k 1) \otimes_A (1 \otimes_k a')
    \quad \text{and} \quad
    \underline{\varepsilon}(a \otimes a') = a a'
    \quad (a, a' \in A),
  \end{equation*}
  respectively. The comultiplication $\Delta$ and the counit $\pi$ of $B_{\omega}$ are unique maps such that the following diagrams commute:
  \begin{equation*}
    \xymatrix@C=30pt@R=20pt{
      B_{\omega}
      \ar[d]_{\Delta} \ar[r]^(.425){\Psi}
      & T(A^{\env}) \ar[r]^(.425){T(\underline{\Delta})}
      & T(A^{\env} \otimes_A A^{\env}) \ar[d]^{\delta_{A^{\env},A^{\env}}} \\
      B_{\omega} \otimes_A B_{\omega}
      \ar[rr]^(.425){\Psi \otimes_A \Psi}
      & & T(A^{\env}) \otimes_A T(A^{\env})
    }
    \quad
    \xymatrix@C=30pt@R=20pt{
      B_{\omega} \ar[d]_{\pi}
      \ar[r]^{\Psi}
      & T(A^{\env}) \ar[d]^{T(\underline{\varepsilon})} \\
      A & T(A) \ar[l]_{\varepsilon}
    }
  \end{equation*}
  By the expression \eqref{eq:B-omega-universal-dinat} of the universal dinatural transformation, one can verify that the multiplication, the comultiplication, and the counit of $B_{\omega}$ are given by \eqref{eq:B-omega-mult}, \eqref{eq:B-omega-comul} and \eqref{eq:B-omega-counit}, respectively. It is easy to check that the element given by \eqref{eq:B-omega-unit} is the unit of $B_{\omega}$. The expressions for the source and the target maps, \eqref{eq:B-omega-src-tgt}, are verified as follows: For $a \in A$,
  \begin{equation*}
    \src(a) = (a \otimes_k 1) \rhd 1_{B_{\omega}}
    \mathop{=}^{\eqref{eq:B-omega-Ae-actions}} \eE{a}{\check{1}}, \quad
    \tgt(a) = (1 \otimes_k a) \rhd 1_{B_{\omega}}
    \mathop{=}^{\eqref{eq:B-omega-Ae-actions}} \eE{1}{\check{a}}. \qedhere
  \end{equation*}
\end{proof}

\subsection{Universal coaction of $B_{\omega}$}
\label{subsec:universal-coactions}

Till the end of this section, we fix a construction data $(\mathcal{D}, \omega)$ with $\mathcal{D}$ a small category as in the last subsection. Let $B_{\omega}$ be the bialgebroid constructed in the above. By Theorem~\ref{thm:FRT-bimonad-universality}, an object of the form $\omega(x)$, $x \in \mathcal{D}$, has a structure of a left $B_{\omega}$-comodule.

\begin{proposition}
  \label{prop:universal-coactions}
  The coaction of $B_{\omega}$ on $\omega(x)$ is given by
  \begin{equation*}
    \omega(x) \to B_{\omega} \otimes_A \omega(x),
    \quad m \mapsto \eE{m}{m^i} \otimes_A m_i
    \quad (m \in \omega(x)),
  \end{equation*}
  where $\{ m_i, m^i \}$ is a pair of dual bases for $M$.
\end{proposition}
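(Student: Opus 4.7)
The plan is to derive the claimed formula by translating the $T$-comodule structure $\widetilde{\omega}(x) = (\omega(x), \partial_x)$ from Lemma~\ref{lem:T-coact-omega-x} into a $B_\omega$-comodule structure via the isomorphism $\lcomod{T} \cong {}^{B_\omega}\mathrm{Mod}$ of Theorem~\ref{thm:T-comod-is-B-comod}. Since all the ingredients are already computed in the excerpt, the argument will consist purely of unpacking definitions.

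First, I would fix a pair of dual bases $\{m_i, m^i\}$ for $\omega(x) \in {}_A\Mod_A$, so that $\coev_{\omega(x)}(1_A) = m^i \otimes_A m_i$. Recalling the definition
\begin{equation*}
    \partial_x(V) = (i_x(V) \otimes \id_{\omega(x)}) \circ (\id_{\omega(x)} \otimes \id_V \otimes \coev_{\omega(x)})
\end{equation*}
from~\eqref{eq:partial-def}, and the explicit description of the universal dinatural transformation from Lemma~\ref{lem:construction-coend},
\begin{equation*}
    i_x(V)(m \otimes_A v \otimes_A \xi) = \eE{m}{\xi} \otimes_{A^{\env}} v,
\end{equation*}
a direct substitution yields the formula
\begin{equation*}
    \partial_x(V)(m \otimes_A v) = \bigl(\eE{m}{m^i} \otimes_{A^{\env}} v\bigr) \otimes_A m_i
\end{equation*}
for all $V \in {}_A \Mod_A$, $m \in \omega(x)$ and $v \in V$.

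Second, I would invoke Remark~\ref{rem:T-comod-is-B-comod}, which gives the explicit form of the $T$-coaction associated to a $B_\omega$-coaction $\delta_M(m) = m_{(-1)} \otimes_A m_{(0)}$: namely,
\begin{equation*}
    \rho_M(V)(m \otimes_A v) = \bigl(m_{(-1)} \otimes_{A^{\env}} v\bigr) \otimes_A m_{(0)}.
\end{equation*}
Comparing this with the formula above for $\rho_{\omega(x)} = \partial_x$, one reads off
\begin{equation*}
    m_{(-1)} \otimes_A m_{(0)} = \eE{m}{m^i} \otimes_A m_i
\end{equation*}
in $B_\omega \otimes_A \omega(x)$, which is precisely the claimed coaction.

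The only mild subtlety is justifying that the identification of coefficients is legitimate, i.e.\ that the formula for $\rho_M(V)$ in Remark~\ref{rem:T-comod-is-B-comod} uniquely determines $\delta_M$. This follows either from the naturality of $\rho_M$ in $V$ (comparing the two sides for $V = A^{\env}$ and $v = 1_A \otimes_k 1_{A^{\op}}$, and using the canonical identification $B_\omega \otimes_{A^{\env}} A^{\env} \cong B_\omega$), or directly from the explicit comonoidal adjunction bijection used in the proof of Theorem~\ref{thm:T-comod-is-B-comod}. I do not anticipate a genuine obstacle; the proposition is essentially a computation that makes the universal coaction explicit in the realization of Lemma~\ref{lem:construction-coend}.
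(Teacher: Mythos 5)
Your proposal is correct and follows essentially the same route as the paper: compute $\partial_x(V)$ explicitly from \eqref{eq:partial-def} and Lemma~\ref{lem:construction-coend}, then read off the $B_\omega$-coaction by specializing to $V = A^{\env}$ (the paper phrases this as ``letting $N = A^{\env}$'' rather than citing Remark~\ref{rem:T-comod-is-B-comod}, but the content is identical). Your extra remark on why the specialization determines $\delta_M$ uniquely is a reasonable clarification that the paper leaves implicit.
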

\begin{proof}
  We consider the bimonad $T = B_{\omega} \otimes_{A^{\env}}(-)$ on ${}_A \Mod_A$. The left $T$-comodule structure of $\omega(x)$ is given by
  \begin{equation*}
    \partial_x(N) = (i_{x}(N) \otimes_A \omega(x)) \circ (\id_{\omega(x)} \otimes \coev_{\omega(x)})
    \quad (N \in {}_A \Mod_A).
  \end{equation*}
  By~\eqref{eq:B-omega-universal-dinat}, we have
  \begin{equation*}
    \partial_x(N) (m \otimes_A n)
    = \left( \eE{m}{m^i} \otimes_{A^{\env}} n \right) \otimes_A m_i
    \quad (m \in \omega(x), n \in N).
  \end{equation*}
  The formula of the left $B_{\omega}$-coaction is obtained by letting $N = A^{\env}$.
\end{proof}

\subsection{Universal R-form of $B_{\omega}$}
\label{subsec:FRT-lax-universal-r}

In this subsection, we assume that the monoidal category $\mathcal{D}$ has a lax braiding $\sigma$.
For $x, y \in \mathcal{D}$, we write
\begin{equation*}
  \Sigma_{x,y} = (\omega^{(2)}_{y,x})^{-1} \circ \omega(\sigma_{x,y}) \circ \omega^{(2)}_{x,y}:
  \omega(x) \otimes_A \omega(y) \to \omega(y) \otimes_A \omega(x).
\end{equation*}
By Theorem~\ref{thm:FRT-bimonad}, the category of left $B_{\omega}$-comodules has a lax braiding.
As the following theorem shows, it is induced by a lax universal R-form on $B_{\omega}$.

\begin{theorem}
  \label{thm:B-omega-lax-universal-R}
  The bialgebroid $B_{\omega}$ has a lax universal R-form $\mathbf{r}_{\omega}$ given by
  \begin{equation}
    \label{eq:B-omega-lax-universal-R}
    \rform_{\omega}\Big( \eE{m}{\xi}, \eE{n}{\zeta} \Big)
    = \eval^{(2)}_{\omega(x), \omega(y)} \Big( \Sigma_{y,x}(n \otimes_A m) \otimes_A \zeta \otimes_A \xi \Big)
  \end{equation}
  for $x, y \in \mathcal{D}$, $m \in \omega(x)$, $\xi \in \omega(x)^{\vee}$, $n \in \omega(y)$ and $\zeta \in \omega(y)^{\vee}$.
  The lax braiding of the category of left $B_{\omega}$-comodules associated to this lax universal R-form coincides with the lax braiding given by Theorem~\ref{thm:FRT-bimonad}. If, moreover, the lax braiding $\sigma$ of $\mathcal{D}$ is invertible, then $\rform_{\omega}$ is in fact a universal R-form on $B_{\omega}$.
\end{theorem}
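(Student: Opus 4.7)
The plan is to leverage the existing construction of the lax braiding on $\lcomod{T} \cong {}^{B_\omega}\Mod$ from Section~\ref{subsec:proof-main-thm-D-braided}. The starting observation is that the right-hand side of \eqref{eq:B-omega-lax-universal-R} is literally the value of the morphism $r_y$ from \eqref{eq:T-coqt-r-def-1} evaluated on a specific element: using \eqref{eq:B-omega-universal-dinat} one has $\eE{m}{\xi} \otimes_{A^{\env}} \zeta = i_x(\omega(y)^\vee)(m \otimes_A \zeta \otimes_A \xi)$, and hence
\begin{equation*}
  \mathbf{r}_\omega(\eE{m}{\xi}, \eE{n}{\zeta}) = r_y\bigl(n \otimes_A (\eE{m}{\xi} \otimes_{A^{\env}} \zeta)\bigr).
\end{equation*}
Well-definedness of $\mathbf{r}_\omega$ in the first argument follows immediately from the coend presentation of $T(\omega(y)^\vee)$, while well-definedness in the second argument reduces to the naturality of the family $\{r_y\}_{y \in \mathcal{D}}$, which is an easy consequence of the naturality of the lax braiding $\sigma$ via the defining formula \eqref{eq:T-coqt-r-def-1}. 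A direct inspection of the formula then verifies \eqref{eq:def-univ-R-1} and \eqref{eq:def-univ-R-2}, so the first half of Theorem~\ref{thm:univ-R-lax-braiding} applies and $\sigma^{\mathbf{r}_\omega}_{M,N}$ is a well-defined left $A$-linear map.

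For the identification $\sigma^{\mathbf{r}_\omega} = \widetilde{\sigma}$, I would plug the explicit coaction from Remark~\ref{rem:T-comod-is-B-comod} into \eqref{eq:T-coqt-action-def} to obtain
\begin{equation*}
  a_N(\eE{p}{\eta} \otimes_{A^{\env}} n) = \mathbf{r}_\omega(n_{(-1)}, \eE{p}{\eta}) \, n_{(0)} \qquad (n \in N, \, p \in \omega(z), \, \eta \in \omega(z)^\vee),
\end{equation*}
after which the definition \eqref{eq:T-coqt-lax-br-def} yields
\begin{equation*}
  \widetilde{\sigma}_{\mathbf{M},\mathbf{N}}(m \otimes_A n) = a_N(m_{(-1)} \otimes_{A^{\env}} n) \otimes_A m_{(0)} = \mathbf{r}_\omega(n_{(-1)}, m_{(-1)}) \, n_{(0)} \otimes_A m_{(0)} = \sigma^{\mathbf{r}_\omega}_{M,N}(m \otimes_A n).
\end{equation*}
Since $\widetilde{\sigma}$ is a lax braiding by Theorem~\ref{thm:FRT-bimonad} (\ref{item:main-thm-D-braided}), so is $\sigma^{\mathbf{r}_\omega}$, and the second half of Theorem~\ref{thm:univ-R-lax-braiding} then promotes $\mathbf{r}_\omega$ to a lax universal R-form. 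This simultaneously establishes the first two claims of the theorem.

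For the final claim, when $\sigma$ is invertible I would apply what has already been proved to $\mathcal{D}$ equipped with the opposite braiding $\sigma^{\op}_{x,y} := \sigma^{-1}_{y,x}$, producing a second lax universal R-form $\overline{\mathbf{r}}_\omega$ on $B_\omega$ given by
\begin{equation*}
  \overline{\mathbf{r}}_\omega(\eE{m}{\xi}, \eE{n}{\zeta}) = \eval^{(2)}_{\omega(x),\omega(y)}\bigl(\Sigma_{x,y}^{-1}(n \otimes_A m) \otimes_A \zeta \otimes_A \xi\bigr).
\end{equation*}
What remains is to verify the convolution identity \eqref{eq:def-strong-univ-R}. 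Unpacking both sides with the comultiplication \eqref{eq:B-omega-comul} and the dual-basis relations \eqref{eq:dual-basis}, the identity reduces to the categorical statement $\Sigma_{y,x}^{-1} \circ \Sigma_{y,x} = \id_{\omega(y) \otimes_A \omega(x)}$ (and symmetrically for the other half), inserted between two $\eval^{(2)}$'s joined by a coevaluation that the dual-basis sum supplies. The main obstacle throughout is the notational bookkeeping required at this last stage: one must carry several pairs of dual bases in parallel, keep the left/right $A$-bimodule actions on duals aligned through the contractions, and repeatedly invoke \eqref{eq:dual-basis} to collapse dual-basis sums. Once this bookkeeping is set up, the identity falls out directly from the compatibility of $\Sigma$ with its inverse.
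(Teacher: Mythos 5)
Your proposal is correct and takes essentially the same route as the paper: the paper likewise checks \eqref{eq:def-univ-R-1}--\eqref{eq:def-univ-R-2} directly, proves the key identity $a_N(\eE{m}{\xi}\otimes_{A^{\env}} n)=\rform_{\omega}(n_{(-1)},\eE{m}{\xi})\,n_{(0)}$ by combining \eqref{eq:T-comod-is-B-comod-pf-1}, \eqref{eq:T-coqt-action-def} and \eqref{eq:T-coqt-r-def-1}, invokes Theorem~\ref{thm:univ-R-lax-braiding} to conclude $\rform_{\omega}$ is a lax universal R-form, and verifies \eqref{eq:def-strong-univ-R} for the same $\overline{\rform}$ by the dual-basis/coevaluation computation reducing to $\Sigma^{-1}\circ\Sigma=\id$. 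The only cosmetic differences are your explicit identification $\rform_{\omega}(\eE{m}{\xi},\eE{n}{\zeta})=r_y(n\otimes_A(\eE{m}{\xi}\otimes_{A^{\env}}\zeta))$ (which the paper uses implicitly) and your derivation of $\overline{\rform}$'s basic properties from the reverse braiding rather than by direct inspection.
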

\begin{proof}
  The well-definedness of the bilinear map $\rform_{\omega}: B_{\omega} \times B_{\omega} \to A$ follows from the naturality of $\Sigma$ and the dinaturality of $\eval^{(2)}$.
  It is easy to see that $\rform_{\omega}$ satisfies~\eqref{eq:def-univ-R-1}--\eqref{eq:def-univ-R-2}.
  Although it may be possible to verify \eqref{eq:def-univ-R-4}--\eqref{eq:def-univ-R-7} in a direct way, we prefer to take a Tannaka theoretic approach: Let $\widetilde{\sigma}$ be the lax braiding of the category of left $B_{\omega}$-comodules given by Theorem~\ref{thm:FRT-bimonad}. We will show that the lax braiding $\widetilde{\sigma}$ is given by
  \begin{equation}
    \label{eq:B-omega-univ-R-pf-1}
    \widetilde{\sigma}_{M,N}(m \otimes n) = \rform_{\omega}(n_{(-1)}, m_{(-1)}) n_{(0)} \otimes_A m_{(0)}
    \quad (m \in M, n \in N)
  \end{equation}
  for all left $B_{\omega}$-comodules $M$ and $N$. Once this equation is proved, we will see that $\rform_{\omega}$ is a lax universal R-form on $B_{\omega}$ by Theorem~\ref{thm:univ-R-lax-braiding}.

  We recall the construction of the lax braiding $\widetilde{\sigma}$. Let $T = B_{\omega} \otimes_{A^{\env}} (-)$ be the bimonad on ${}_A \Mod_A$ corresponding to $B_{\omega}$. In Subsection~\ref{subsec:proof-main-thm-D-braided}, we first defined
  \begin{equation*}
    r_x: \omega(x) \otimes_A T(\omega(x)^{\vee}) \to A \quad (x \in \mathcal{D})
  \end{equation*}
  to be the unique morphism of $A$-bimodules such that the equation
  \begin{equation}
    \label{eq:B-omega-univ-R-pf-2}
    r_x \circ (\id_{\omega(x)} \otimes_A i_{y}(\omega(x)^{\vee})
    = \eval_{\omega(x), \omega(y)}^{(2)} \circ (\Sigma_{x,y} \otimes_A \id_{\omega(x)^{\vee}} \otimes_A \id_{\omega(y)^{\vee}})
  \end{equation}
  holds for all $y \in \mathcal{D}$. Then, for a left $B_{\omega}$-comodule $N$, we defined $a_N: T(N) \to N$ to be the unique morphism of $A$-bimodules such that
  \begin{equation}
    \label{eq:B-omega-univ-R-pf-3}
    a_N \circ i_x(N) = (r_x \otimes_A \id_N) \circ (\id_{\omega(x)} \otimes_A \rho_N(\omega(x)^{\vee}))
  \end{equation}
  for all $x \in \mathcal{D}$, where $\rho_N: N \otimes_A (-) \to T(-) \otimes_A N$ is the natural transformation associated to the coaction of $B_{\omega}$ on $N$ (see Remark~\ref{rem:T-comod-is-B-comod}). Finally, we defined
  \begin{equation*}
    \widetilde{\sigma}_{M,N}(m \otimes_A n) = (m_{(-1)} \rightharpoonup n) \otimes_A m_{(0)}
    \quad (m \in M, n \in N),
  \end{equation*}
  where $b \rightharpoonup n = a_N(b \otimes_{A^{\env}} n)$ for $b \in B_{\omega}$ and $n \in N$.

  We note that $B_{\omega}$ is generated by elements of the form $\eE{m}{\xi}$.
  Thus, to prove equation \eqref{eq:B-omega-univ-R-pf-1}, it suffices to show that the equation
  \begin{equation}
    \label{eq:B-omega-univ-R-pf-4}
    \eE{m}{\xi} \rightharpoonup n = \rform_{\omega}\left( n_{(-1)}, \eE{m}{\xi} \right) n_{(0)}
    \quad (m \in \omega(x), \xi \in \omega(x)^{\vee}, n \in N).
  \end{equation}
  holds. We fix $n \in N$ and write $n_{(-1)} \otimes_A n_{(0)}$ as
  \begin{equation}
    \label{eq:B-omega-univ-R-pf-5}
    n_{(-1)} \otimes_A n_{(0)} = \sum_{j} \eE{n_j}{\zeta_j} \otimes_A n'_j
  \end{equation}
  for some $n_j \in \omega(y_j)$, $\zeta \in \omega(y_j)^{\vee}$ ($y_j \in \mathcal{D}$) and $n'_j \in N$. Then,
  \begin{align*}
    \eE{m}{\xi} \rightharpoonup n
    & = a_N ((m \otimes_k \xi) \otimes_{A^{\env}} n) \\
    {}^{\eqref{eq:B-omega-universal-dinat}} \!
    & = a_N i_x(N) (m \otimes_A n \otimes_A \xi) \\
    {}^{\eqref{eq:T-comod-is-B-comod-pf-1}, \eqref{eq:B-omega-univ-R-pf-3}} \!
    & = r_x(m \otimes_A (n_{(-1)} \otimes_{A^{\env}} \xi)) n_{(0)} \\
    {}^{\eqref{eq:B-omega-univ-R-pf-5}} \!
    & = \sum_{j} r_x \Big(m \otimes_A i_y(\omega(x)^{\vee})(n_j \otimes_A \xi \otimes_A \zeta_j) \Big) n_j' \\
    {}^{\eqref{eq:B-omega-universal-dinat}, \eqref{eq:B-omega-univ-R-pf-2}} \!
    & = \sum_{j} \eval_{\omega(x),\omega(y)}\Big( \Sigma_{x,y}(m \otimes_A n_j) \otimes_A \xi \otimes_A \zeta_j \Big) n_j' \\
    {}^{\eqref{eq:B-omega-lax-universal-R}} \!
    & = \sum_{j} \rform_{\omega}\left( \eE{n_j}{\zeta_j}, \eE{m}{\xi} \right) n_j'
      = \rform_{\omega}\left( n_{(-1)}, \eE{m}{\xi} \right) n_{(0)}.
  \end{align*}

  Now we suppose that the lax braiding $\sigma$ of $\mathcal{D}$ is invertible. To complete the proof, we shall show that $\rform_{\omega}$ is a universal R-form on $B_{\omega}$. We note that $\Sigma_{x,y}$ is invertible for all $x, y \in \mathcal{D}$. Thus we define $\overline{\rform}: B_{\omega} \times B_{\omega} \to k$ by
  \begin{equation}
    \overline{\rform}\Big( \eE{m}{\xi}, \eE{n}{\zeta} \Big)
    = \eval^{(2)}_{\omega(x), \omega(y)} \Big( \Sigma_{x, y}^{-1}(n \otimes_A m) \otimes_A \zeta \otimes_A \xi \Big)
  \end{equation}
  for $x, y \in \mathcal{D}$, $m \in \omega(x)$, $\xi \in \omega(x)^{\vee}$, $n \in \omega(y)$ and $\zeta \in \omega(y)^{\vee}$. For simplicity of notation, we set $X := \eE{m}{\xi}$ and $Y = \eE{n}{\zeta}$. Let $\{ m^i, m_i \}$ and $\{ n^j, n_j \}$ be pairs of dual bases for $\omega(x)$ and $\omega(y)$, respectively. Then we have
  \begin{gather*}
    \rform_{\omega}(X_{(1)}, Y_{(1)}) \, \overline{\rform}(Y_{(2)}, X_{(2)})
    = \rform_{\omega}\left( \eE{m}{m^i}, \eE{n}{n^j} \right)
    \overline{\rform}\left( \eE{n_j}{\zeta}, \eE{m_i}{\xi} \right) \\
    = \eval^{(2)}( \Sigma_{y,x}(n \otimes_A m) \otimes_A n^j \otimes_A m^i)
    \, \eval^{(2)}( \Sigma_{y,x}^{-1}(m_i \otimes_A n_j) \otimes_A \xi \otimes_A \zeta).
  \end{gather*}
  This is the element of $A$ obtained by applying the map
  \begin{gather*}
    (\eval^{(2)}_{\omega(x),\omega(y)} \otimes_A \eval^{(2)}_{\omega(y),\omega(x)})
    \circ (\Sigma_{y,x} \otimes_A \id \otimes_A \id \otimes \Sigma_{y,x}^{-1} \otimes_A \id \otimes_A \id) \\
    \circ (\id_{\omega(x)} \otimes_A \id_{\omega(y)} \otimes_A \coev^{(2)}_{\omega(x),\omega(y)} \otimes_A \id_{\omega(y)^{\vee}} \otimes_A \id_{\omega(x)^{\vee}})
  \end{gather*}
  to $n \otimes_A m \otimes_A \xi \otimes_A \zeta$.
  One easily finds that this map is equal to $\eval^{(2)}_{\omega(x),\omega(y)}$ by representing the above expression graphically.
  Thus we continue the computation as follows:
  \begin{equation*}
    \rform_{\omega}(X_{(1)}, Y_{(1)}) \, \overline{\rform}(Y_{(2)}, X_{(2)})
    = \langle \zeta, n \langle \xi, m \rangle \rangle
    = \langle \xi \otimes_{\omega} \zeta, m \otimes_{\omega} n \rangle
    = \pi(X Y).
  \end{equation*}
  One can verify $\overline{\rform}(X_{(1)}, Y_{(1)}) \, \rform_{\omega}(Y_{(2)}, X_{(2)}) = \pi(X Y)$ in a similar way. Thus $\rform_{\omega}$ is a universal R-form. The proof is done.
\end{proof}

\subsection{The inverse of the Galois map of $B_{\omega}$}

In this subsection, we assume that the monoidal category $\mathcal{D}$ is right rigid. Then, by Theorem~\ref{thm:FRT-bimonad}~(\ref{item:main-thm-D-right-rigid}), the bimonad $T$ is a left Hopf monad and therefore the bialgebroid $B_{\omega}$ is a left Hopf algebroid. This means that the Galois map~\eqref{eq:bialgebroid-Galois-map} for $B = B_{\omega}$ is invertible. We denote by $\zeta_x: \omega(x)^{\vee} \to \omega(x^{\vee})$ the duality transformation for $\omega$ used in the proof of Theorem~\ref{thm:FRT-bimonad}~(\ref{item:main-thm-D-right-rigid}). Then we have:

\begin{theorem}
  \label{thm:B-omega-inv-Galois}
  The inverse of the Galois map $\beta$ is given by
  \begin{equation}
    \label{eq:B-omega-inv-Galois}
    \beta^{-1} \left( \eE{m}{\xi} \otimes_A \eE{n}{\chi} \right)
    = \eE{m}{\phi_i} \otimes_{A^{\op}} \eE{\xi'}{\phi^i} \eE{n}{\chi}
  \end{equation}
  for $m \in \omega(x)$, $\xi \in \omega(x)^{\vee}$, $n \in \omega(y)$ and $\chi \in \omega(y)^{\vee}$, where
  \begin{equation*}
    \xi' = \zeta_x(\xi) \in \omega(x^{\vee})
    \quad \text{and} \quad
    \phi^i \otimes_A \phi_i = (\id \otimes_A \zeta_x^{-1}) \coev_{\omega(x^{\vee})}(1).
  \end{equation*}
\end{theorem}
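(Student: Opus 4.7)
The approach is to exploit the identification, recalled in the paragraph after \eqref{eq:bialgebroid-Galois-map}, between the Galois map $\beta$ of $B_{\omega}$ and the left fusion operator $H^{\ell}_{A^{\env},A^{\env}}$ of the bimonad $T = B_{\omega} \otimes_{A^{\env}}(-)$. Since $\mathcal{D}$ is right rigid, Theorem~\ref{thm:FRT-bimonad}~(\ref{item:main-thm-D-right-rigid}) tells us that $T$ is a left Hopf monad, and its proof constructs an explicit inverse $\overline{H}{}^{\ell}_{M,N}$ characterised by equation \eqref{eq:T-left-fusion-inv} in terms of the duality transformation $\zeta_x$. The entire task is therefore to specialise $\overline{H}{}^{\ell}_{A^{\env},A^{\env}}$ and translate it, via Lemma~\ref{lem:construction-coend} and \eqref{eq:B-omega-universal-dinat}, into the elementwise formula~\eqref{eq:B-omega-inv-Galois}.

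Concretely, I would first observe that, under the canonical isomorphism $T(A^{\env}) \cong B_{\omega}$, the element $\eE{m}{\xi} \in B_{\omega}$ corresponds to $i_x(A^{\env})(m \otimes_A (1\otimes_k 1) \otimes_A \xi)$, while the natural transformation $\partial_y(A^{\env})$ produces precisely $\eE{n}{n^j} \otimes_A n_j$ with $\{n_j, n^j\}$ a dual basis, by Proposition~\ref{prop:universal-coactions}. Feeding $m \otimes_A n$ through the right-hand side of \eqref{eq:T-left-fusion-inv} (with $M = N = A^{\env}$, $x$ and $y$ the source indices) then reads: first apply $\zeta_x$ to the right-most factor in $\omega(x) \otimes \omega(x)^{\vee} \otimes \omega(y)$, then $\omega^{(2)}_{x^{\vee},y}$, then the coaction $\partial_{x^{\vee}\otimes y}(A^{\env})$, then $(\omega^{(2)}_{x^{\vee},y})^{-1}$, and finally contract against the remaining factor of $\omega(x)$ via $\eval_{\omega(x)}$. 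Using Proposition~\ref{prop:universal-coactions} and the multiplication rule \eqref{eq:B-omega-mult}, the coaction step produces an element of the form $\eE{\xi'}{\phi^i}\cdot\eE{n}{\chi}$ where $\xi' = \zeta_x(\xi)$, while the evaluation step produces $\eE{m}{\phi_i}$ with the indices $\phi^i \otimes_A \phi_i$ supplied by $\coev_{\omega(x^{\vee})}$ post-composed with $\id \otimes_A \zeta_x^{-1}$ (the $\zeta_x^{-1}$ arises because the coend term $\omega(x^{\vee})\otimes_A \omega(x^{\vee})^{\vee}$ must be re-expressed with $\omega(x)^{\vee}$ in the second slot to match the $\eval_{\omega(x)}$ at the end, cf.\ \eqref{eq:def-duality-trans-1}--\eqref{eq:def-duality-trans-2}). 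This produces the stated formula. A parallel but shorter unpacking then verifies well-definedness over $A^{\op}$.

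As a safety check one may also verify \eqref{eq:B-omega-inv-Galois} directly, by computing $\beta$ applied to the right-hand side using \eqref{eq:B-omega-mult} and \eqref{eq:B-omega-comul}: choosing a dual basis $\{m_j,m^j\}$ for $\omega(x)$ one gets $\eE{m}{m^j} \otimes_A \eE{m_j \otimes_{\omega} \xi'}{\phi_i \mathbin{\check{\otimes}}_{\omega} \phi^i}\cdot\eE{n}{\chi}$, and one reduces this to $\eE{m}{\xi}\otimes_A\eE{n}{\chi}$ by applying the relation $\Rel_{\coev_x} = 0$ in $J_{\omega}$ together with \eqref{eq:def-duality-trans-1} and the dual-basis identity~\eqref{eq:dual-basis}. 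The main obstacle, in either approach, is the careful bookkeeping of the duality transformation $\zeta_x$: the object $\omega(x)^{\vee}$ has a dual basis in ${}_A\Mod_A$, while $\mathcal{D}$ only provides the dual $x^{\vee} \in \mathcal{D}$, so every application of $\coev_{\omega(x^{\vee})}$ must be conjugated by $\zeta_x^{\pm 1}$ to match the coend structure produced by $\partial_{x^{\vee}}$. Once this translation is set up cleanly, the remaining computations are algebraic manipulations in $B_{\omega}$ using the relations $\Img(\Rel_f) \subset J_{\omega}$ from \eqref{eq:map-Rel-f}--\eqref{eq:ideal-J-omega}.
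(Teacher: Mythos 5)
Your proposal is correct, and in fact contains two workable routes; the second one (your ``safety check'') is essentially the paper's own proof, while your primary route is genuinely different. The paper does \emph{not} unwind the inverse fusion operator $\overline{H}{}^{\ell}_{A^{\env},A^{\env}}$: it observes that $\beta$ is already known to be invertible (via the left-Hopf-monad property) and right $B_{\omega}$-linear, so it suffices to exhibit a preimage of $\eE{m}{\xi} \otimes_A 1_{B_{\omega}}$ under $\beta$; this is done by a short computation with \eqref{eq:B-omega-comul}, \eqref{eq:B-omega-mult}, the identity $\phi_i \mathbin{\check{\otimes}}_{\omega} \phi^i = \omega(\eval_x)^{\vee}(\check{1}_{\omega})$, and the dual-basis identity \eqref{eq:dual-basis}. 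Using right $B_{\omega}$-linearity first, rather than applying $\beta$ to the full right-hand side as you do, is slightly cleaner because one never has to worry about whether the displayed formula for $\beta^{-1}$ is well defined: one is merely evaluating an a priori existing map on specific elements. Two small corrections to your sketch of the direct check: the relation you need to quotient by is the one attached to $\eval_x \colon x \otimes x^{\vee} \to \unitobj$ (i.e.\ $\Img(\Rel_{\eval_x}) \subset J_{\omega}$), not $\Rel_{\coev_x}$, since the product $\eE{m_j}{\phi_i}\eE{\xi'}{\phi^i}$ lives in $E_{x \otimes x^{\vee}}$ and must be pushed down to $E_{\unitobj}$; and the final contraction uses \eqref{eq:def-duality-trans-2} to evaluate $\omega(\eval_x)(m_j \otimes_{\omega} \zeta_x(\xi))$. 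Your primary route --- specializing \eqref{eq:T-left-fusion-inv} at $M = N = A^{\env}$ and translating through Lemma~\ref{lem:construction-coend} --- would also yield the formula and explains where it comes from, but it requires more bookkeeping (including the identification of $H^{\ell}_{A^{\env},A^{\env}}$ with $\beta$ and of $\overline{H}{}^{\ell}$ with the elementwise expression), which is presumably why the paper opts for the shorter verification.
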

\begin{proof}
  Since we have already known that the Galois map $\beta$ is invertible, and since $\beta$ is right $B_{\omega}$-linear, it is sufficient to verify the equation
  \begin{equation*}
    % \label{eq:B-omega-inv-Galois-1}
    \beta \left( \eE{m}{\phi_i} \otimes_{A^{\op}} \eE{\xi'}{\phi^i} \right)
    = \eE{m}{\xi} \otimes 1_{B_{\omega}}
  \end{equation*}
  for $m$ and $\xi$ as above.
  By the definition of $\check{\otimes}_{\omega}$ and $\zeta_x$, we have
  \begin{equation}
    \label{eq:B-omega-inv-Galois-2}
    \phi_i \check{\otimes}_{\omega} \phi^i
    = \check{\omega}^{(2)}_{x^{\vee}, x}(\phi^i \otimes_A \phi_i)
    \mathop{=}^{\eqref{eq:def-duality-trans-1}}
    (\omega(\eval_x)^{\vee} \circ \check{\omega}_{0})(1_A)
    = \omega(\eval_x)^{\vee}(\check{1}_{\omega}).
  \end{equation}
  Let $\{ m_j, m^j \}$ be a pair of dual bases for $\omega(x)$. The proof is completed as follows:
  \begin{align*}
    & \beta \left( \eE{m}{\phi_i} \otimes_{A^{\op}} \eE{\xi'}{\phi^i} \right) \\
    {}^{\eqref{eq:bialgebroid-Galois-map}, \eqref{eq:B-omega-comul}} \!
    & = \eE{m}{m^j} \otimes_{A} \eE{m_j}{\phi_i} \eE{\xi'}{\phi^i} \\
    {}^{\eqref{eq:B-omega-mult}} \!
    & =\eE{m}{m^j} \otimes_{A} \eE{m_j \otimes_{\omega} \xi'}{\phi_i \check{\otimes}_{\omega} \phi^i} \\
    {}^{\eqref{eq:B-omega-inv-Galois-2}} \!
    & =\eE{m}{m^j} \otimes_{A} \eE{\omega(\eval_x)(m_j \otimes_{\omega} \xi')}{\check{1}_{\omega}} \\
    {}^{\eqref{eq:def-duality-trans-1}} \!
    & =\eE{m}{m^j} \otimes_{A} \eE{\langle \xi, m_j \rangle 1_{\omega}}{\check{1}_{\omega}}
      \quad \text{(in ${}_{\tgt}B \otimes_A {}_{\src}B$)} \\
    {}^{\eqref{eq:B-omega-src-tgt}} \!
    & =\eE{m}{m^j \langle \xi, m_j \rangle} \otimes_{A} \eE{1_{\omega}}{\check{1}_{\omega}} \\
    {}^{\eqref{eq:dual-basis},\eqref{eq:B-omega-unit}} \!
    & =\eE{m}{\xi} \otimes_A 1_{B_{\omega}}. \qedhere
  \end{align*}
\end{proof}

\section{The FRT construction over non-commutative algebras}
\label{sec:FRT-over-non-comm-rings}

\subsection{Generators and relations for monoidal categories}
\label{subsec:gen-rel-mon}

Throughout this section, we fix a commutative ring $k$ and a $k$-algebra $A$.
In the last section, we have constructed a left bialgebroid $B_{\omega}$ over $A$ from a construction data $(\mathcal{D}, \omega)$ over ${}_A \Mod_A$.
This section aims to present $B_{\omega}$ by generators and relations in the case where the monoidal category $\mathcal{D}$ is presented by generators and relations.

We first recall what a presentation of a monoidal category is. A {\em tensor scheme} \cite[Definition 1.4]{MR1113284} (also called a {\em monoidal signature}) is a data $\Sigma = (\Sigma_0, \Sigma_1, \src, \tgt)$ consisting of two sets $\Sigma_0$ and $\Sigma_1$ and two maps $\src, \tgt: \Sigma_1 \to \langle \Sigma_0 \rangle_{\otimes}$, where $\langle X \rangle_{\otimes}$ for a set $X$ is the free monoid on $X$ with binary operation $\otimes$ and unit $\unitobj$. We usually write $f: x \to y$ if $f$ is an element of $\Sigma_1$ such that $\src(f) = x$ and $\tgt(f) = y$.

Given a tensor scheme $\Sigma = (\Sigma_0, \Sigma_1, \src, \tgt)$, a {\em free monoidal category} on $\Sigma$ is defined as a monoidal category with a certain universal property. A free monoidal category on $\Sigma$ exists and is unique up to equivalence. In this paper, we denote by $\mathcal{F}_{\Sigma}$ the free monoidal category given explicitly in \cite[Theorem 1.2]{MR1113284}. The reader needs not to review the full description of the construction of $\mathcal{F}_{\Sigma}$. Besides the freeness of $\mathcal{F}_{\Sigma}$, we only use the following properties:
\begin{enumerate}
\item $\mathcal{F}_{\Sigma}$ is a strict monoidal category.
\item As a monoid, $\Obj(\mathcal{F}_{\Sigma})$ is identical to $\langle \Sigma_0 \rangle_{\otimes}$.
\item Every morphism of $\mathcal{F}_{\Sigma}$ is built from elements of $\Sigma_1$ and $\id_x$ ($x \in \Sigma_0 \sqcup \{ \unitobj \}$)
  by taking the tensor product and the composition.
\end{enumerate}

Given a family $\mathcal{E} := (f_i, g_i)_{i \in I}$ of parallel morphisms in $\mathcal{F}_{\Sigma}$ indexed by a set $I$, we define $\sim_{\mathcal{E}}$ to be the minimal equivalence relation on the set $\Mor(\mathcal{F}_{\Sigma})$ such that the following statements hold:
\begin{enumerate}
\item We have $f_i \sim_{\mathcal{E}} g_i$ for all  $i \in I$.
\item If $f \sim_{\mathcal{E}} g$, then we have $f \circ a \sim_{\mathcal{E}} g \circ a$ and $b \circ f \sim_{\mathcal{E}} b \circ g$ for all morphisms  $a$ and $b$ in $\mathcal{F}_{\Sigma}$ such that $f \circ a$ and $b \circ f$ make sense.
\item If $f \sim_{\mathcal{E}} g$ and $f' \sim_{\mathcal{E}} g'$, then we have $f \otimes f' \sim_{\mathcal{E}} g \otimes g'$.
\end{enumerate}
Now we define the category $\mathcal{F}_{\Sigma / \mathcal{E}}$ by
\begin{equation*}
  \Obj(\mathcal{F}_{\Sigma / \mathcal{E}}) = \Obj(\mathcal{F}_{\Sigma})
  \quad \text{and} \quad
  \Mor(\mathcal{F}_{\Sigma / \mathcal{E}}) = \Mor(\mathcal{F}_{\Sigma}) / \mathord{\sim}_{\mathcal{E}}
\end{equation*}
with the composition of morphisms given by $[f] \circ [g] = [f \circ g]$, where $[h]$ means the equivalence class of $h \in \Mor(\mathcal{F}_{\Sigma})$. The category $\mathcal{F}_{\Sigma / \mathcal{E}}$ is a strict monoidal category with the tensor product $[f] \otimes [g] = [f \otimes g]$.

\begin{definition}
  We call $\mathcal{F}_{\Sigma / \mathcal{E}}$ the monoidal category generated by the monoidal signature $\Sigma$ with relations $f_i = g_i$ ($i \in I$). By abuse of notation, we usually denote a morphism $[f]$ in this category by $f$.
\end{definition}

\begin{example}
  \label{ex:braid-cat}
  The category $\mathfrak{B}$ of braids is defined as follows: An object of this category is a non-negative integer. Given $m, n \in \Obj(\mathfrak{B})$, the set $\mathfrak{B}(m, n)$ is the braid group of $m$ strands if $m = n$ and the empty set otherwise. The composition of morphisms is given by the multiplication in the braid group.
  This category is in fact a monoidal category with tensor product given by $n \otimes m = n + m$ for objects and by juxtaposition for morphisms. It is well-known that the monoidal category $\mathfrak{B}$ is generated by the tensor scheme
  \begin{equation*}
    \Sigma_0 = \{ x \},
    \quad \Sigma_1 = \{ \sigma: x \otimes x \to x \otimes x, \ \overline{\sigma}: x \otimes x \to x \otimes x \}
  \end{equation*}
  with defining relations
  \begin{equation*}
    (\sigma \otimes \id_x) (\id_x \otimes \sigma) (\sigma \otimes \id_x)
    = (\id_x \otimes \sigma)(\sigma \otimes \id_x) (\id_x \otimes \sigma),
    \quad \sigma \overline{\sigma} = \id_{x \otimes x} = \overline{\sigma} \sigma.
  \end{equation*}
  An object $n$ of $\mathfrak{B}$ corresponds to the $n$-th tensor power of $x$ in this presentation.
\end{example}

\subsection{Tensor $R$-ring}

Let $R$ be a $k$-algebra. Given an $R$-bimodule $M$, we define
\begin{equation*}
  T_R(M) = \bigoplus_{i = 0}^{\infty} T^i_R(M),
  \quad T^0_R(M) = R,
  \quad T^i_R(M) = \underbrace{M \otimes_R \dotsb \otimes_R M}_i
  \quad (i > 0).
\end{equation*}
The $R$-bimodule $T_{R}(M)$ is an $R$-ring by the tensor product over $R$.

\begin{definition}
  We call $T_R(M)$ {\em the tensor $R$-ring} over $M$.
\end{definition}

By the construction, the tensor $R$-ring $T_{R}(M)$ has the following universal property: For any $R$-ring $B$ and any morphism $\phi: M \to B$ of $R$-bimodules, there is a unique morphism $\widetilde{\phi}: T_R(M) \to B$ of $R$-rings such that $\widetilde{\phi}|_M = \phi$.

The universal property allows us to extend the assignment $M \mapsto T_{R}(M)$ to a functor $T_R$ from ${}_R \Mod_R$ to the category of $R$-rings. We remark:

\begin{lemma}
  \label{lem:Ker-TRf}
  Let $f: M \to N$ be a morphism in ${}_R \Mod_R$. Then the kernel of $T_R(f)$ is the ideal generated by $\Ker(f) \subset M$ regarded as a subset of $T_R(M)$.
\end{lemma}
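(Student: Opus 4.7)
Set $K = \Ker f$ and let $J \subset T_R(M)$ denote the two-sided ideal generated by $K$. The inclusion $J \subseteq \Ker T_R(f)$ is immediate, because $T_R(f)$ vanishes on $K \subset M \subset T_R(M)$ and its kernel is a two-sided ideal.

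For the reverse inclusion, the plan is to factor $f$ canonically through its image as $f = \iota \circ p$, where $p: M \twoheadrightarrow M/K$ is the quotient and $\iota: M/K \hookrightarrow N$ is the induced injection, so that $T_R(f) = T_R(\iota) \circ T_R(p)$. The main computation is the identification $\Ker T_R(p) = J$. Working degree by degree, iterated right-exactness of the tensor product $\otimes_R$ identifies $(M/K)^{\otimes_R i}$ with the quotient of $M^{\otimes_R i}$ by the image of $\sum_{j=1}^{i} M^{\otimes(j-1)} \otimes_R K \otimes_R M^{\otimes(i-j)}$; assembling these identifications across all $i$ yields an isomorphism $T_R(M)/J \cong T_R(M/K)$ and hence $\Ker T_R(p) = J$. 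The same isomorphism can also be extracted cleanly from the universal property of the tensor $R$-ring: any $R$-ring homomorphism out of $T_R(M)$ that kills $K$ factors uniquely through $T_R(M/K)$, and the quotient $T_R(M)/J$ enjoys this same universal property.

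It then suffices to establish that $T_R(\iota)$ is injective; combined with $\Ker T_R(p) = J$, this gives $\Ker T_R(f) = T_R(p)^{-1}(\Ker T_R(\iota)) = J$. This last step is the main obstacle in full generality, since without some flatness hypothesis the maps $\iota^{\otimes_R i}$ need not be injective for $i \ge 2$. In the applications of this lemma later in Section~\ref{sec:FRT-over-non-comm-rings}, however, the bimodules involved arise from a construction data $(\mathcal{D}, \omega)$ whose objects $\omega(x)$ are right rigid—hence finitely generated projective, and a fortiori flat, as left $A$-modules—so that the requisite injectivity of $T_R(\iota)$ is available and the conclusion follows.
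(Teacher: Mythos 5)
Your factorization $f = \iota \circ p$ and the degreewise identification $\Ker T_R(p) = J$ via right-exactness of $\otimes_R$ is exactly the computation the paper performs: its proof rests on the formula $\Ker(f_1 \otimes_R f_2) = \Ker(f_1) \otimes_R M_2 + M_1 \otimes_R \Ker(f_2)$, which is the degree-two instance of your quotient description and is valid precisely when the maps are surjective. You are also right to balk at the remaining step: as stated, for arbitrary $f$ the lemma is false. Over $R = \mathbb{Z}$ take $M = N = \mathbb{Z} \oplus \mathbb{Z}/2$ and $f = 2 \oplus \id$; then $f$ is injective, so the ideal generated by $\Ker(f)$ is zero, yet $T^2_R(f) = f \otimes_R f$ annihilates the nonzero element $1 \otimes \bar{1}$ of the summand $\mathbb{Z} \otimes_{\mathbb{Z}} \mathbb{Z}/2$ of $M \otimes_R M$. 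The same example shows that the kernel formula invoked in the paper's own proof fails for non-surjective maps, so the statement your argument actually establishes is the correct one: $\Ker T_R(f)$ is the ideal generated by $\Ker(f)$ whenever $f$ is \emph{surjective}, equivalently $T_R(M/K) \cong T_R(M)/J$.

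The genuine gap in your write-up is the proposed rescue of the general case. Injectivity of $T_R(\iota)$ is not secured by the $\omega(x)$ being finitely generated projective (hence flat) as left $A$-modules: in the application the tensor ring is formed over $R = A^{\env}$, so what one would need is flatness over $A^{\env}$ (on the appropriate sides) of $\Img(f)$ and of the bimodules $\omega(x) \otimes_k \omega(x)^{\vee}$, which is neither asserted in the paper nor implied by left $A$-flatness of $\omega(x)$. The reason the later applications are unaffected is more mundane: the lemma is only ever invoked for the surjection $p$ from the free $A^{\env}$-bimodule on the generating set onto $G$, so $\iota$ is an isomorphism and the problematic step disappears. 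The clean fix is therefore to add the hypothesis that $f$ is surjective (or, if one insists on general $f$, an explicit hypothesis guaranteeing injectivity of the maps $\iota^{\otimes_R i}$), rather than to appeal to flatness over $A$.
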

\begin{proof}
  The map $T_{R}(f)$ is actually the direct sum of
  \begin{equation*}
    \id_{R}: T^0_{R}(M) \to T^0_{R}(N), \quad
    \underbrace{f \otimes_A \dotsb \otimes_A f}_i
    : T_{R}^{i}(M) \to T_{R}^{i}(N)
    \quad (i = 1, 2, \dotsc).
  \end{equation*}
  We have $\Ker(f_1 \otimes_R f_2) = \Ker(f_1) \otimes_R M_2 + M_1 \otimes_R \Ker(f_2)$ in $M_1 \otimes_A M_2$ if $f_1$ and $f_2$ are homomorphisms of $R$-bimodules out of $M_1$ and $M_2$, respectively. The claim of this lemma is proved by using this formula iteratively.
\end{proof}

The free $R$-bimodule $R X R$ over a set $X$ is an $R$-bimodule that is free with basis $X$ as a left $R^{\env}$-module. The {\em free $R$-ring} $R\langle X \rangle$ over $X$ is the tensor $R$-ring over the free $R$-bimodule $R X R$. The above lemma says that one can present $T_{R}(M)$ as a quotient of $R \langle X \rangle$ if there is an epimorphism $R X R \twoheadrightarrow M$ of $R$-bimodules whose kernel is known.

\subsection{Generators and relations for the bialgebroid $B_{\omega}$}

Let $(\mathcal{D}, \omega)$ be a construction data over ${}_A \Mod_A$ such that the monoidal category $\mathcal{D}$ is generated by a monoidal signature $(\Sigma_0, \Sigma_1, \src, \tgt)$ with some relations. The bialgebroid $B_{\omega}$ we have constructed in the last section is a quotient of the $A^{\env}$-bimodule $\widetilde{B}_{\omega} = \bigoplus_{x \in \Sigma} \widetilde{E}_x$, where $\widetilde{E}_x$ is the $A^{\env}$-bimodule introduced in Subsection \ref{subsec:realize-coends}. In this subsection, we present the bialgebroid $B_{\omega}$ as a quotient of the tensor $A^{\env}$-ring
\begin{equation*}
  \widetilde{B}^{\Sigma}_{\omega} := T_{A^{\env}}(\widetilde{G}^{\Sigma}_{\omega}),
  \quad \text{where} \quad \widetilde{G}^{\Sigma}_{\omega} := \bigoplus_{x \in \Sigma_0} \widetilde{E}_x.
\end{equation*}
For this purpose, we first remark:

\begin{lemma}
  The $A^{\env}$-bimodule $\widetilde{B}_{\omega}$ is an $A^{\env}$-ring by the structure maps given by the same formula as $B_{\omega}$ but with the symbol $\mathbf{e}$ replaced with $\widetilde{\mathbf{e}}$.
\end{lemma}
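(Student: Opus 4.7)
The plan is to verify the axioms of an $A^{\env}$-ring on $\widetilde{B}_{\omega}$ directly, transporting the coherence data of the strong monoidal functor $\omega$ through the defining formulas. First I will check that the proposed multiplication is a well-defined $k$-bilinear operation. The product of $\wE{m}{\xi} \in \widetilde{E}_x$ and $\wE{n}{\zeta} \in \widetilde{E}_y$ is the element of $\widetilde{E}_{x \otimes y} = \omega(x \otimes y) \otimes_k \omega(x \otimes y)^{\vee}$ whose two factors are $\omega^{(2)}_{x,y}(m \otimes_A n)$ and $\check{\omega}^{(2)}_{y,x}(\zeta \otimes_A \xi)$; since the $\otimes_A$ inside each factor is $A$-balanced and the outer tensor product defining $\widetilde{E}_{x \otimes y}$ is only $k$-linear, this assignment is $k$-bilinear and extends by direct sums to $\widetilde{B}_{\omega} \otimes_k \widetilde{B}_{\omega} \to \widetilde{B}_{\omega}$.

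Associativity reduces to the identity $(m \otimes_{\omega} n) \otimes_{\omega} p = \omega^{(3)}_{x,y,z}(m \otimes_A n \otimes_A p) = m \otimes_{\omega} (n \otimes_{\omega} p)$, which is precisely the associativity axiom~\eqref{eq:gra-cal-omega-assoc} for $\omega$; the corresponding identity for $\check{\otimes}_{\omega}$ follows either by a symmetric computation or by applying the right duality functor. The unit axiom for $\wE{1_{\omega}}{\check{1}_{\omega}}$ reduces to $\omega^{(2)}_{\unitobj, x}(\omega_0 \otimes \id) = \id = \omega^{(2)}_{x, \unitobj}(\id \otimes \omega_0)$ and its counterpart for $\check{\omega}_0$, which are exactly~\eqref{eq:gra-cal-omega-unit}.

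Next I will verify that $\src: A \to \widetilde{B}_{\omega}$ and $\tgt: A^{\op} \to \widetilde{B}_{\omega}$ are $k$-algebra homomorphisms. This follows from the $A$-bimodule linearity of $\omega_0$ and $\check{\omega}_0$ combined with the unit coherence just invoked (for example, $a_{\omega} \otimes_{\omega} b_{\omega} = \omega^{(2)}_{\unitobj,\unitobj}(\omega_0(a) \otimes_A \omega_0(b)) = (ab)_{\omega}$ by moving $b$ across $\otimes_A$ and applying the unit axiom). For the commutation $\src(a)\tgt(a') = \tgt(a')\src(a)$, both sides expand to elements of $\widetilde{E}_{\unitobj}$ whose first factor is $a_{\omega} \otimes_{\omega} 1_{\omega} = 1_{\omega} \otimes_{\omega} a_{\omega} = a_{\omega}$; their second factors become equal once one uses the $A$-bimodule structure on $\omega(\unitobj)^{\vee}$ transported by $\omega_0$ together with the definition $\check{\omega}_0 = (\omega_0^{-1})^{\vee} \circ \vartheta_0$.

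The main obstacle I expect is the bookkeeping in the source--target commutation, since one must carefully distinguish the $A$-balanced tensor products hidden inside $\otimes_{\omega}$ and $\check{\otimes}_{\omega}$ from the $k$-tensor used to form $\widetilde{E}_{\unitobj}$, and recall that $\check{\omega}_0$ is not a naive transpose of $\omega_0$. Everything else is a direct translation of the coherence axioms of a strong monoidal functor into equalities in $\widetilde{B}_{\omega}$. As a consistency check, the quotient map $q : \widetilde{B}_{\omega} \to B_{\omega}$ intertwines all the structure maps with those on $B_{\omega}$ described in Subsection~\ref{subsec:bialgebroid-B-omega}, so each identity verified above projects to the corresponding identity known to hold in the bialgebroid $B_{\omega}$.
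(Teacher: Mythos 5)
Your proposal is correct and is exactly the direct verification that the paper leaves to the reader: its entire proof of this lemma is the word ``Straightforward,'' meaning one checks the $A^{\env}$-ring axioms by translating them into the coherence identities \eqref{eq:gra-cal-omega-assoc} and \eqref{eq:gra-cal-omega-unit} for the strong monoidal structures $\omega^{(2)}, \omega_0$ and their duals $\check{\omega}^{(2)}, \check{\omega}_0$, precisely as you do. Your explicit attention to the $k$-bilinearity of the product and to the fact that $\check{\omega}_0$ is not a naive transpose of $\omega_0$ correctly identifies where the only real bookkeeping lies.
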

\begin{proof}
  Straightforward.
\end{proof}

By the universal property, there is a unique morphism $\widetilde{\phi}: \widetilde{B}^{\Sigma}_{\omega} \to \widetilde{B}_{\omega}$ of $A^{\env}$-rings extending the inclusion map $\widetilde{G}^{\Sigma}_{\omega} \hookrightarrow \widetilde{B}_{\omega}$. The map $\widetilde{\phi}$ is in fact an isomorphism of $A^{\env}$-rings. To see this, we introduce some notation:
Let $\ell$ be a positive integer.
For objects $z_1, \dotsc, z_{\ell}$ of $\mathcal{D}$, there are canonical isomorphisms
\begin{gather*}
  \omega^{(\ell)}_{z_1, \dotsc, z_{\ell}}:
  \omega(z_1) \otimes_A \dotsb \otimes_A \omega(z_{\ell}) \to \omega(z_1 \otimes \dotsb \otimes z_{\ell}), \\
  \check{\omega}^{(\ell)}_{z_{\ell}, \dotsc, z_{1}} :
  \omega(z_{\ell})^{\vee} \otimes_A \dotsb \otimes_A \omega(z_{1})^{\vee}
  \to \omega(z_1 \otimes \dotsb \otimes z_{\ell})^{\vee}
\end{gather*}
obtained by using the monoidal structure of $\omega$ iteratively.
In the case where $\ell = 0$, we make the convention that $z_1 \otimes \dotsb \otimes z_{\ell} = \unitobj$,
\begin{equation}
  \label{eq:omega-0-convention}
  \begin{array}{cc}
    \omega(z_1) \otimes_A \dotsb \otimes_A \omega(z_{\ell}) = A,
    & \omega^{(0)}_{z_1, \cdots, z_{\ell}} = \omega_0, \\[3pt]
    \omega(z_{\ell})^{\vee} \otimes_A \dotsb \otimes_A \omega(z_{1})^{\vee} = A^{\op},
    & \check{\omega}^{(0)}_{z_{\ell}, \cdots, z_{1}} = \check{\omega}_0.
  \end{array}
\end{equation}
Every non-unit object $x$ of $\mathcal{D}$ can be written as $x = x_1 \otimes \dotsb \otimes x_m$ for some $x_i \in \Sigma_0$ in a unique way. By using this expression, we define
\begin{equation*}
  \widetilde{E}^{\Sigma}_{x} := \widetilde{E}_{x_1} \otimes_{A^{\env}} \dotsb \otimes_{A^{\env}} \widetilde{E}_{x_m}
\end{equation*}
and often identify it with the $A^{\env}$-bimodule
\begin{equation*}
  (\omega(x_1) \otimes_A \dotsb \otimes_A \omega(x_{\ell}))
  \otimes_k (\omega(x_{\ell})^{\vee} \otimes_A \dotsb \otimes_A \omega(x_1)^{\vee})
\end{equation*}
through the isomorphism defined by
\begin{equation}
  \label{eq:widetilde-phi-iso-1}
  \begin{aligned}
    \wE{m_1}{\xi_1} \otimes_{A^{\env}} \dotsb \otimes_{A^{\env}} \wE{m_{\ell}}{\xi_{\ell}}
    & \mapsto (m_1 \otimes_A m_2 \otimes_A \dotsb \otimes_A m_{\ell}) \\[-3pt]
    & \qquad \qquad \mbox{} \otimes_k (\xi_{\ell} \otimes_A \dotsb \otimes_A \xi_2 \otimes_A \xi_1).
  \end{aligned}
\end{equation}
For the unit object of $\mathcal{D}$, we set $\widetilde{E}^{\Sigma}_{\unitobj} = A^{\env}$.
Then we have $\widetilde{B}^{\Sigma}_{\omega} = \bigoplus_{x \in \Obj(\mathcal{D})} \widetilde{E}^{\Sigma}_x$ by the definition of tensor $A^{\env}$-rings. Now we prove:

\begin{lemma}
  \label{lem:widetilde-phi-iso}
  The map $\widetilde{\phi}$ is an isomorphism of $A^{\env}$-rings.
\end{lemma}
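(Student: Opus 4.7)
The plan is to show that $\widetilde{\phi}$ preserves the decomposition over $\Obj(\mathcal{D}) = \langle \Sigma_0 \rangle_\otimes$ and that its restriction to each summand is exactly (up to the identification \eqref{eq:widetilde-phi-iso-1}) built from the structure isomorphisms of the strong monoidal functor $\omega$. Since both direct sums are indexed by the same set and each piece is an iso, the whole map is an iso.

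First, I will observe that the multiplication in $\widetilde{B}_\omega$ is $\Obj(\mathcal{D})$-graded: the product $\wE{m}{\xi} \cdot \wE{n}{\zeta}$ with $m \in \omega(x)$ and $n \in \omega(y)$ lands in $\widetilde{E}_{x \otimes y}$ by formula \eqref{eq:B-omega-mult}, and the source/target maps land in $\widetilde{E}_\unitobj$ by \eqref{eq:B-omega-src-tgt}. Since $\widetilde{\phi}$ is an $A^\env$-ring map extending the inclusion $\widetilde{G}^\Sigma_\omega \hookrightarrow \widetilde{B}_\omega$ and every element of $\widetilde{E}^\Sigma_x$ (for $x = x_1 \otimes \cdots \otimes x_\ell$ with $x_i \in \Sigma_0$) is an $A^\env$-linear combination of products $\wE{m_1}{\xi_1} \otimes_{A^\env} \cdots \otimes_{A^\env} \wE{m_\ell}{\xi_\ell}$, a straightforward induction on $\ell$ shows that $\widetilde{\phi}(\widetilde{E}^\Sigma_x) \subseteq \widetilde{E}_x$. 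Hence $\widetilde{\phi}$ decomposes as $\widetilde{\phi} = \bigoplus_{x \in \Obj(\mathcal{D})} \widetilde{\phi}_x$ with $\widetilde{\phi}_x : \widetilde{E}^\Sigma_x \to \widetilde{E}_x$.

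Next, I will compute $\widetilde{\phi}_x$ for a non-unit $x = x_1 \otimes \cdots \otimes x_\ell$ explicitly. By iterating \eqref{eq:B-omega-mult}, the image of the simple tensor in \eqref{eq:widetilde-phi-iso-1} equals
\[
  \wE{\omega^{(\ell)}_{x_1, \dotsc, x_\ell}(m_1 \otimes_A \cdots \otimes_A m_\ell)}{\check{\omega}^{(\ell)}_{x_\ell, \dotsc, x_1}(\xi_\ell \otimes_A \cdots \otimes_A \xi_1)}.
\]
Under the identification \eqref{eq:widetilde-phi-iso-1} of $\widetilde{E}^\Sigma_x$ with $(\omega(x_1) \otimes_A \cdots \otimes_A \omega(x_\ell)) \otimes_k (\omega(x_\ell)^\vee \otimes_A \cdots \otimes_A \omega(x_1)^\vee)$, this means $\widetilde{\phi}_x = \omega^{(\ell)}_{x_1, \dotsc, x_\ell} \otimes_k \check{\omega}^{(\ell)}_{x_\ell, \dotsc, x_1}$. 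Since $\omega$ is strong monoidal, both $\omega^{(\ell)}$ and $\check{\omega}^{(\ell)}$ are isomorphisms of $A$-bimodules, so $\widetilde{\phi}_x$ is an isomorphism. For $x = \unitobj$, $\widetilde{\phi}_\unitobj$ is the unit map $A^\env \to \widetilde{E}_\unitobj$, which by \eqref{eq:B-omega-src-tgt} and \eqref{eq:B-omega-unit} sends $a \otimes_k b^\op$ to $\wE{a_\omega}{\check{b}_\omega}$ (using the convention \eqref{eq:omega-0-convention}); this is $\omega_0 \otimes_k \check{\omega}_0$, again an isomorphism because $\omega$ is strong monoidal.

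Assembling these summand-wise isomorphisms, $\widetilde{\phi}$ is an isomorphism of $A^\env$-bimodules, and it is an $A^\env$-ring map by construction. I expect no real obstacle beyond the careful bookkeeping of the reversed order of the dual factors (reflected in the definition of $\mathbin{\check{\otimes}}_\omega$ and $\check{\omega}^{(\ell)}_{x_\ell, \dotsc, x_1}$), which is already encoded in the identification \eqref{eq:widetilde-phi-iso-1}.
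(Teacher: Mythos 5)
Your proposal is correct and follows essentially the same route as the paper: both arguments use the $\Obj(\mathcal{D})$-grading to decompose $\widetilde{\phi}$ into components $\widetilde{\phi}_x$, identify each non-unit component (under the identification \eqref{eq:widetilde-phi-iso-1}) with $\omega^{(\ell)} \otimes_k \check{\omega}^{(\ell)}$ via iteration of the multiplication formula, and invoke the strong monoidality of $\omega$ together with a direct check of the unit component. Your added remarks on why the grading is preserved and on the explicit form of $\widetilde{\phi}_{\unitobj}$ are consistent with, and slightly more detailed than, the paper's proof.
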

\begin{proof}
  Both $\widetilde{B}_{\omega}$ and $\widetilde{B}^{\Sigma}_{\omega}$ are graded by the monoid $\Obj(\mathcal{D})$, and the map $\widetilde{\phi}$ preserves the grading.
  For $x \in \Obj(\mathcal{D})$, we denote by $\widetilde{\phi}_x : \widetilde{E}^{\Sigma}_x \to \widetilde{E}_x$ the map induced by $\widetilde{\phi}$.
  Let $x$ be a non-unit object of $\mathcal{D}$ and write it as $x = x_1 \otimes \dotsb \otimes x_{\ell}$ for some $x_i \in \Sigma_0$. By the definition of the multiplication of $\widetilde{B}_{\omega}$, we have
  \begin{gather*}
    \widetilde{\phi}\left( \wE{m_1}{\xi_1} \cdots \wE{m_{\ell}}{\xi_{\ell}} \right)
    = \widetilde{\phi}\left( \wE{m_1}{\xi_1} \right) \cdots \widetilde{\phi}\left( \wE{m_{\ell}}{\xi_{\ell}} \right) \\
    = \wE{m_1}{\xi_1} \cdots \wE{m_{\ell}}{\xi_{\ell}}
    = \wE{m_1 \otimes_{\omega} \dotsb \otimes_{\omega} m_{\ell}}
    {\xi_{\ell} \otimes_{\omega} \dotsb \otimes_{\omega} \xi_1}
  \end{gather*}
  for $m_i \in \omega(x_i)$ and $\xi_i \in \omega(x_i)^{\vee}$ ($i = 1, \dotsc, \ell$). This means that $\widetilde{\phi}_x$ is equal to the composition of \eqref{eq:widetilde-phi-iso-1} and $\omega^{(\ell)}_{z_1,\cdots,z_{\ell}} \otimes_k \omega^{(\ell)}_{z_{\ell}, \cdots, z_1}$.
  Hence $\widetilde{\phi}_x$ is bijective.
  It can be directly verified that $\widetilde{\phi}_{\unitobj}$ is also bijective.
  Thus $\widetilde{\phi}$ is an isomorphism.
\end{proof}

Given a morphism $f: x_1 \otimes \dotsb \otimes x_m \to y_1 \otimes \dotsb \otimes y_n$ in $\mathcal{D}$ with $x_i, y_j \in \Sigma_0$, we define the $k$-linear map $\Rel^{\Sigma}_f$ by
\begin{equation}
  \label{eq:B-omega'-Rel}
  \begin{gathered}
    \Rel^{\Sigma}_f: (\omega(x_1) \otimes_A \dotsb \otimes_A \omega(x_m)) \otimes_k (\omega(y_n)^{\vee} \otimes_A \dotsb \otimes_A \omega(y_1)^{\vee}) \to \widetilde{B}^{\Sigma}_{\omega}, \\
    \Rel^{\Sigma}_f = \widetilde{\phi}{}^{-1} \circ \Rel_f \circ (\omega^{(m)}_{x_1, \cdots, x_m} \otimes_{k} \check{\omega}^{(n)}_{y_n, \cdots, y_1})
  \end{gathered}
\end{equation}
where the convention \eqref{eq:omega-0-convention} is used for the case where either $m$ or $n$ is zero.

\begin{definition}
  We define $B^{\Sigma}_{\omega}$ to be the quotient of the $A^{\env}$-ring $\widetilde{B}^{\Sigma}_{\omega}$ by the ideal generated by the set $\bigcup_{f \in \Sigma_1} \Img(\Rel^{\Sigma}_f)$. 
\end{definition}

For $m \in \omega(x)$ and $\xi \in \omega(x)^{\vee}$ with $x \in \Sigma_0$, we denote by $\eE{m}{\xi}$ the image of the element $\wE{m}{\xi} \in \widetilde{B}^{\Sigma}_{\omega}$ under the quotient map $q^{\Sigma} : \widetilde{B}^{\Sigma}_{\omega} \twoheadrightarrow B^{\Sigma}_{\omega}$.
As in Subsection~\ref{subsec:bialgebroid-B-omega}, we use same symbols to express elements of the bialgebroid $B_{\omega}$.
We also denote by $G_{\omega}^{\Sigma}$ the image of $\widetilde{G}_{\omega}^{\Sigma}$ under $q^{\Sigma}$.

\begin{theorem}
  \label{thm:B-omega-gen-rel}
  The $A^{\env}$-ring $B^{\Sigma}_{\omega}$ has a unique structure of a left bialgebroid over $A$ determined by the same formula as \eqref{eq:B-omega-mult}--\eqref{eq:B-omega-counit} on the generating set $G^{\Sigma}_{\omega}$. There is a unique isomorphism $\phi: B^{\Sigma}_{\omega} \to B_{\omega}$ of left bialgebroids given by
  \begin{equation}
    \label{eq:B-dash-to-B-omega}
    \phi\left( \eE{m}{\xi} \right) = \eE{m}{\xi}
  \end{equation}
  for $m \in \omega(x)$ and $\xi \in \omega(x)^{\vee}$ with $x \in \Sigma_0$.
\end{theorem}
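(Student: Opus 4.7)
The plan is to realise $\phi$ as the map descended from the $A^{\env}$-ring isomorphism $\widetilde{\phi}: \widetilde{B}^{\Sigma}_{\omega} \to \widetilde{B}_{\omega}$ of Lemma~\ref{lem:widetilde-phi-iso}, then transfer the bialgebroid structure back. First, I would observe that the quotient map $q: \widetilde{B}_{\omega} \to B_{\omega}$ is an $A^{\env}$-ring homomorphism (the multiplications are defined by the same formulas), so $J_{\omega} = \Ker(q)$ is automatically a two-sided ideal of $\widetilde{B}_{\omega}$. Consequently $q \circ \widetilde{\phi}$ is a surjective $A^{\env}$-ring map, and the theorem reduces to showing that its kernel $\widetilde{\phi}^{-1}(J_{\omega})$ coincides with $I^{\Sigma}$, the two-sided ideal of $\widetilde{B}^{\Sigma}_{\omega}$ generated by $\bigcup_{f \in \Sigma_{1}} \Img(\Rel^{\Sigma}_{f})$. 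The inclusion $I^{\Sigma} \subseteq \widetilde{\phi}^{-1}(J_{\omega})$ is immediate from the definition~\eqref{eq:B-omega'-Rel} of $\Rel^{\Sigma}_{f}$.

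For the reverse inclusion, since $J_{\omega} = \sum_{h \in \Mor(\mathcal{D})} \Img(\Rel_{h})$ it is enough to verify $\widetilde{\phi}^{-1}(\Img(\Rel_{h})) \subseteq I^{\Sigma}$ for every morphism $h$ in $\mathcal{D}$. Using the presentation $\mathcal{D} = \mathcal{F}_{\Sigma / \mathcal{E}}$, I would argue by structural induction on a lift of $h$ to $\mathcal{F}_{\Sigma}$: the case of an identity is trivial since $\Rel_{\id} = 0$, and the case $h \in \Sigma_{1}$ is the very definition of $I^{\Sigma}$. For a composite $h = g \circ f$, the functoriality of $\omega$ and of $(-)^{\vee}$ yields the telescoping identity
\begin{equation*}
  \Rel_{g \circ f}(m \otimes_{k} \xi)
  = \Rel_{g}(\omega(f)(m) \otimes_{k} \xi) + \Rel_{f}(m \otimes_{k} \omega(g)^{\vee}(\xi)),
\end{equation*}
so the inductive hypothesis propagates. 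For a tensor product $h = f \otimes g$, I would use the naturality of $\omega^{(2)}$ and $\check{\omega}^{(2)}$ together with the identification~\eqref{eq:widetilde-phi-iso-1} to rewrite $\widetilde{\phi}^{-1}(\Rel_{f \otimes g}(m \otimes_{k} \xi))$ as a difference of two products in $\widetilde{B}^{\Sigma}_{\omega}$, and then insert and subtract an intermediate product so as to exhibit this difference as the sum of a left multiple of $\Rel^{\Sigma}_{g}$ and a right multiple of $\Rel^{\Sigma}_{f}$, both of which lie in $I^{\Sigma}$ by induction. The defining relations $\mathcal{E}$ impose no extra constraint, because $\Rel_{h}$ depends only on the class of $h$ in $\mathcal{D}$, not on a particular lift.

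With $\widetilde{\phi}^{-1}(J_{\omega}) = I^{\Sigma}$ in hand, $\widetilde{\phi}$ descends to an $A^{\env}$-ring isomorphism $\phi: B^{\Sigma}_{\omega} \to B_{\omega}$ satisfying~\eqref{eq:B-dash-to-B-omega}, and uniqueness of such $\phi$ follows because $G^{\Sigma}_{\omega}$ generates $B^{\Sigma}_{\omega}$ as an $A^{\env}$-ring. Transporting the left bialgebroid structure of $B_{\omega}$ back along $\phi$ equips $B^{\Sigma}_{\omega}$ with the required bialgebroid structure, and its formulas on $G^{\Sigma}_{\omega}$ are the same as \eqref{eq:B-omega-mult}--\eqref{eq:B-omega-counit} by construction; uniqueness of this structure is then clear since a left bialgebroid over $A$ is determined by its source, target, multiplication, comultiplication, and counit, each of which is prescribed on the generating set $G^{\Sigma}_{\omega}$. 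The main obstacle will be the tensor-product case of the induction: unlike the composition case, which is a clean functorial telescope, this step requires reconciling the tensor-$A^{\env}$-ring multiplication of $\widetilde{B}^{\Sigma}_{\omega}$ with the juxtaposition multiplication of $\widetilde{B}_{\omega}$ in the presence of the strong monoidal coherence data $\omega^{(2)}$ and $\check{\omega}^{(2)}$; an auxiliary reduction to the case where all four source/target objects of $f$ and $g$ lie in $\Sigma_{0}$, by iterating the identification~\eqref{eq:widetilde-phi-iso-1}, should be enough to finish the argument.
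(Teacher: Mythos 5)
Your proposal is correct and follows essentially the same route as the paper: the paper also descends the isomorphism $\widetilde{\phi}$ of Lemma~\ref{lem:widetilde-phi-iso} after proving $\widetilde{\phi}(J^{\Sigma}_{\omega}) = J_{\omega}$, with the composition case handled by exactly your telescoping identity and the tensor case by the same insert-and-subtract trick reducing $f_1 \otimes f_2$ to $f_1 \otimes \id$ and $\id \otimes f_2$ (these containments are packaged as Lemma~\ref{lem:B-omega-gen-rel-2} rather than run as an inline structural induction, but the content is identical). The remaining steps --- transporting the bialgebroid structure along $\phi$ and deducing uniqueness from the fact that $G^{\Sigma}_{\omega}$ generates --- also match the paper's argument.
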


We recall that the bialgebroid $B_{\omega}$ is defined to be the quotient of $\widetilde{B}_{\omega}$ by the sum of $\Img(\Rel_f)$'s, where $f$ runs over all morphisms in $\mathcal{D}$.
This theorem expresses the bialgebroid $B_{\omega}$ as a quotient of the tensor ring over $\widetilde{G}^{\Sigma}_{\omega}$. Furthermore, relations we need to impose on generators can be reduced to those coming from the generating set $\Sigma_1$ of morphisms of $\mathcal{D}$.

The following lemma is a mechanism of reducing relations:

\begin{lemma}
  \label{lem:B-omega-gen-rel-2}
  For two morphisms $f: y \to z$ and $g: x \to y$ in $\mathcal{D}$, we have
  \begin{equation}
    \label{eq:B-omega-gen-rel-2}
    \Img(\Rel^{\Sigma}_{f g}) \subset \Img(\Rel^{\Sigma}_f) + \Img(\Rel^{\Sigma}_g).
  \end{equation}
  For two morphisms $f_1: x_1 \to y_1$ and $f_2: x_2 \to y_2$ in $\mathcal{D}$, we have
  \begin{equation}
    \label{eq:B-omega-gen-rel-3}
    \Img(\Rel^{\Sigma}_{f_1 \otimes f_2}) \subset
    \Img(\Rel^{\Sigma}_{f_1}) \cdot \widetilde{B}^{\Sigma}_{\omega}
    + \widetilde{B}^{\Sigma}_{\omega} \cdot \Img(\Rel^{\Sigma}_{f_2}).
  \end{equation}
  If $f$ is an isomorphism in $\mathcal{D}$, then we have
  \begin{equation}
    \label{eq:B-omega-gen-rel-4}
    \Img(\Rel^{\Sigma}_{f}) = \Img(\Rel^{\Sigma}_{f^{-1}}).
  \end{equation}
\end{lemma}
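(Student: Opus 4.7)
My strategy is to first establish each inclusion at the level of $\Rel_f : \omega(x) \otimes_k \omega(y)^{\vee} \to \widetilde{B}_\omega$ defined in Subsection~\ref{subsec:realize-coends}, and then transport the identity to $\widetilde{B}^\Sigma_\omega$ via the $A^{\env}$-ring isomorphism $\widetilde{\phi}$ of Lemma~\ref{lem:widetilde-phi-iso}, together with the strong monoidal isomorphisms $\omega^{(n)}$ and $\check{\omega}^{(n)}$ that relate the two versions of $\Rel$.

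For part~(1), the key observation is the purely algebraic ``telescoping'' identity
\begin{equation*}
  \Rel_{f g}(m \otimes_k \xi)
  = \Rel_f(\omega(g)(m) \otimes_k \xi) + \Rel_g(m \otimes_k \omega(f)^{\vee}(\xi)),
\end{equation*}
which follows immediately from the definition of $\Rel$ together with the functoriality $\omega(fg) = \omega(f)\omega(g)$ and the contravariance of $(-)^\vee$. Pre-composing with $\omega^{(m)}_{x_1,\cdots,x_m} \otimes_k \check{\omega}^{(n)}_{z_n,\cdots,z_1}$ and using $\widetilde{\phi}^{-1}$, I rewrite this identity in $\widetilde{B}^\Sigma_\omega$; the element $\omega(g)(m) \in \omega(y)$ decomposes via $(\omega^{(n')}_{y_1,\cdots,y_{n'}})^{-1}$ into a sum of elementary tensors so that the first summand lies in $\Img(\Rel^\Sigma_f)$, and similarly the second lies in $\Img(\Rel^\Sigma_g)$.

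For part~(2), I use that $\omega$ is strong monoidal, so $\omega(f_1 \otimes f_2)$ commutes with $\omega^{(2)}$ and $\omega(f_1 \otimes f_2)^\vee$ commutes with $\check{\omega}^{(2)}$. After evaluating $\Rel_{f_1 \otimes f_2}$ on elements of the form $m_1 \otimes_\omega m_2 \otimes_k \xi_2 \mathbin{\check{\otimes}}_\omega \xi_1$ and transporting via $\widetilde{\phi}^{-1}$, the difference becomes
\begin{equation*}
  \wE{\omega(f_1)(m_1)}{\xi_1} \cdot \wE{\omega(f_2)(m_2)}{\xi_2}
  - \wE{m_1}{\omega(f_1)^{\vee}(\xi_1)} \cdot \wE{m_2}{\omega(f_2)^{\vee}(\xi_2)}
\end{equation*}
in $\widetilde{B}^\Sigma_\omega$. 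Applying the multiplicative telescoping $ac - bd = a(c-d) + (a-b)d$ with the intermediate term $\wE{\omega(f_1)(m_1)}{\xi_1} \cdot \wE{m_2}{\omega(f_2)^{\vee}(\xi_2)}$ expresses the left-hand side as $\wE{\omega(f_1)(m_1)}{\xi_1} \cdot \Rel^\Sigma_{f_2}(\cdots) + \Rel^\Sigma_{f_1}(\cdots) \cdot \wE{m_2}{\omega(f_2)^{\vee}(\xi_2)}$, which lies in $\widetilde{B}^\Sigma_\omega \cdot \Img(\Rel^\Sigma_{f_2}) + \Img(\Rel^\Sigma_{f_1}) \cdot \widetilde{B}^\Sigma_\omega$. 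Since everything is linear, this extends to general inputs after decomposing $m_i$ and $\xi_i$ into simple tensors along the $\Sigma_0$-factorizations of $x_i$ and $y_i$.

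For part~(3), the substitution $m' := \omega(f^{-1})(m')$ combined with $\omega(f)\omega(f^{-1}) = \id$ and $\omega(f^{-1})^\vee \omega(f)^\vee = \id$ gives the identity
\begin{equation*}
  \Rel_{f^{-1}}(m' \otimes_k \omega(f)^{\vee}(\xi)) = -\Rel_f(\omega(f^{-1})(m') \otimes_k \xi).
\end{equation*}
Since $\omega(f^{-1})$ and $\omega(f)^\vee$ are bijections, as $(m', \xi)$ ranges freely so do the arguments on the right, which shows $\Img(\Rel_{f^{-1}}) \subset \Img(\Rel_f)$; the reverse inclusion follows by symmetry applied to $f^{-1}$. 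Transporting via $\widetilde{\phi}^{-1}$ and the monoidal isomorphisms yields the equality of the images in $\widetilde{B}^\Sigma_\omega$. The main bookkeeping obstacle throughout will be matching the $\Sigma_0$-factorizations of intermediate objects (particularly $y$ in part~(1) and the concatenations $x_1 \otimes x_2$, $y_1 \otimes y_2$ in part~(2)); the strong monoidal coherence ensures that this bookkeeping is routine, so that the essential content of the proof is the three algebraic telescoping identities above.
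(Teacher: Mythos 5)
Your proposal is correct, and for parts (1) and (3) it follows the paper's proof essentially verbatim: reduce everything to the maps $\Rel_f$ on $\widetilde{B}_{\omega}$ via the ring isomorphism $\widetilde{\phi}$ (using $\widetilde{\phi}(\Img(\Rel^{\Sigma}_f)) = \Img(\Rel_f)$, which makes the transport immediate and renders your discussion of re-decomposing $\omega(g)(m)$ into elementary tensors unnecessary), then apply the additive telescoping identity for $\Rel_{fg}$ and the sign-flip identity for $\Rel_{f^{-1}}$. For part (2) you take a mildly different route: the paper first factors $f_1 \otimes f_2 = (f_1 \otimes \id_{y_2}) \circ (\id_{x_1} \otimes f_2)$, applies part (1) to reduce to the two one-sided cases, and then handles each of those by expanding $(\omega^{(2)})^{-1}(m)$ and $(\check{\omega}^{(2)})^{-1}(\xi)$ into simple tensors and exhibiting $\Rel_{f_1 \otimes \id}(m \otimes_k \xi)$ as a sum of terms $\Rel_{f_1}(\cdots)\cdot\wE{\cdots}{\cdots}$; you instead evaluate $\Rel_{f_1 \otimes f_2}$ in one step on products $\wE{m_1}{\xi_1}\cdot\wE{m_2}{\xi_2}$ and apply the multiplicative telescoping $ac - bd = a(c-d) + (a-b)d$. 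Both arguments rest on the same two facts, namely that $\omega(f_1 \otimes f_2)$ and its dual act factorwise through the strong monoidal structure and that elements of the form $m_1 \otimes_{\omega} m_2$ and $\xi_1 \mathbin{\check{\otimes}}_{\omega} \xi_2$ span the relevant modules; your one-step version is slightly more economical and lands directly in $\Img(\Rel^{\Sigma}_{f_1}) \cdot \widetilde{B}^{\Sigma}_{\omega} + \widetilde{B}^{\Sigma}_{\omega} \cdot \Img(\Rel^{\Sigma}_{f_2})$ as required. The only blemish is the circular phrasing ``the substitution $m' := \omega(f^{-1})(m')$'' in part (3); what you mean is that the paper's identity $\Rel_f(m \otimes_k \xi) = -\Rel_{f^{-1}}(\omega(f)(m) \otimes_k \omega(f)^{\vee}(\xi))$ read with $m' = \omega(f)(m)$ shows one inclusion because $\omega(f)$ and $\omega(f)^{\vee}$ are bijective, and symmetry gives the other.
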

\begin{proof}
  Since $\widetilde{\phi}$ is an isomorphism of $A^{\env}$-rings and $\widetilde{\phi}(\Img(\Rel^{\Sigma}_f)) = \Img(\Rel_f)$ for all morphisms $f$ in $\mathcal{D}$, it suffices to show the corresponding results for $\Rel_f$. We first prove \eqref{eq:B-omega-gen-rel-2}. For $m \in \omega(x)$ and  $\xi \in \omega(z)^{\vee}$, we have
  \begin{align*}
    \Rel_{f g}(m \otimes_k \xi)
    & = \Rel_f(\omega(g)(m) \otimes_k \xi) + \Rel_g(m \otimes_k \omega(f)^{\vee}(\xi)),
  \end{align*}
  which implies $\Img(\Rel_{f g}) \subset \Img(\Rel_f) + \Img(\Rel_g)$. Thus \eqref{eq:B-omega-gen-rel-2} follows. To prove the next one, we first note that \eqref{eq:B-omega-gen-rel-2} yields
  \begin{equation*}
    \Img(\Rel_{f_1 \otimes f_2}) \subset
    \Img(\Rel_{f_1 \otimes \id_{y_2}})
    + \Img(\Rel_{\id_{x_1} \otimes f_2}).
  \end{equation*}
  Thus, to prove \eqref{eq:B-omega-gen-rel-3}, it is enough to show
  \begin{equation}
    \label{eq:B-omega-gen-rel-pf-1}
    \Img(\Rel_{f_1 \otimes \id_{y_2}}) \subset \Img(\Rel_{f_1}) \cdot \widetilde{B}_{\omega}, \quad
    \Img(\Rel_{\id_{x_1} \otimes f_2}) \subset \widetilde{B}_{\omega} \cdot \Img(\Rel_{f_2}).
  \end{equation}
  Now we fix elements $m \in \omega(x_1 \otimes y_2)$ and $\xi \in \omega(y_1 \otimes y_2)^{\vee}$ and write
  \begin{equation*}
    (\omega^{(2)}_{x_1, y_2})^{-1}(m) = \sum_{i} m_i' \otimes_A m_i''
    \quad \text{and} \quad
    (\check{\omega}^{(2)}_{y_2, y_1})^{-1}(\xi) = \sum_{j} \xi_j' \otimes_A \xi_j''
  \end{equation*}
  for some $m_i' \in \omega(x_1)$, $m_i'' \in \omega(y_2)$, $\xi_j' \in \omega(y_2)^{\vee}$ and $\xi_j'' \in \omega(y_1)^{\vee}$. Then we have
  \begin{align*}
    \Rel_{f_1 \otimes \id_{y_2}}(m \otimes_k \xi)
    = \sum_{i, j} \Rel_{f_1}(m_i' \otimes_k \xi_j'') \cdot \wE{m_i''}{\xi_{j}'}
    \in \Img(\Rel_{f_1}) \cdot \widetilde{B}_{\omega},
  \end{align*}
  which implies the first inclusion relation in \eqref{eq:B-omega-gen-rel-pf-1}. The second one is verified in a similar way. Thus we obtain \eqref{eq:B-omega-gen-rel-3}.

  Finally, we prove \eqref{eq:B-omega-gen-rel-4}. If $f: y \to z$ is an isomorphism in $\mathcal{D}$, then
  \begin{equation*}
    \Rel_f(m \otimes \xi)
    = -\Rel_{f^{-1}}(\omega(f)(m) \otimes_k \omega(f)^{\vee}(\xi))
    \in \Img(\Rel_{f^{-1}})
  \end{equation*}
  for $m \in \omega(y)$ and $\xi \in \omega(z)^{\vee}$. Hence $\Img(\Rel_f) \subset \Img(\Rel_{f^{-1}})$. By symmetry, we also have the converse inclusion. The proof is done.
\end{proof}

\begin{proof}[Proof of Theorem~\ref{thm:B-omega-gen-rel}]
  Let $J^{\Sigma}_{\omega}$ be the ideal of $\widetilde{B}^{\Sigma}_{\omega}$ generated by $\bigcup_{f \in \Sigma_1} \Img(\Rel^{\Sigma}_f)$, and let $J_{\omega}$ be the $k$-submodule of $\widetilde{B}_{\omega}$ defined by~\eqref{eq:ideal-J-omega}. Since the quotient map $\widetilde{B}_{\omega} \twoheadrightarrow B_{\omega}$ ($:= \widetilde{B}_{\omega}/J_{\omega}$) is a morphism of $A^{\env}$-rings, $J_{\omega}$ is in fact an ideal of $\widetilde{B}_{\omega}$. Since $\Img(\Rel_f) \subset J_{\omega}$ for all morphisms $f$ in $\mathcal{D}$, we have $\widetilde{\phi}(J^{\Sigma}_{\omega}) \subset J_{\omega}$. Since $\mathcal{D}$ is generated by $\Sigma$, we also have
  \begin{equation*}
    J_{\omega} = \sum_{f \in \Mor(\mathcal{D})} \Img(\Rel_f)
    \subset \sum_{f \in \Sigma_1} \widetilde{B}_{\omega} \cdot \Img(\Rel_f) \cdot \widetilde{B}_{\omega}
    \subset \widetilde{\phi}(J_{\omega}^{\Sigma})
  \end{equation*}
  by Lemma~\ref{lem:B-omega-gen-rel-2}. Thus, in conclusion, we have $\widetilde{\phi}(J^{\Sigma}_{\omega}) = J_{\omega}$.
  Hence $\widetilde{\phi}$ induces an isomorphism between the quotient $A^{\env}$-rings $B^{\Sigma}_{\omega}$ and $B_{\omega}$, which we denote by $\phi: B^{\Sigma}_{\omega} \to B_{\omega}$. We equip $B^{\Sigma}_{\omega}$ with a structure of a left bialgebroid through $\phi$. The resulting bialgebroid structure of $B^{\Sigma}_{\omega}$ is as stated. The isomorphism $\phi$ obviously fulfills the required conditions. The proof is done.
\end{proof}

\subsection{The FRT construction over non-commutative algebras}
\label{subsec:FRT-construction}

A {\em braided object} in a monoidal category $\mathcal{C}$ is a pair $(V, c)$ consisting of an object $V \in \mathcal{C}$ and an isomorphism $c: V \otimes V \to V \otimes V$ in $\mathcal{C}$ satisfying the braid equation
\begin{equation*}
  (c \otimes \id_V) (\id_V \otimes c) (c \otimes \id_V)
  = (\id_V \otimes c)(c \otimes \id_V) (\id_V \otimes c).
\end{equation*}
Let $(M, c)$ be a braided object in ${}_A \Mod_A$ such that $M$ is right rigid or, equivalently, finitely generated and projective as a left $A$-module.
We present the category $\mathfrak{B}$ of braids as in Example~\ref{ex:braid-cat}.
There is a unique strict monoidal functor $\omega: \mathfrak{B} \to {}_A \Mod_A$ such that $\omega(x) = M$ and $\omega(\sigma) = c$. Since $\mathfrak{B}$ is braided, the left bialgebroid $B_{\omega}$ associated to $\omega$ has a universal R-form given by Theorem~\ref{thm:B-omega-lax-universal-R}.

\begin{definition}
  \label{def:FRT-bialgebroid}
  We call the coquasitriangular left bialgebroid $B_{\omega}$ with $\omega$ as above {\em the FRT bialgebroid} associated to the braided object $(M, c)$.
\end{definition}

As an application of our results, we give a presentation of the FRT bialgebroid associated to $(M, c)$. We first fix a pair $\{ m_i, m^i \}_{i = 1}^N$ of dual bases for $M$. It is easy to see that $\{ m_i \}_{i = 1}^N$ generates $M$ as a left $A$-module and $\{ m^i \}_{i = 1}^N$ generates $M^{\vee}$ as a right $A$-module.
Moreover, the set $\{ m_i \otimes_A m_j \}_{i, j = 1}^N$ generates $M \otimes_A M$ as a left $A$-module.
Thus we can write
\begin{equation}
  \label{eq:B-omega-FRT-R-matrix}
  c(m_i \otimes_A m_j) = \sum_{r, s = 1}^N w_{i j}^{r s} \, m_r \otimes_A m_s
  \quad (i, j = 1, \dotsc, N)
\end{equation}
for some $w_{i j}^{r s} \in A$ (although this expression is not unique in general). We make the $k$-module $G := M \otimes_k M^{\vee}$ an $A^{\env}$-bimodule by
\begin{equation}
  \label{eq:B-omega-FRT-generators-action}
  (a_1 \otimes_k a_2^{\op}) \cdot (m \otimes_k \xi) \cdot (a_3 \otimes_k a_4^{\op})
  = (a_1 m a_3) \otimes_k (a_4 \xi a_2)
\end{equation}
for $a_1, a_2, a_3, a_4 \in A$, $m \in M$ and $\xi \in M^{\vee}$.

\begin{theorem}
  \label{thm:FRT-coqt-bialgebroid}
  We define the $A^{\env}$-ring $B(M, c)$ to be the quotient of $T_{A^{\env}}(G)$ by the ideal generated by
  \begin{equation}
    \label{eq:B-omega-FRT-relation}
    \sum_{r, s = 1}^N w_{i j}^{r s}\rhd T^p_r T^q_s
    - \sum_{r, s = 1}^N T_i^r T_j^s \lhd \overline{w}^{p q}_{r s}
    \quad (i, j, p, q = 1, \dotsc, N),
  \end{equation}
  where $T_i^j := m_i \otimes_k m^j \in G$,
  \begin{equation}
    \label{eq:B-omega-FRT-relation-c-mathring}
    \overline{w}_{r s}^{p q} := \sum_{i, j = 1}^N w_{r s}^{i j} \langle m^p, m_i \langle m^q, m_j \rangle \rangle,
  \end{equation}
  and the $A$-actions $\rhd$ and $\lhd$ are given by our rule \eqref{eq:bialgebroid-Re-action-left} when we view an $A^{\env}$-ring as an $A$-bimodule. Then $B(M, c)$ has a unique structure of a coquasitriangular left bialgebroid over $A$ determined by
  \begin{equation*}
    \Delta(T_{i}^{j}) = \sum_{r = 1}^N T_i^r \otimes_A T_r^j,
    \quad \pi(T_{i}^{j}) = \langle m^j, m_i \rangle
    \quad \text{and}
    \quad \mathbf{r}(T_{i}^{r}, T_{j}^{s}) = \overline{w}_{j i}^{r s}
  \end{equation*}
  for $i, j, r, s = 1, \cdots, N$. The FRT bialgebroid of Definition \ref{def:FRT-bialgebroid} associated to $(M, c)$ is isomorphic to the bialgebroid $B(M, c)$.
\end{theorem}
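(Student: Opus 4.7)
The plan is to derive Theorem~\ref{thm:FRT-coqt-bialgebroid} as a direct application of the presentation theorem~\ref{thm:B-omega-gen-rel} to the standard presentation of the braid category recalled in Example~\ref{ex:braid-cat}. For this presentation $\Sigma_0 = \{x\}$ and $\Sigma_1 = \{\sigma, \overline{\sigma}\}$, so (since Theorem~\ref{thm:B-omega-gen-rel} only consumes the signature and not the defining relations of $\mathcal{D}$) we have $\widetilde{G}^{\Sigma}_{\omega} = \widetilde{E}_x = M \otimes_k M^{\vee} = G$ and $\widetilde{B}^{\Sigma}_{\omega} = T_{A^{\env}}(G)$, and $B_{\omega}$ is identified with the quotient of $T_{A^{\env}}(G)$ by the ideal generated by $\Img(\Rel^{\Sigma}_{\sigma}) \cup \Img(\Rel^{\Sigma}_{\overline{\sigma}})$. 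Equation~\eqref{eq:B-omega-gen-rel-4} of Lemma~\ref{lem:B-omega-gen-rel-2}, applied to the mutually inverse pair $\sigma, \overline{\sigma}$, collapses this to the ideal generated by $\Img(\Rel^{\Sigma}_{\sigma})$ alone.

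The core of the proof will be to evaluate $\Rel^{\Sigma}_{\sigma}$ explicitly on the generating family $(m_i \otimes_A m_j) \otimes_k (m^q \otimes_A m^p)$ of its domain (a generating family by the dual-basis identity~\eqref{eq:dual-basis} applied to $M$ and to $M^{\vee}$). Using that $\omega$ is strict monoidal (so $\omega^{(2)} = \id$ and $\mathbin{\check{\otimes}}_{\omega} = \vartheta^{(2)}$) together with $\omega(\sigma) = c$ and the expansion \eqref{eq:B-omega-FRT-R-matrix}, the first term of $\Rel_{\sigma}$ pulls back under $\widetilde{\phi}^{-1}$ to $\sum_{r,s} w_{ij}^{rs} \rhd T_r^p T_s^q$. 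For the second term I will expand $\omega(\sigma)^{\vee}(m^p \mathbin{\check{\otimes}}_{\omega} m^q) \in (M \otimes_A M)^{\vee}$ against the dual basis $\{\,m_r \otimes_A m_s,\ m^r \mathbin{\check{\otimes}}_{\omega} m^s\,\}$ of $M \otimes_A M$ (read off from $\coev^{(2)}$, see \eqref{eq:rigid-mon-cat-coev-2}). The key pairing identity
\[
  \langle m^p \mathbin{\check{\otimes}}_{\omega} m^q,\ m_r \otimes_A m_s \rangle = \langle m^p,\ m_r \langle m^q, m_s \rangle \rangle
\]
follows directly from \eqref{eq:rigid-mon-cat-eval-2}, and combined with $c(m_r \otimes_A m_s) = \sum_{i,j} w_{rs}^{ij} m_i \otimes_A m_j$ it yields $\omega(\sigma)^{\vee}(m^p \mathbin{\check{\otimes}}_{\omega} m^q) = \sum_{r,s}(m^r \mathbin{\check{\otimes}}_{\omega} m^s) \cdot \overline{w}^{pq}_{rs}$. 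The right $A$-action on the dual factor transports under $\widetilde{\phi}^{-1}$ to the right $A^{\env}$-ring action $\lhd \overline{w}^{pq}_{rs}$ on $T_i^r T_j^s$, so the image of the generator becomes exactly the element~\eqref{eq:B-omega-FRT-relation}. This provides the desired isomorphism $\phi : B(M,c) \xrightarrow{\cong} B_{\omega}$ sending $T_i^j \mapsto \eE{m_i}{m^j}$.

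Transporting the bialgebroid and coquasitriangular structure of $B_{\omega}$ along $\phi$ then gives the stated formulas on generators: \eqref{eq:B-omega-comul} and \eqref{eq:B-omega-counit} immediately specialize to $\Delta(T_i^j) = \sum_r T_i^r \otimes_A T_r^j$ and $\pi(T_i^j) = \langle m^j, m_i \rangle$, while Theorem~\ref{thm:B-omega-lax-universal-R} with $\Sigma_{x,x} = c$ and the same dual-basis expansion reduces \eqref{eq:B-omega-lax-universal-R} to $\mathbf{r}(T_i^r, T_j^s) = \sum_{a,b} w_{ji}^{ab} \langle m^r, m_a \langle m^s, m_b \rangle \rangle = \overline{w}^{rs}_{ji}$. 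Uniqueness of all these structures is automatic because $B(M,c)$ is generated as an $A^{\env}$-ring by the $T_i^j$.

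The main technical obstacle is the careful bookkeeping of bimodule actions and dual pairings in expanding $\omega(\sigma)^{\vee}$: one must track how $\mathbin{\check{\otimes}}_{\omega}$ (i.e.\ $\vartheta^{(2)}$) interacts with the $A$-bimodule actions $\langle a \cdot f, m \rangle = \langle f, ma \rangle$ and $\langle f \cdot a', m \rangle = \langle f, m \rangle a'$ on $M^{\vee}$, and how these actions descend under $\widetilde{\phi}^{-1}$ onto the source/target actions $\rhd$ and $\lhd$ in the $A^{\env}$-ring. Handling this bookkeeping correctly is exactly what produces the non-trivial coefficient $\overline{w}^{pq}_{rs}$ defined by \eqref{eq:B-omega-FRT-relation-c-mathring}; once it is in place, every other step reduces to an immediate application of the results from Sections~\ref{sec:bialgebroids} and \ref{sec:construction-bialgebroids}.
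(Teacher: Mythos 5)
Your proposal is correct and follows essentially the same route as the paper's proof: identify $B(M,c)$ with $B^{\Sigma}_{\omega}$ via Theorem~\ref{thm:B-omega-gen-rel}, collapse the ideal to the one generated by $\Img(\Rel^{\Sigma}_{\sigma})$ using \eqref{eq:B-omega-gen-rel-4}, compute the dual map $\omega(\sigma)^{\vee}$ by the $\coev^{(2)}/\eval^{(2)}$ dual-basis expansion to produce the coefficients $\overline{w}^{pq}_{rs}$, and then read off $\Delta$, $\pi$ and $\mathbf{r}$ on generators from \eqref{eq:B-omega-comul}, \eqref{eq:B-omega-counit} and Theorem~\ref{thm:B-omega-lax-universal-R}, with uniqueness following from generation by the $T_i^j$. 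The bookkeeping you flag as the main obstacle is exactly the computation the paper carries out in displays \eqref{eq:B-omega'-FRT-relation-computation-1}--\eqref{eq:B-omega'-FRT-relation-computation-3}.
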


The tensor $A^{\env}$-ring $T_{A^{\env}}(G)$ is not a free $A^{\env}$-ring in general. If one desires to present $B(M, c)$ as a quotient of a free $A^{\env}$-ring, consider the free $A^{\env}$-bimodule $F$ generated by the set $X = \{ X_i^j \}_{i, j = 1, \dotsc, N}$. There is a unique morphism $p: F \to G$ of $A^{\env}$-bimodules such that $p(X_i^j) = T_i^j$ for all $i, j$. By Lemma~\ref{lem:Ker-TRf} and the succeeding remark, the $A^{\env}$-ring $B(M, c)$ can be defined as the quotient of the free $A^{\env}$-ring $A^{\env} \langle X \rangle$ by the ideal generated by the set
\begin{equation*}
  \text{(generators of $\Ker(p)$)} \cup \{ E_{ij}^{pq} \mid i, j, p, q = 1, \dotsc, N \},
\end{equation*}
where $E_{ij}^{pq}$ is the element of $A^{\env}\langle X \rangle$ given by the same formula as \eqref{eq:B-omega-FRT-relation} but with $T_{a}^{b}$ replaced with $X_{a}^{b}$.

\begin{proof}[Proof of Theorem \ref{thm:FRT-coqt-bialgebroid}]
  Let $\Sigma$ be the tensor scheme used to present the category $\mathfrak{B}$ of braids in Example~\ref{ex:braid-cat}. Since the FRT bialgebroid is isomorphic to $B^{\Sigma}_{\omega}$, we will actually show that $B(M, c)$ is identical to $B^{\Sigma}_{\omega}$.

  Given a morphism $f$ in $\mathfrak{B}$, we denote by $I(f)$ the ideal of $T_{A^{\env}}(G)$ generated by $\Img(\Rel^{\Sigma}_{f})$.
  By the construction, $B^{\Sigma}_{\omega}$ is a quotient of $T_{A^{\env}}(G)$ by $I := I(\sigma) + I(\overline{\sigma})$.
  Since $I(\sigma) = I(\overline{\sigma})$ by Lemma~\ref{lem:B-omega-gen-rel-2}, we have $I = I(\sigma)$. For simplicity, we set
  \begin{equation*}
    c^{\dagger} := (\check{\omega}^{(2)}_{x,x})^{-1} \circ \omega(\sigma)^{\vee} \circ \check{\omega}^{(2)}_{x,x}
    : M^{\vee} \otimes_A M^{\vee} \to M^{\vee} \otimes_A M^{\vee}.
  \end{equation*}
  This map is equal to the composition
  \begin{align}
    M^{\vee} \otimes M^{\vee}
    \label{eq:B-omega'-FRT-relation-computation-1}
    & \xrightarrow{\makebox[8em]{$\scriptstyle \coev^{(2)}_{M,M} \otimes \id \otimes \id$}}
      M^{\vee} \otimes M^{\vee} \otimes M \otimes M \otimes M^{\vee} \otimes M^{\vee} \\
    \label{eq:B-omega'-FRT-relation-computation-2}
    & \xrightarrow{\makebox[8em]{$\scriptstyle \id \otimes \id \otimes c \otimes \id \otimes \id$}}
      M^{\vee} \otimes M^{\vee} \otimes M \otimes M \otimes M^{\vee} \otimes M^{\vee} \\
    \label{eq:B-omega'-FRT-relation-computation-3}
    & \xrightarrow{\makebox[8em]{$\scriptstyle \id \otimes \id \otimes \eval^{(2)}_{M,M}$}}
      M^{\vee} \otimes M^{\vee},
  \end{align}
  where $\otimes = \otimes_A$. Thus, for $p, q = 1, \dotsc, N$, we compute
  \begin{align*}
    m^q \otimes_A m^p
    & \xmapsto{\makebox[3em]{$\scriptstyle \eqref{eq:B-omega'-FRT-relation-computation-1}$}}
      m^s \otimes_A m^r \otimes_A m_r \otimes_A m_s \otimes_A m^q \otimes_A m^p \\
    & \xmapsto{\makebox[3em]{$\scriptstyle \eqref{eq:B-omega'-FRT-relation-computation-2}$}}
      m^s \otimes_A m^r \otimes_A w_{r s}^{i j} m_i \otimes_A m_j \otimes_A m^q \otimes_A m^p \\
    & \xmapsto{\makebox[3em]{$\scriptstyle \eqref{eq:B-omega'-FRT-relation-computation-3}$}}
      m^s \otimes_A m^r w_{r s}^{i j} \langle m^p, m_i \langle m^q, m_j \rangle \rangle
      \mathop{=}^{\eqref{eq:B-omega-FRT-relation-c-mathring}}
      m^s \otimes_A m^r \overline{w}_{r s}^{p q},
  \end{align*}
  where the Einstein convention is used to suppress sums over $i$, $j$, $r$ and $s$. Hence,
  \begin{align*}
    & \Rel^{\Sigma}_{\sigma}((m_i \otimes_A m_j) \otimes_k (m^q \otimes_A m^p)) \\
    & = c(m_i \otimes_A m_j) \otimes_k (m^q \otimes_A m^p)
      - (m_i \otimes_A m_j) \otimes_k c^{\dagger}(m^q \otimes_A m^p) \\
    & = (w_{i j}^{r s} m_r \otimes_A m_s) \otimes_k (m^q \otimes_A m_p)
      - (m_i \otimes_A m_j) \otimes_k (m^s \otimes_A m^r \overline{w}_{r s}^{p q}) \\
    & = w_{i j}^{r s}\rhd T^p_r T^q_s
      - T_i^r T_j^s \lhd \overline{w}^{p q}_{r s}
  \end{align*}
  for all $i, j, p, q = 1, \cdots, N$, where the Einstein convention is used again.
  Since the sets $\{ m_i \otimes_A m_j \}_{i, j}$ and $\{ m^q \otimes_A m^p \}_{p, q}$ generate $M \otimes_A M$ and $M^{\vee} \otimes_A M^{\vee}$ as a left and a right $A$-module, respectively, the image of the $A^{\env}$-bimodule map $\Rel^{\Sigma}_{\sigma}$ is generated by the elements \eqref{eq:B-omega-FRT-relation}.

  By the above argument, the ideal $I$ is generated by the elements~\eqref{eq:B-omega-FRT-relation}. Hence the $A^{\env}$-ring $B(M, c)$ is identical to $B^{\Sigma}_{\omega}$. It is easy to see that the comultiplication and the counit of $B(M, c)$ are given as stated on the generators $T_{i}^{j}$.
  By Theorem~\ref{thm:B-omega-lax-universal-R}, the universal R-form form $\mathbf{r}$ is given by
  \begin{align*}
    \mathbf{r}\left( T_i^r, T_j^s \right)
    & = \eval_{M,M}^{(2)}(c(m_j \otimes_A m_i) \otimes_A m^s \otimes_A m^r) \\
    & = \eval_{M,M}^{(2)}(w_{j i}^{p q} m_p \otimes_A m_q \otimes_A m^s \otimes_A m^r) \\
    & = \langle m^r, w_{j i}^{p q} m_p \langle m^s, m_q \rangle \rangle
      = \overline{w}^{r s}_{j i}
  \end{align*}
  for $i, j, r, s = 1, \cdots, N$. The uniqueness part of this theorem follows from that $B(M, c)$ is generated by $T_{i}^j$'s as an $A^{\env}$-ring.
\end{proof}

\subsection{Rigid extension of the category of braids}

In the previous subsection, we have constructed a left bialgebroid $B(M, c)$ from a braided object $(M, c)$ in ${}_A \Mod_A$ such that $M$ is right rigid. A `Hopf version' of the bialgebroid $B(M, c)$ is obtained in a similar way. Namely, we assume that the braided object $(M, c)$ is `dualizable' in the sense of \cite[Definition 7.3]{MR1250465}. Then the monoidal functor $\omega: \mathfrak{B} \to {}_A \Mod_A$ in the above lifts to the `rigid extension' of $\mathfrak{B}$. By Theorem~\ref{thm:FRT-bimonad}, the bialgebroid arising from the lift of $\omega$ is in fact a coquasitriangular Hopf algebroid.

We first recall the definition of dualizability from \cite{MR1250465}. Let $\mathcal{C}$ be a monoidal category, and let $V$ and $X$ be objects of $\mathcal{C}$. Suppose that $V$ is right rigid. Given a morphism $f: V \otimes X \to X \otimes V$ in $\mathcal{C}$, we define $f^{\flat} : X \otimes V^{\vee} \to V^{\vee} \otimes X$ by
\begin{equation*}
  f^{\flat} = (\id_{V^{\vee}} \otimes \id_X \otimes \eval_V)
  \circ (\id_{V^{\vee}} \otimes f \otimes \id_{V^{\vee}})
  \circ (\coev_V \otimes \id_X \otimes \id_{V^{\vee}}).
\end{equation*}

\begin{definition}[\cite{MR1250465}]
  A braided object $(V, c)$ in $\mathcal{C}$ is said to be {\em dualizable} if $V$ is right rigid and the morphisms $c^{\flat}$ and $(c^{-1})^{\flat}$ are invertible.
\end{definition}

Since $\mathfrak{B}$ is a free braided monoidal category on a single object, a `rigid extension' of $\mathfrak{B}$ shall mean a free braided rigid monoidal category on a single object. Such a category has been constructed in \cite{MR1154897} in a geometric way. For our purpose, the following algebraic construction is rather useful:

\begin{definition}
  The monoidal category $\widetilde{\mathfrak{B}}$ is generated by the tensor scheme
  \begin{equation*}
    \Sigma_0 = \{ x_{+}, x_{-} \},
    \quad \Sigma_1 = \{ \varepsilon, \eta, \sigma_{a,b}, \overline{\sigma}_{a,b} \, (a, b \in \{ +, - \}) \},
  \end{equation*}
  where $\varepsilon: x_{+} \otimes x_{-} \to \unitobj$, $\eta: \unitobj \to x_{-} \otimes x_{+}$,
  $\sigma_{a,b}: x_a \otimes x_b \to x_b \otimes x_a$ and
  $\overline{\sigma}_{a,b}:  x_b \otimes x_a \to x_a \otimes x_b$, subject to the relations
  \begin{gather}
    \allowdisplaybreaks
    \label{eq:B-tilde-rel-YBE}
    \begin{aligned}
      (\sigma_{+, +} \otimes \id_{+}) & (\id_{+} \otimes \sigma_{+, +}) (\sigma_{+, +} \otimes \id_{+}) \\
      = \, & (\id_{+} \otimes \sigma_{+, +}) (\sigma_{+, +} \otimes \id_{+}) (\id_{+} \otimes \sigma_{+, +}),
    \end{aligned} \\
    \label{eq:B-tilde-rel-invertible}
    \overline{\sigma}_{a b} \sigma_{a b}
    = \id_{a} \otimes \id_{b}
    =  \sigma_{b a} \overline{\sigma}_{b a}, \\
    \label{eq:B-tilde-rel-rigid}
    (\varepsilon \otimes \id_{+}) (\id_{+} \otimes \eta) = \id_{+},
    \quad (\id_{-} \otimes \varepsilon) (\eta \otimes \id_{-}) = \id_{-}, \\
    \label{eq:B-tilde-rel-4}
    (\id_a \otimes \varepsilon) (\sigma_{+,a} \otimes \id_{-})
    = (\varepsilon \otimes \id_a) (\id_{+} \otimes \overline{\sigma}_{-,a}), \\
    \label{eq:B-tilde-rel-5}
    (\id_a \otimes \varepsilon) (\overline{\sigma}_{a,+} \otimes \id_{-})
    = (\varepsilon \otimes \id_a) (\id_{+} \otimes \sigma_{a,-}) \phantom{,}
  \end{gather}
  for $a, b \in \{ +, - \}$, where $\id_a = \id_{x_a}$.
\end{definition}

Equation~\eqref{eq:B-tilde-rel-rigid} says that $(x_{-}, \varepsilon, \eta)$ is a right dual object of $x_{+}$.
By expressing $\sigma_{a,b}$ and $\overline{\sigma}_{a,b}$ by a positive and a negative crossing in string diagrams, respectively, equations \eqref{eq:B-tilde-rel-4} and~\eqref{eq:B-tilde-rel-5} are graphically interpreted as follows:
\begin{equation*}
  \PIC{rigid-braid-01}
  \mathop{=}^{\eqref{eq:B-tilde-rel-4}}
  \PIC{rigid-braid-02} \qquad \qquad
  \PIC{rigid-braid-03}
  \mathop{=}^{\eqref{eq:B-tilde-rel-5}}
  \PIC{rigid-braid-04}
\end{equation*}
In view of (1) and (3) of the following lemma, we may say that $\widetilde{\mathfrak{B}}$ is a free braided rigid monoidal category on a single object, and thus call it a `rigid extension' of the monoidal category $\mathfrak{B}$ of braids.

\begin{lemma}
  \label{lem:free-rigid-br}
  The monoidal category $\widetilde{\mathfrak{B}}$ enjoys the following property:
  \begin{enumerate}
  \item $\widetilde{\mathfrak{B}}$ is a braided rigid monoidal category.
  \item Given a dualizable braided object $(V, c)$ in a monoidal category $\mathcal{C}$, there is a strong monoidal functor $F: \widetilde{\mathfrak{B}} \to \mathcal{C}$ such that
    \begin{gather*}
      F(x_{+}) = V,
      \quad F(x_{-}) = V^{\vee},
      \quad F(\varepsilon) = \eval_V,
      \quad F(\eta) = \coev_{V}, \\
      F(\sigma_{+,+}) = c, \quad
      F(\sigma_{-,+}) = (c^{\flat})^{-1}, \quad
      F(\sigma_{+,-}) = (c^{-1})^{\flat}, \quad
      F(\sigma_{-,-}) = c^{\flat\flat}, \\
      F(\overline{\sigma}_{a,b}) = F(\sigma_{a,b})^{-1}
      \quad (a, b \in \{ +, - \}),
    \end{gather*}
    and such a strong monoidal functor is unique up to isomorphisms.
  \item For an object $V$ of a braided rigid monoidal category $\mathcal{C}$, there exists a braided strong monoidal functor $F_V: \widetilde{\mathfrak{B}} \to \mathcal{C}$ such that $F_{V}(x_{+}) = V$. Furthermore, such a braided strong monoidal functor $F_V$ is unique up to isomorphism.
  \end{enumerate}
\end{lemma}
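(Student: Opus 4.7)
The plan is to prove the three parts in order, with the universal property of the presentation of $\widetilde{\mathfrak{B}}$ by generators and relations doing most of the heavy lifting in parts (2) and (3). For part (1), I would first identify $(x_{-}, \varepsilon, \eta)$ as a right dual of $x_{+}$ directly from relation \eqref{eq:B-tilde-rel-rigid}. A left dual of $x_{-}$ (hence a left dual of $x_{+}$) is obtained by setting
\begin{equation*}
  \varepsilon' = \varepsilon \circ \overline{\sigma}_{-,+}, \qquad
  \eta' = \sigma_{+,-} \circ \eta,
\end{equation*}
with the zigzag identities verified by combining \eqref{eq:B-tilde-rel-invertible}, \eqref{eq:B-tilde-rel-rigid}, and the mixed relations \eqref{eq:B-tilde-rel-4}--\eqref{eq:B-tilde-rel-5}. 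Since right and left duals are closed under tensor products, every object of $\widetilde{\mathfrak{B}}$ is rigid. To build the braiding, I would define $\sigma_{X,Y}: X \otimes Y \to Y \otimes X$ on a pair of generating objects $X = x_{a_1} \otimes \dotsb \otimes x_{a_m}$ and $Y = x_{b_1} \otimes \dotsb \otimes x_{b_n}$ by iteratively composing the generator crossings $\sigma_{a_i, b_j}$ according to the hexagon formulas, and then verify well-definedness and naturality on the generating morphisms $\varepsilon, \eta, \sigma_{a,b}, \overline{\sigma}_{a,b}$ by reducing each verification to one of \eqref{eq:B-tilde-rel-YBE}--\eqref{eq:B-tilde-rel-5}. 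Invertibility of $\sigma$ then follows from \eqref{eq:B-tilde-rel-invertible}.

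For part (2), the universal property of $\widetilde{\mathfrak{B}}$ as the monoidal category presented by the given tensor scheme and relations yields a strong monoidal functor $F$ to $\mathcal{C}$ with the stated values on generators, provided the images of the defining relations hold in $\mathcal{C}$. Relation \eqref{eq:B-tilde-rel-YBE} translates to the Yang--Baxter equation for $c$, which is the braided hypothesis on $(V, c)$. Relation \eqref{eq:B-tilde-rel-invertible} follows from the dualizability assumption, which guarantees that each of $c$, $c^{\flat}$, $(c^{-1})^{\flat}$, and $c^{\flat\flat}$ is invertible. Relation \eqref{eq:B-tilde-rel-rigid} is the zigzag identity for $(V^{\vee}, \eval_V, \coev_V)$. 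The mixed relations \eqref{eq:B-tilde-rel-4}--\eqref{eq:B-tilde-rel-5} amount to graphical identities expressing the definitions of $c^{\flat}$ and $(c^{-1})^{\flat}$ in terms of $c$, $c^{-1}$, and the cup/cap, which unwind to the zigzag identities for $V$. Uniqueness up to monoidal isomorphism follows because any strong monoidal functor out of a category presented by generators and relations is determined up to unique monoidal isomorphism by its behavior on the generating data.

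For part (3), given an object $V$ of a braided rigid monoidal category $\mathcal{C}$ with braiding $c_{-,-}$, I would first observe that $(V, c_{V,V})$ is a dualizable braided object: using the naturality and the hexagon axioms for $c_{-,-}$, the morphism $c_{V,V}^{\flat}$ can be identified graphically with (a composite involving) $c_{V, V^{\vee}}$, and both it and $(c_{V,V}^{-1})^{\flat}$ are isomorphisms as composites of isomorphisms. Part (2) then produces a strong monoidal functor $F_V: \widetilde{\mathfrak{B}} \to \mathcal{C}$ with $F_V(x_{+}) = V$ and $F_V(\sigma_{+,+}) = c_{V,V}$. To upgrade $F_V$ to a braided monoidal functor, I would check that $F_V(\sigma_{X,Y}) = c_{F_V(X), F_V(Y)}$ on pairs of generators; this holds by construction for $(x_{+}, x_{+})$, and the remaining cases are forced by the naturality of $c_{-,-}$ applied to $\varepsilon$ and $\eta$ together with the hexagon axioms. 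Uniqueness of $F_V$ among braided strong monoidal functors with $F_V(x_{+}) = V$ then follows as in (2): the image of every generator is determined by $V$ and by the braiding of $\mathcal{C}$.

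The main obstacle will be the coherence bookkeeping in part (1): constructing a well-defined braiding $\sigma_{X,Y}$ from the generator crossings $\sigma_{a,b}$ and showing that two different ways of composing generators into a single braiding component give the same morphism in $\widetilde{\mathfrak{B}}$. This amounts to an Artin-type argument in the category of $(m, n)$-tangles built from the generators, and the identification with the free braided rigid monoidal category on one object (the approach of Joyal--Street) may be the cleanest conceptual route.
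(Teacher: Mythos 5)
Your proposal is correct and follows essentially the same route as the paper: identify $(x_-,\varepsilon,\eta)$ as a right dual from \eqref{eq:B-tilde-rel-rigid}, assemble a braiding extending the generator crossings $\sigma_{a,b}$, and deduce parts (2) and (3) from the universal property of the presentation together with dualizability of $(V,c)$ (resp.\ of $(V, c'_{V,V})$ in a braided rigid category). The coherence obstacle you single out at the end is discharged in the paper exactly by the Joyal--Street route you suggest: one first derives $(\sigma_{+,a})^{\flat} = \overline{\sigma}_{-,a}$ and $(\overline{\sigma}_{a,+})^{\flat} = \sigma_{a,-}$ from \eqref{eq:B-tilde-rel-4} and \eqref{eq:B-tilde-rel-5}, concludes that $(x_{+}, \sigma_{+,+})$ is a dualizable braided object in $\widetilde{\mathfrak{B}}$ itself, and then invokes Propositions 7.5 and 2.2 of \cite{MR1250465} to obtain the mixed Yang--Baxter relations (which are \emph{not} among the defining relations and do not reduce to them in one step) and to assemble the braiding, after which rigidity of all objects follows from the general fact that left and right rigidity coincide in a braided category rather than from your explicit duals $\varepsilon' = \varepsilon\circ\overline{\sigma}_{-,+}$, $\eta' = \sigma_{+,-}\circ\eta$.
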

\begin{proof}
  (1) By applying the map $f \mapsto (\id_{-} \otimes f) \circ (\eta \otimes \id_{-})$ to \eqref{eq:B-tilde-rel-4} and \eqref{eq:B-tilde-rel-5}, we obtain the following equations:
  \begin{equation}
    \label{eq:B-tilde-rel-6}
    (\sigma_{+, a})^{\flat} = \overline{\sigma}_{-, a}, \quad
    (\overline{\sigma}_{a, +})^{\flat} = \sigma_{a, -}  \quad (a \in \{ +, - \}).
  \end{equation}
  Equations \eqref{eq:B-tilde-rel-YBE}, \eqref{eq:B-tilde-rel-invertible} and \eqref{eq:B-tilde-rel-6} imply that $(x_{+}, \sigma_{+, +})$ is a dualizable braided object. Thus, by \cite[Proposition 7.5]{MR1250465}, we have
  \begin{equation}
    \label{eq:free-rigid-monoidal-pf-1}
    (\id_{r} \otimes \sigma_{p,q}) (\sigma_{p,r} \otimes \id_{q}) (\id_{p} \otimes \sigma_{q,r})
    = (\sigma_{q,r} \otimes \id_{p}) (\id_{q} \otimes \sigma_{p,r}) (\sigma_{p,q} \otimes \id_{r})
  \end{equation}
  for all $p, q, r \in \{ +, - \}$.

  By~\eqref{eq:free-rigid-monoidal-pf-1} and \cite[Proposition 2.2]{MR1250465} applied to the discrete category over the set $\{ +, - \}$, we obtain a family of isomorphisms $c_{x,y}: x \otimes y \to y \otimes x$ in $\widetilde{\mathfrak{B}}$ such that the equations \eqref{eq:def-lax-br-1}, \eqref{eq:def-lax-br-2}, \eqref{eq:def-lax-br-3} and $c_{a,b} = \sigma_{a,b}$ hold for all $a, b \in \{ +, - \}$. We show that $c$ is a braiding of $\widetilde{\mathfrak{B}}$. For this, it is enough to check that the equations
  \begin{equation}
    \label{eq:free-rigid-monoidal-pf-2}
    (\id_a \otimes f) c_{\src(f), a}
    = c_{\tgt(f), a} (f \otimes \id_a), \quad
     c_{a, \tgt(f)} (\id_a \otimes f)
     = (f \otimes \id_a) c_{a, \src(f)}
  \end{equation}
  hold for all $f \in \Sigma_1$ and $a \in \{ +, - \}$. If $f = \sigma_{p, q}$ or $f = \overline{\sigma}_{p, q}$ for some $p, q \in \{ +, - \}$, then \eqref{eq:free-rigid-monoidal-pf-2} follows from \eqref{eq:free-rigid-monoidal-pf-1}. For $f = \varepsilon$, we have
  \begin{align*}
    (\id_a \otimes \varepsilon) c_{x_{+} \otimes x_{-}, a}
    = (\id_a \otimes \varepsilon) (\sigma_{+,a} \otimes \id_{-}) (\id_{+} \otimes \sigma_{-,a})
    \mathop{=}^{\eqref{eq:B-tilde-rel-4}}
    c_{a, \unitobj} (\varepsilon \otimes \id_a)
  \end{align*}
  and, in a similar way, prove $c_{a, \unitobj} (\id_a \otimes \varepsilon) = (\varepsilon \otimes \id_a) c_{a,x_{-} \otimes x_{+}}$. For $f = \eta$, we remark that the equations
  \begin{gather*}
    (\eta \otimes \id_a) (\id_{-} \otimes \sigma_{+,a})
    = (\id_a \otimes \eta) (\widetilde{\sigma}_{-,a} \otimes \id_{+}), \\
    (\eta \otimes \id_a) (\id_{-} \otimes \widetilde{\sigma}_{a,+})
    = (\id_a \otimes \eta) (\sigma_{a,-} \otimes \id_{+}) \phantom{,}
  \end{gather*}
  are obtained by applying the map $f \mapsto (\id_{-} \otimes f \otimes \id_{+}) (\eta \otimes \id_a \otimes \eta)$ to the both sides of \eqref{eq:B-tilde-rel-4} and \eqref{eq:B-tilde-rel-5}, respectively. By using these equations, we prove \eqref{eq:free-rigid-monoidal-pf-2} for $f = \eta$ in a similar way as the case where $f = \varepsilon$.

  Therefore $c$ is a braiding of $\widetilde{\mathfrak{B}}$. By the general fact that an object of a braided monoidal category is left rigid if and only if it is right rigid, both $x_{+}$ and $x_{-}$ are rigid. Since the tensor product of rigid objects is again rigid, we conclude that every object of $\widetilde{\mathfrak{B}}$ is rigid. The proof of (1) is done.

  (2) This follows from the universal property of $\widetilde{\mathfrak{B}}$.

  (3) Let $\mathcal{C}$ be a braided rigid monoidal category with braiding $c'$, and let $V$ be an object of $\mathcal{C}$. Then the pair $(V, t)$, where $t = c'_{V,V}$, is a dualizable braided object in $\mathcal{C}$ \cite[Proposition 7.4]{MR1250465}. The rest follows from Part (2).
\end{proof}

\subsection{The FRT construction: a Hopf version}

Let $(M, c)$ be a dualizable braided object in ${}_A \Mod_A$.
Then a strong monoidal functor $\omega : \mathfrak{B} \to {}_A \Mod_A$ is constructed from $(M, c)$ by the way of Lemma~\ref{lem:free-rigid-br} (2).

\begin{definition}
  \label{def:FRT-Hopf}
  We call the coquasitriangular Hopf algebroid $B_{\omega}$ with $\omega$ as above {\em the FRT Hopf algebroid} associated to $(M, c)$.
\end{definition}

We give a presentation of the FRT Hopf algebroid associated to $(M, c)$.
For this purpose, we first prove the following technical lemma:

\begin{lemma}
  There are a natural number $N$ and elements
  \begin{equation*}
    m_i \in M, \quad m^i \in M^{\vee}
    \quad \text{and} \quad \hat{m}_i \in M^{\vee\vee}
    \quad (i = 1, \dotsc, N)
  \end{equation*}
  such that the following equations hold:
  \begin{equation}
    \label{eq:FRT-special-dual-basis}
    \coev_{M}(1) = \sum_{i = 1}^N m^i \otimes_A m_i
    \quad \text{and} \quad
    \coev_{M^{\vee}}(1) = \sum_{i = 1}^N \hat{m}_i \otimes_A m^i.
  \end{equation}
\end{lemma}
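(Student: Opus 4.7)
The plan is to build dual bases for $M$ and $M^{\vee}$ independently and then combine them into a single indexed family by concatenation with zero-padding. The two coevaluation identities will thereby become independent of each other and take place on disjoint blocks of the new index set.

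The dual basis for $M$ comes for free: since $M$ is right rigid in ${}_A \Mod_A$, it is finitely generated and projective as a left $A$-module, so there are $N_1 \ge 1$ together with $m_i^{\circ} \in M$ and $m^{i, \circ} \in M^{\vee}$ ($i = 1, \dotsc, N_1$) with $\coev_M(1) = \sum_i m^{i, \circ} \otimes_A m_i^{\circ}$. The essential step will be to produce a dual basis for $M^{\vee}$, which requires $M^{\vee}$ itself to be right rigid. For this I will invoke Lemma~\ref{lem:free-rigid-br}\,(2): since $(M, c)$ is dualizable, there is a strong monoidal functor $F : \widetilde{\mathfrak{B}} \to {}_A \Mod_A$ with $F(x_+) = M$ and $F(x_-) = M^{\vee}$. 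By Lemma~\ref{lem:free-rigid-br}\,(1), every object of $\widetilde{\mathfrak{B}}$ is rigid, so $x_-$ admits a right dual in $\widetilde{\mathfrak{B}}$, and since any strong monoidal functor preserves right duals, $M^{\vee} = F(x_-)$ is right rigid in ${}_A \Mod_A$. Consequently $M^{\vee}$ is finitely generated and projective as a left $A$-module, and a dual basis exists: there are $N_2 \ge 1$ together with $\hat{\mu}_j \in M^{\vee\vee}$ and $\mu^j \in M^{\vee}$ ($j = 1, \dotsc, N_2$) satisfying $\coev_{M^{\vee}}(1) = \sum_j \hat{\mu}_j \otimes_A \mu^j$.

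Finally, setting $N = N_1 + N_2$, I will define $(m_k, m^k, \hat{m}_k) = (m_k^{\circ}, m^{k, \circ}, 0)$ for $1 \le k \le N_1$ and $(m_k, m^k, \hat{m}_k) = (0, \mu^{k - N_1}, \hat{\mu}_{k - N_1})$ for $N_1 < k \le N$. The identity $\sum_k m^k \otimes_A m_k = \coev_M(1)$ then holds because the contributions with $k > N_1$ vanish, and the identity $\sum_k \hat{m}_k \otimes_A m^k = \coev_{M^{\vee}}(1)$ holds because the contributions with $k \le N_1$ vanish. The only nontrivial ingredient is the right rigidity of $M^{\vee}$; this is not a formal consequence of the right rigidity of $M$ alone (the dual of a finitely generated projective left $A$-module need not be finitely generated projective as a left $A$-module with its induced left action) and genuinely needs the dualizability of $(M, c)$, invoked via the universal property of $\widetilde{\mathfrak{B}}$.
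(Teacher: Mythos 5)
Your proposal is correct and takes essentially the same approach as the paper: the paper's proof also chooses dual bases for $M$ and for $M^{\vee}$ separately and concatenates them with zero-padding (presented there as a $3\times(r+s)$ matrix whose columns are the triples $(m_i, m^i, \hat{m}_i)$), which is exactly your block construction. Your explicit verification that $M^{\vee}$ is right rigid, via the lift to $\widetilde{\mathfrak{B}}$ and preservation of duals by strong monoidal functors, is left implicit in the paper but is precisely the reason $\coev_{M^{\vee}}$ exists in this setting.
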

\begin{proof}
  There are elements $n_i \in M$, $n^i \in M^{\vee}$, $\phi_j \in M^{\vee}$ and $\phi^j \in M^{\vee\vee}$ ($i = 1, \dotsc, r$, $j = 1, \dotsc, s$) such that $\coev_M(1) = \sum_{i = 1}^r n^i \otimes_A n_i$ and $\coev_{M^{\vee}}(1) = \sum_{i = 1}^r \phi^i \otimes_A \phi_i$. We set $N = r + s$ and define $m_i$, $m^i$ and $\hat{m}_i$ ($i = 1, \dotsc, N$) to be the first, the second and the third entry of the $i$-th row of the matrix
  \begin{equation*}
    \begin{pmatrix}
      n_1 & n_2 & \cdots & n_s & 0 & 0 & \dotsb & 0 \\
      n^1 & n^2 & \cdots & n^s & \phi_1 & \phi_2 & \cdots & \phi_r \\
      0 & 0 & \dotsc & 0 & \phi^1 & \phi^2 & \cdots & \phi^r
    \end{pmatrix},
  \end{equation*}
  respectively. Then these elements satisfy \eqref{eq:FRT-special-dual-basis}.
\end{proof}

We fix elements $\{ m_i \}_{i = 1}^N$, $\{ m^i \}_{i = 1}^N$ and $\{ \hat{m}_i \}_{i = 1}^N$ satisfying~\eqref{eq:FRT-special-dual-basis} (they are only required to satisfy \eqref{eq:FRT-special-dual-basis} and need not to be taken in the way of the proof of the above lemma).
For $i, j = 1, \cdots, N$, we set
\begin{equation}
  \label{eq:B-omega-FRT-Hopf-generators}
  T_{i}^{j} = m_i \otimes_k m^j \in M \otimes_k M^{\vee}
  \quad \text{and} \quad
  \overline{T}{}_{i}^j = m^j \otimes_k \hat{m}_i \in M^{\vee} \otimes_k M^{\vee\vee}.
\end{equation}
We also make the $k$-module $G := (M \otimes_k M^{\vee}) \oplus (M^{\vee} \otimes_k M^{\vee\vee})$ an $A^{\env}$-bimodule in a similar way as \eqref{eq:B-omega-FRT-generators-action}. Now we prove:

\begin{theorem}
  \label{thm:FRT-coqt-Hopf}
  The FRT Hopf algebroid of Definition~\ref{def:FRT-Hopf} is isomorphic to the Hopf algebroid $H(M, c)$ defined as follows: As an $A^{\env}$-ring, $H(M, c)$ is the quotient of the tensor $A^{\env}$-ring $T_{A^{\env}}(G)$ by the ideal generated by the elements \eqref{eq:B-omega-FRT-relation} and
  \begin{align}
    \label{eq:B-omega-FRT-Hopf-relation}
    \beta_{i j} - \sum_{r = 1}^N T_{i}^{r} \overline{T}{}_{r}^{j},
    \quad
    \overline{\beta}_{i j} - \sum_{r = 1}^N \overline{T}{}_{i}^{r} T_{r}^{j},
    \quad
    (i, j = 1, \dotsc, N),
  \end{align}
  where $\beta_{i j}$ and $\overline{\beta}_{i j}$ are elements of $A^{\env} =T_{A^{\env}}^{0}(G)$ defined by
  \begin{equation*}
    \beta_{i j} = \langle m^j, m_i \rangle \otimes_k 1_{A^{\op}}
    \quad \text{and} \quad
    \overline{\beta}_{i j} = 1_{A} \otimes_k \langle \hat{m}_i, m_j \rangle.
  \end{equation*}
  The comultiplication and the counit are given by
  \begin{gather*}
    \Delta(T_{i}^{j}) = \sum_{r = 1}^N T_{i}^{r} \otimes_A T_{r}^{j},
    \quad \pi(T_{i}^{j}) = \langle m^j, m_i \rangle, \\
    \Delta(\overline{T}_{i}^{j}) = \sum_{r = 1}^N \overline{T}_{i}^{r} \otimes_A \overline{T}_{r}^{j},
    \quad \pi(\overline{T}_{i}^{j}) = \langle \hat{m}_i, m^j \rangle.
  \end{gather*}
  As $H(M, c)$ is isomorphic to the FRT Hopf algebroid associated to $(M, c)$, it has a universal R-form $\mathbf{r}$. On the generators, the form $\mathbf{r}$ is given by
  \begin{align*}
    \mathbf{r}(T_{r}^{i}, T_{s}^{j})
    & = \eval^{(2)}_{M,M}(c(m_j \otimes_A m_i) \otimes_A m^s \otimes_A m^r), \\
    \mathbf{r}(T_{r}^{i}, \overline{T}_{s}^{j})
    & = \eval^{(2)}_{M,M^{\vee}}((c^{\flat})^{-1}(m^j \otimes_A m_i) \otimes_A \hat{m}_s \otimes_A m^r), \\
    \mathbf{r}(\overline{T}_{r}^{i}, T_{s}^{j})
    & = \eval^{(2)}_{M^{\vee},M}((c^{-1})^{\flat}(m_j \otimes_A m^i) \otimes_A m^s \otimes_A \hat{m}_r), \\
    \mathbf{r}(\overline{T}_{r}^{i}, \overline{T}_{s}^{j})
    & = \eval^{(2)}_{M^{\vee},M^{\vee}}(c^{\flat\flat}(m^j \otimes_A m^i) \otimes_A \hat{m}_s \otimes_A \hat{m}_r)
  \end{align*}
  for $i, j, r, s = 1, \cdots, N$.
\end{theorem}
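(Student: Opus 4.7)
The plan is to deduce this theorem from Theorem~\ref{thm:B-omega-gen-rel} applied to the strong monoidal functor $\omega \colon \widetilde{\mathfrak{B}} \to {}_A\Mod_A$ provided by Lemma~\ref{lem:free-rigid-br}~(2), using the explicit presentation of $\widetilde{\mathfrak{B}}$ given just before the theorem. First I would identify the $A^{\env}$-bimodule of generators: since $\Sigma_0 = \{x_+, x_-\}$ with $\omega(x_+) = M$ and $\omega(x_-) = M^{\vee}$, one has $\widetilde{G}^{\Sigma}_{\omega} = (M \otimes_k M^{\vee}) \oplus (M^{\vee} \otimes_k M^{\vee\vee})$, and the elements $T_i^j$, $\overline{T}_i^j$ of \eqref{eq:B-omega-FRT-Hopf-generators} sit inside the two summands. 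The choice of $\{m_i\}$, $\{m^i\}$, $\{\hat{m}_i\}$ satisfying \eqref{eq:FRT-special-dual-basis} guarantees that these elements span the relevant spaces in the same way the dual bases did in the proof of Theorem~\ref{thm:FRT-coqt-bialgebroid}.

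Next I would process each $f \in \Sigma_1$ and compute $\Img(\Rel^{\Sigma}_f)$. By \eqref{eq:B-omega-gen-rel-4} the morphisms $\overline{\sigma}_{a,b}$ contribute no new relations. For $f = \sigma_{+,+}$ the computation is word-for-word the one carried out in the proof of Theorem~\ref{thm:FRT-coqt-bialgebroid} and produces the relations \eqref{eq:B-omega-FRT-relation}. For $f = \varepsilon \colon x_+ \otimes x_- \to \unitobj$, using $\omega(\varepsilon) = \eval_M$ and the convention \eqref{eq:omega-0-convention} for the $n = 0$ factor $\check{\omega}^{(0)} = \check{\omega}_0$, an unwinding of the definition \eqref{eq:B-omega'-Rel} of $\Rel^{\Sigma}_\varepsilon$ and the multiplication rule \eqref{eq:B-omega-mult} will identify the image with the $A^{\env}$-subbimodule generated by the first family of elements in \eqref{eq:B-omega-FRT-Hopf-relation}; the argument for $f = \eta$ is completely parallel, using $\omega(\eta) = \coev_M$, and yields the second family.

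The subtle step is to handle the remaining generators $\sigma_{+,-}, \sigma_{-,+}, \sigma_{-,-}$ of $\Sigma_1$. The idea is that the defining relations \eqref{eq:B-tilde-rel-4}--\eqref{eq:B-tilde-rel-5} of $\widetilde{\mathfrak{B}}$, together with \eqref{eq:B-tilde-rel-rigid} and \eqref{eq:B-tilde-rel-invertible}, imply that each of these morphisms is equal, in $\widetilde{\mathfrak{B}}$ itself, to a fixed composition of tensor products of $\sigma_{+,+}, \overline{\sigma}_{+,+}, \varepsilon, \eta$ and identities (this is precisely the content of Lemma~\ref{lem:free-rigid-br}~(3): a braided rigid monoidal category is freely generated by a single object together with its braiding with itself). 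Applying $\Rel^{\Sigma}$ and combining \eqref{eq:B-omega-gen-rel-2}--\eqref{eq:B-omega-gen-rel-4} of Lemma~\ref{lem:B-omega-gen-rel-2} then shows that $\Img(\Rel^{\Sigma}_{\sigma_{+,-}})$, $\Img(\Rel^{\Sigma}_{\sigma_{-,+}})$, $\Img(\Rel^{\Sigma}_{\sigma_{-,-}})$ all lie in the ideal already generated by the $\sigma_{+,+}$, $\varepsilon$, $\eta$ relations, so they can be omitted from the presentation. This bookkeeping is where I expect the only real work to live; the rest is formal.

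Finally I would read off the coalgebra structure and R-form. The formulas for $\Delta$ and $\pi$ on $T_i^j$ and $\overline{T}_i^j$ follow from \eqref{eq:B-omega-comul}--\eqref{eq:B-omega-counit} together with the fact that $\{m_i, m^i\}$ and $\{m^i, \hat m_i\}$ are dual bases for $M$ and $M^{\vee}$ respectively by \eqref{eq:FRT-special-dual-basis}. By Theorem~\ref{thm:B-omega-lax-universal-R}, $H(M,c)$ carries a universal R-form, and the four values of $\mathbf{r}$ claimed in the statement are obtained by plugging $\omega(\sigma_{+,+}) = c$, $\omega(\sigma_{-,+}) = (c^{\flat})^{-1}$, $\omega(\sigma_{+,-}) = (c^{-1})^{\flat}$, $\omega(\sigma_{-,-}) = c^{\flat\flat}$ (per Lemma~\ref{lem:free-rigid-br}~(2)) into the general formula \eqref{eq:B-omega-lax-universal-R}; that the resulting form is genuinely invertible (hence gives a Hopf structure, compatible with Theorem~\ref{thm:FRT-bimonad}~(\ref{item:main-thm-D-right-rigid})) is immediate from the fact that the lax braiding of $\widetilde{\mathfrak{B}}$ is invertible.
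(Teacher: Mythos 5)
Your proposal is correct and follows essentially the same route as the paper: identify $H(M,c)$ with $B^{\Sigma}_{\omega}$ via Theorem~\ref{thm:B-omega-gen-rel}, use Lemma~\ref{lem:B-omega-gen-rel-2} together with the relations \eqref{eq:B-tilde-rel-4}--\eqref{eq:B-tilde-rel-5} to absorb $I(\sigma_{+,-})$, $I(\sigma_{-,+})$, $I(\sigma_{-,-})$ into $I(\sigma_{+,+})+I(\varepsilon)+I(\eta)$, compute $\Rel^{\Sigma}_{\sigma_{+,+}}$, $\Rel^{\Sigma}_{\varepsilon}$, $\Rel^{\Sigma}_{\eta}$ explicitly, and read off the coring structure and R-form from \eqref{eq:B-omega-comul}--\eqref{eq:B-omega-counit} and \eqref{eq:B-omega-lax-universal-R}. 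The only quibble is that the expressibility of $\sigma_{+,-}$, $\sigma_{-,+}$, $\sigma_{-,-}$ in terms of $\sigma_{+,+}$, $\varepsilon$, $\eta$ comes directly from the derived identities $(\sigma_{+,a})^{\flat}=\overline{\sigma}_{-,a}$ and $(\overline{\sigma}_{a,+})^{\flat}=\sigma_{a,-}$ rather than from Lemma~\ref{lem:free-rigid-br}~(3), but this does not affect the argument.
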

\begin{proof}
  We show that $H(M, c)$ is identical to $B_{\omega}^{\Sigma}$.
  Given a morphism $f$ in $\mathfrak{B}$, we denote by $I(f)$ the ideal of $T_{A^{\env}}(G)$ generated by $\Img(\Rel^{\Sigma}_{f})$.
  By the construction, $B^{\Sigma}_{\omega}$ is a quotient of $T_{A^{\env}}(G)$ by the ideal
  \begin{equation*}
    I := I(\sigma_{+,+})
    + I(\sigma_{+,-})
    + I(\sigma_{-,+})
    + I(\sigma_{-,-})
    + I(\varepsilon) + I(\eta).
  \end{equation*}
  Now we shall find generators of this ideal. The process is very similar to the case of Hopf algebras \cite[Section 2]{MR1800719}.
  First, by Lemma~\ref{lem:B-omega-gen-rel-2}, we see that the ideals $I(\sigma_{+,-})$, $I(\sigma_{-,+})$ and $I(\sigma_{-,-})$ are contained in the sum of $I(\sigma_{+,+})$, $I(\varepsilon)$ and $I(\eta)$.
  By the same computation as in the proof of Theorem~\ref{thm:FRT-coqt-bialgebroid}, the ideal $I(\sigma_{+,+})$ is generated by \eqref{eq:B-omega-FRT-relation}. By the construction of $\omega$, we have $\omega(\varepsilon) = \eval_M$ and
  \begin{equation*}
    (\check{\omega}^{(2)}_{x_{-}, x_{+}})^{-1} \circ \omega(\varepsilon)^{\vee} \circ \check{\omega}_0 = \coev_{M^{\vee}}.
  \end{equation*}
  Hence, identifying $\widetilde{E}_{x_{+}} \otimes_{A^{\env}} \widetilde{E}_{x_{-}}$ with $(\omega(x_{+}) \otimes_A \omega(x_{-})) \otimes_k (\omega(x_{-})^{\vee} \otimes_A \omega(x_{+})^{\vee})$ by the isomorphism \eqref{eq:widetilde-phi-iso-1}, we have
  \begin{gather*}
    \Rel^{\Sigma}_{\varepsilon}((m_i \otimes_A m^j) \otimes_k 1_A)
    = \eval_M(m_i \otimes_A m^j) \otimes_k 1 - (m_i \otimes_A m^j) \otimes_k \coev_{M^{\vee}}(1) \\
    \mathop{=}^{\eqref{eq:FRT-special-dual-basis}}
    \beta_{i j} - \sum_{r = 1}^N (m_i \otimes_A m^j) \otimes_k (\hat{m}_r \otimes_A m^r)
    \mathop{=}^{\eqref{eq:B-omega-FRT-Hopf-generators}}
    \beta_{i j} - \sum_{r = 1}^N T_{i}^{r} \overline{T}{}_{r}^{j}
  \end{gather*}
  for $i, j = 1, \cdots, N$. Similarly, since $\omega(\eta) = \coev_M$ and
  \begin{equation*}
    \check{\omega}_0^{-1}
    \circ \omega(\eta)^{\vee}
    \circ (\check{\omega}^{(2)}_{x_{+}, x_{-}})^{-1} = \eval_{M^{\vee}},
  \end{equation*}
  we have
  \begin{equation*}
    \Rel^{\Sigma}_{\eta}(1_A \otimes_k m^j \otimes_A \hat{m}_i)
    = \sum_{r = 1}^N \overline{T}{}_{i}^{r} T_{r}^{j} - \overline{\beta}_{i j}
  \end{equation*}
  for $i, j = 1, \dotsc, N$. Thus the elements \eqref{eq:B-omega-FRT-Hopf-relation} generate $I(\varepsilon) + I(\eta)$. In conclusion, the ideal $I$ in concern is generated by the elements \eqref{eq:B-omega-FRT-relation} and \eqref{eq:B-omega-FRT-Hopf-relation} and therefore the $A^{\env}$-ring $H(M, c)$ agrees with the $A^{\env}$-ring $B^{\Sigma}_{\omega}$.

  The $A^{\env}$-ring $H(M, c)$ has a structure of a coquasitriangular left Hopf algebroid over $A$ since so does $B^{\Sigma}_{\omega}$. The proof is completed by writing down the structure maps of $H(M, c)$ for the generators by the same way as Theorem~\ref{thm:FRT-coqt-bialgebroid}. 
\end{proof}

\begin{remark}
  We close this paper by discussing the relation between our presentation of the FRT Hopf algebroid and Hayashi's construction \cite{MR1623965} of face algebras (which can be regarded as a class of bialgebroids by \cite{MR1615924}).
  Let $\Lambda$ be a non-empty finite set.
  A {\em quiver} over the set $\Lambda$ is a triple $Q = (Q, \src, \tgt)$ consisting of a set $Q$ and two maps $\src$ and $\tgt$ from $Q$ to $\Lambda$. Given two quivers $Q$ and $Q'$, we define
  \begin{equation*}
    Q \times_{\Lambda} Q' = \{ (q, q') \in Q \times Q' \mid \tgt(q) = \src(q') \}.
  \end{equation*}
  The set $\Lambda$ is a quiver over $\Lambda$ by $\src = \tgt = \id_{\Lambda}$. We identify
  \begin{equation*}
    Q \times_{\Lambda} \Lambda = Q = \Lambda \times_{\Lambda} Q,
    \quad (q, \tgt(q)) \leftrightarrow q \leftrightarrow (\src(q), q).
  \end{equation*}
  The set $Q \times_{\Lambda} Q'$ is a quiver over $\Lambda$ by $\src((q,q')) = \src(q)$ and $\tgt((q,q')) = \tgt(q')$. For a quiver $Q$ over $\Lambda$ and a non-negative integer $n$, we define
  \begin{equation*}
    Q^{(0)} = \Lambda
    \quad \text{and} \quad
    Q^{(n)} = Q \times_{\Lambda} Q^{(n-1)} \quad (n \ge 1).
  \end{equation*}
  For $p = (p_1, \cdots, p_m) \in Q^{(m)}$ and $q = (q_1, \cdots, q_n) \in Q^{(n)}$, we set
  \begin{equation*}
    p \cdot q = (p_1, \cdots, p_m, q_1, \cdots, q_n) \in Q^{(m + n)}
  \end{equation*}
  when $\tgt(p) = \src(q)$. % An element of $Q^{(n)}$ is refered to as a path on $Q$ of length $n$, and the operation $p \cdot q$ is called the concatination of paths.

  We consider the algebra $A = \mathrm{Map}(\Lambda, k)$ of $k$-valued functions on $\Lambda$. If $Q$ is a quiver $Q$ over $\Lambda$, then the free $k$-module $M_Q$ over $Q$ is an $A$-bimodule by
  \begin{equation*}
    f \cdot m_q \cdot f' = f(\src(q)) f'(\tgt(q)) m_q
    \quad (f, f' \in A, q \in Q),
  \end{equation*}
  where $m_q$ is the element $q \in Q$ regarded as an element of $M_Q$.
  For $\lambda \in \Lambda$, we denote by $e_{\lambda} \in A$ the characteristic function on the set $\{ \lambda \}$. Then we have
  \begin{equation*}
    m_p \otimes_A m_q
    = m_p e_{\tgt(p)} \otimes_A m_q
    = m_p \otimes_A e_{\tgt(p)} m_q
    = \delta_{\tgt(p),\src(q)} m_p \otimes_A m_q
  \end{equation*}
  for all $p, q \in Q$, where $\delta$ denotes the Kronecker symbol. By this observation, we see that the $n$-fold tensor product of $M_Q$ over $A$ is a free $k$-module with basis
  \begin{equation*}
    \{ m_{q_1} \otimes_A \dotsb \otimes_A m_{q_n} \mid (q_1, \cdots, q_n) \in Q^{(n)} \}.
  \end{equation*}

  Now let $(M, w)$ be a braided object of ${}_A \Mod_A$ such that $M = M_Q$ for some finite non-empty quiver $Q$ over $\Lambda$. We say that a quadruple $(a,b,c,d) \in Q^4$ is a {\em face} if the equations $\src(a) = \src(c)$, $\tgt(a) = \src(b)$, $\tgt(c) = \src(d)$ and $\tgt(b) = \tgt(d)$ hold. Since $w$ is a morphism of $A$-bimodules, we have
  \begin{equation*}
    w(m_p \otimes_A m_q)
    = \sum_{r, s \in Q} w_{p q}^{r s} \, m_r \otimes_A m_s
    \quad (p, q \in Q)
  \end{equation*}
  for some $w_{p q}^{r s} \in k$ such that $w_{p q}^{r s} = 0$ unless $(p,q,r,s)$ is a face. Thus $(M, w)$ can be regarded as what is called a star-triangular face model in \cite[p.234]{MR1623965}.
  The closability \cite[p.235]{MR1623965} of $(M, w)$ is equivalent to the dualizability of $(M, w)$.
  If $(M, w)$ is dualizable, then its Lyubashenko double \cite[p.235]{MR1623965} is the braided object $(M \oplus M^{\vee}, \widetilde{w})$, where $\widetilde{w}$ is the extension of $w$ such that
  \begin{equation*}
    \widetilde{w}|_{M^{\vee} \otimes_A M} = (w^{\flat})^{-1},
    \quad \widetilde{w}|_{M \otimes_A M^{\vee}} = (w^{-1})^{\flat}
    \quad \text{and}
    \quad \widetilde{w}|_{M^{\vee} \otimes_A M^{\vee}} = w^{\flat\flat}.
  \end{equation*}

  We suppose that the braided object $(M, w)$ is dualizable.
  For $q \in Q$, we define $m^q \in M^{\vee}$ and $\hat{m}_q \in M^{\vee\vee}$ by $m^q(m_p) = \delta_{p,q} e_{\src(q)}$ and $\hat{m}_q(m^p) = \delta_{p,q} e_{\tgt(q)}$ ($p \in Q$), respectively. It is easy to see that the following equations hold:
  \begin{equation*}
    \coev_M(1_A) = \sum_{q \in Q} m^q \otimes_A m_q
    \quad \text{and} \quad
    \coev_{M^{\vee}}(1_A) = \sum_{q \in Q} \hat{m}_q \otimes_A m^q.
  \end{equation*}
  Now we set $G := (M \otimes_k M^{\vee}) \oplus (M^{\vee} \otimes_k M^{\vee\vee})$,
  \begin{equation*}
    T_{p}^{q} = m_p \otimes_k m^q \in M \otimes_k M^{\vee}
    \quad \text{and} \quad
    \overline{T}{}_{p}^{q} = m^p \otimes_k \hat{m}_q
    \in M^{\vee} \otimes_k M^{\vee\vee}
  \end{equation*}
  for $p, q \in Q$. Then, as an $A^{\env}$-ring, the Hopf algebroid $H(M, c)$ is the quotient of $T_{A^{\env}}(G)$ by the ideal generated by the elements
  \begin{gather}
    \label{eq:face-alg-rel-1}
    \sum_{r, s \in Q} w_{i j}^{r s} \, T^p_r T^q_s
    - \sum_{r, s \in Q} w^{p q}_{r s} \, T_i^r T_j^s, \\
    \label{eq:face-alg-rel-2}
    \delta_{i j} (e_{\src(i)} \otimes_k 1_{A^{\op}}) - \sum_{r \in Q} T_i^r \overline{T}{}_r^j,
    \quad \delta_{i j} (1_{A} \otimes_k e_{\tgt(i)}) - \sum_{r \in Q} \overline{T}{}_{i}^{r} T_{r}^{j}
  \end{gather}
  for $i, j, p, q \in Q$. We omit to write down the coring structure and the universal R-form of $H(M, w)$.

  To explain the relation between $H(M, w)$ and Hayashi's face algebra, we introduce the $k$-algebra $\widetilde{H}(Q)$ generated by the symbols $\eE{p}{q}$, $\bE{p}{q}$ ($p, q \in Q^{(m)}$, $m = 0, 1, \cdots$) subject to the relations
  \begin{gather*}
    \bE{\lambda}{\mu} = \eE{\lambda}{\mu}
    \quad (\lambda, \mu \in Q^{(0)} = \Lambda),
    \quad \sum_{\lambda, \mu \in \Lambda} \eE{\lambda}{\mu}
    = 1, \\
    \eE{p}{q} \cdot \eE{r}{s}
    = \delta_{\tgt(p),\src(r)} \delta_{\tgt(q),\src(s)} \eE{p \cdot r}{q \cdot s}, \\
    \bE{p}{q} \cdot \bE{r}{s}
    = \delta_{\src(p),\tgt(r)} \delta_{\src(q),\tgt(s)} \bE{r \cdot p}{s \cdot q}\phantom{,}
  \end{gather*}
  for $p, q \in Q^{(m)}$, $r, s \in Q^{(n)}$ ($m, n = 0, 1, \cdots$)\footnote{The elements $\eE{p}{q}$ and $\bE{p}{q}$ correspond to $e \begin{pmatrix} p \\ q \end{pmatrix}$ and $e \begin{pmatrix} \widetilde{p} \\ \widetilde{q} \end{pmatrix}$ of \cite[Proposition 5.5]{MR1623965}, respectively. In \cite{MR1623965}, for a path $p$ in $Q$, the symbol $\widetilde{p}$ is used to mean the path in the opposite quiver obtained from $p$ by reversing its direction. Thus we have $\src(\widetilde{p}) = \tgt(p)$ and $\tgt(\widetilde{p}) = \src(p)$. The relation $\bE{\lambda}{\mu} = \eE{\lambda}{\mu}$ is imposed because, for $\lambda \in \Lambda$, the `path $\widetilde{\lambda}$ of length 0' in the opposite quiver is identified with $\lambda$ in \cite{MR1623965}}.
  The algebra $\widetilde{H}(Q)$ can be identified with the $k$-algebra $T_{A^{\env}}(G)$ via the algebra map defined by
  \begin{equation*}
    \eE{p}{q} \mapsto T_{p_1}^{q_1} \cdots T_{p_n}^{q_n}
    \quad \text{and} \quad
    \bE{p}{q} \mapsto \overline{T}{}_{p_n}^{q_n} \cdots \overline{T}{}_{p_1}^{q_1}
  \end{equation*}
  for $p = (p_1, \cdots, p_n), q = (q_1, \cdots, q_n) \in Q^{(n)}$. Under this identification, the face algebra $\mathrm{Hc}(\mathfrak{A}(w))$ of \cite[Proposition 5.5]{MR1623965} is the quotient of $T_{A^{\env}}(G)$ by the ideal generated by \eqref{eq:face-alg-rel-1}, \eqref{eq:face-alg-rel-2} and some more elements. However, in actuality, the `some more elements' are redundant generators of the ideal since they belong to the ideals $I(\sigma_{+,-})$, $I(\sigma_{-,+})$ or $I(\sigma_{-,-})$ with notation of the proof of Theorem~\ref{thm:FRT-coqt-Hopf}.
  Thus $\mathrm{Hc}(\mathfrak{A}(w))$ is identified with $H(M, c)$ as an algebra.
  By looking the coring structure on the generators, we conclude that, as a left bialgebroid, $H(M, c)$ is identified with $\mathrm{Hc}(\mathfrak{A}(w))^{\mathrm{cop}}$.
\end{remark}

%% References
% \bibliographystyle{alpha}
% \bibliography{../common}
\def\cprime{$'$}

\end{document}